\documentclass[11pt,a4paper]{article}
\usepackage{amsmath,amssymb,amsthm,amscd,mathrsfs}
\usepackage{indentfirst}
\usepackage[top=25mm, bottom=30mm, left=30mm, right=30mm]{geometry}
\usepackage{color}
\usepackage{comment}

\newtheorem{df}{\bf Definition}[section]
\newtheorem{thm}[df]{\bf Theorem}
\newtheorem{lem}[df]{\bf Lemma}
\newtheorem{cor}[df]{\bf Corollary}
\newtheorem{prop}[df]{\bf Proposition}
\newtheorem{rem}[df]{\bf Remark}
\newtheorem{exa}[df]{\bf Example}

\newtheorem*{claim}{\bf Claim}

\newtheorem{thmA}{\bf Theorem}
%定理Aのように表示
\newtheorem{corA}[thmA]{\bf Corollary}

\newcommand{\R}{\mathbb{R}}
\newcommand{\C}{\mathbb{C}}
\newcommand{\Z}{\mathbb{Z}}

\newcommand{\N}{\mathbb{N}}
\newcommand{\B}{\mathbb{B}}
\newcommand{\K}{\mathbb{K}}
\newcommand{\M}{\mathbb{M}}

\newcommand{\rE}{\mathord{\text{\rm E}}}

\newcommand{\ri}{\mathrm{i}}
\newcommand{\actson}{\curvearrowright}

\newcommand{\Ad}{\operatorname{Ad}}
\newcommand{\id}{\text{\rm id}}

\newcommand{\Aut}{\operatorname{Aut}}

\newcommand{\Tr}{\mathord{\text{\rm Tr}}}

\newcommand{\ovt}{\mathbin{\overline{\otimes}}}

\newcommand{\otm}{\otimes_{\rm min}}

\newcommand{\D}{\operatorname{DSG}}

\renewcommand{\Re}{\operatorname{Re}}

%------------------------------------------------------------------

\title{\bf Weak relative Dixmier property and Popa's intertwining technique for type III subfactors}
\author{Yusuke Isono\thanks{Research Institute for Mathematical Sciences, Kyoto University, 606-8502, Kyoto, Japan}}
\date{}

%
%
%------------------------------------------------------------------
\begin{document}

\maketitle

\renewcommand{\thefootnote}{}% 番号書式を"空"にする
\footnote[0]{E-mail: \texttt{isono@kurims.kyoto-u.ac.jp}}
\footnote[0]{2020 \textit{Mathematics Subject Classification.} 46L10, 46L36, 46L40.}
\footnote[0]{\textit{Key words}: Weak relative Dixmier property; Popa's intertwining-by-bimodules; Type $\rm III$ factor; Tomita--Takesaki theory.}
\footnote[0]{YI is supported by JSPS KAKENHI Grant Number 24K06759.}
\renewcommand{\thefootnote}{\arabic{footnote}}% 元(算用数字)に戻す

\begin{abstract}
	Let \( A \subset M \) be an inclusion of von Neumann algebras equipped with a faithful normal semifinite operator valued weight \( E \colon M \to A \). We prove that every positive element \( x \in M \) with \( E(x) < \infty \) satisfies the weak Dixmier property relative to \( A \): the \( \sigma \)-weak closure of the convex hull of its unitary orbit under \( \mathcal{U}(A) \) intersects the relative commutant \( A' \cap M \). This extends Marrakchi's result for the case of conditional expectations.
We apply this result to obtain new structural theorems for type III factors, including a reformulation of Popa's intertwining criterion without tracial assumptions, an extension of Ozawa's relative solidity theorem to the type III setting, and a Galois-type correspondence for crossed products by totally disconnected groups. The last result resolves a question posed by Boutonnet and Brothier regarding the structure of intermediate subfactors.
\end{abstract}

\section{Introduction}

Averaging techniques, particularly those involving unitary conjugation, play a fundamental role in the study of von Neumann algebras. A prominent example of this is the \emph{Dixmier property} \cite{Di49}, which asserts that any element in a von Neumann algebra lies in the norm closure of the convex hull of its unitary orbit. In the context of von Neumann algebras, a more natural formulation of this idea is given by the \emph{weak relative Dixmier property}  for inclusions $A\subset M$ \cite{Po99}:  for every element $x \in M$, the $\sigma$-weak closed convex hull of $\{uxu^* \mid u \in \mathcal{U}(A)\}$ intersects the relative commutant $A' \cap M$. Conceptually, this means that elements of $M$ can be made approximately central relative to $A$ by unitary conjugation. For further background and details on this notion and its history, see \cite[Section~0]{Po20}.

Despite its simple definition, unlike in Dixmier's original setting, the weak relative Dixmier property does not hold in general. While it trivially holds when $A=M$ by Dixmier's theorem, there are many situations in which it fails. For example, an inclusion $M\subset \B(H)$ satisfies the property if and only if $M$ is approximately finite-dimensional~\cite{Sc63,Co75}. In the type $\rm III_1$ setting, the property is intimately related to Connes' bicentralizer problem.  Haagerup showed~\cite{Ha85} that the inclusion of the continuous core into a type~$\rm III_1$ factor (via the Takesaki duality) has the weak relative Dixmier property if and only if the factor has trivial bicentralizer. This result completed the proof of the celebrated classification theorem of amenable factors. These examples show that whether the weak relative Dixmier property holds depends strongly on the structure of the inclusion.

The situation is much simpler when $A$ is finite and there exists a faithful normal conditional expectation from $M$ onto $A$. In this setting, the weak relative Dixmier property is well known to hold by standard Hilbert space techniques, and it has been an essential tool in von Neumann algebras. Indeed, Popa employed it in his analysis of maximal abelian subalgebras~\cite{Po81} and in his classification of amenable subfactors~\cite{Po95}, both of which form part of his deep contributions to the theory of von Neumann algebras. It is worth noting that Haagerup's work \cite{Ha85} on the bicentralizer problem was influenced by Popa's earlier analysis of maximal abelian subalgebras.

However, when the subalgebra $A$ is not finite, it had long been unclear whether the property holds. This problem was explicitly raised in~\cite{Po99}. A major breakthrough was made by Marrakchi~\cite{Ma19}, who solved it by proving the weak relative Dixmier property for all inclusions $A \subset M$ admitting a faithful normal conditional expectation. This result covers, in particular, the case where $A$ is of type III.
His proof introduced a novel approach based on compact convex semigroups of completely positive maps and minimal idempotents, making an essential use of Ellis' lemma.  This result was later applied in \cite{Ma23} to the study of the relative bicentralizers. That application demonstrates its importance in the broader theory of von Neumann algebras.

This paper builds on these developments by introducing a more flexible version of the weak relative Dixmier property, formulated for individual positive elements. We focus on positive elements that are ``integrable'' with respect to a given \emph{operator valued weight}, that is, a generalization of conditional expectations that allows infinite values (see Subsection~\ref{Operator valued weights}).  Our main theorem refines the weak relative Dixmier property by providing a localized criterion that still makes sense even when there is no conditional expectation from $M$ onto $A$. This new framework will be applied to several results later in the paper. We note that the applications will concern the basic construction, which is a typical situation where a canonical operator valued weight is available, but no conditional expectation exists.

We now state our main result precisely. See Theorem \ref{thm-Dixmier-expectation} for a more general statement, which includes a version involving ucp maps.

\begin{thmA}\label{thmA}
	Let $A\subset M$ be an inclusion of von Neumann algebras and assume that there exists a faithful normal semifinite operator valued weight $E_A\colon M \to A$. Then for every positive element $x\in M$ with $E_A(x)<\infty$, the $\sigma$-weak closure of the convex hull
	$$\mathcal K(x,A):=\overline{\mathrm{conv}}^{\rm weak}\{ u x u^* \mid u\in \mathcal U(A)\} \subset M$$
intersects $A'\cap M$. 
\end{thmA}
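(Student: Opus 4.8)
The plan is to follow the strategy of Marrakchi \cite{Ma19}, but to replace the tracial $L^2$-geometry that is available when the subalgebra is finite by the Hilbert $A$-module geometry attached to the operator valued weight $E_A$, and to locate the desired element of $A'\cap M$ as a fixed point of a compact convex semigroup of averaging maps.

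First I would record the soft properties of $\cK:=\mathcal K(x,A)$. Since $0\le uxu^*\le\|x\|1$ for every $u\in\mathcal U(A)$, the set $\cK$ is a nonempty, convex, $\sigma$-weakly compact subset of the ball of radius $\|x\|$ in $M$, and it is invariant under each conjugation $\Ad(u)$. For a finitely supported probability measure $\mu=\sum_i\lambda_i\delta_{u_i}$ on $\mathcal U(A)$ write $T_\mu(\xi)=\sum_i\lambda_i u_i\xi u_i^*$; each $T_\mu$ is an affine $\sigma$-weakly continuous self-map of $\cK$, and $T_\mu\op T_\nu=T_{\mu*\nu}$. Let $S\subset\cK^{\cK}$ be the closure of $\{T_\mu\}$ in the topology of pointwise $\sigma$-weak convergence. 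Then $S$ is a compact, convex, right topological semigroup containing the group $G:=\{\,T_{\delta_u}\mid u\in\mathcal U(A)\,\}$, and $G$ acts on $S$ by the continuous affine left translations $\theta\mapsto T_{\delta_u}\op\theta$. The entire problem reduces to producing a single $\theta_0\in S$ with $T_{\delta_u}\op\theta_0=\theta_0$ for all $u$: for such $\theta_0$ and any $\xi\in\cK$ the element $\theta_0(\xi)$ lies in $\cK$ and satisfies $u\,\theta_0(\xi)\,u^*=\theta_0(\xi)$ for every $u$, so that $\theta_0(x)\in\cK\cap(A'\cap M)$.

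To construct $\theta_0$ I would pass to the semifinite geometry supplied by $E_A$. From $E_A(x)<\infty$ and $x^2\le\|x\|x$ one gets $E_A(\xi^*\xi)\le\|x\|\,E_A(x)$ uniformly on the convex hull, so every $\xi\in\cK$ is square-integrable and defines a vector $\hat\xi$ in the self-dual Hilbert $A$-module $\cH$ associated with the $A$-valued inner product $\langle\eta,\zeta\rangle=E_A(\eta^*\zeta)$. The equivariance $E_A(u\,\cdot\,u^*)=u\,E_A(\,\cdot\,)\,u^*$ shows that each $\Ad(u)$ implements a bijection of $\cH$ preserving this $A$-valued inner product up to the inner twist $a\mapsto uau^*$, and that the image $\hat\cK$ of $\cK$ is a bounded $\sigma$-weakly compact convex subset of $\cH$. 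This is precisely the module replacement for the $L^2$-picture used in the finite case, and the role of square-integrability is to make the averaging process contractive for the $A$-valued energy rather than merely bounded in $M$.

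Finally I would combine the two pictures. Applying Ellis' lemma to the compact right topological semigroup $S$ yields a minimal left ideal $L\subset S$ and an idempotent $e\in L$, which is a right identity of $L$ and whose range $e(\cK)=\mathrm{Fix}(e)$ is a nonempty compact convex set. The remaining—and genuinely hard—step is to show that $e$, or a suitable idempotent in the minimal ideal, is left-fixed by all of $G$, equivalently that $e(\cK)\subset A'\cap M$. Here the non-amenability of $\mathcal U(A)$ rules out any soft fixed-point argument, and one must use $E_A$: by tracking the $A$-valued energy $E_A\bigl(\,\widehat{e(\xi)}^{\,*}\,\widehat{e(\xi)}\,\bigr)$ along the averaging net and exploiting that the twisted isometries $\Ad(u)$ cannot strictly decrease it, one expects an energy-minimal point of $e(\hat\cK)$, produced by the module geometry together with the Dixmier property applied inside $A$, to be $\Ad(u)$-invariant for every $u$. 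I expect this last step—the trace-free substitute for the uniqueness of the norm-minimizer available in the finite case—to be the main obstacle, and it is exactly where the hypothesis $E_A(x)<\infty$ is indispensable.
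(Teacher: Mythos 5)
Your soft framework is essentially the paper's: the set $\mathcal K(x,A)$ is a compact convex set on which the weak Dixmier semigroup $\D(A\subset M)$ acts, the problem reduces to finding an idempotent in that semigroup whose range meets $A'\cap M$, Ellis' lemma supplies minimal idempotents, and the uniform bound $E_A(\xi^*\xi)\le \|x\|\,\|E_A(x)\|$ on $\mathcal K(x,A)$ (i.e.\ $\mathcal K(x,A)\subset \mathfrak n_{E_A}$, the paper's Lemma \ref{lem-operator-bounded}) is indeed the place where $E_A(x)<\infty$ enters. But the step you yourself flag as ``genuinely hard'' --- showing that a (minimal) idempotent $e$ satisfies $e(\mathcal K)\subset A'\cap M$ --- is exactly the content of the theorem, and the mechanism you sketch for it does not work. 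The $A$-valued energy $E_A(\xi^*\xi)$ is only \emph{covariant} under conjugation, $E_A\bigl((u\xi u^*)^*(u\xi u^*)\bigr)=u\,E_A(\xi^*\xi)\,u^*$, so there is no $\Ad(\mathcal U(A))$-invariant quantity to minimize until you compose with something invariant; in the finite case that something is the trace of $A$, and its strict convexity on $L^2$ gives the \emph{unique} minimizer that forces invariance. When $A$ is of type III neither ingredient is available, and ``an energy-minimal point of $e(\widehat{\mathcal K})$'' has no meaning as stated: the values $E_A(\xi^*\xi)$ form a subset of $A^+$ with no canonical minimal element and no uniqueness principle.

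What the paper actually does (Proposition \ref{prop-averaging-typeIII}, after first splitting $A$ into its semifinite and type III central summands and treating the semifinite part by the classical $L^2$-argument with the invariant weight $\Tr_A\circ E_A$) is quite different from module-energy minimization. For type III $A$ it takes a faithful normal conditional expectation $E\colon A\to\mathcal Z(A)$ lying in $\D(A\subset A)$ (available by Marrakchi's bicentralizer work), arranges a minimal idempotent $\Phi$ with $\Phi|_A=E$, and proves via the operator inequality $E_A(\theta(y))\le\theta(E_A(y))$ and the faithfulness of $E\circ E_A$ that every such idempotent $\Psi$ satisfies $\Psi(y^*y)=y^*y$ for $y\in\Psi(\mathfrak m_{E_A})$; hence $\Psi(\mathfrak m_{E_A}+A)$ is a genuine $C^*$-subalgebra of $M$ on which the Choi--Effros product is the ambient product. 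Combining this with the fact that restrictions between ranges of minimal idempotents are $\ast$-isomorphisms, a parallelogram-law computation applied to $\theta=\tfrac12(\Ad(u)+\id)$ forces $uau^*=a$ for every $a\in\Phi(\mathfrak m_{E_A})$. This algebraic multiplicative-domain argument is the trace-free substitute you were looking for; without it (or an equivalent), your proposal establishes only the setup and not the theorem.
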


As an immediate consequence, we obtain the following corollary. Haagerup characterized the case when $E_A$ is semifinite on $A'\cap M$ \cite{Ha77b}. The following corollary provides a characterization of the opposite phenomenon in terms of the weak relative Dixmier property. This characterization is also related to Popa's intertwining technique, which we will discuss later (see Remark \ref{rem-intertwining2}).

\begin{corA}\label{corB}
	Keep the setting in Theorem \ref{thmA}. The following conditions are equivalent.
\begin{enumerate}
	\item We have $E_A(x)=\infty$ for all nonzero positive $x\in A'\cap M$. 

	\item For every $x\in M^+$ with $E_A(x)<\infty$, $\mathcal K(x,A)$ contains $0$.

	\item For every $x\in M^+$ with $E_A(x)<\infty$, we have $A'\cap \mathcal K(x,A)=\{0\}$.
\end{enumerate}
\end{corA}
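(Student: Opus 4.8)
The plan is to prove the cyclic chain $(1)\Rightarrow(3)\Rightarrow(2)\Rightarrow(1)$, with essentially all of the content concentrated in the first implication; the other two are formal consequences of Theorem~\ref{thmA} and of the fact that a relatively central element has a trivial unitary orbit.

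For $(1)\Rightarrow(3)$, fix $x\in M^+$ with $E_A(x)<\infty$ and put $a:=E_A(x)\in A^+$. First I would record how $E_A$ acts on the convex hull: for a finite convex combination $z=\sum_j\lambda_j u_jxu_j^*$ with $u_j\in\mathcal U(A)$, additivity and $A$-bimodularity of the operator valued weight give
$$E_A(z)=\sum_j\lambda_j u_j\, a\, u_j^*\le \|a\|\,1,$$
since $a\le\|a\|\,1$ and unitary conjugation preserves this estimate. The key step is to push this uniform bound through the $\sigma$-weak closure. To do so I would test against an arbitrary normal state $\varphi$ on $A$: the composition $\varphi\op E_A$ is a normal, hence $\sigma$-weakly lower semicontinuous, weight on $M$, and it takes value at most $\|a\|$ on every such $z$. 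Thus for any $y\in\mathcal K(x,A)$, writing $y$ as a $\sigma$-weak limit of a net of such combinations and invoking lower semicontinuity gives $\varphi(E_A(y))\le\|a\|$; as $\varphi$ ranges over all normal states this forces $E_A(y)\le\|a\|\,1$, and in particular $E_A(y)<\infty$. Since every element of $\mathcal K(x,A)$ is a $\sigma$-weak limit of positive elements and is therefore positive, any $y\in A'\cap\mathcal K(x,A)$ is a positive element of $A'\cap M$ with $E_A(y)<\infty$; by hypothesis $(1)$ this means $y=0$. Combining this with Theorem~\ref{thmA}, which guarantees that $A'\cap\mathcal K(x,A)$ is nonempty, we obtain $A'\cap\mathcal K(x,A)=\{0\}$.

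The implication $(3)\Rightarrow(2)$ is then immediate: by $(3)$ the set $A'\cap\mathcal K(x,A)$ equals $\{0\}$, and in particular $0\in\mathcal K(x,A)$, which is exactly $(2)$ (Theorem~\ref{thmA} ensures this intersection is genuinely $\{0\}$ rather than empty). For $(2)\Rightarrow(1)$ I would argue by contraposition: if $(1)$ fails, there is a nonzero $y\in(A'\cap M)^+$ with $E_A(y)<\infty$. Taking $x:=y$, each $u\in\mathcal U(A)$ fixes $y$ under conjugation because $y$ commutes with $A$, so the unitary orbit of $y$ is the single point $\{y\}$ and hence $\mathcal K(y,A)=\{y\}$. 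Since $y\ne 0$ we have $0\notin\mathcal K(y,A)$, contradicting $(2)$.

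The main obstacle is precisely the passage from the convex hull to its $\sigma$-weak closure in $(1)\Rightarrow(3)$. An operator valued weight is only lower semicontinuous, not continuous, so one cannot simply take $\sigma$-weak limits in the identity $E_A(z)=\sum_j\lambda_j u_jau_j^*$; a priori the bound could degenerate to an unbounded element of the extended positive cone in the limit. Reducing to the ordinary normal weights $\varphi\op E_A$ and exploiting their lower semicontinuity is what allows the uniform bound $E_A(\,\cdot\,)\le\|a\|\,1$ to survive the closure. Once this bound is secured, everything else is bookkeeping around Theorem~\ref{thmA}.
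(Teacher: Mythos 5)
Your proposal is correct and follows essentially the same route as the paper: the paper also proves $(1)\Rightarrow(3)\Rightarrow(2)\Rightarrow(1)$, using Theorem~\ref{thmA} for nonemptiness of $A'\cap\mathcal K(x,A)$ and the fixed-orbit observation $\mathcal K(y,A)=\{y\}$ for $y\in A'\cap M$. The only difference is that the finiteness estimate $E_A(y)<\infty$ for $y\in\mathcal K(x,A)$, which you derive inline via lower semicontinuity of $\varphi\circ E_A$, is exactly the content of Lemma~\ref{lem-operator-bounded}, which the paper simply cites at that point.
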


When $A$ is semifinite, Theorem~\ref{thmA} can be proved using standard Hilbert space methods. The main novelty of this paper is therefore that we treat the case where $A$ is of type~III. In the proof, we first follow Marrakchi's approach~\cite{Ma19} and, by making use of Ellis' lemma, construct a suitable ucp map, which can be approximated by $\Ad(u)$ for some $u \in \mathcal U(A)$. Marrakchi proved that this map is faithful on $M$ and used this property in his argument. In our setting, the map is no longer faithful, and a new idea is required. 
We study the behavior of the ucp map on the $\ast$-subalgebra of $M$ on which the operator valued weight $E_A$ is finite, and this part is our contribution. The details will be given in Proposition~\ref{prop-averaging-typeIII}.

\subsection*{Application 1: Popa's intertwining-by-bimodules}

We apply our framework to reformulate Popa's intertwining theory for general type III inclusions. Popa's \emph{deformation/rigidity theory} has played a central role in the study of non-amenable von Neumann algebras over the past two decades; see the surveys \cite{Po06b,Va10,Io17}. One of its main tools is the \emph{intertwining-by-bimodules} technique \cite{Po01,Po03}. In the tracial setting, this method provides an analytic criterion for the embedding of one subalgebra into another, formulated in terms of the convergence of unitary sequences. These characterizations are convenient and widely used.

In the type III setting, however, such techniques are harder to apply due to the lack of traces and the associated Hilbert space structure. Although many partial results have been obtained \cite{CH08,HR10,HV12,Ue12,Is14,HI15,Ue16,BH16,Is19}, a complete extension had remained an open problem. In this paper, we address this gap by proving a new formulation of the intertwining condition that applies to arbitrary subalgebras.

Our Theorem A provides two equivalent conditions: one involving a net of unitaries, and the other involving a completely positive map that satisfies a weak Dixmier type condition (see items (2) and (3) of Theorem C below). Item (2), formulated in terms of nets $(u_i)_i$, is the natural analogue of the finite case, but in the type III setting it often turns out to be less useful. By contrast, item (3) is more effective, since the associated ucp map can be chosen to be normal on $M$. A more detailed explanation of the difference between (2) and (3) will be given right after Theorem \ref{thmC}.

For simplicity, we introduce here the version formulated for unital subalgebras of type III. For a general statement, see Theorem \ref{thm-intertwining1} and Proposition \ref{prop-intertwining1}. 
We say that a (possibly non-unital) inclusion of von Neumann algebras $A\subset M$ is \emph{with expectation} if there exists a faithful normal conditional expectation from $1_AM1_A $ onto $A$. 

\begin{thmA}\label{thmC}
	Let $A,B\subset M$ be unital inclusions of $\sigma$-finite von Neumann algebras with expectation. Assume that $A,B$ are of type $\rm III$. Then the following conditions are equivalent. 
\begin{enumerate}
	\item We have $A \preceq_M B$, see Definition \ref{def-intertwining1}.
	\item There exists no net $(u_i)_{i \in I}$ of unitaries in $\mathcal U(A)$ such that 
	$$E_{B}(b^*u_i a)\rightarrow 0\quad \text{in the strong topology for all}\quad  a,b\in M,$$
where $E_B$ is a fixed faithful normal conditional expectation from $M$ onto $B$.

	\item There exists no ucp map $\Psi$ on the basic construction $\langle M,B\rangle$, which is approximated by convex combinations of $\Ad(u)$ for $u\in \mathcal U(A)$, such that $\Psi|_M$ is normal and 
	$$\Psi(ae_{B} b^*) = 0\quad \text{for all}\quad a,b\in M,$$
where $e_B$ is the Jones projection of $E_B$.

\end{enumerate}
\end{thmA}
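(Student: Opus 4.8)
The plan is to run the whole argument inside the basic construction $\langle M,B\rangle$, feeding the Jones projection $e_B$ and its translates $ae_Bb^*$ into Theorem \ref{thmA} and Corollary \ref{corB}. Let $\widehat{E}_B\colon\langle M,B\rangle\to M$ be the dual operator valued weight of $E_B$; it is faithful, normal, semifinite and satisfies $\widehat{E}_B(ae_Bb)=ab$ for $a,b\in M$, so in particular $\widehat{E}_B(e_B)=1$. Composing with the expectation gives a faithful normal semifinite operator valued weight
$$E:=E_A\circ\widehat{E}_B\colon\langle M,B\rangle\to A,$$
and for each $a\in M$ one has $E(ae_Ba^*)=E_A(\widehat{E}_B(ae_Ba^*))=E_A(aa^*)<\infty$. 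Hence every element of the family $\{ae_Bb^*\mid a,b\in M\}$ lies in the domain to which Theorem \ref{thmA} and Corollary \ref{corB} apply, for the inclusion $A\subset\langle M,B\rangle$ with the weight $E$; here $\mathcal K(y,A)$ denotes the relevant convex hull inside $\langle M,B\rangle$. Fix a faithful normal state $\varphi$ on $M$ with cyclic vector $\Omega\in L^2(M)$. I will use the basic-construction formulation of $\preceq_M$ (see Definition \ref{def-intertwining1} and Theorem \ref{thm-intertwining1}): $A\preceq_M B$ holds if and only if $A'\cap\langle M,B\rangle$ contains a nonzero positive element $d$ with $E(d)<\infty$; equivalently, $A\not\preceq_M B$ is exactly condition (1) of Corollary \ref{corB} for $A\subset\langle M,B\rangle$.

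With this dictionary, Corollary \ref{corB} is the backbone. It says that $A\not\preceq_M B$ is equivalent to the assertion that for every positive $y$ with $E(y)<\infty$ --- in particular for each $y=ae_Ba^*$ --- one has both $0\in\mathcal K(y,A)$ and $A'\cap\mathcal K(y,A)=\{0\}$. Thus the intertwining dichotomy is converted into an averaging dichotomy for the orbit of the family $\{ae_Ba^*\}$ under $\mathcal U(A)$: either the family can be averaged to $0$, or its averages meet $A'\cap\langle M,B\rangle$ nontrivially. Since Theorem \ref{thmA} already guarantees that $\mathcal K(e_B,A)$ meets $A'\cap\langle M,B\rangle$, in the non-intertwining case this intersection is forced to be exactly $\{0\}$, i.e. $0\in\mathcal K(e_B,A)$.

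It remains to identify the two averaging phenomena with conditions (2) and (3). For (2) the bridge is the matrix coefficient identity
$$\|E_B(b^*u_ia)\Omega\|^2=\langle u_i^*(be_Bb^*)u_i\,a\Omega,\,a\Omega\rangle\qquad(u_i\in\mathcal U(A),\ a,b\in M),$$
which follows from $e_B(x\Omega)=E_B(x)\Omega$. For fixed $b$, quantifying over all $a$ (the vectors $a\Omega$ are dense and the operators $u_i^*(be_Bb^*)u_i$ are uniformly bounded and positive) shows that $E_B(b^*u_ia)\to0$ strongly for all $a$ is the same as $\Ad(u_i^*)(be_Bb^*)\to0$ $\sigma$-weakly; so the net of (2) is precisely a single net of unitaries averaging the whole family $\{be_Bb^*\}$ to $0$, which by the preceding paragraph is $A\not\preceq_M B$. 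For (3) I would instead invoke the ucp refinement of Theorem \ref{thmA} (Theorem \ref{thm-Dixmier-expectation}) to extract from the convex averaging a single ucp map $\Psi$ on $\langle M,B\rangle$ that is a point-$\sigma$-weak limit of convex combinations of $\Ad(u)$, $u\in\mathcal U(A)$, and is moreover normal on $M$. The Kadison--Schwarz inequality then reduces the requirement $\Psi(ae_Bb^*)=0$ for all $a,b$ to $\Psi(ae_Ba^*)=0$ for all $a$ (since $be_Ba^*ae_Bb^*\le\|a\|^2\,be_Bb^*$), i.e. again to averaging $\{ae_Ba^*\}$ to $0$. Matching both with Corollary \ref{corB} gives $(1)\Leftrightarrow(2)\Leftrightarrow(3)$.

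Two steps are the genuine obstacles. First, Corollary \ref{corB} averages only one element $y$ at a time, so the real work is \emph{simultaneity}: upgrading the per-element averages into a single net of unitaries (for (2)) and, more importantly, into a single ucp map handling the entire family $\{ae_Bb^*\}$ at once (for (3)). This is where the structural analysis behind Theorem \ref{thmA}, rather than Corollary \ref{corB} alone, is needed, and where the normality of $\Psi|_M$ must be secured: a naive point-$\sigma$-weak limit of $\Ad(u_i)$ may acquire a singular part, and it is precisely the ucp version of Theorem \ref{thmA} that yields a genuinely normal map --- which is what makes (3) effective and (2) comparatively awkward in the type $\mathrm{III}$ setting. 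Second, the reverse implication, that $A\preceq_M B$ \emph{obstructs} averaging to $0$, must be run without a trace: in the tracial case one separates $0$ from the orbit by pairing with a finite-trace central element via Hahn--Banach, whereas here the separating normal functional has to be manufactured from the $E$-finite central element $d$ using the canonical weight $\varphi\circ\widehat{E}_B$ and modular theory. I expect this trace-free separation to be the hardest single step.
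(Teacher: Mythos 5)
Your overall frame --- running Theorem \ref{thmA}/Corollary \ref{corB} inside $\langle M,B\rangle$ for the inclusion $A\subset\langle M,B\rangle$ with the operator valued weight $E_A\circ\widehat{E}_B$, and bridging to item (2) via the identity $\|E_B(b^*ua)\xi_\varphi\|^2=\langle u^*be_Bb^*u\,a\xi_\varphi,a\xi_\varphi\rangle$ --- is exactly the paper's (see Theorem \ref{thm-intertwining1} and Remark \ref{rem-intertwining2}). But the two steps you yourself single out as ``genuine obstacles'' are left unresolved, and the routes you sketch for them are not the ones that work. For the direction that $A\preceq_M B$ obstructs the averaging (i.e.\ $(1)\Rightarrow\neg(2)$ and $(1)\Rightarrow\neg(3)$), you propose to manufacture a separating normal functional from the $E$-finite element $d\in A'\cap\langle M,B\rangle$ by modular theory. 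The problem with any such functional argument is that the $\Psi$ of item (3) is \emph{not} normal on $\langle M,B\rangle$ (only on $M$), so you cannot evaluate it against $d$, which is merely a weak limit of sums of $ae_Bb^*$. The paper avoids this entirely: since $A$ is of type $\rm III$, the auxiliary Hilbert space $H$ in the intertwining data can be taken \emph{finite-dimensional} (\cite[Lemma 2.6]{Is19}), whence $w(e_B\otimes 1_H)w^*$ is a finite algebraic sum of elements $ae_Bb^*\otimes e_{ij}$ and is fixed by $\Ad(u\otimes 1_H)$ for every $u\in\mathcal U(A)$; applying $\Psi\otimes\id_{\B(H)}$ (or the net $(u_i)$) then forces $w(e_B\otimes 1_H)w^*=0$, hence $w=0$. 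This finite-dimensionality reduction is the essential ingredient, and it is absent from your plan.

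On simultaneity: in the direction $(2)\Rightarrow$ (no averaging to $0$) there is in fact nothing to upgrade. The negation of (2) yields a uniform $\delta>0$ and a finite set $F$ with $\sum_{a,b\in F}\|E_B(b^*ua)\|_\varphi^2\ge\delta$ for \emph{all} $u\in\mathcal U(A)$; one packages $F$ into the single positive element $d_0=\sum_{b\in F}be_Bb^*$ with $\widehat{E}_B(d_0)<\infty$, the lower bound passes to convex combinations and $\sigma$-weak limits, so $0\notin\mathcal K(d_0,\Ad_A)$, and Theorem \ref{thmA} applied to this one element produces the nonzero $d\in A'\cap\langle M,B\rangle$ with $\widehat{E}_B(d)<\infty$ required by \cite[Theorem 2(ii)]{BH16}. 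In the converse direction, the single ucp map annihilating all of $\mathfrak m_{\widehat{E}_B}$ comes not from the ``ucp refinement of Theorem \ref{thmA}'' but from Lemma \ref{lem-Schwartz2}/Lemma \ref{lem-operator-valued-weight-extension}: compose the averaging maps for finitely many elements (using that $\D(\Ad_A)$ is a compact semigroup leaving $\mathfrak m_{\widehat{E}_B}^+$ invariant) and pass to a limit over finite subsets. The normality of $\Psi|_M$ is then obtained by post-composing with Marrakchi's faithful normal conditional expectation onto $A'\cap M$ (Theorem \ref{thm-Marrakchi}, which is where the type $\rm III$ hypothesis on $A$ enters a second time); Theorem \ref{thm-Dixmier-expectation} by itself gives no normality. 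Finally, a small inaccuracy in your dictionary: the criterion from \cite{BH16} requires $\widehat{E}_B(d)<\infty$, not $E_A(\widehat{E}_B(d))<\infty$; the latter does not formally imply the former, though it can be repaired by cutting with spectral projections of $\widehat{E}_B(d)$, and the $d$ actually produced by the averaging satisfies the stronger condition automatically by Lemma \ref{lem-operator-bounded}.
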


In item (3) above, we use the topology of pointwise $\sigma$-weak convergences for ucp maps.

\smallskip

\noindent\textbf{Comment on item (3).}
Suppose that item (2) does not hold. Take a net $(u_i)_i$ and let $\omega$ be a cofinal nonprincipal ultrafilter on $I$. Define $\Psi := \lim_{i\to \omega} \Ad(u_i)$. It is straightforward to check that $\Psi(ae_Bb^*)=0$ for all $a,b\in M$. Moreover, if the net $(u_i)_i$ represents an element of the ultraproduct $M^{\omega}$, then $\Psi$ is normal on $M$, and hence $\Psi$ satisfies the requirements of item (3).

In general, however, $(u_i)_i$ does not give rise to an element of $M^\omega$, so normality may fail. Item (3) asserts that by allowing convex combinations of such adjoint maps, we can nevertheless obtain a normal ucp map. Since $\Psi$ in item (3) is approximated by convex combinations of maps of the form $\Ad(u)$ with $u\in \mathcal U(A)$, this $\Psi$ can serve as a substitute for the original net $(u_i)_i$. We will demonstrate this idea in the proof of Lemma~\ref{lem-uniform-Bernoulli}.

\subsection*{Application 2: Rigidity theorems via Theorem~\ref{thmC}}

Recall that a diffuse von Neumann algebra $M$ is \emph{solid} \cite{Oz03} if for every diffuse von Neumann subalgebra $A\subset M$ with expectation, the relative commutant $A'\cap M$ is amenable. In this case, every non-amenable subfactor $N\subset M$ with expectation is \emph{prime}, meaning that  it does not admit a decomposition $N=P\ovt Q$ into a tensor product of two diffuse von Neumann algebras. Thus, solidity may be regarded as a strong form of indecomposability with respect to tensor product decompositions. More generally, for an inclusion $B\subset M$ of von Neumann algebras with expectation,  we say that  \emph{$M$ is solid relative to $B$} if for any projection $p\in M$ and von Neumann subalgebra $A\subset pMp$ with expectation such that $A\not\preceq_MB$, the relative commutant $A'\cap pMp$ is amenable. 

We present two independent applications of Theorem~\ref{thmC}, each of which yields relatively solid type~III factors.

\smallskip

\noindent\textbf{Application to Popa's spectral gap rigidity.}
Popa's \emph{malleable deformation} and \emph{spectral gap rigidity} principle \cite{Po03,Po06a} has been a powerful tool in the analysis of tracial von Neumann algebras, but its extension to type III settings has proved challenging, particularly for type III subalgebras. Partial generalizations have been obtained in \cite{HI15b,Ma16,HI17}. Using Theorem~\ref{thmC} and making use of Marrakchi's aforementioned results on relative bicentralizers~\cite{Ma23}, we overcome these difficulties and establish a spectral gap argument that applies to type III subalgebras with expectation.

As an application, we obtain the following rigidity theorem for Bernoulli crossed products, extending the main results of~\cite{Po03,CI08,Ma16}.

\begin{thmA}\label{thmD}
	Let $\Gamma$ be a discrete group, $(B_0,\varphi_0)$ an amenable von Neumann algebra with a faithful normal state, and consider the Bernoulli shift action 
	$$ \Gamma \actson \bigotimes_{\Gamma} (B_0,\varphi_0) =:B .$$
Let $\Gamma \actson N$ be any action on an amenable $\sigma$-finite von Neumann algebra and consider the crossed product $M = (N\ovt B)\rtimes \Gamma$ with respect to the diagonal action. Then $M$ is solid relative to $N\rtimes \Gamma$.  
\end{thmA}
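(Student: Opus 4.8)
The plan is to combine Popa's malleable deformation for Bernoulli shifts with the type~III intertwining criterion of Theorem~\ref{thmC}, following the spectral gap rigidity strategy of \cite{Po06a,CI08} but now for subalgebras of type~III. Throughout write $P:=N\rtimes\Gamma\subset M$, with its faithful normal conditional expectation $E_P$ and Jones projection $e_P$.

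\smallskip
\noindent\textbf{Step 1 (Malleable deformation).} First I would construct an s-malleable deformation of the Bernoulli part. Choose a state-preserving inclusion $(B_0,\varphi_0)\subset(\tilde B_0,\tilde\varphi_0)$ together with a one-parameter group $(\theta_t)_{t\in\R}$ of $\tilde\varphi_0$-preserving automorphisms and a period-two symmetry $\rho$ of $\tilde B_0$ satisfying $\theta_0=\id$, $\rho\theta_t\rho=\theta_{-t}$, $\rho|_{B_0}=\id$ and $\theta_t\to\id$ pointwise (the standard free-product/Gaussian construction). Tensoring over $\Gamma$ gives $\tilde B=\bigotimes_\Gamma(\tilde B_0,\tilde\varphi_0)\supset B$ with a diagonal $\Gamma$-action, and setting $\tilde M:=(N\ovt\tilde B)\rtimes\Gamma\supset M$ I obtain a one-parameter group $(\alpha_t)$ and a symmetry $\beta$ of $\tilde M$ that fix $P$ pointwise, satisfy $\beta\alpha_t\beta=\alpha_{-t}$, $\beta|_M=\id$, and converge pointwise $\alpha_t\to\id$. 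Since all maps are normal and state preserving, every inclusion in sight is with expectation and Theorem~\ref{thmC} is applicable.

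\smallskip
\noindent\textbf{Step 2 (Coarseness over $N$).} Next I would analyse the deformation error bimodule $\mathcal H:=L^2(\tilde M)\ominus L^2(M)$. Because the $\Gamma$-action on $\tilde B$ is Bernoulli, every finite set of ``new'' coordinates has trivial stabilizer, so the Koopman representation of $\Gamma$ on $L^2(\tilde B)\ominus L^2(B)$ is a multiple of the regular representation. Consequently, as an $M$-$M$ bimodule, $\mathcal H$ is weakly contained in the bimodule that is coarse over $N$, namely $L^2(M)\otimes_N L^2(M)$. This is exactly where the Bernoulli structure enters, and it is what will reduce relative amenability over $P$ to relative amenability over the amenable algebra $N$. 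In parallel I would record Popa's transversality inequality relating $\|\alpha_{t}(x)-x\|$ and $\|\alpha_{2t}(x)-x\|$ via the symmetry $\beta$.

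\smallskip
\noindent\textbf{Step 3 (Spectral gap and conclusion).} Now fix a projection $p\in M$ and $A\subset pMp$ with expectation such that $A\not\preceq_M P$, and set $Q:=A'\cap pMp$; the goal is that $Q$ is amenable. The first claim is that $A$ has \emph{spectral gap} in $\mathcal H$, i.e. $\alpha_t\to\id$ uniformly on $(A)_1$. If this failed, a suitable normalization of $\{\alpha_t(u)-u : u\in\mathcal U(A)\}$ would produce a net of almost $A$-central unit vectors in $\mathcal H$; since $\mathcal H$ is coarse over $N$, Popa's criterion for almost central vectors in coarse bimodules---now available in the type~III setting precisely through Theorem~\ref{thmC}, which converts such a net into the normal ucp map of item~(3) annihilating $p\,e_P\,p$---would give $A\preceq_M N$, hence $A\preceq_M P$, a contradiction. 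With uniform convergence on $(A)_1$ in hand, the transversality inequality together with the commutation $[A,Q]=0$ transfers the deformation behaviour to $Q$, producing a net of almost $Q$-central vectors in the coarse-over-$N$ bimodule $L^2(M)\otimes_N L^2(M)$; by the Ozawa--Popa relative amenability criterion this shows that $Q$ is amenable relative to $N$ inside $M$. Since $N$ is amenable, $Q=A'\cap pMp$ is amenable, which is the assertion that $M$ is solid relative to $P=N\rtimes\Gamma$.

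\smallskip
The hard part is Step~3, and specifically the passage through Theorem~\ref{thmC}. In the tracial case the spectral gap argument runs entirely inside $L^2(M)$ and the net of unitaries $(u_i)\subset\mathcal U(A)$ detecting the (non)intertwining converges in the Hilbert space; in the type~III situation $A$ carries no trace, the relevant net need not define an element of any ultrapower, and the bimodule estimates lose their $L^2$-footing. The whole point of invoking item~(3) of Theorem~\ref{thmC} is to replace this recalcitrant net by a \emph{normal} ucp map, approximated by convex combinations of $\Ad(u)$ with $u\in\mathcal U(A)$, which can be composed with the deformation to yield the uniform spectral gap estimate. Getting this replacement to interact correctly with the malleable deformation, and bookkeeping the relative amenability over $N$ in the absence of a trace (using the operator-valued weight framework rather than $\tau$), is where the genuine work lies; the deformation construction of Step~1 and the coarseness computation of Step~2 are, by contrast, essentially standard.
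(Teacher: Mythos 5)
Your overall template (malleable Bernoulli deformation, spectral gap, Theorem~\ref{thmC}) matches the paper's, but the central claim of Step~3 --- that $A\not\preceq_M N\rtimes\Gamma$ forces $\alpha_t\to\id$ uniformly on $(A)_1$ --- is false, and the mechanism you offer for it misidentifies which algebra the relevant vectors are almost central for. Concretely, $A=N\ovt B$ with $B_0$ diffuse satisfies $A\not\preceq_M N\rtimes\Gamma$, yet for any fixed $t\neq 0$ the unitaries $u=\bigotimes_{g\in F}v_g$ with $v_g\in\mathcal U(B_0)$ of trace zero satisfy $\|\alpha_t(u)-u\|_2^2\to 2$ as $|F|\to\infty$. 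Structurally: for $u\in\mathcal U(A)$ the projection onto $\mathcal H$ of the $L^2$-image of $\alpha_t(u)$ is almost central for $Q:=A'\cap pMp$ (since $[u,v]=0$ for $v\in Q$ and $\alpha_{-t}(v)\to v$), \emph{not} for $A$, where the commutator is controlled by $[u,a]$; and almost-central unit vectors in a bimodule weakly contained in coarse-over-$N\ovt B_0^F$ bimodules yield relative amenability $Q\lessdot_M N\ovt B$, not an intertwining $A\preceq_M N$. Thus both branches of your dichotomy draw the wrong conclusion: failure of uniform convergence is precisely what delivers the desired relative amenability of $Q$ (it is not a contradiction), while the uniform-convergence branch must instead produce $A\preceq_{\widetilde M}\theta_1(A)$ and then, via the Bernoulli analysis, $A\preceq_M N\rtimes\Gamma$ or $A\preceq_M N\ovt B_0^F$, the latter being excluded by a normalizer argument which again uses the non-relative-amenability of $Q$. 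This is how Theorem~\ref{thm-Bernoulli} and its proof are organized, with the spectral gap for $Q$ (not a property of $A$) as the hypothesis driving the uniform convergence on $(A)_1$.

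Even after reordering the logic, your outline omits the two places where the type~III difficulty actually sits. First, converting the ultrapower spectral gap statement into an intertwiner $w$ with $\theta_1(x)w=wx$ requires averaging over $\mathcal U(A^\omega_{\psi^\omega})$ rather than $\mathcal U(A)$, and in the absence of a trace this replacement is only available when the relative bicentralizer of $A$ coincides with $A'\cap pMp$; the paper forces this by tensoring the whole configuration with $R_\infty\cong R_\infty\ovt R_\infty$ and invoking Marrakchi's bicentralizer theorem (Lemmas~\ref{lem-relative-bicentralizer-tensor}, \ref{lem-uniform-convex-ultraproduct}, \ref{lem-malleable-nonfinite}). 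Second, the normal ucp map of Theorem~\ref{thmC}(3) is used not in the spectral gap step but in Lemma~\ref{lem-uniform-Bernoulli}: one takes a single $\Psi$, normal on $\widetilde M$, annihilating $a e_{N\rtimes\Gamma}b^*$ and $a e_{N\ovt B_0^F}b^*$ for \emph{all} finite $F$ simultaneously, and a reduced-word computation in $B_0\ast L\Z$ shows that $\Psi$ also annihilates $x e_{\theta_1(M)}x^*$, contradicting $A\preceq_{\widetilde M}\theta_1(M)$. Without these two ingredients the argument does not close in the type~III setting.
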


\smallskip

\noindent\textbf{Application to Ozawa's relative solidity theorem.}
Ozawa's solidity theorem~\cite{Oz03} asserts that von Neumann algebras arising from \emph{biexact groups} (e.g.\ hyperbolic groups) are solid. In~\cite{Oz04}, he strengthened the result to relative solidity, showing that crossed products by actions on abelian algebras are solid relative to the base algebra. Further generalizations to type III algebras were obtained in \cite{HV12,Is12}. Recently, Ding and Peterson \cite{DP23} introduced the notion of \emph{biexact von Neumann algebras}, providing a unified framework that extends Ozawa's solidity to this general setting. In all these results, however, the target subalgebra was assumed to be finite.

Using Theorem~\ref{thmC}, we remove this restriction and obtain relative solidity for type~III crossed products. The proof follows Ozawa's original strategy, but is carried out in the framework of biexact von Neumann algebras.

\begin{thmA}\label{thmE}
	Let $\alpha \colon \Gamma \actson B$ be an action of a discrete group on an amenable $\sigma$-finite von Neumann algebra. Put $M:=B\rtimes_\alpha \Gamma$. Assume that $\Gamma$ is biexact. Then $M$ is solid relative to $B$. 
\end{thmA}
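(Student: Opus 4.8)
The plan is to adapt Ozawa's original derivation of solidity from the Akemann--Ostrand property, using Theorem~\ref{thmC} (in its general non-unital form, Theorem~\ref{thm-intertwining1}) to replace the sequence of unitaries that the tracial proof extracts from weak mixing. Fix a faithful normal conditional expectation $E_B\colon M\to B$ with Jones projection $e_B$, let $H=L^2(M)$, $\langle M,B\rangle=(JBJ)'\cap \B(H)$, and let $\mathcal K_B:=\overline{Me_BM}^{\,\|\cdot\|}$ be the ideal of $B$-relatively compact operators. Given a projection $p\in M$ and a subalgebra $A\subseteq pMp$ with expectation such that $A\not\preceq_M B$, set $N:=A'\cap pMp$; the goal is to prove that $N$ is amenable (throughout we compress by $p$ and carry out the averaging over $\mathcal U(A)$ inside the corner $pMp$, which is routine).

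First I would establish the relative Akemann--Ostrand property: since $\Gamma$ is bi-exact and $B$ is amenable, the two-sided multiplication $*$-homomorphism
$$\nu\colon M\otm M^{\rm op}\longrightarrow \B(H)/\mathcal K_B,\qquad x\otimes y^{\rm op}\longmapsto x\,Jy^*J+\mathcal K_B,$$
is min-continuous, i.e.\ well defined on the minimal tensor product. The input is boundary amenability of $\Gamma$ (the content of bi-exactness) together with nuclearity coming from amenability of $B$: bi-exactness gives the Akemann--Ostrand property for $C^*_\lambda(\Gamma)$ modulo the compacts on $\ell^2\Gamma$, the small-at-infinity condition places the relevant commutators inside $\mathcal K_B$, and amenability of $B$ lets one carry the coefficient algebra through the crossed product without destroying min-continuity. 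This step is purely $C^*$-algebraic and does not involve $A$ or $N$.

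Next I would feed $A\not\preceq_M B$ into the intertwining reformulation. By Theorem~\ref{thm-intertwining1} there is a ucp map $\Psi$, obtained as a point-$\sigma$-weak limit of convex combinations of $\Ad(u)$ with $u\in\mathcal U(A)$, that is normal on $M$ and satisfies $\Psi(ae_Bb^*)=0$ for all $a,b\in M$, hence $\Psi|_{\mathcal K_B}=0$. Realizing the same convex combinations on the $C^*$-algebra generated by $M$, $JMJ$ and $\mathcal K_B$ and taking a cluster point, I obtain a ucp map $\tilde\Psi$ agreeing with $\Psi$ on $\langle M,B\rangle$, with three features: (a) each $u\in\mathcal U(A)\subseteq M$ commutes with $M'=JMJ$, so $\tilde\Psi$ is a $JMJ$-bimodule map, in particular $\tilde\Psi(S\,JyJ)=\tilde\Psi(S)\,JyJ$; (b) each $u$ commutes with $N=A'\cap pMp$, so $\tilde\Psi(n_1Sn_2)=n_1\tilde\Psi(S)n_2$ for $n_1,n_2\in N$ and $\tilde\Psi|_N=\mathrm{id}$; and (c) $\tilde\Psi|_M=\Psi|_M$ is normal. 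Composing with $\nu$ (legitimate since $\tilde\Psi$ kills $\mathcal K_B$ and $\nu$ is min-continuous) and using (a) yields a min-continuous ucp map
$$\Theta\colon M\otm M^{\rm op}\longrightarrow \B(H),\qquad \Theta(x\otimes y^{\rm op})=\Psi(x)\,Jy^*J,$$
whose restriction to $N\otm N^{\rm op}$ is, by (b), the standard product representation $n_1\otimes n_2^{\rm op}\mapsto n_1\,Jn_2^*J$.

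Finally, exactness of $M$ (a consequence of bi-exactness of $\Gamma$ and amenability of $B$) makes $N\otm N^{\rm op}\hookrightarrow M\otm M^{\rm op}$ isometric, so $\Theta$ shows that the product map $N\odot N^{\rm op}\to\B(H)$ is bounded for the minimal tensor norm; feeding this into the Effros--Lance characterization of semidiscreteness (after passing to the standard form of $N$) yields that $N$ is injective, i.e.\ amenable. The main obstacle is twofold. The genuinely new analytic input is the relative Akemann--Ostrand property of Step~1 in the non-tracial setting: one must build the basic construction and the ideal $\mathcal K_B$ from a conditional expectation rather than a trace, check that $\Ad(u)$ for $u\in\mathcal U(A)$ preserves $\mathcal K_B$ and fixes $JMJ$, and verify min-continuity of $\nu$ with amenable coefficients, where the small-at-infinity bookkeeping is more delicate without a trace. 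The deeper, conceptually central difficulty is the passage to the standard form of $N$ needed for Effros--Lance: in the tracial case this is free because the trace-preserving conditional expectation $M\to N$ embeds $L^2(N)$ into $L^2(M)$ as a sub-bimodule, but for type~III $N$ no such expectation need exist. It is precisely here that the normality of $\Psi$ (item~(3) of Theorem~\ref{thmC}) is indispensable: it provides a normal comparison map substituting for the missing expectation, so that min-boundedness on $L^2(M)$ descends to the standard $N$-bimodule. A bare net $(u_i)_i$ as in item~(2) would not be normal on $M$, and the argument would break at exactly this point.
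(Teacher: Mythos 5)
Your overall strategy---extract the normal ucp map $\Psi$ from the intertwining criterion, use it to annihilate the relatively compact operators, and then run an Akemann--Ostrand type argument---is parallel to the paper's, and your diagnosis that the normality of $\Psi$ on $M$ is the essential new input (substituting for the missing trace or expectation onto $N=A'\cap pMp$) is exactly the point the paper exploits. Two smaller remarks: the paper first reduces to the case where $A$ has no semifinite properly infinite direct summand (so that $\Psi|_{pMp}$ can be taken to be a \emph{faithful normal} conditional expectation onto $A'\cap pMp$, via Corollary \ref{cor-intertwining1}) and treats general $A$ by cutting with an increasing sequence of projections; your sketch omits this reduction. Also, once one has a weakly nuclear map $M\to pMp$ restricting to the identity on $N$, composing with the normal expectation $pMp\to N$ gives semidiscreteness of $N$ directly, with no need to pass through Effros--Lance and the standard form.

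The genuine gap is in your Step 1 and in the final step. Biexactness of $\Gamma$ together with amenability of $B$ does \emph{not} yield min-continuity of $\nu$ on the von Neumann algebraic tensor product $M\otm M^{\rm op}$; the relative Akemann--Ostrand property is only available on weakly dense exact $C^*$-subalgebras such as $B\rtimes_{\rm red}\Gamma$ and $J(B\rtimes_{\rm red}\Gamma)J$. Likewise, the von Neumann algebra $M=B\rtimes_\alpha\Gamma$ is not exact as a $C^*$-algebra outside trivial cases (an exact von Neumann algebra with the weak expectation property would be nuclear), so the assertion that ``exactness of $M$ makes $N\otm N^{\rm op}\hookrightarrow M\otm M^{\rm op}$ isometric'' cannot be invoked. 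Consequently $\Theta$ is only defined at the level of the dense $C^*$-subalgebra, and $N$ need not be contained in it; the passage from that subalgebra to $N\otm N^{\rm op}$ is precisely the delicate local-reflexivity step in Ozawa's original proof, and your sketch does not supply it (nor its non-tracial adaptation). The paper sidesteps this entirely by working in the Ding--Peterson framework: biexactness relative to the boundary piece $\mathbb X_{\mathcal F}$ is formulated as $M$-nuclearity of $M\to \mathbb S_{\mathbb X_{\mathcal F}}(M)$ with respect to the weak $M$-$M$-topology, a statement already at the von Neumann algebra level. Since $\Psi$ kills $\mathbb X_{\mathcal F}$, is an $M'$-bimodule map, and is normal on $M$ and $M'$, Lemma \ref{lem-biexact-ccp} shows $\Psi(\mathbb S_{\mathbb X_{\mathcal F}}(M))\subset M$ and that $\Psi$ is continuous into the $\sigma$-weak topology, so the composite $M\to pMp$ is weakly nuclear and restricts to the identity on $A'\cap pMp$. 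If you wish to keep the classical AO route, you must restrict $\nu$ to the reduced crossed product $C^*$-algebra and its mirror image and then carry out the local reflexivity argument in the non-tracial setting; that is a substantial missing step, not a routine verification.
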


We will discuss concrete examples of relatively solid type III factors in Subsection~\ref{Examples_prime}.

\subsection*{Application 3: Galois correspondences}

We further investigate the so-called \emph{Galois correspondence} for intermediate subfactors arising from group actions. In the case of discrete groups, the situation is well understood: it was shown by Choda \cite{Ch78} and later by Izumi--Longo--Popa \cite{ILP96} that, for outer actions on $\sigma$-finite factors, every subfactor of the crossed product that contains the base algebra is itself a crossed product by a subgroup. This phenomenon is often regarded as a noncommutative analogue of the classical Galois correspondence.

In contrast, for locally compact groups, the problem becomes significantly more subtle, since conditional expectations onto intermediate subfactors may not exist. Boutonnet and Brothier studied this question for totally disconnected groups and proved that a Galois-type correspondence holds under a strong outerness assumption \cite{BB17}. Their result also requires the presence of a semifinite trace or a suitable invariant state, and provides examples covering various types of group actions. However, their methods do not apply when such structures are absent. Most notably, they do not cover actions on type~$\mathrm{III}_0$ factors.

Using our main theorem, we succeed in removing assumptions of trace or invariant state, and show that the Galois correspondence still holds under the same dynamical conditions. This applies even in the type $\mathrm{III}_0$ case.

\begin{thmA}\label{thmF}
	Let $G\actson B$ be an action of a totally disconnected group on a $\sigma$-finite factor $B$. Assume that $\alpha$ is properly outer relative to a compact open subgroup $K_0\leq G$ whose action is minimal, see Section \ref{Galois correspondences}. Then every subfactor between $B$ and $B\rtimes G$ is of the form that $B\rtimes H$ for some closed subgroup $H\leq G$. 
\end{thmA}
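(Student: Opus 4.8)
The plan is to reduce Theorem~\ref{thmF} to the discrete-group Galois correspondence of Choda and Izumi--Longo--Popa \cite{Ch78,ILP96}, carried out over the intermediate factor $P := B\rtimes K_0$ and in the type~III generality now afforded by Theorems~\ref{thmA} and~\ref{thmC}. Since $K_0$ is compact open and acts minimally, $P$ is a factor, the inclusion $B\subset P$ is irreducible (so $B'\cap P=\mathbb{C}$) and carries the Haar conditional expectation $P\to B$. Because $K_0$ is open the coset space $G/K_0$ is discrete, so $P\subset M:=B\rtimes G$ behaves like a discrete crossed product: there is a faithful normal conditional expectation $E_P\colon M\to P$, the dual weight gives a canonical operator valued weight $E_B\colon M\to B$, and the coaction $\delta$ of $G$ furnishes a grading of $M$ whose homogeneous components are indexed by $G/K_0$. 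For an intermediate subfactor $B\subseteq N\subseteq M$ I define $H:=\{\,g\in G:\text{the }gK_0\text{-component of }N\text{ is nonzero}\,\}$, so that the inclusion $N\subseteq\overline{B\rtimes H}$ holds by construction.

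The heart of the argument is an extraction step: if some $x\in N$ has nonzero $gK_0$-component, then $\lambda_g\in N$, and hence $P\lambda_g\subseteq N$. In the tracial setting this is the familiar averaging-plus-outerness computation, but here there is neither a trace nor, a priori, a conditional expectation onto $N$, so I would replace it by Theorems~\ref{thmA} and~\ref{thmC}. Proper outerness of $\alpha$ relative to $K_0$ means that for $g\notin K_0$ the automorphism $\alpha_g$ admits no nonzero partial-isometry intertwiner, which is precisely a failure of intertwining in the sense of Theorem~\ref{thmC}; by the equivalence $(1)\Leftrightarrow(2)$ there this supplies a net of unitaries $u_i\in\mathcal{U}(B)\subseteq\mathcal{U}(N)$ along which the cross terms of condition~$(2)$ converge strongly to zero. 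Averaging $u_i x u_i^*$ by the weak relative Dixmier property of Theorem~\ref{thmA}, applied to the operator valued weight $E_B$ on the $\sigma$-weakly dense $*$-subalgebra where $E_B$ is finite, then produces a $\sigma$-weak limit that remains in $N$ (because $N$ is $\sigma$-weakly closed and contains $B$) and whose only surviving homogeneous component is the chosen block. The irreducibility $B'\cap P=\mathbb{C}$ from minimality finally forces this block to be a unitary in $B$ times $\lambda_g$, whence $\lambda_g\in N$.

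Granting the extraction step, the correspondence assembles formally. The set $H$ is a closed subgroup: $K_0\subseteq H$ since $1\in B\subseteq N$; it is stable under products and inverses because $\lambda_g,\lambda_h\in N$ give $\lambda_{gh}=\lambda_g\lambda_h\in N$ and $\lambda_{g^{-1}}=\lambda_g^*\in N$; and $H$ is closed in $G$ since its complement is a union of open $K_0$-cosets on which the grading of $N$ vanishes. Hence $N=B\rtimes H$: the inclusion $B\rtimes H\subseteq N$ follows from the extraction step together with $B\subseteq N$, while $N\subseteq B\rtimes H$ is the defining property of $H$ combined with $\sigma$-weak closedness and $\overline{B\rtimes H}=B\rtimes H$.

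I expect the extraction step to be the main obstacle, since it is exactly there that the absence of a trace and of an expectation onto $N$ would defeat the classical proof. Theorem~\ref{thmA} is what repairs this, providing the weak relative Dixmier averaging for $E_B$ even though $B$ is of type~III, and Theorem~\ref{thmC} is what converts the dynamical hypothesis of proper outerness into the concrete non-intertwining that makes the average concentrate on a single block. The delicate points will be checking that the Dixmier limits genuinely stay inside $N$ and that minimality upgrades an extracted corner to the full unitary $\lambda_g$; moreover, since $K_0$ need not be normal, $G/K_0$ is only a coset space, so the grading must be handled throughout via the coaction $\delta$ rather than a quotient action. This is precisely the improvement over \cite{BB17}, where a trace or invariant state made the extraction routine.
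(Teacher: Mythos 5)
Your high-level strategy (define $H$ from the homogeneous components of $N$, reduce to an ``extraction step'' asserting that a nonzero $gK_0$-component forces the corresponding unitary into $N$) matches the shape of the argument in the paper, which itself follows \cite{BB17} and only replaces the steps where the trace was used. However, the way you propose to carry out the extraction step has a genuine gap, and it is exactly at the point you yourself flag as the main obstacle.

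You average $x$ over $\mathcal U(B)$ and claim the $\sigma$-weak limit has ``only the chosen block'' surviving. This fails: any element of $\overline{\mathrm{conv}}^{\rm weak}\{uxu^*\mid u\in\mathcal U(B)\}\cap (B'\cap M)$ is supported on $K_0$, because proper outerness relative to $K_0$ forces $B'\cap M\subset (B^{K_0})'\cap M= LK_0$. So for $g\notin K_0$ the $gK_0$-component of your Dixmier limit is \emph{zero}, i.e.\ the averaging annihilates precisely the component you are trying to extract. The correct move (as in the paper's Lemma \ref{lem-Galois} and in \cite{BB17}) is to average the \emph{shifted} element $q\lambda_g^*bx_0q$ over $\mathcal U(qB^Kq)$ for a small compact open $K\leq K_0$: since $u(q\lambda_g^*yq)u^*=\lambda_g^*\alpha_g(u)yu^*q$ stays in $\lambda_g^*N$, the limit lands in $\lambda_g^*N\cap(qB^Kq)'$, and proper outerness identifies $(B^K)'\cap M$ with $LK$, so the polar decomposition and the minimality of the central projection $p_K$ in $LK$ yield $\lambda_g qp_K\in (N)_1p_K$ (not $\lambda_g\in N$ outright; the passage to $B\rtimes H$ is then the remaining part of \cite{BB17}). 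Two further points you do not address are essential here: (i) the nonvanishing of the average is not automatic and is secured by choosing the Dixmier conditional expectation $E\in\D(qB^Kq\subset qMq)$ to be faithful, normal and state-preserving (the refined form of Theorem \ref{thm-Marrakchi}), which requires cutting by a finite projection $q$ when $B^K$ is semifinite and properly infinite; (ii) Theorem \ref{thmC} plays no role in this proof -- the dynamical hypothesis enters only through the computation $(B^K)'\cap M=LK$, and the intertwining criterion does not produce the net of unitaries you want, nor in the algebra you want it in.
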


This result answers a question posed by Boutonnet and Brothier \cite[QUESTION 1.5]{BB17} and provides new examples of type~III crossed products exhibiting the intermediate subfactor property. It also demonstrates the strength of our elementwise averaging techniques, which remain effective even in the absence of traces and conditional expectations.

\subsection*{Acknowledgments}
	The author would like to thank Amine Marrakchi and Sorin Popa for useful comments on the first draft of the article.

\tableofcontents

\section{Preliminaries}\label{Preliminaries}

	Let $M$ be a von Neumann algebra and $\varphi$ a faithful normal semifinite weight on $M$. We use two norms: $\|\, \cdot \, \|_\infty$ is the operator norm of $M$, and $\|\, \cdot \, \|_{2,\varphi}$ (or $\| \, \cdot \, \|_{\varphi}$) is the $L^2$-norm by $\varphi$ on $L^2(M,\varphi)$. For $\theta\in \Aut(M)$, put $\theta(\varphi):=\varphi\circ \theta^{-1}$. The \textit{modular operator, conjugation}, and \textit{action} of $\varphi$ are denoted by $\Delta_\varphi$, $J_\varphi$, and $\sigma^\varphi$ respectively. The \emph{continuous core} (with respect to $\varphi$) is the crossed product von Neumann algebra $C_\varphi(M) := M\rtimes_{\sigma^\varphi}\R$, and the \emph{centralizer algebra} $M_\varphi\subset M$ is the fixed point subalgebra of the modular action. See \cite[Chapter VIII]{Ta03} for definitions of all these objects.

Let $G$ be a locally compact group with a fixed left Haar measure $\mu_G$. Let $\alpha \colon G\actson B$ be a continuous action on a von Neumann algebra $B\subset \B(H)$. We denote the crossed product von Neumann algebra by $B\rtimes_\alpha G \subset \B(L^2(G,H))$, which is generated by $\pi_\alpha(x)$ and $\lambda_g$ for $x\in B$ and $g\in G$, where 
	$$ (\pi_\alpha(x)\xi)(s) = \alpha_s^{-1}(x)\xi(s),\quad (\lambda_g\xi)(s)=\xi(g^{-1}s) $$
for all $\xi \in L^2(G,H)$. Here we identify $1\otimes \lambda_g$ on $H\otimes L^2(G)$ with $\lambda_g$ on $L^2(G,H)$.

\subsection{Operator valued weights}
\label{Operator valued weights}

Let $M$ be a von Neumann algebra with the set of positive elements $M^+\subset M$. The \textit{extended positive cone} $\widehat{M}^+$ of $M$ is defined as the set of all the lower semicontinuous functions $m\colon M_*^+\rightarrow [0,\infty]$ satisfying
\begin{itemize}
	\item $m(\varphi+\psi)=m(\varphi)+m(\psi)$ for all $\varphi,\psi\in M_*^+$,
	\item $m(\lambda \varphi)=\lambda m(\varphi)$ for all $\varphi\in M_*^+$ and all $\lambda\geq 0$. 
\end{itemize}
We have a natural inclusion $M^+ \subset \widehat{M}^+$ via the evaluation map. An element $m\in \widehat{M}^+$ is contained in $M^+$ if and only if $m(\varphi)<\infty$ for all $\varphi\in M_*^+$. In this case, we write $m\in M^+$ or $\|m\|_\infty <\infty$.

Let $B\subset M$ be a von Neumann subalgebra. We say that a map $T \colon M^+\rightarrow \widehat{B}^+$ is an \textit{operator valued weight} from $M$ to $B$ \cite{Ha77a,Ha77b} if it satisfies the following three conditions:
\begin{itemize}
	\item $T(\lambda x)=\lambda T(x)$ for all $x\in M^+$ and all $\lambda \geq 0$, 
	\item $T(x+y)=T(x)+T(y)$ for all $x,y \in M^+$,
	\item $T(b^*xb)=b^*T(x)b$ for all $x\in M^+$ and all $b\in B$.
\end{itemize}
Note that when $B=\C$, these conditions reduce to the usual definition of a \textit{weight} on $M$.  For an operator valued weight $T : M^+\rightarrow \widehat{B}^+$, we put 
\begin{align*}
	\mathfrak{n}_{T}&:= \left\{x\in M\mid \|T(x^*x)\|_\infty<+\infty \right\};\\
	\mathfrak{m}_{T}&:=(\mathfrak{n}_{T})^*\mathfrak{n}_{T}= \left\{\sum_{i=1}^nx_i^*y_i \mid n \geq 1, x_i, y_i\in \mathfrak{n}_{T} \text{ for all } 1 \leq i \leq n \right\}.
\end{align*}
Then, $\mathfrak{m}_{T}$ is linearly spanned by the positive part $\mathfrak{m}_{T}^+:= \mathfrak{m}_T \cap M^+$, which coincides with the set of all positive elements $x\in M^+$ such that $\|T(x)\|_\infty <\infty$. 
The operator valued weight $T$ has a unique extension $T\colon \mathfrak{m}_{T}\rightarrow B$ as a $B$-$B$-bimodular linear map. In particular, $T$ extends to a conditional expectation, if $T(1_M)=1_B$. The operator valued weight $T$ is said to be 
\begin{itemize}
\item \textit{faithful} if $T(x)=0$ $\Rightarrow$ $x=0$, ($x\in M^+$),
\item \textit{normal} if  $T(x_i)\nearrow T(x)$ \quad whenever $x_i\nearrow x$, ($x_i,x\in M^+$),
\item \textit{semifinite} if $\mathfrak{m}_{T}$ is $\sigma$-weakly dense in $M$.
\end{itemize}
In this paper, all the operator valued weights are assumed to be faithful, normal and semifinite. We will say that a unital inclusion $B\subset M$ of von Neumann algebras is \textit{with operator valued weight} if there is an operator valued weight $E_B \colon M \to B$. For more on operator valued weights, we refer the reader to \cite{Ha77a,Ha77b}.

\subsection{Weak Dixmier semigroups}\label{Weak Dixmier semigroups}

For a von Neumann algebra $M$, we denote by $\B(M)$ the set of all bounded operators on $M$. It is a dual Banach space, so it has a weak$\ast$ topology. On the unit ball of $\B(M)$, this topology  coincides with  the \textit{point $\sigma$-weak topology}, that is defined by pointwise convergences in the $\sigma$-weak topology (e.g.\ \cite[Theorem 1.3.7]{BO08}). In particular, the unit ball is compact in the point $\sigma$-weak topology. We denote by $\mathrm{CCP}(M)\subset \B(M)$ (resp.\ $\mathrm{UCP}(M)\subset \B(M)$) the set of all contractive completely positive maps (resp.\ unital cp maps) on $M$.

Following \cite{Ma19}, we define the \textit{weak Dixmier semigroup} associated with the inclusion $A\subset M$ as 
	$$\D(A\subset M):= \overline{\mathrm{conv}} \{ \Ad(u) \mid u\in \mathcal U(A)\}\subset \mathrm{UCP}(M),$$
where the closure is taken by the point $\sigma$-weak topology. Put $K:=\D(A\subset M)$ and observe that it is a compact set that is closed under the composition. 
For each fixed $\theta_2\in K$, the composition $K \ni \theta_1\mapsto \theta_1\circ \theta_2\in K$ is continuous. Note that the continuity of $\theta_2$ (for a fixed $\theta_1$) holds if $\theta_1$ is normal. Thus, $K$ forms a \textit{compact (convex) semigroup} (see \cite[Definition 3.4 and Example 3.5]{Ma19}). An element $e\in K$ is called an \textit{idempotent} if $e^2=e$. For two idempotents $e,f\in K$, a partial order $e<f$  is defined by $ef=fe=e$. A \textit{minimal idempotent} is one that is minimal with respect to this partial order. A minimal idempotent always exists by Ellis' lemma.

The following results are due to Marrakchi. Recall that for a ucp map $E\colon M \to M$, the \textit{Choi--Effros product} \cite{CE97} on $E(M)$ is defined by $x\cdot y:=E(xy)$ for all $x,y\in E(M)$. With this product, $E(M)$ becomes a $C^*$-algebra.

\begin{prop}\label{prop-Marrakchi}
	The following assertions hold.
\begin{itemize}

	\item \cite[Remark 3.7]{Ma19} If $e\in K$ is a minimal idempotent and if $x\in K$, then $ex$ and $xe$ are also minimal idempotents.

	\item \cite[Corollary 3.8]{Ma19} If $\Phi,\Psi\in K$ are two minimal idempotents, then 
	$$\Psi|_{\Phi(M)}\colon \Phi(M)\to \Psi(M)$$
 is a $\ast$-isomorphism with inverse $\Phi|_{\Psi(M)}$, where $\Phi(M)$, $\Psi(M)$ are $C^*$-algebras by Choi--Effros products.

\end{itemize}

\end{prop}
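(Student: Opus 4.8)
The plan is to deduce both assertions from a single structural fact about the compact convex semigroup $K=\D(A\subset M)$: for every minimal idempotent $e\in K$ the maximal subgroup $eKe$ is trivial, i.e.\ $e\theta e=e$ for all $\theta\in K$. Granting this, both bullets are quick. By the standard Ellis theory $K$ has a smallest two-sided ideal (its kernel), minimal idempotents exist and are exactly the idempotents of the kernel. So if $e$ is a minimal idempotent and $x\in K$ is arbitrary, then $exe=e$ gives $(xe)^2=x(exe)=xe$ and $(ex)^2=(exe)x=ex$, whence $ex$ and $xe$ are idempotents; since $e$ lies in the kernel and the kernel is an ideal, $ex,xe$ lie in the kernel, and idempotents of the kernel are minimal. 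This proves the first bullet. For the second, applying $e\theta e=e$ with $e=\Phi$, $\theta=\Psi$ yields $\Phi\Psi\Phi=\Phi$, and symmetrically $\Psi\Phi\Psi=\Psi$. For $y\in\Phi(M)$ one has $\Phi(\Psi(y))=\Phi\Psi\Phi(y)=\Phi(y)=y$, and likewise $\Psi(\Phi(z))=z$ for $z\in\Psi(M)$; thus the ucp maps $\Psi|_{\Phi(M)}\colon\Phi(M)\to\Psi(M)$ and $\Phi|_{\Psi(M)}\colon\Psi(M)\to\Phi(M)$ are mutually inverse. A unital completely positive bijection between $C^*$-algebras whose inverse is again completely positive is a unital complete order isomorphism, hence a $\ast$-isomorphism; so $\Psi|_{\Phi(M)}$ is a $\ast$-isomorphism with inverse $\Phi|_{\Psi(M)}$, proving the second bullet.

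The heart of the matter is the triviality of $H:=eKe$, and this is where I would use \emph{convexity}, the feature that distinguishes $K$ from a general compact right topological semigroup, in which maximal subgroups may be nontrivial. Since $e$ lies in the completely simple kernel, $H$ is algebraically a group with identity $e$; the map $\theta\mapsto e\theta e$ is affine, so $H$ is convex, and together with compactness this makes $H$ a compact convex group. I would then average against Haar measure: let $b=\int_H h\,\mathrm{d}h$ be the barycenter of the Haar probability measure of $H$, which belongs to $H$ because $H$ is compact and convex. For each fixed $h_0\in H$ the right translation $R_{h_0}\colon\theta\mapsto\theta h_0$ is continuous (right multiplication is always continuous in $K$) and affine, so it commutes with the barycenter; by right invariance of Haar measure, $b\,h_0=R_{h_0}(b)=\int_H h\,h_0\,\mathrm{d}h=\int_H h\,\mathrm{d}h=b$. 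As $b$ is invertible in the group $H$, cancellation gives $h_0=e$ for every $h_0\in H$, that is $H=\{e\}$.

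The step I expect to be the main obstacle is the verification that $H=eKe$ is genuinely a \emph{compact} convex group for the point $\sigma$-weak topology, since this interacts delicately with normality. Right multiplication is always continuous, but left multiplication by $e$ is continuous only if $e$ is normal, whereas a minimal idempotent—being a point $\sigma$-weak limit of convex combinations of the normal maps $\Ad(u)$—need not be normal. Establishing that $H$ is closed (hence compact), so that the barycenter $b$ actually lies in $H$ and can be cancelled, therefore requires care. I would handle this by working inside the minimal left ideal $Ke=R_e(K)$, which is automatically compact and convex (being a continuous affine image of $K$), and by routing every appeal to continuity through right translations, the operations that remain continuous without any normality hypothesis. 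Once $H=\{e\}$ is secured in this way, the reductions of the two bullets described above complete the proof.
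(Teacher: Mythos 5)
The paper offers no proof of this proposition; it simply cites \cite[Remark 3.7]{Ma19} and \cite[Corollary 3.8]{Ma19}. Your reduction of both bullets to the single identity $e\theta e=e$ (for $e\in K$ a minimal idempotent and $\theta\in K$ arbitrary) is correct and mirrors how these statements are organized in \cite{Ma19}: the computations $(xe)^2=x(exe)=xe$ and $(ex)^2=(exe)x=ex$, the fact that $ex,xe$ lie in the smallest ideal and hence are minimal, the relations $\Phi\Psi\Phi=\Phi$ and $\Psi\Phi\Psi=\Psi$ giving mutually inverse ucp bijections, and the passage from a unital complete order isomorphism of the Choi--Effros algebras to a $\ast$-isomorphism are all sound.

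The genuine gap is in your proof of the identity $e\theta e=e$ itself. The Haar-measure argument requires $H=eKe$ to be a compact convex \emph{topological} group, so that an invariant probability measure exists, its barycenter lies in $H$, and the barycenter can be cancelled. Neither compactness nor the existence of such a measure is available here. $K$ is only a right topological semigroup: $\theta_1\mapsto\theta_1\circ\theta_2$ is continuous, but $\theta_2\mapsto\theta_1\circ\theta_2$ is continuous only when $\theta_1$ is normal, and a minimal idempotent $e$, being a point--$\sigma$-weak limit of convex combinations of $\Ad(u)$, need not be normal. Hence $eKe=\{x\in Ke\mid ex=x\}$ is the fixed-point set of a possibly discontinuous map on the compact set $Ke$ and there is no reason for it to be closed. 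Moreover, even compact right topological groups need not carry Haar measure (the maximal subgroups of the kernel of a compact right topological semigroup are in general not topological groups; $\beta\N$ is the standard cautionary example). You identify the compactness issue yourself, but the proposed remedy of "working inside $Ke$ and routing every appeal to continuity through right translations" is not carried out, and right translations alone cannot manufacture the invariant measure that your cancellation step needs. The argument in \cite{Ma19} establishes $exe=e$ by a direct computation exploiting the convexity of $K$ and the algebraic group structure of $eKe$, without asserting that $eKe$ is closed or carries an invariant measure; as written, your route to the key identity does not go through.
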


The following result is the main theorem of \cite{Ma19}. As we will need a more precise formulation, we include it here.

\begin{thm}[{\cite{Ma19}}]\label{thm-Marrakchi}
	Let $A\subset M$ be an inclusion of von Neumann algebras with expectation. Then there exists a conditional expectation $E\colon M \to A'\cap M$ that is contained in $\D(A\subset M)$. 

If $A$ has no direct summand that is semifinite and properly infinite, then $E$ can be chosen as a faithful and normal one. More precisely, if $A=A_1\oplus A_2$ is the decomposition such that $A_1$ is of type $\rm III$ and $A_2$ is finite, and if $\varphi\in A_*$ is a faithful normal state such that $\varphi|_{A_2}$ is a trace, then there exists $E\colon M \to A'\cap M\in \D(A\subset M)$ such that $\varphi \circ E = \varphi$. 

\end{thm}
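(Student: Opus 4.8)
The plan is to realise the expectation as a minimal idempotent of $K:=\D(A\subset M)$ and to pin its image down to $A'\cap M$ using the operator valued weight $E_A$. By Ellis' lemma $K$ contains a minimal idempotent $E$. Two structural facts come for free: every $u\in\mathcal U(A)$ commutes with $A'\cap M$, so each $\Ad(u)$ fixes $A'\cap M$ pointwise and is $(A'\cap M)$-bimodular, and both properties survive convex combinations and point-$\sigma$-weak limits. Hence $E$ is an $(A'\cap M)$-bimodular norm-one projection with $A'\cap M\subseteq E(M)=:B$, and the whole difficulty is the reverse inclusion $B\subseteq A'\cap M$.

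To obtain it I would first invoke Proposition~\ref{prop-Marrakchi}: for each $u$ the map $\Ad(u)E$ is again a minimal idempotent, with image $uBu^*$, and the mutually inverse $*$-isomorphisms $\Ad(u)E|_{B}$ and $E|_{uBu^*}$ yield the identity $E\,\Ad(u)\,E=E$, that is $E(uxu^*)=x$ for all $x\in B$ (and likewise for $u^*$). I would then transport this into the self-dual Hilbert $A$-module completion of $M$ for the $A$-valued inner product $\langle x,y\rangle_A:=E_A(x^*y)$. Each $\Ad(u)$ satisfies $\langle uxu^*,uyu^*\rangle_A=u\langle x,y\rangle_A u^*$, hence is a module isometry; therefore $E$, being a point-$\sigma$-weak limit of convex combinations of such maps, is an adjointable module contraction, and as an idempotent it is the orthogonal projection $P$ onto $\overline B$ with $P|_M=E$. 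For $x\in B$ and $w:=uxu^*$ we have $Pw=E(uxu^*)=x$, so $\langle Pw,Pw\rangle_A=E_A(x^*x)$ while $\langle w,w\rangle_A=uE_A(x^*x)u^*$; applying the same with $u^*$ forces $uE_A(x^*x)u^*=E_A(x^*x)$, so equality holds in $\langle Pw,Pw\rangle_A\le\langle w,w\rangle_A$ and the Pythagorean identity gives $Pw=w$. Thus $uxu^*=E(uxu^*)=x$, proving $B\subseteq A'\cap M$ and hence $B=A'\cap M$. By Tomiyama's theorem $E$ is a conditional expectation onto $A'\cap M$ contained in $\D(A\subset M)$, which is the first assertion.

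For the refinement I would extend the given state to $M$ by $\varphi\circ E_A$ (still written $\varphi$), so that $\sigma^\varphi_t(A)=A$ and therefore $\sigma^\varphi_t(A'\cap M)=A'\cap M$. By Takesaki's theorem there is a unique $\varphi$-preserving normal conditional expectation $E_0\colon M\to A'\cap M$, and any $\varphi$-preserving expectation onto $A'\cap M$ is automatically normal and faithful; so it suffices to select the minimal idempotent $\varphi$-preservingly, forcing it to equal $E_0$. On the finite summand $A_2$ the traciality of $\varphi|_{A_2}$ puts $\mathcal U(A_2)$ inside the centralizer $M_\varphi$, so the generating conjugations preserve $\varphi$ and the usual $L^2$-averaging stays $\varphi$-invariant. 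On the type $\rm III$ summand $A_1$ the conjugations $\Ad(u)$ do not preserve $\varphi$; here I would use that the modular flow of $\varphi$ is approximately inner on $A_1$, so that $\sigma^\varphi_t$ is approximable within $K$, and average the expectation of the first part over the modular group while remaining in $\D(A\subset M)$, producing a $\varphi$-preserving—hence normal and faithful—expectation.

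The step I expect to be the main obstacle is precisely this last one. In the type $\rm III$ case the generating conjugations destroy $\varphi$, so nothing constrains the state a priori, and one must re-inject the modular dynamics into the semigroup $K$ to recover both $\varphi$-preservation and normality. By contrast the first assertion is robust: the Hilbert $A$-module attached to $E_A$ supplies exactly the positivity needed to collapse the image of the minimal idempotent onto $A'\cap M$ without recourse to any invariant state.
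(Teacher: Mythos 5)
Your reduction of the first assertion to proving $E(M)\subseteq A'\cap M$ for a minimal idempotent $E\in\D(A\subset M)$, and the identity $E(uxu^*)=x$ for $x\in E(M)$ extracted from Proposition~\ref{prop-Marrakchi}, are both sound. The gap is the Hilbert-module step. As you note, $\langle uxu^*,uyu^*\rangle_A=u\langle x,y\rangle_A u^*$, so $\Ad(u)$ preserves the norm $\|E_A(x^*x)\|^{1/2}$ but is \emph{not} an $A$-module map: it twists the right $A$-action by $\Ad(u)$, and its ``adjoint'' satisfies only a twisted relation. Hence $E$, a limit of convex combinations of such maps, is not an $A$-module map, Paschke's adjointability theorem for self-dual modules does not apply, and there is no self-adjoint orthogonal projection $P$ with which to run the Pythagorean identity. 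Concretely, the operator inequality you need, $\langle E(w),E(w)\rangle_A\le\langle w,w\rangle_A$, is not available: Kadison--Schwarz for $E$ followed by the lower-semicontinuity argument of Lemma~\ref{lem-operator-bounded} only yields
\[
E_A\bigl(E(w)^*E(w)\bigr)\;\le\;E_A\bigl(E(w^*w)\bigr)\;\le\;E\bigl(E_A(w^*w)\bigr),
\]
with an unavoidable extra application of $E$ on the right. Closing that loop is exactly where the real work lies: in Marrakchi's proof (reproduced in adapted form in Proposition~\ref{prop-averaging-typeIII}) one first normalizes the minimal idempotent so that $E|_A$ is a faithful normal conditional expectation onto $\mathcal Z(A)$, applies that faithful map to kill the discrepancy, deduces that the image is a genuine $C^*$-subalgebra on which the Choi--Effros product is the ambient product, and then uses the parallelogram law inside $M$ itself to force $uau^*=a$. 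No $A$-valued inner product is used, precisely because no $\mathcal U(A)$-invariant one exists when $A$ is not finite.

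The refinement also breaks down in the only hard case. On the finite summand your $L^2$-averaging is the classical argument (Lemma~\ref{lem-averaging-trace1}), and the uniqueness and automatic normality of a $\varphi$-preserving expectation onto the $\sigma^\varphi$-invariant subalgebra $A'\cap M$ are fine. But on the type $\rm III$ summand the assertion that ``the modular flow of $\varphi$ is approximately inner on $A_1$'' is false for general type $\rm III$ von Neumann algebras, and even where some form of approximate innerness holds you would need the approximating unitaries to lie in $A$ and the averaging to remain inside $\D(A\subset M)$ while producing exact $\varphi$-invariance; none of this is supplied. Producing a $\varphi$-preserving element of $\D(A\subset M)$ for type $\rm III$ subalgebras is precisely the substantial content of \cite[Theorem~5.2]{Ma23}, which the paper cites rather than reproves. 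Indeed the paper's own proof of Theorem~\ref{thm-Marrakchi} consists only of the citations to \cite{Ma19} and \cite{Ma23} together with the central-projection reduction of Lemma~\ref{lem-projection-unitary2}; your attempt to reprove both ingredients from scratch does not close in either half.
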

\begin{proof}
	The first part is proved in \cite[MAIN THEOREM]{Ma19}. For the second part, if $A$ is finite, then this result is classical and well known (see Lemma \ref{lem-averaging-trace1}). If $A$ is of type III, then it is proved in \cite[Theorem 5.2]{Ma23}, see the proof of this theorem. For the general case, $A$ is decomposed into a direct sum of a finite and a type III subalgebras, and we can reduce the problem into each case, see the argument in Lemma \ref{lem-projection-unitary2}.
\end{proof}

We note that if $A$ is semifinite and properly infinite, then any conditional expectation onto $A'\cap M$ that is contained in $\D(A\subset M)$ vanishes all finite projections in $A$ (e.g.\ \cite[3.2 Approximation Theorem]{SZ99}). In particular, such a conditional expectation can not be normal. For this reason, we will often assume that $A$ has no direct summand that is semifinite and properly infinite.

\section{Weak relative Dixmier property}

Let $M$ be a von Neumann algebra and $G$ a discrete group. We do \textit{not} assume $G$ is countable. Consider a discrete group action $\alpha \colon G\actson M$. We are mainly interested in the following two cases:
\begin{itemize}
	\item let $A\subset M$ be a von Neumann subalgebra, and define $G = \mathcal U(A)$ and $\alpha = \Ad\colon \mathcal U(A)\actson M$; 

	\item $G$ is a locally compact group acting continuously on $M$.

\end{itemize}
In this section, we are only interested in the fixed points under the actions, so we can regard these two actions as discrete group actions.

\subsection{Weak Dixmier subspaces}

	Let $\alpha \colon G \actson M$ be a discrete group action. We begin by extending the notion of weak Dixmier semigroups to our setting. Put 
	$$\D_{\rm alg}(\alpha):= \mathrm{conv} \{ \alpha_g \mid g\in G\}\subset \mathrm{UCP}(M) .$$
\begin{df}\upshape
	The \textit{weak Dixmier semigroup for $\alpha \colon G \actson M$} is the closure in the point $\sigma$-weak topology of $\D_{\rm alg}(\alpha)$. We denote it by $\D(\alpha\colon G \actson M)$ or $\D(\alpha)$. 
\end{df}

Note that $\D(\alpha)$ is a compact (convex) semigroup, as discussed in Subsection \ref{Weak Dixmier semigroups}. For each $x\in M$, we put 
	$$ \mathcal K(x,\alpha):=\overline{\mathrm{conv}}^{\rm weak} \{ \alpha_g (x) \mid g\in G\} \subset M.$$
where the closure is taken with respect to the $\sigma$-weak topology. Observe that $\alpha$ acts on $ \mathcal K(x,\alpha)$ as convex maps, so that the set of fixed points $\mathcal K(x,\alpha)^\alpha$ is naturally defined. 

We introduce the following notion.

\begin{df}\upshape
	
The \textit{weak Dixmier subspace }$\mathcal D(\alpha)$ is defined as the set of all $x\in M$ that satisfy
	$$ \mathcal K(\theta(x),\alpha) ^\alpha \neq \emptyset\quad \text{for all} \quad \theta\in \D(\alpha).$$
\end{df}

We will show in  Corollary \ref{cor-subspace} that $\mathcal D(\alpha)\subset M$ is indeed a subspace. It is straightforward to check that 
	$$M^\alpha \mathcal D(\alpha)M^\alpha \subset \mathcal D(\alpha)\quad\text{and}\quad \theta(\mathcal D(\alpha))\subset \mathcal D(\alpha)\quad\text{for all} \quad\theta\in \D(\alpha).$$
We often use the following elementary facts. These follow easily from the compactness of  $\D(\alpha)$. 

\begin{itemize}
	\item We have $ \mathcal K(x,\alpha) = \D(\alpha)x $ for all $x\in M$.

	\item If $\theta_1,\theta_2 \in \D(\alpha)$, then $\theta_1\circ \theta_2 \in \D(\alpha)$. 

	\item Let $M\subset \widetilde{M}$ be an inclusion of von Neumann algebras with an extended action $\alpha \colon G \actson \widetilde{M}$. If $\theta \in \D(\alpha\colon G\actson M)$, then there exists $\widetilde{\theta}\in \D(\alpha \colon G \actson \widetilde{M})$ such that $\widetilde{\theta}|_M = \theta$. 

\end{itemize}
To see the last statement, observe first that elements in $\D_{\rm alg}(\alpha\colon G\actson M)$ is naturally extended on $\widetilde{M}$, and then use the compactness of $\D(\alpha\colon G\actson \widetilde{M})$.

Let $A\subset M$ be a von Neumann subalgebra and define a group action $\Ad_A \colon \mathcal U(A) \actson M$. In this case, we often use the following natural notation: 
\begin{align*}
	&\D(\Ad_A) = \D(\Ad_A^M) = \D(A\subset M);\\
	&\mathcal D(\Ad_A )=  \mathcal D(\Ad_A^M) = \mathcal D(A\subset M);\\
	& \mathcal K(x,\Ad_A) = \mathcal K(x,A),\quad \mathcal K(x,\Ad_A)^{\Ad_A} = A'\cap \mathcal K(x,A).
\end{align*}

The following theorem is the main observation of this article. We will prove it in Subsection \ref{Proof of Theorem thmA}. Once we prove it, Theorem \ref{thmA} immediately follows from this theorem.

\begin{thm}\label{thm-Dixmier-expectation}
	Let $A\subset M$ be an inclusion of von Neumann algebras with operator valued weight $E_A\colon M\to A$. Then we have $\mathfrak m_{E_A}\subset \mathcal D(A\subset M)$. 
In particular, there exists a ucp map $\Psi \in \D(A\subset M)$ such that 
	$$ \Psi(  \mathfrak m_{E_A} ) \subset A'\cap M.$$
\end{thm}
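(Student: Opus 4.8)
The plan is to reduce the entire statement to a single assertion about a minimal idempotent of the compact convex semigroup $K:=\D(A\subset M)$. First I would record that $\mathfrak m_{E_A}$ is invariant under every $\theta\in K$: for positive $x$ one has $E_A(uxu^*)=uE_A(x)u^*$, so convex combinations of $\Ad(u)$ preserve the bound $\|E_A(\,\cdot\,)\|_\infty\le\|E_A(x)\|_\infty$, and this bound survives the point $\sigma$-weak limit by lower semicontinuity (normality) of $E_A$; hence $\theta(x)\in\mathfrak m_{E_A}^+$. Granting this, it suffices to prove the following \textbf{Key Claim}: if $e\in K$ is a minimal idempotent (which exists by Ellis' lemma), then $e(\mathfrak m_{E_A})\subset A'\cap M$. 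Indeed, taking $\Psi:=e$ gives the ``in particular'' part at once, since $e\in\D(A\subset M)$; and for the inclusion $\mathfrak m_{E_A}\subset\mathcal D(A\subset M)$ one observes that, for $x\in\mathfrak m_{E_A}$ and $\theta\in K$, the element $e(\theta(x))=(e\circ\theta)(x)$ lies in $\D(A\subset M)\,\theta(x)=\mathcal K(\theta(x),A)$ and, by the invariance above together with the claim, also in $A'\cap M$, so $A'\cap\mathcal K(\theta(x),A)\neq\emptyset$.

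The mechanism behind the claim combines two ingredients. The first is formal: since $\Ad(u)\circ e$ is again a minimal idempotent by Proposition \ref{prop-Marrakchi}, applying that proposition to the pair $e$ and $\Ad(u)\circ e$ shows that $e|_{(\Ad(u)e)(M)}$ is the inverse of $(\Ad(u)e)|_{e(M)}$; evaluating on $y:=e(x)\in e(M)$ yields the identity
\[ e\big(u\,e(x)\,u^{*}\big)=e(x)\qquad\text{for all }u\in\mathcal U(A). \]
The second ingredient is the only $\Ad(\mathcal U(A))$-invariant structure available when $A$ is of type III: on the $*$-algebra $\mathfrak m_{E_A}$ the $A$-valued form $E_A(h^{*}h)$ is covariant, $E_A\big((uhu^{*})^{*}(uhu^{*})\big)=uE_A(h^{*}h)u^{*}$, so the module seminorm $\|h\|_{E_A}:=\|E_A(h^{*}h)\|_\infty^{1/2}$ is \emph{exactly} preserved by each $\Ad(u)$. (Every difference of elements of $\mathcal K(x,A)$ lies in $\mathfrak n_{E_A}$, as $\mathfrak m_{E_A}$ is a $*$-algebra, so this quantity is defined where needed.) This invariant plays the role that a conjugation-invariant trace or weight plays in the finite or semifinite case, where no such weight is forbidden.

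The remaining and decisive step is to upgrade the idempotent identity to genuine commutation, that is, to pass from $e(u\,e(x)\,u^{*})=e(x)$ to $u\,e(x)\,u^{*}=e(x)$, which is precisely $e(x)\in A'\cap M$. Writing $z:=u\,e(x)\,u^{*}$ and $y:=e(x)$, the Kadison--Schwarz inequality gives $y^{2}=e(z)^{2}\le e(z^{2})$, hence $E_A(y^{2})\le E_A(e(z^{2}))$, while the norm contraction $\|E_A(e(\,\cdot\,))\|_\infty\le\|E_A(\,\cdot\,)\|_\infty$ combined with the $\Ad(u)$-invariance of $\|\,\cdot\,\|_{E_A}$ collapses $\|E_A(y^{2})\|_\infty=\|E_A(e(z^{2}))\|_\infty=\|E_A(z^{2})\|_\infty$ to equalities. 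The hard part is that this norm equality alone does not yet force the operator equality $z=y$: unlike the finite or semifinite case, where a scalar strictly convex invariant $L^{2}$-norm identifies the fixed point as a unique minimal vector, here the only invariant is the operator norm of the $A$-valued quantity $E_A(\,\cdot^{*}\,\cdot\,)$, and equality of operator norms of two ordered positive elements of $A$ does not imply their equality. Extracting the operator identity from this $A$-valued, non-tracial data — exploiting that the identity holds simultaneously for all $u\in\mathcal U(A)$ and using peaking/support arguments in $A$ to localize where the norms are attained, i.e. a genuine elementwise analysis of $\mathfrak m_{E_A}$ rather than any invariant state — is exactly where I expect the main obstacle to lie.
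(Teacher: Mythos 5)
Your reduction to a statement about a minimal idempotent $e\in\D(A\subset M)$, and the formal identity $e(u\,e(x)\,u^*)=e(x)$ extracted from Proposition \ref{prop-Marrakchi}, are both on the right track and match the skeleton of the paper's argument. But the decisive step --- upgrading $e(u\,e(x)\,u^*)=e(x)$ to $u\,e(x)\,u^*=e(x)$ --- is exactly where your proposal stops, and you say so yourself. The mechanism you suggest (Kadison--Schwarz plus equality of the operator norms $\|E_A(\,\cdot^*\cdot\,)\|_\infty$) cannot close this: as you note, equality of operator norms of two ordered positive elements of $\widehat{A}^+$ does not give equality of the elements, and there is no strictly convex invariant to pin down a unique fixed point. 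So the proposal has a genuine, admitted gap at the one step that carries the content of the theorem.

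The paper closes this gap by a different device, which your proposal does not anticipate. First, one does \emph{not} work with an arbitrary minimal idempotent: when $A$ is of type III one chooses (via \cite[Theorem 5.2]{Ma23}, quoted in Theorem \ref{thm-Marrakchi}) a \emph{faithful normal} conditional expectation $E\colon A\to\mathcal Z(A)$ lying in $\D(\Ad_A\colon\mathcal U(A)\actson A)$, and then arranges the minimal idempotent $\Phi$ to satisfy $\Phi|_A=E$. The point is that $E\circ E_A$ is then a faithful normal weight, and combining the Kadison--Schwarz inequality $x^*x\le\Phi(x^*x)$ with the operator inequality $E_A\circ\Phi\le\Phi\circ E_A$ of Lemma \ref{lem-operator-bounded} (not just a norm inequality) one gets $E\circ E_A(\Phi(x^*x)-x^*x)=0$ and hence $\Phi(x^*x)=x^*x$ for $x\in\Phi(\mathfrak m_{E_A})$. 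This shows that the Choi--Effros product on $\Phi(M_0)$, $M_0=\overline{\mathfrak m_{E_A}+A}$, coincides with the product of $M$; consequently, for the \emph{convex combination} $\theta=\tfrac12(\Ad(u)+\id)$, the map $\theta|_{\Phi(M_0)}$ is a genuine $*$-homomorphism for the product of $M$, and the parallelogram law $\tfrac12(|uau^*|^2+|a|^2)\ge\tfrac14|uau^*+a|^2$ with forced equality yields $|uau^*-a|^2=0$. Your invariant $\|E_A(h^*h)\|_\infty$ plays no role in this; the faithfulness of $E\circ E_A$ does. Finally, note that the theorem as stated allows arbitrary $A$, and the minimal-idempotent route cannot be run uniformly: when $A$ is semifinite and properly infinite no faithful expectation $A\to\mathcal Z(A)$ exists in the Dixmier semigroup (such expectations annihilate finite projections), so the paper first splits $A$ by the central projection separating the semifinite and type III summands (Lemma \ref{lem-projection-unitary2}) and treats the semifinite part by the classical $L^2$-projection argument with the weight $\Tr_A\circ E_A$ (Proposition \ref{prop-averaging-trace2}). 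Your proposal would need both this case division and the faithful-expectation/Choi--Effros mechanism to become a proof.
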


In the rest of this subsection, we prove several elementary properties of $\D(\alpha)$ and $\mathcal D(\alpha)$.

\subsection*{Schwartz property}

The next lemma is essentially due to Schwartz \cite{Sc63}.

\begin{lem}\label{lem-Schwartz}
	Let $\alpha\colon G\actson M$ be a discrete group action on a von Neumann algebra. Then there exists $\Psi\in \D(\alpha)$ such that 
	$$\Psi(x)\in M^\alpha ,\quad \text{for all}\quad x\in \mathcal D(\alpha).$$
More generally, if there exist a subset $M_0\subset M$ and $\Psi_0\in \D(\alpha)$ such that $\Psi_0(M_0)\subset M^\alpha$ and $\theta(M_0)\subset M_0$ for all $\theta\in \D(\alpha)$, then $M_0\subset \mathcal D(\alpha)$ and the above $\Psi$ can be chosen as satisfying $\Psi|_{M_0}=\Psi_0|_{M_0}$.
\end{lem}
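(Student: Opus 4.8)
The plan is to prove the more general ``$M_0$'' statement, from which the first assertion follows by taking $M_0 = \mathcal D(\alpha)$ and $\Psi_0$ to be any element of $\D(\alpha)$ (indeed, if $x\in \mathcal D(\alpha)$ then by definition $\mathcal K(\theta(x),\alpha)^\alpha\neq\emptyset$ for all $\theta$, and one checks that $\D(\alpha)$ itself satisfies the hypotheses on $M_0$). So assume we are given $M_0$ and $\Psi_0\in\D(\alpha)$ with $\Psi_0(M_0)\subset M^\alpha$ and $\theta(M_0)\subset M_0$ for all $\theta\in\D(\alpha)$. First I would verify $M_0\subset\mathcal D(\alpha)$: fix $x\in M_0$ and $\theta\in\D(\alpha)$. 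Since $\theta(M_0)\subset M_0$ we have $\theta(x)\in M_0$, hence $\Psi_0(\theta(x))\in M^\alpha$. Because $\Psi_0\in\D(\alpha)$ we have $\Psi_0(\theta(x)) = (\Psi_0\circ\theta)(x) \in \D(\alpha)\theta(x)\cdot$\,; more precisely $\Psi_0\circ\theta\in\D(\alpha)$, and $\D(\alpha)\,\theta(x) = \mathcal K(\theta(x),\alpha)$ by the first elementary fact recorded above. Thus $\Psi_0(\theta(x))\in\mathcal K(\theta(x),\alpha)$, and being in $M^\alpha$ it lies in $\mathcal K(\theta(x),\alpha)^\alpha$, which is therefore nonempty. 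This holds for every $\theta$, so $x\in\mathcal D(\alpha)$.

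The substance of the lemma is the construction of a single $\Psi$ that simultaneously lands every element of $\mathcal D(\alpha)$ in $M^\alpha$, while restricting to $\Psi_0$ on $M_0$. The approach I would take is a Zorn/maximality argument inside the compact convex semigroup $\D(\alpha)$, in the spirit of Schwartz and of the semigroup techniques recalled in Subsection \ref{Weak Dixmier semigroups}. The key idea is that a map $\theta\in\D(\alpha)$ sends $x$ into $M^\alpha$ precisely when $\theta(x)$ is a fixed point, i.e.\ $\alpha_g(\theta(x))=\theta(x)$ for all $g$; and the family of such conditions can be organized so that one can iteratively enlarge the set of elements sent into $M^\alpha$ without destroying what has already been achieved. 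Concretely, I would consider maps $\Psi\in\D(\alpha)$ satisfying $\Psi|_{M_0}=\Psi_0|_{M_0}$ (a nonempty compact convex subsemigroup, since $\Psi_0$ itself qualifies and the condition is preserved under the relevant compositions), and within this set run a maximality argument on the ``fixed-point set'' $\{x\in\mathcal D(\alpha): \Psi(x)\in M^\alpha\}$, which is a $\sigma$-weakly closed subspace.

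For each individual $x\in\mathcal D(\alpha)$ we already know $\mathcal K(x,\alpha)^\alpha\neq\emptyset$, so there is \emph{some} $\theta_x\in\D(\alpha)$ with $\theta_x(x)\in M^\alpha$; the problem is uniformity over all $x$ at once. The mechanism I would exploit is that once $\Psi(x)\in M^\alpha$, this property is inherited after further composition: if $\Psi(x)$ is $\alpha$-fixed and $\eta\in\D(\alpha)$, then $(\eta\circ\Psi)(x)=\eta(\Psi(x))$, and since each $\alpha_g$ fixes $\Psi(x)$ one gets that $\eta$ applied to a fixed point, averaged, remains fixed — so $(\eta\circ\Psi)(x)\in M^\alpha$ as well. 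This monotonicity means the collection of $x$'s already ``captured'' can only grow under left composition, allowing a transfinite/net construction: index $\mathcal D(\alpha)$, and successively compose on the left with maps that capture the next element, using compactness of $\D(\alpha)$ to extract a limit point $\Psi$ of the resulting net of partial composites. Compactness in the point $\sigma$-weak topology guarantees the limit exists in $\D(\alpha)$, and the closedness of the fixed-point subspace together with the inheritance property guarantees the limit captures every $x$.

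The main obstacle I anticipate is the passage to the limit: the net of composites need not converge, only a subnet/limit point does, and I must ensure that a \emph{single} limiting $\Psi$ inherits the fixed-point property for \emph{all} $x\in\mathcal D(\alpha)$ simultaneously — not merely for each $x$ along its own subnet. This is exactly where the monotonicity (once fixed, stays fixed under further left composition) is essential, combined with a careful choice of the indexing so that each $x$ is eventually and permanently captured before the limit is taken. I would also need to check that the constraint $\Psi|_{M_0}=\Psi_0|_{M_0}$ survives the construction: this holds because all the composites are of the form $\eta\circ\Psi_0$ with $\eta\in\D(\alpha)$ fixing the image $\Psi_0(M_0)\subset M^\alpha$ pointwise (again using the inheritance property on $M_0$), so their restriction to $M_0$ coincides with $\Psi_0|_{M_0}$, and this closed condition passes to the limit. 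Verifying that $\Psi\in\D(\alpha)$ throughout — i.e.\ that the semigroup is genuinely closed under the operations used — is routine given the compact-semigroup structure recalled earlier, so I would treat it briefly.
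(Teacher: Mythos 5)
Your plan is sound, and the core mechanism you isolate --- once $\Psi(x)\in M^\alpha$, every further left composition with an element of $\D(\alpha)$ leaves the value literally unchanged, because every element of $\D_{\rm alg}(\alpha)$ (hence of $\D(\alpha)$) fixes $M^\alpha$ pointwise --- is exactly what drives the paper's proof as well. The difference is organizational: the paper avoids your transfinite recursion by applying Zorn's lemma to $\D(\alpha,\Psi_0,M_0)=\{\theta\in\D(\alpha)\mid\theta|_{M_0}=\Psi_0|_{M_0}\}$ equipped with the preorder $\theta_1\leq\theta_2$ iff $\theta_2(x)\in\mathcal K(\theta_1(x),\alpha)$ for all $x\in M$; chains have upper bounds by compactness, and a maximal element $\Psi$ automatically sends every $x_0\in\mathcal D(\alpha)$ into $M^\alpha$ (pick $y_0\in\mathcal K(\Psi(x_0),\alpha)^\alpha$, approximate it by $f_i(\Psi(x_0))$ with $f_i\in\D_{\rm alg}(\alpha)$, pass to a limit point $\Psi_\infty$ of $f_i\circ\Psi$, and use maximality to force $\Psi(x_0)=y_0$). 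This buys exactly what you worry about in your last two paragraphs: no well-ordering of $\mathcal D(\alpha)$, no bookkeeping of which elements have been captured, and no limit-stage analysis --- the maximal element captures everything at once. Your recursion does go through, provided you note that at stage $\beta$ the element to be captured is $\Psi_\beta(x_\beta)$ rather than $x_\beta$ itself; this is precisely why $\mathcal D(\alpha)$ is defined by requiring $\mathcal K(\theta(x),\alpha)^\alpha\neq\emptyset$ for \emph{all} $\theta\in\D(\alpha)$, not just for $\theta=\id$.

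One genuine (though easily repaired) slip: deducing the first assertion by taking $M_0=\mathcal D(\alpha)$ and ``$\Psi_0$ any element of $\D(\alpha)$'' is circular, since the hypothesis $\Psi_0(M_0)\subset M^\alpha$ with $M_0=\mathcal D(\alpha)$ is exactly the statement you are trying to prove. The correct specialization is the trivial one ($M_0=\{0\}$ and $\Psi_0=\id$, as the paper notes); the $\Psi$ produced by the general argument already maps all of $\mathcal D(\alpha)$ into $M^\alpha$, so nothing more is needed.
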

\begin{proof}
Fix $M_0 \subset M$ and $\Psi_0$ as in the statement. Note that we can choose $M_0=0$ and $\Psi_0 = \id$, so such $M_0$ and $\Psi_0$ exist.

If $x\in M_0$ and $\theta\in \D(\alpha)$, then $\theta(x)\in M_0$ and $\Psi_0(\theta(x))\in M^\alpha$ by assumption. Since $\Psi_0(\theta(x)) \in \mathcal K(\theta(x),\alpha)$, we get $\mathcal K(\theta(x),\alpha)^\alpha \neq \emptyset$. This implies $M_0\subset \mathcal D(\alpha)$. 

We will apply Zorn's lemma to the following set:
	$$\D(\alpha,\Psi_0,M_0):= \{ \theta \in \D(\alpha)\mid \theta|_{M_0} = \Psi_0|_{M_0}\}.$$
This is compact in the point $\sigma$-weak topology. For each $\theta_1,\theta_2\in \D(\alpha)$, we define $\theta_1\leq \theta_2$ by the condition
\begin{align*}
	\mathcal K(\theta_2(x),\alpha) \subset \mathcal K(\theta_1(x),\alpha)\quad \text{for all}\quad x\in M.
\end{align*}
Equivalently, $\theta_2(x)\in \mathcal K(\theta_1(x),\alpha)$ for all $x\in M$. 

	Observe that $\D(\alpha,\Psi_0,M_0)$ is nonempty, since it contains $\Psi_0$. Let $\{\theta_{i}\}_{i\in I}$ be a chain in $\D(\alpha,\Psi_0,M_0)$. Then since $\D(\alpha,\Psi_0,M_0)$ is compact, a subnet of $\{\theta_{i}\}_{i\in I}$ converges to some element $\theta_\infty\in\D(\alpha,\Psi_0,M_0)$. 
Fix $x\in M$ and $i\in I$. Then for each $i\leq j$, one has
	$$\theta_j(x)\in \mathcal K(\theta_j(x),\alpha) \subset \mathcal K(\theta_i(x),\alpha),$$
so that $\theta_\infty(x)\in \mathcal K(\theta_i(x),\alpha)$. We get $\theta_i\leq \theta_\infty $. Thus we can apply Zorn's lemma. 

Take a maximal element $\Psi\in \D(\alpha,\Psi_0,M_0)$. We have to show that $\Psi(x)\in M^\alpha$ for all $x\in \mathcal D(\alpha)$. Fix $x_0\in \mathcal D(\alpha)$ and we show $\Psi(x_0)\in M^\alpha$. Since $\mathcal K(\Psi(x_0),\alpha)^\alpha \neq \emptyset$, one can take an element $y_0\in \mathcal K(\Psi(x_0),\alpha)^\alpha$. 
There exist $f_i\in \D_{\rm alg}(\alpha)$ such that $f_i(\Psi(x_0)) \to y_0$ in the $\sigma$-weak topology. Then since $f_i\circ \Psi$ is contained in $\D(\alpha,\Psi_0,M_0)$, one can find a subnet of $\{f_i\circ \Psi\}_i$, which converges to some element $\Psi_\infty\in\D(\alpha,\Psi_0,M_0)$. Then for each $x\in M$ and $i$, one has $ f_i\circ \Psi(x)\in \mathcal K(\Psi(x),\alpha)$, so that $\Psi_\infty(x)\in \mathcal K(\Psi(x),\alpha)$. This implies $\Psi\leq \Psi_\infty $ and $\Psi = \Psi_\infty$ by the maximality. We get $\Psi(x_0)=\Psi_\infty(x_0)=y_0\in M^\alpha$ as desired. 
\end{proof}

\begin{cor}\label{cor-subspace}
	Let $\alpha\colon G\actson M$ be a discrete group action on a von Neumann algebra. The following assertions hold true.
\begin{enumerate}
	\item For each $x\in M$, the following conditions are equivalent.
\begin{enumerate}
	\item[$\rm (i)$] We have $ x\in \mathcal D(\alpha)$. \quad $(\Leftrightarrow$\  $ \mathcal K(x,\alpha)=\D(\alpha)x\subset \mathcal D(\alpha).)$
	\item[$\rm (ii)$] There exists $\Psi\in \D(\alpha)$ such that $\Psi(\mathcal K(x,\alpha)) \subset M^\alpha$.
\end{enumerate}

	\item The weak Dixmier subspace $\mathcal D(\alpha)\subset M$ is a subspace that is closed in the operator norm topology.

\end{enumerate}
\end{cor}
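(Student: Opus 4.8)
The plan is to prove part (1) first and then obtain part (2) as a formal consequence, the only real inputs being Lemma~\ref{lem-Schwartz} and the elementary facts already recorded: $\mathcal K(x,\alpha)=\D(\alpha)x$ for all $x$, the semigroup property $\theta_1\circ\theta_2\in\D(\alpha)$, the invariance $\theta(\mathcal D(\alpha))\subset\mathcal D(\alpha)$ for $\theta\in\D(\alpha)$, and the observation that $\id=\alpha_e\in\D(\alpha)$.

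For part (1) I would first dispatch the parenthetical equivalence $x\in\mathcal D(\alpha)\Leftrightarrow\mathcal K(x,\alpha)=\D(\alpha)x\subset\mathcal D(\alpha)$: the forward direction is immediate from invariance, since $\theta(x)\in\theta(\mathcal D(\alpha))\subset\mathcal D(\alpha)$ for every $\theta\in\D(\alpha)$, while the reverse holds because $x=\id(x)\in\D(\alpha)x$. For $\mathrm{(i)}\Rightarrow\mathrm{(ii)}$ I would apply the first assertion of Lemma~\ref{lem-Schwartz} to produce a single global map $\Psi\in\D(\alpha)$ with $\Psi(\mathcal D(\alpha))\subset M^\alpha$; since $x\in\mathcal D(\alpha)$ forces $\mathcal K(x,\alpha)\subset\mathcal D(\alpha)$ by the equivalence just shown, this same $\Psi$ satisfies $\Psi(\mathcal K(x,\alpha))\subset M^\alpha$. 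For the converse $\mathrm{(ii)}\Rightarrow\mathrm{(i)}$ I would invoke the ``more general'' statement of Lemma~\ref{lem-Schwartz} with $M_0:=\mathcal K(x,\alpha)=\D(\alpha)x$ and $\Psi_0:=\Psi$ the map supplied by $\mathrm{(ii)}$: the hypothesis $\Psi_0(M_0)\subset M^\alpha$ is exactly $\mathrm{(ii)}$, and $\theta(M_0)=\theta(\D(\alpha)x)\subset\D(\alpha)x=M_0$ is the semigroup property. The lemma then yields $\mathcal K(x,\alpha)=M_0\subset\mathcal D(\alpha)$, and in particular $x\in\mathcal D(\alpha)$.

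For part (2) I would run everything through the single global $\Psi\in\D(\alpha)$ with $\Psi(\mathcal D(\alpha))\subset M^\alpha$ and the characterization in $\mathrm{(1)(ii)}$. Closure under scalars is immediate, since $\mathcal K(\lambda x,\alpha)=\lambda\,\mathcal K(x,\alpha)$ and $M^\alpha$ is a linear subspace. For additivity, given $x,y\in\mathcal D(\alpha)$ I would verify $\Psi(\mathcal K(x+y,\alpha))\subset M^\alpha$: each element of $\mathcal K(x+y,\alpha)=\D(\alpha)(x+y)$ is of the form $\theta(x+y)=\theta(x)+\theta(y)$ with $\theta(x),\theta(y)\in\mathcal D(\alpha)$, whence $\Psi(\theta(x+y))=\Psi(\theta(x))+\Psi(\theta(y))\in M^\alpha$; part (1) then gives $x+y\in\mathcal D(\alpha)$. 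For norm-closedness, given $x_n\in\mathcal D(\alpha)$ with $\|x_n-x\|_\infty\to0$, I would likewise check $\Psi(\mathcal K(x,\alpha))\subset M^\alpha$: for $\theta\in\D(\alpha)$ one has $\theta(x_n)\in\mathcal D(\alpha)$, so $\Psi(\theta(x_n))\in M^\alpha$, and since $\Psi$ and $\theta$ are contractive, $\Psi(\theta(x_n))\to\Psi(\theta(x))$ in norm; as $M^\alpha$ is norm-closed (it is a fixed-point algebra), the limit $\Psi(\theta(x))$ lies in $M^\alpha$, and part (1) gives $x\in\mathcal D(\alpha)$.

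The argument is essentially formal once Lemma~\ref{lem-Schwartz} is in hand, so I do not expect a serious obstacle. The one point deserving care is that closure under addition, and under norm limits, requires an averaging map that flattens $x$ and $y$ — indeed all of $\mathcal D(\alpha)$ — into $M^\alpha$ \emph{simultaneously}: a map adapted to $x$ alone need not send $\mathcal K(y,\alpha)$ into $M^\alpha$, so one cannot treat $x$ and $y$ by separate maps and simply add them. This is exactly what the global $\Psi$ of the first assertion of Lemma~\ref{lem-Schwartz} provides, after which the linearity of $\Psi$ and the norm-closedness of $M^\alpha$ settle both claims.
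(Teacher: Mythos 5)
Your proposal is correct and follows essentially the same route as the paper: both hinge on the single global map $\Psi\in\D(\alpha)$ with $\Psi(\mathcal D(\alpha))\subset M^\alpha$ from Lemma~\ref{lem-Schwartz}, the containment $\mathcal K(x+y,\alpha)\subset\mathcal K(x,\alpha)+\mathcal K(y,\alpha)$ (which you phrase element-wise as $\theta(x+y)=\theta(x)+\theta(y)$), and norm-continuity of $\Psi\circ\theta$ together with norm-closedness of $M^\alpha$ for the closure claim. The only cosmetic difference is that for $(\mathrm{ii})\Rightarrow(\mathrm{i})$ you invoke the ``more general'' clause of Lemma~\ref{lem-Schwartz} with $M_0=\mathcal K(x,\alpha)$, whereas the paper gives the equivalent one-line direct argument ($\Psi(\theta(x))\in\mathcal K(\theta(x),\alpha)^\alpha$ for each $\theta$); both are the same computation.
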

\begin{proof}
	(1) (i)$\Rightarrow$(ii) This follows by Lemma \ref{lem-Schwartz}.

	(ii)$\Rightarrow$(i) Let $\Psi$ be as in the statement. If $\theta\in \D(\alpha)$, then $\theta(x)\in \mathcal K(x,\alpha)$, hence $\Psi(\theta(x))\in M^\alpha$. 
Since $\Psi\in \D(\alpha)$, we get $\Psi(\theta(x))\in \mathcal K(\theta(x),\alpha)^\alpha$ and $x\in \mathcal D(\alpha)$.

	(2) Fix $\Psi$ as in Lemma \ref{lem-Schwartz}. Take any $x,y\in  \mathcal D(\alpha)$ and observe that $\Psi(\mathcal K(x,\alpha)),\Psi(\mathcal K(y,\alpha))\subset M^\alpha$. We apply $\Psi$ to
	$$ \mathcal K(x+y,\alpha) \subset \mathcal K(x,\alpha)+\mathcal K(y,\alpha)$$
and get that $\Psi(\mathcal K(x+y,\alpha)) \subset M^\alpha$. By item (1), this means $x+y\in \mathcal D(\alpha)$. Thus, $\mathcal D(\alpha)$ is a subspace of $M$. 

If $\mathcal D(\alpha)\ni x_n \to x\in M$ is a convergence in the operator norm of $M$. Fix $\theta\in \D(\alpha)$. Then since $\theta(x_n)\in \mathcal K(x_n ,\alpha)\subset \mathcal D(\alpha)$, we get $\Psi(\theta(x_n))\in M^\alpha$. This implies $\Psi(\theta(x))\in M^\alpha$ and hence $\Psi(\theta(x))$ is contained in $\mathcal K(\theta(x),\alpha)^\alpha$. We get $x \in \mathcal D(\alpha)$. Thus, $\mathcal D(\alpha)$ is closed in the operator norm.
\end{proof}

The next lemma will be used later in application.

\begin{lem}\label{lem-Schwartz2}
	Let $\alpha\colon G\actson M$ be a discrete group action on a von Neumann algebra. 
Let $M_0 \subset M$ be a subset satisfying $\theta(M_0)\subset M_0$ for all $\theta\in \D(\alpha)$. If $\mathcal K (x,\alpha)$ contains $0$ for all $x\in M_0$, then there exists $\Psi\in \D(\alpha)$ such that $\Psi(M_0) = 0$.
\end{lem}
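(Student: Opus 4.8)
The plan is to deduce this from Lemma~\ref{lem-Schwartz} rather than rerunning the Zorn's lemma argument from scratch. The crucial observation is that $0\in M^\alpha$, so the hypothesis ``$0\in\mathcal K(x,\alpha)$'' is secretly the stronger statement that the fixed-point set $\mathcal K(x,\alpha)^\alpha$ is nonempty. This is exactly the condition defining membership in the weak Dixmier subspace $\mathcal D(\alpha)$, so I expect $M_0$ to land inside $\mathcal D(\alpha)$, at which point Lemma~\ref{lem-Schwartz} supplies a single $\Psi$ sending all of $\mathcal D(\alpha)$ into $M^\alpha$.

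First I would verify $M_0\subset\mathcal D(\alpha)$. Fix $x\in M_0$ and $\theta\in\D(\alpha)$. By the assumed invariance $\theta(x)\in M_0$, so by hypothesis $0\in\mathcal K(\theta(x),\alpha)$; since $0$ is $\alpha$-fixed, in fact $0\in\mathcal K(\theta(x),\alpha)^\alpha$, whence $\mathcal K(\theta(x),\alpha)^\alpha\neq\emptyset$. As $\theta\in\D(\alpha)$ was arbitrary, $x\in\mathcal D(\alpha)$. Applying Lemma~\ref{lem-Schwartz} (basic form) now yields $\Psi\in\D(\alpha)$ with $\Psi(x)\in M^\alpha$ for every $x\in\mathcal D(\alpha)$, and in particular $\Psi(M_0)\subset M^\alpha$.

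The final step upgrades ``$\Psi(x)\in M^\alpha$'' to ``$\Psi(x)=0$'' by invoking invariance a second time. For $x\in M_0$, since $\Psi\in\D(\alpha)$ we have $\Psi(x)\in M_0$, so the hypothesis gives $0\in\mathcal K(\Psi(x),\alpha)$. On the other hand, $\Psi(x)\in M^\alpha$ means the orbit $\{\alpha_g(\Psi(x))\mid g\in G\}$ is the single point $\Psi(x)$, so that $\mathcal K(\Psi(x),\alpha)=\{\Psi(x)\}$. Combining the two facts forces $\Psi(x)=0$, and hence $\Psi(M_0)=0$, as required.

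I do not expect a genuine analytic obstacle beyond what Lemma~\ref{lem-Schwartz} already provides; the content lies entirely in arranging the invariance of $M_0$ correctly at two distinct points. It is used once to place $M_0$ inside $\mathcal D(\alpha)$, and a second (more delicate) time to guarantee that $\Psi(x)$ falls back into $M_0$, so that the hypothesis may be applied to $\Psi(x)$ itself rather than to $x$. The collapse $\mathcal K(y,\alpha)=\{y\}$ for $y\in M^\alpha$ is the precise mechanism that converts the fixed-point conclusion of Lemma~\ref{lem-Schwartz} into the sharper vanishing conclusion sought here. As an alternative one could reprove the statement directly via the compactness/Zorn scheme of Lemma~\ref{lem-Schwartz}, taking a maximal $\Psi\in\D(\alpha)$ for the order $\theta_1\le\theta_2\iff\theta_2(x)\in\mathcal K(\theta_1(x),\alpha)$ for all $x\in M$ and checking that maximality forces $\Psi|_{M_0}=0$, but the reduction above is considerably shorter.
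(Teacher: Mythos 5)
Your argument is correct, but it follows a genuinely different route from the paper's. The paper proves the lemma directly: for any finite subset $F=\{x_1,\dots,x_n\}\subset M_0$ it builds $\theta_F\in\D(\alpha)$ with $\theta_F(F)=0$ by iterated composition (killing $x_1$ first, then noting the image of $x_2$ is still in $M_0$ so it can be killed next, and so on), and then extracts a limit point of the net $\{\theta_F\}_F$ using only the compactness of $\D(\alpha)$. You instead reduce to Lemma~\ref{lem-Schwartz}: since $0$ is automatically $\alpha$-fixed, the hypothesis $0\in\mathcal K(\theta(x),\alpha)$ places $M_0$ inside $\mathcal D(\alpha)$, Lemma~\ref{lem-Schwartz} then hands you $\Psi$ with $\Psi(M_0)\subset M^\alpha\cap M_0$, and the collapse $\mathcal K(y,\alpha)=\{y\}$ for $y\in M^\alpha$ combined with the hypothesis forces $\Psi(x)=0$. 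All three uses of the invariance of $M_0$ and of the hypothesis in your write-up are applied to elements that genuinely lie in $M_0$, so there is no gap. The trade-off is that your reduction is shorter but imports the Zorn's-lemma machinery of Lemma~\ref{lem-Schwartz}, whereas the paper's direct iteration-plus-compactness argument is more elementary and self-contained; both yield a valid $\Psi\in\D(\alpha)$ with $\Psi(M_0)=0$.
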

\begin{proof}
	Let $x_1,x_2,\ldots,x_n \in M_0$ be any elements. By assumption, take $\theta_1\in \D(\alpha)$ such that $\theta_1(x_1) = 0$. Then since $\theta_1(x_2)$ is in $M_0$ by assumption, there exists $\theta_2 \in \D(\alpha)$ such that $\theta_2(\theta_1(x_2))=0$. We automatically have $\theta_2(\theta_1(x_1))=0$. We repeat this procedure and get $\theta_1,\ldots,\theta_n\in \D(\alpha)$ such that, with $\theta:=\theta_n \circ \cdots \circ \theta_1\in D(\alpha)$, $\theta(x_k)=0$ for all $k=1,\ldots,n$. Thus for any finite subset $F\subset M_0$, there exists $\theta_F\in \D(\alpha)$ such that $\theta_F (F)=0$. Consider a net $\{\theta_F\}_{F}$ and then by the compactness of $\D(\alpha)$, a subnet of $\{\theta_F\}_{F}$ converges to $\Psi \in \D(\alpha)$. Then $\Psi$ does the work.
\end{proof}

\subsection*{Reductions by projections}

\begin{lem}\label{lem-projection-action1}
	Let $\alpha\colon G\actson M$ be a discrete group action on a von Neumann algebra.  Let $p\in M^\alpha$ be a projection and consider the reduced action $\alpha^p\colon G\actson pMp$.
\begin{enumerate}
	\item We have $p \mathcal K(x,\alpha)p=\mathcal K(pxp,\alpha) = \mathcal K(pxp,\alpha^p)$ for all $x\in M$.

	\item The restriction map $\D(\alpha)\ni \theta \mapsto \theta|_{pMp}\in \D(\alpha^p)$ is surjective.

	\item We have $\mathcal D(\alpha^p)=p\mathcal D(\alpha)p \subset \mathcal D(\alpha)$.

\end{enumerate}
\end{lem}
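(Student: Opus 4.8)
The plan is to prove the three items in order, since (2) and (3) rest on (1). The recurring mechanism is that, because $p\in M^\alpha$, the reduction map $T\colon M\to pMp$, $T(y)=pyp$, is affine, $\sigma$-weakly continuous, and intertwines the actions: $T\circ\alpha_g=\alpha^p_g\circ T$, using $\alpha_g(p)=p$. I will also invoke the standard fact that the intrinsic $\sigma$-weak topology of the corner $pMp$ coincides with the topology it inherits as a $\sigma$-weakly closed subspace of $M$; this lets me compute orbit closures indifferently in $pMp$ or in $M$, a point I shall reuse repeatedly.

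For (1), I first note that $\mathcal K(x,\alpha)=\D(\alpha)x$ is norm-bounded by $\|x\|_\infty$ and $\sigma$-weakly closed, hence $\sigma$-weakly compact. Applying $T$ then yields the two inclusions that combine into the desired equality. On the one hand, continuity together with affineness pushes $T$ through the closed convex hull, so $T(\mathcal K(x,\alpha))\subseteq\overline{\mathrm{conv}}^{\rm weak}\{T(\alpha_g(x))\}=\mathcal K(pxp,\alpha)$. On the other hand, $T(\mathcal K(x,\alpha))$ is the continuous image of a compact set, hence $\sigma$-weakly closed and convex, and it contains every generator $\alpha_g(pxp)=T(\alpha_g(x))$, so it contains $\mathcal K(pxp,\alpha)$. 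This gives $p\mathcal K(x,\alpha)p=\mathcal K(pxp,\alpha)$. The remaining equality $\mathcal K(pxp,\alpha)=\mathcal K(pxp,\alpha^p)$ holds because both sides are the $\sigma$-weak closed convex hull of the single orbit $\{\alpha^p_g(pxp)\}\subset pMp$, computed once in $M$ and once in $pMp$, and the two topologies agree.

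For (2), I first check that the map is well defined: a finite convex combination $\sum_g c_g\alpha_g\in\D_{\rm alg}(\alpha)$ restricts on $pMp$ to $\sum_g c_g\alpha^p_g\in\D_{\rm alg}(\alpha^p)$, and taking point $\sigma$-weak limits, using that $pMp$ is $\sigma$-weakly closed (so every $\theta\in\D(\alpha)$ maps $pMp$ into $pMp$) together with the topology coincidence, shows $\theta|_{pMp}\in\D(\alpha^p)$. For surjectivity, given $\psi\in\D(\alpha^p)$ I write $\psi=\lim_i\psi_i$ with $\psi_i=\sum_g c^{(i)}_g\alpha^p_g$, lift each to $\theta_i:=\sum_g c^{(i)}_g\alpha_g\in\D_{\rm alg}(\alpha)$, and extract by compactness of $\D(\alpha)$ a point $\sigma$-weakly convergent subnet $\theta_{i_\lambda}\to\theta\in\D(\alpha)$. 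Then $\theta|_{pMp}=\lim_\lambda\theta_{i_\lambda}|_{pMp}=\lim_\lambda\psi_{i_\lambda}=\psi$, as wanted.

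For (3), the inclusion $p\mathcal D(\alpha)p\subseteq\mathcal D(\alpha)$ is immediate from the recorded invariance $M^\alpha\mathcal D(\alpha)M^\alpha\subseteq\mathcal D(\alpha)$ and $p\in M^\alpha$. The substantive point is the equivalence, for $z\in pMp$, that $z\in\mathcal D(\alpha^p)$ iff $z\in\mathcal D(\alpha)$; granting it, $\mathcal D(\alpha^p)=p\mathcal D(\alpha)p$ follows formally, since each side equals $\{z\in pMp:z\in\mathcal D(\alpha)\}$. To prove the equivalence I use the characterization in Corollary \ref{cor-subspace}(1): membership in $\mathcal D(\cdot)$ means there exists $\Psi$ in the corresponding Dixmier semigroup with $\Psi(\mathcal K(z,\cdot))$ contained in the fixed-point set. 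I record that $(pMp)^{\alpha^p}=M^\alpha\cap pMp=pM^\alpha p$ and, by (1), that $\mathcal K(z,\alpha)=\mathcal K(z,\alpha^p)\subseteq pMp$ for $z\in pMp$. One implication then restricts a witness $\Psi\in\D(\alpha)$ to $pMp$ (well defined by (2)), and the other lifts a witness $\Psi'\in\D(\alpha^p)$ to $\D(\alpha)$ (surjectivity from (2)); in both cases the fact that the relevant map preserves $pMp$ forces the image of $\mathcal K(z,\cdot)$ into $M^\alpha\cap pMp$, which is exactly the required fixed-point set. I expect the main obstacle to be not a single deep step but the topological bookkeeping underlying (1): upgrading the two easy inclusions to an equality genuinely requires the $\sigma$-weak compactness of $\mathcal K(x,\alpha)$ and the coincidence of the $\sigma$-weak topologies on $pMp$ and $M$, and these same facts are silently reused throughout (2) and (3), so they must be set up cleanly at the outset.
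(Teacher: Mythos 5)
Your proof is correct and follows essentially the same route as the paper: item (1) via $\sigma$-weak compactness of $\mathcal K(x,\alpha)$ and continuity of the compression $y\mapsto pyp$, and item (2) via lifting algebraic approximants and compactness of $\D(\alpha)$. The only divergence is in (3), where you invoke the characterization of Corollary \ref{cor-subspace}(1)(ii) (a single averaging map $\Psi$ landing in the fixed-point set, transferred between $\D(\alpha)$ and $\D(\alpha^p)$ by restriction and lifting), whereas the paper argues directly from the definition of $\mathcal D$ using the identity $p\,\mathcal K(\,\cdot\,,\alpha)^{\alpha}p=\mathcal K(p\,\cdot\,p,\alpha^p)^{\alpha^p}$ derived from item (1); both arguments are valid and of comparable length.
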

\begin{proof}
	(1) It is obvious that $p \mathcal K(x,\alpha) p\subset  \mathcal K(pxp,\alpha)=\mathcal K( pxp, \alpha^p)$. To see $\mathcal K(pxp,\alpha)\mathcal \subset p \mathcal K(x,\alpha) p$, since  generators of $\mathcal K(pxp,\alpha)$ are contained in $p \mathcal K(x,\alpha) p$, we only need to check that $p \mathcal K(x,\alpha) p$ is a closed set. But this is easy because $ \mathcal K(x,\alpha) $ is a compact set.

	(2) It is easy to see that the restriction map is well defined. Let $\theta\in \D(\alpha^p)$ and take a net $\D_{\rm alg}(\alpha^p)\ni f_i \to \theta$ that converges in the point $\sigma$-weak topology. Each $f_i$ has a natural extension on $M$, which we denote by $\widetilde{f}_i\in \D_{\rm alg}(\alpha)$. Then since $\D(\alpha)$ is compact, up to a subnet, we may assume that $\widetilde{f}_i$ converges to $\widetilde{\theta}\in \D(\alpha)$. By construction, it is easy to see $\widetilde{\theta}|_{pMp}=\theta$, thus the map is surjective.

	(3) Let $x\in \mathcal D(\alpha^p)\subset pMp$ and we show $x\in \mathcal D(\alpha)$. Fix $\widetilde{\theta}\in \D(\alpha )$ and $\theta:=\widetilde{\theta}|_{pMp}$ is contained in $\D(\alpha^p)$. By assumption of $x$ and item (1), we get
	$$\emptyset \neq \mathcal K(\theta(x),\alpha^p)^{\alpha^p} = p \mathcal K( \theta(x),\alpha)^{\alpha} p.$$
This implies $x\in \mathcal D(\alpha)$ and $x\in p\mathcal D(\alpha)p$.

Let $x\in \mathcal D(\alpha)$ and we show $\mathcal K( \theta(pxp),\alpha^p)^{\alpha^p} \neq \emptyset$ for all $\theta\in \D(\alpha^p)$. 
By item (2), there exists $\widetilde{\theta}\in\D(\alpha )$ such that $\widetilde{\theta}|_{pMp}=\theta$. By assumption of $x$, it holds that $\mathcal K(\widetilde{\theta}(x),\alpha)^{\alpha} \neq \emptyset$. Combining it with item (1), we get
	$$\emptyset\neq p \mathcal K( \widetilde{\theta}(x),\alpha)^{\alpha} p= \mathcal K(p \widetilde{\theta}(x)p,\alpha)^{\alpha} = \mathcal K(\theta(pxp),\alpha^p)^{\alpha^p} .$$
This implies $pxp\in \mathcal D(\alpha^p)$, hence $p\mathcal D(\alpha)p\subset \mathcal D(\alpha^p)$.
\end{proof}

\begin{lem}\label{lem-projection-unitary1}
	Let $A\subset M$ be an inclusion of von Neumann algebras and  $p\in A'\cap M$ a projection. Consider $\Ad_A \colon \mathcal U(A)\actson M$ and $\Ad_{Ap}\colon \mathcal U(Ap)\actson pMp$.
\begin{enumerate}
	\item We have $p\mathcal K(x,\Ad_A)p=\mathcal K(pxp,\Ad_{Ap}) = \mathcal K(pxp,\Ad_{Ap})$ for all $x\in M$.

	\item The restriction map $\D(\Ad_A)\ni \theta \mapsto \theta|_{pMp}\in \D(\Ad_{Ap})$ is surjective.

	\item We have $\mathcal D(\Ad_{Ap})=p\mathcal D(\Ad_A)p \subset \mathcal D(\Ad_{A})$.

\end{enumerate}
\end{lem}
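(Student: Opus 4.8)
The plan is to reduce all three assertions to the already-proved Lemma \ref{lem-projection-action1} by observing that a projection $p\in A'\cap M$ is nothing other than a fixed point of the action $\Ad_A$. Indeed, the fixed point algebra of $\Ad_A\colon \mathcal U(A)\actson M$ is exactly $M^{\Ad_A}=A'\cap M$, so $p\in M^{\Ad_A}$ and the reduced action $(\Ad_A)^p\colon \mathcal U(A)\actson pMp$ of Lemma \ref{lem-projection-action1} is defined. Thus Lemma \ref{lem-projection-action1}, applied verbatim to $\alpha=\Ad_A$, yields the three conclusions with $(\Ad_A)^p$ in place of $\Ad_{Ap}$. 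It then remains only to identify $(\Ad_A)^p$ with $\Ad_{Ap}$ as actions on $pMp$.

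For this identification, note that for $u\in\mathcal U(A)$ we have $up=pu$, so $(\Ad_A)^p_u(pxp)=u(pxp)u^*=(up)(pxp)(up)^*$ with $up\in\mathcal U(Ap)$; conversely, granting the claim $\mathcal U(Ap)=\{up\mid u\in\mathcal U(A)\}$ below, the two actions have identical orbit maps on $pMp$. Hence their algebraic convex hulls of $\Ad$-maps coincide, and therefore so do their point $\sigma$-weak (resp.\ $\sigma$-weak) closures: $\D((\Ad_A)^p)=\D(\Ad_{Ap})$, $\mathcal K(y,(\Ad_A)^p)=\mathcal K(y,\Ad_{Ap})$ for all $y\in pMp$, and consequently $\mathcal D((\Ad_A)^p)=\mathcal D(\Ad_{Ap})$. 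Substituting these equalities into Lemma \ref{lem-projection-action1} gives exactly the three statements: part (1) is Lemma \ref{lem-projection-action1}(1) combined with $\mathcal K(pxp,(\Ad_A)^p)=\mathcal K(pxp,\Ad_{Ap})$; part (2) is Lemma \ref{lem-projection-action1}(2) with $\D((\Ad_A)^p)=\D(\Ad_{Ap})$; and part (3) is Lemma \ref{lem-projection-action1}(3) with $\mathcal D((\Ad_A)^p)=\mathcal D(\Ad_{Ap})$.

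The one genuinely nontrivial point, which I expect to be the \emph{main obstacle}, is the unitary lifting $\mathcal U(Ap)=\{up\mid u\in\mathcal U(A)\}$. Here I would exploit that $p$ commutes with $A$: the normal $\ast$-homomorphism $A\to pMp$, $a\mapsto ap$, has $\sigma$-weakly closed kernel, which is a two-sided ideal of the form $A(1-z)$ for a central projection $z\in Z(A)$ characterized by $zp=p$ and $(1-z)p=0$. This map then restricts to a normal $\ast$-isomorphism $Az\xrightarrow{\sim}Ap$ carrying $z$ to $p$. Given a unitary $v\in\mathcal U(Ap)$, I pull it back to a unitary $w\in\mathcal U(Az)$ and set $u:=w+(1-z)$; since $w(1-z)=(1-z)w=0$ and $ww^*=w^*w=z$, a direct check gives $u\in\mathcal U(A)$, while $up=wp+(1-z)p=wp=v$. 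This establishes the claim, and with it the lemma. The remaining verifications — that $u$ is unitary, and that the orbit identification passes to convex hulls and their closures — require nothing beyond the definitions.
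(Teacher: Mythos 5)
Your proposal is correct and follows essentially the same route as the paper: both reduce all three parts to Lemma \ref{lem-projection-action1} and isolate the surjectivity of $\mathcal U(A)\ni u\mapsto up\in\mathcal U(Ap)$ as the only new ingredient, proving it via the $\ast$-isomorphism $Az\simeq Ap$ (your $z$, defined through the kernel of $a\mapsto ap$, is exactly the central support of $p$ in $A'$ used in the paper) and the lift $u=w+(1-z)$.
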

\begin{proof}
	One can follow the same proofs as that in Lemma \ref{lem-projection-action1}, once the following claim is proven. The following map is surjective:
	$$ \mathcal U(A)\ni u \mapsto up\in \mathcal U(Ap) .$$
To see the claim, recall that $Az\ni az \mapsto ap\in Ap$ is a $\ast$-isomorphism, where $z\in \mathcal Z(A')=\mathcal Z(A)$ is the central support of $p$ in $A'$. In particular, the map $\mathcal U(Az)\ni uz \mapsto up\in \mathcal U(Ap)$ is a group isomorphism. Since $z$ is in the center of $A$, one has $\mathcal U(A)= \mathcal U(Az)\oplus \mathcal U(Az^{\perp})$, hence the map $\mathcal U(A)\ni u \mapsto uz\in \mathcal U(Az)$ is surjective. Combining these two maps, we get the claim.
\end{proof}

\begin{lem}\label{lem-projection-unitary2}
	Let $A\subset M$ be an inclusion of von Neumann algebras and $z\in \mathcal Z(A)$ a projection. Consider the group action $\Ad_A \colon \mathcal U(A)\actson M$. Then along the natural decomposition $M=zMz\oplus zMz^\perp \oplus z^\perp Mz \oplus z^\perp Mz^\perp$, we have
	$$ \mathcal D(\Ad_A) = \mathcal D(\Ad_{Az}) \oplus  zMz^\perp \oplus z^\perp Mz\oplus  \mathcal D(\Ad_{Az^{\perp}}),$$
where $\Ad_{Ap}\colon \mathcal U(Ap)\actson pMp$ is the reduced action for each projection $p\in A'\cap M$.
\end{lem}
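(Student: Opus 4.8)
The plan is to exploit that $z\in\mathcal Z(A)=A\cap A'$ is a projection lying in the fixed-point set $A'\cap M=M^{\Ad_A}$, so that $z$ and $z^\perp$ commute with every $u\in\mathcal U(A)$. Consequently each of the four Peirce corners $zMz$, $zMz^\perp$, $z^\perp Mz$, $z^\perp Mz^\perp$ is invariant under the whole semigroup $\D(\Ad_A)$. Since $z,z^\perp\in M^{\Ad_A}$, I would then combine the inclusion $M^\alpha\mathcal D(\alpha)M^\alpha\subset\mathcal D(\alpha)$ noted after the definition of the weak Dixmier subspace with Lemma \ref{lem-projection-unitary1}(3) applied to the projections $z,z^\perp\in A'\cap M$ to identify the diagonal corners of $\mathcal D(\Ad_A)$ with $\mathcal D(\Ad_{Az})$ and $\mathcal D(\Ad_{Az^\perp})$.

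First I would treat the off-diagonal corners. The key observation is that $u:=2z-1$ is a self-adjoint unitary in $\mathcal U(A)$ with $uz=z$ and $uz^\perp=-z^\perp$. A direct computation then gives $\Ad(u)(x)=-x$ for every $x\in zMz^\perp$ and for every $x\in z^\perp Mz$. Hence $0=\tfrac12(\id+\Ad(u))(x)\in\mathcal K(x,\Ad_A)$, and since $0\in A'\cap M$ is a fixed point while the corner is $\Ad_A$-invariant, the image $\theta(x)$ stays in the same corner for every $\theta\in\D(\Ad_A)$; applying the same unitary $2z-1$ to $\theta(x)$ yields $0\in\mathcal K(\theta(x),\Ad_A)^{\Ad_A}$ for all $\theta\in\D(\Ad_A)$. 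This proves $zMz^\perp\subset\mathcal D(\Ad_A)$ and $z^\perp Mz\subset\mathcal D(\Ad_A)$.

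Next I would prove the two inclusions of the asserted identity, using that $\mathcal D(\Ad_A)$ is a norm-closed subspace by Corollary \ref{cor-subspace}(2). For the inclusion ``$\subseteq$'': given $x\in\mathcal D(\Ad_A)$, the compressions $zxz$ and $z^\perp xz^\perp$ lie in $z\mathcal D(\Ad_A)z$ and $z^\perp\mathcal D(\Ad_A)z^\perp$, which by Lemma \ref{lem-projection-unitary1}(3) equal $\mathcal D(\Ad_{Az})$ and $\mathcal D(\Ad_{Az^\perp})$; the two off-diagonal compressions lie trivially in $zMz^\perp$ and $z^\perp Mz$; summing the four gives the desired membership in the right-hand side. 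For the inclusion ``$\supseteq$'': each summand on the right is already contained in $\mathcal D(\Ad_A)$ — the diagonal ones again by Lemma \ref{lem-projection-unitary1}(3), the off-diagonal ones by the previous paragraph — so their sum lies in $\mathcal D(\Ad_A)$ because it is a subspace.

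I do not expect a serious obstacle: once the averaging trick with $2z-1$ is in place, the argument is essentially bookkeeping built on the subspace property and Lemma \ref{lem-projection-unitary1}(3). The one point requiring care is the off-diagonal step, where one must verify that $\mathcal K(\theta(x),\Ad_A)^{\Ad_A}$ is nonempty for \emph{every} $\theta\in\D(\Ad_A)$ and not merely for $\theta=\id$; this is exactly why it matters that the corners are invariant under the entire semigroup $\D(\Ad_A)$, so that the single self-adjoint unitary $2z-1$ continues to collapse $\theta(x)$ to $0$ after any $\theta$ has been applied.
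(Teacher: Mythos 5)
Your proof is correct and follows essentially the same route as the paper: the off-diagonal corners are shown to lie in $\mathcal D(\Ad_A)$ by averaging over unitaries built from $z$ (you use the single symmetry $2z-1$ and the two-term convex combination $\tfrac12(\id+\Ad(2z-1))$, whereas the paper integrates over $\mathcal U(\C z\oplus\C z^\perp)$ with Haar measure — the same idea in discrete form), and the diagonal corners are identified via Lemma \ref{lem-projection-unitary1}(3). Your explicit verification that the corners are invariant under all of $\D(\Ad_A)$, so that the argument applies to $\theta(x)$ for every $\theta$, is exactly the point the paper also relies on.
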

\begin{proof}
Define a finite dimensional von Neumann subalgebera $A_0:=\C z\oplus \C z^\perp\subset A$. Since the unitary group $\mathcal U(A_0)$ is compact, one can define
	$$\Psi\colon M \to A_0'\cap M;\quad  x\mapsto \int_{\mathcal U(A_0)}uxu^* \, du,$$
where $du$ is the Haar measure. By construction, $\Psi$ is contained in $\D(\Ad_{A_0})\subset \D(\Ad_A)$. 
Then for each $x\in zMz^{\perp}$ and $\theta\in \D(\Ad_A)$, one has
	$$\mathcal K(\theta(x),A) \ni \Psi(\theta(x)) = \Psi(z\theta(x)z^\perp)= z\Psi(\theta(x))z^\perp=0.$$
This implies that $A'\cap \mathcal K(\theta(x),A)$ contains $0$, hence $x$ is contained in $\mathcal D(\Ad_A)$. We get $z M z^\perp \subset \mathcal D(\Ad_A)$ and this means $z M z^\perp = z\mathcal D(\Ad_A)z^\perp$. The same reasoning shows that $z^\perp M z = z^\perp\mathcal D(\Ad_A)z$. 
Since $z\mathcal D(\Ad_A)z = \mathcal D(\Ad_{Az})$ by Lemma \ref{lem-projection-unitary1} and the same holds for $z^\perp$, we get the conclusion.
\end{proof}

\subsection{Tracial subalgebras}

In this subsection, we study the case that the subalgebra $A\subset M$ is semifinite. In the case of group actions, this means that the action has an invariant weight.

\begin{lem}\label{lem-averaging-trace1}
	Let $\alpha\colon G\actson M$ be a discrete group action on a von Neumann algebra.  Assume that there exists a faithful normal semifinite weight $\varphi$ such that $\alpha_g(\varphi) = \varphi$ for all $g\in G$. Then it holds that $\mathfrak m_\varphi \subset \mathcal D(\alpha)$.

If further that $\varphi$ is a faithful normal state, then there exists a unique conditional expectation $E\colon M \to M^\alpha$ such that $\varphi \circ E= \varphi$ and 
$E\in \D(\alpha)$.
\end{lem}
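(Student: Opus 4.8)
The plan is to prove the two assertions in turn, deriving the second from the first.

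For the first assertion I would argue elementwise and then invoke that $\mathcal D(\alpha)$ is a norm-closed subspace (Corollary \ref{cor-subspace}); since $\mathfrak m_\varphi$ is the linear span of $\mathfrak m_\varphi^+$, it suffices to show $x\in\mathcal D(\alpha)$ for each $x\in\mathfrak m_\varphi^+$. The first point to record is that $\mathfrak m_\varphi\subset\mathfrak n_\varphi$: for $0\le x$ with $\varphi(x)<\infty$ one has $x^2\le\|x\|_\infty\,x$, hence $\varphi(x^*x)\le\|x\|_\infty\varphi(x)<\infty$, so the GNS map $\Lambda\colon\mathfrak n_\varphi\to L^2(M,\varphi)$ is defined on all of $\mathfrak m_\varphi$. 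The invariance $\alpha_g(\varphi)=\varphi$ makes $U_g\Lambda(a):=\Lambda(\alpha_g(a))$ a unitary representation of $G$ on $L^2(M,\varphi)$, and lower semicontinuity of $\varphi$ shows $\theta(\mathfrak m_\varphi^+)\subset\mathfrak m_\varphi^+$ for every $\theta\in\D(\alpha)$; in particular $\mathcal K(x,\alpha)=\D(\alpha)x\subset\mathfrak m_\varphi^+\subset\mathfrak n_\varphi$.

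Fix $x\in\mathfrak m_\varphi^+$ and $\theta\in\D(\alpha)$, and put $x':=\theta(x)\in\mathfrak m_\varphi^+$. Let $P$ be the orthogonal projection of $L^2(M,\varphi)$ onto the $U$-fixed vectors. By the classical minimal-norm argument (the vector of least norm in $\overline{\mathrm{conv}}^{\|\cdot\|_{2,\varphi}}\{U_g\Lambda(x')\}$ is $U$-fixed and coincides with $P\Lambda(x')$), there are convex combinations $y_n=\sum_k\lambda^{(n)}_k\alpha_{g^{(n)}_k}(x')\in\mathcal K(x',\alpha)\cap\mathfrak m_\varphi^+$ with $\Lambda(y_n)=\bigl(\sum_k\lambda^{(n)}_k U_{g^{(n)}_k}\bigr)\Lambda(x')\to P\Lambda(x')$ in $\|\cdot\|_{2,\varphi}$. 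Since $\|y_n\|_\infty\le\|x'\|_\infty$, the net $(y_n)$ has a $\sigma$-weak cluster point $y\in\mathcal K(x',\alpha)\subset\mathfrak n_\varphi$; pass to a subnet along which $y_n\to y$ $\sigma$-weakly while still $\Lambda(y_n)\to P\Lambda(x')$ in norm.

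The crux is to transfer these two modes of convergence into the single equality $\Lambda(y)=P\Lambda(x')$. Here I would use that for $c\in\mathfrak m_\varphi$ the functional $v\mapsto\varphi(c^*v)$ is $\sigma$-weakly continuous (normal)—a standard fact for normal semifinite weights, in contrast to the merely $L^2$-bounded behaviour for general $c\in\mathfrak n_\varphi$—together with the density of $\Lambda(\mathfrak m_\varphi)$ in $L^2(M,\varphi)$. Indeed, for each $c\in\mathfrak m_\varphi$ we have $\langle\Lambda(y_n),\Lambda(c)\rangle=\varphi(c^*y_n)\to\varphi(c^*y)=\langle\Lambda(y),\Lambda(c)\rangle$ by normality and $\sigma$-weak convergence, while the same inner products tend to $\langle P\Lambda(x'),\Lambda(c)\rangle$ by the $L^2$-convergence; hence $\Lambda(y)=P\Lambda(x')$. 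As $P\Lambda(x')$ is $U$-fixed and $\Lambda$ is injective, $\alpha_h(y)=y$ for all $h$, so $y\in\mathcal K(\theta(x),\alpha)^\alpha$. Since $\theta\in\D(\alpha)$ was arbitrary, $x\in\mathcal D(\alpha)$, and the subspace property yields $\mathfrak m_\varphi\subset\mathcal D(\alpha)$. I expect this transfer step—isolating the correct class $\mathfrak m_\varphi$ of test vectors on which $\varphi$ behaves like a normal functional—to be the main obstacle, as it is exactly where the weight (rather than state) setting bites.

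For the second assertion, when $\varphi$ is a state we have $\mathfrak m_\varphi=M$, so $\mathcal D(\alpha)=M$ by the first part. Lemma \ref{lem-Schwartz} then provides $E\in\D(\alpha)$ with $E(M)\subset M^\alpha$; since every $a\in M^\alpha$ is fixed by each $\alpha_g$, hence by every element of $\D(\alpha)$, we get $E|_{M^\alpha}=\mathrm{id}$, so $E$ is a unital contractive idempotent onto $M^\alpha$, i.e.\ a conditional expectation by Tomiyama's theorem. Normality of the state $\varphi$ and the invariance $\varphi\circ\alpha_g=\varphi$ give $\varphi\circ E=\lim_i\varphi\circ\Phi_i=\varphi$ along any net $\Phi_i\in\D_{\rm alg}(\alpha)$ approximating $E$ in the point $\sigma$-weak topology. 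Finally, uniqueness does not use membership in $\D(\alpha)$: any $\varphi$-preserving conditional expectation $E'$ onto $M^\alpha$ satisfies $\langle\Lambda(E'(x)),\Lambda(a)\rangle=\varphi(a^*E'(x))=\varphi(a^*x)=\langle\Lambda(x),\Lambda(a)\rangle$ for all $a\in M^\alpha$, so $\Lambda(E'(x))$ is forced to be the projection of $\Lambda(x)$ onto $\overline{\Lambda(M^\alpha)}$ and $E'$ is thereby determined.
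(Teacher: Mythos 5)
Your overall architecture (reduce to $x\in\mathfrak m_\varphi^+$, note $\theta(\mathfrak m_\varphi^+)\subset\mathfrak m_\varphi^+$ by lower semicontinuity, run the minimal--norm/mean--ergodic argument in $L^2(M,\varphi)$, then identify the $L^2$-limit with a $\sigma$-weak limit point of $\mathcal K(\theta(x),\alpha)$) is exactly the paper's, and your treatment of the state case is fine. The paper, however, outsources precisely your ``transfer step'' to \cite[Lemma 4.4]{HI15} (closedness of the image of $\mathcal K(d,\alpha)$ in $L^2$), and that is where your argument has a genuine gap: the ``standard fact'' you invoke --- that for $c\in\mathfrak m_\varphi$ the functional $v\mapsto\varphi(c^*v)$ is $\sigma$-weakly continuous --- is \emph{false} for non-tracial weights. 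The normal functionals attached to a weight are the sandwich forms $x\mapsto\varphi(a^*xb)=\langle\pi_\varphi(x)\Lambda(b),\Lambda(a)\rangle$ with $a,b\in\mathfrak n_\varphi$; your functional is $\varphi(c^*\cdot\,v\cdot 1)$, and $1\notin\mathfrak n_\varphi$ unless $\varphi$ is bounded. Concretely, take $M=\B(\ell^2)$, $\varphi=\Tr(h^{1/2}\,\cdot\,h^{1/2})$ with $h=\mathrm{diag}(n^2)$, and $c=\xi\xi^*$ with $\xi=\sum_n n^{-3/2-\varepsilon}e_n$. Then $\xi\in D(h^{1/2})$, so $c\in\mathfrak m_\varphi^+$, but $\xi\notin D(h)$, and one computes $\varphi(cv)=\langle h^{1/2}\xi,\,h^{1/2}v^*\xi\rangle$; for the contractions $v_N=\|\xi\|^{-1}\xi e_N^*\in\mathfrak n_\varphi$, which tend to $0$ $\sigma$-weakly, this equals $\|\xi\|\,N^{1/2-\varepsilon}\to\infty$. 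So testing $\sigma$-weak convergence against $\Lambda(\mathfrak m_\varphi)$ via this pairing does not work, and the identity $\Lambda(y)=P\Lambda(x')$ is not justified as written. (Your heuristic that this is ``exactly where the weight setting bites'' is right; the proposed cure is not.)

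The step can be repaired without changing your strategy. For instance, test against vectors $\Lambda(ab)$ with $a\in\mathfrak n_\varphi$ and $b$ in the Tomita algebra of $\varphi$: by the KMS condition, $\varphi((ab)^*v)=\varphi(b^*a^*v)=\varphi\bigl(a^*\,v\,\sigma^\varphi_{-\ri}(b^*)\bigr)$, which \emph{is} of the normal sandwich form since $a^*\in\mathfrak n_\varphi^*$ and $\sigma^\varphi_{-\ri}(b^*)\in\mathfrak n_\varphi$; such $\Lambda(ab)=\pi_\varphi(a)\Lambda(b)$ span a dense subspace of $L^2(M,\varphi)$, so the two limits can be identified. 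Alternatively, simply quote \cite[Lemma 4.4]{HI15} for the closedness of $\Lambda(\mathcal K(\theta(x),\alpha))$ in the $L^2$-norm and take the minimal-norm element there, as the paper does; note the paper also avoids your cluster-point bookkeeping by observing directly that the unique element of $\mathcal K$ of minimal $L^2$-norm must be $\alpha$-fixed.
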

\begin{proof}
	Fix $d\in \mathfrak m_\varphi^+$ and take $x\in \mathcal K(d,\alpha)$. We first show  $\varphi(x)\leq \varphi(d)$. To see this, take a net $f_i\in \D_{\rm alg}(\alpha)$ such that $f_i(d)\to x$ in the $\sigma$-weak topology. Since $\varphi$ is lower semi-continuous and since $\varphi$ is $\alpha$-invariant, we get 
	$$ \varphi( x) \leq \liminf_i \varphi(f_i(d)) = \varphi(d).$$
We next show $\mathcal K(d,\alpha)^\alpha \neq \emptyset$. 
As in the proof of \cite[Lemma 4.4]{HI15}, we can embed $\mathcal K(d,\alpha) \to L^2(M,\varphi)$ and the range is closed in the $L^2$-norm topology. 
By the projection theorem, take the unique element $a\in \mathcal K(d,\alpha)$ that attains the minimum distance from $0\in L^2(M,\varphi)$. By the uniqueness and since $\varphi$ is $\alpha$-invariant, we get $\alpha_g(a)=a$ for all $g\in G$. This means $a\in M^\alpha$ and we obtain $\mathcal K(d,\alpha)^\alpha \neq \emptyset$.

We show $d\in \mathcal D(\alpha)$. Take any $\theta\in \D(\alpha)$. Since $\theta(d)$ is in $\mathfrak m_\varphi^+$ by the first part of the proof, we can apply the result in the last argument and get $\mathcal K(\theta(d),\alpha)^\alpha \neq \emptyset$. This means $d\in \mathcal D(\alpha)$.

Suppose that $\varphi$ is a state. Then since $\alpha$ commutes with the modular action $\sigma^\varphi$, the subalgebra $M^\alpha \subset M$ is globally preserved by $\sigma^\varphi$. Hence there exists a unique conditional expectation $E\colon M \to  M^\alpha$ such that $\varphi \circ E = \varphi$. By the uniqueness, we know $\alpha_g \circ E\circ \alpha _g^{-1} = E$ for all $g\in G$. 
Since $M=\mathfrak m_\varphi  \subset \mathcal D(\alpha)$, for each $x\in M$, we have $\mathcal K(x,\alpha)^\alpha \neq \emptyset$, so take any element $a$. Observe that $ E (\mathcal K(x,\alpha)) = \{E(x)\}$ by the normality of $E$, hence we get $a = E(a)=E(x)$. We have proved that $\mathcal K(x,\alpha) = \{E(x)\}$ for all $x\in M$. Now by Lemma \ref{lem-Schwartz}, take $\Psi\in \D(\alpha)$ with $\Psi(M)=\Psi(\mathcal D(\alpha))\subset M^\alpha$. Then for each $x\in M$, we have $\Psi(x)\in  \mathcal K(x,\alpha)^\alpha = \{E(x)\}$, hence we get $\Psi(x)=E(x)$ and $E = \Psi \in \D(\alpha)$. 
\end{proof}

\begin{prop}\label{prop-averaging-trace2}
	Let $A\subset M$ be an inclusion of von Neumann algebras. Assume that there exists a faithful normal semifinite weight $\varphi$ such that $A\subset M_\varphi$. Then we have $\mathfrak m_\varphi \subset \mathcal D(\Ad_A\colon \mathcal U(A)\actson M)$.

This assumption is satisfied in the following two cases:
\begin{itemize}
	\item $\varphi$ is a semifinite trace and $A$ is a general subalgebra (possibly of type $\rm III$);
	\item $A\subset M$ is with an operator valued weight $E_A$, $A$ has a semifinite trace $\Tr_A$, and $\varphi=\Tr_A \circ E_A $.
\end{itemize}
\end{prop}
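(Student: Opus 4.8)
The plan is to reduce the main assertion to Lemma~\ref{lem-averaging-trace1}, applied to the discrete group action $\Ad_A\colon \mathcal U(A)\actson M$. The only hypothesis of that lemma to be checked is the invariance $\Ad(u)(\varphi)=\varphi$ for every $u\in\mathcal U(A)$, and this is precisely where the assumption $A\subset M_\varphi$ enters. Since each $u\in\mathcal U(A)$ lies in the centralizer $M_\varphi$, the weight $\varphi$ is trace-like against $u$: recalling the standard fact that $\varphi(az)=\varphi(za)$ whenever $a\in M_\varphi$ and $z\in\mathfrak m_\varphi$, I take $a=u$ and $z=yu^*$ to get $\varphi(uyu^*)=\varphi(yu^*u)=\varphi(y)$ for all $y\in\mathfrak m_\varphi$. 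Thus $\varphi\circ\Ad(u)$ and $\varphi$ agree on $\mathfrak m_\varphi^+$; as both are faithful normal semifinite weights, a routine normality argument (approximating $x\in M^+$ from below by an increasing net in $\mathfrak m_\varphi^+$, which stays in $\mathfrak m_\varphi^+$ after conjugation by $u$) upgrades this to equality on all of $M^+$, i.e.\ $\Ad(u)(\varphi)=\varphi$. Lemma~\ref{lem-averaging-trace1} then gives $\mathfrak m_\varphi\subset\mathcal D(\Ad_A)$, which is the claim.

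It remains to verify that the two listed cases do produce such a $\varphi$ with $A\subset M_\varphi$. In the first case $\varphi$ is a semifinite trace on $M$, so $\sigma^\varphi$ is trivial and $M_\varphi=M\supset A$, and there is nothing further to check (this holds irrespective of the type of $A$). In the second case I would identify the modular action of $\varphi=\Tr_A\circ E_A$ on the subalgebra $A$ by invoking Haagerup's theorem on the modular automorphism group of a weight composed with an operator valued weight: for any faithful normal semifinite weight $\psi$ on $A$ one has $\sigma_t^{\psi\circ E_A}|_A=\sigma_t^{\psi}$, and $\psi\circ E_A$ is again a faithful normal semifinite weight on $M$. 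Applying this with $\psi=\Tr_A$, whose modular action is trivial because it is a trace, yields $\sigma_t^\varphi|_A=\id_A$, that is $A\subset M_\varphi$, as needed.

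The main obstacle is the second case: the entire reduction hinges on correctly importing the modular theory of composed weights $\Tr_A\circ E_A$ (Haagerup's computation $\sigma^{\psi\circ E_A}|_A=\sigma^\psi$) in order to conclude $A\subset M_\varphi$. I should also take care with the passage of the identity $\varphi\circ\Ad(u)=\varphi$ from $\mathfrak m_\varphi^+$ to all of $M^+$; this is standard for faithful normal semifinite weights, but it is the one point in the main argument where the type~III nature of $A$ forces a genuine normality/functional-calculus verification rather than a finite-trace computation.
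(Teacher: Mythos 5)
Your proposal is correct and follows essentially the same route as the paper: the paper's proof is the one-line observation that $A\subset M_\varphi$ makes $\Ad_A$ a $\varphi$-preserving action, so Lemma~\ref{lem-averaging-trace1} applies. The extra details you supply (the centralizer computation $\varphi(uyu^*)=\varphi(y)$ with its extension to $M^+$ by normality, and Haagerup's identity $\sigma^{\psi\circ E_A}|_A=\sigma^\psi$ for the second case) are exactly the standard facts the paper leaves implicit, and they are invoked correctly.
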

\begin{proof}
	We consider $\Ad_A\colon \mathcal U(A)\actson M$. Then the condition $A\subset M_\varphi$ means that $\Ad_A$ is $\varphi$-preserving. Hence the proposition follows by Lemma \ref{lem-averaging-trace1}.
\end{proof}

\subsection{Type $\rm III$ subalgebras}

In this subsection, we study the case that the subalgebra $A\subset M$ is of type III. We prepare a lemma.

\begin{lem}\label{lem-operator-bounded}
	Let $\alpha \colon G\actson M$ be a discrete group action on a von Neumann algebra, $A\subset M$ a von Neumann subalgebra with operator valued weight $E_A\colon M\to A$ such that $\alpha_g\circ E_A = E_A \circ \alpha_g$ for all $g\in G$. If $x\in \mathfrak m_{E_A}^+$ and $\theta\in \D(\alpha)$, then $ E_A(\theta(x))\leq \theta(E_A(x))$. 

In particular, we have
\begin{itemize}
	\item $ \mathcal K(x,\alpha)\subset \mathfrak m_{E_A}$ for all $x\in \mathfrak m_{E_A}$; and 
	\item $ \theta(\mathfrak m_{E_A}) \subset \mathfrak m_{E_A}$ for all $\theta\in \D(\alpha)$.
\end{itemize}
\end{lem}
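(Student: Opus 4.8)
The plan is to establish the inequality first on the algebraic convex hull $\D_{\rm alg}(\alpha)$, where it holds as an equality, and then to pass to the limit using the normality (i.e.\ lower semicontinuity) of $E_A$. The starting observation is purely algebraic: for $f=\sum_k\lambda_k\alpha_{g_k}\in\D_{\rm alg}(\alpha)$ with $\lambda_k\geq 0$, $\sum_k\lambda_k=1$, the linearity and positivity of $E_A$ together with the hypothesis $\alpha_g\circ E_A=E_A\circ\alpha_g$ (which presupposes $\alpha_g(A)=A$) give, for $x\in\mathfrak m_{E_A}^+$,
\[
	E_A(f(x))=\sum_k\lambda_k\,E_A(\alpha_{g_k}(x))=\sum_k\lambda_k\,\alpha_{g_k}(E_A(x))=f(E_A(x)).
\]
Since $\alpha_{g_k}$ preserves $A$ and $E_A(x)\in A^+$ is bounded, this already shows $f(x)\in\mathfrak m_{E_A}^+$, so the algebraic versions of both inclusions hold.

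Next I would fix $\theta\in\D(\alpha)$ and choose a net $f_i\in\D_{\rm alg}(\alpha)$ with $f_i\to\theta$ in the point $\sigma$-weak topology, so that $f_i(x)\to\theta(x)$ and $f_i(E_A(x))\to\theta(E_A(x))$ $\sigma$-weakly. To compare $E_A(\theta(x))\in\widehat A^+$ with $\theta(E_A(x))\in M^+$ inside $\widehat M^+$, I would test against an arbitrary $\omega\in M_*^+$ and set $\psi:=\omega|_A\in A_*^+$. By normality of $E_A$, the composition $\varphi_\psi:=\psi\circ E_A$ is a normal weight on $M$, hence $\sigma$-weakly lower semicontinuous on $M^+$ as a supremum of normal positive functionals. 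Combining lower semicontinuity with the algebraic identity of the first step yields
\[
	\varphi_\psi(\theta(x))\leq\liminf_i\varphi_\psi(f_i(x))=\liminf_i\omega\!\left(f_i(E_A(x))\right)=\omega\!\left(\theta(E_A(x))\right),
\]
where the middle equality uses $E_A(f_i(x))=f_i(E_A(x))\in A$ together with $\psi=\omega|_A$, and the last equality uses the $\sigma$-weak convergence $f_i(E_A(x))\to\theta(E_A(x))$ and the $\sigma$-weak continuity of $\omega$. As $\omega\in M_*^+$ is arbitrary, this is precisely $E_A(\theta(x))\leq\theta(E_A(x))$ in $\widehat M^+$.

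For the two ``in particular'' statements, I would note that $\theta$ is unital and contractive, so $\theta(E_A(x))\leq\|E_A(x)\|_\infty\cdot 1$, and hence the inequality just proved dominates $E_A(\theta(x))$ in $\widehat M^+$ by a bounded element. Since every $\psi\in A_*^+$ extends to some $\omega\in M_*^+$ (automatically with $\|\omega\|=\omega(1)=\psi(1)=\|\psi\|$, as $1_A=1_M$), the domination reads $E_A(\theta(x))(\psi)\leq\|E_A(x)\|_\infty\|\psi\|$ for all $\psi\in A_*^+$; this forces $E_A(\theta(x))\in A^+$ with $\|E_A(\theta(x))\|_\infty\leq\|E_A(x)\|_\infty$, i.e.\ $\theta(x)\in\mathfrak m_{E_A}^+$. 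Using $\mathcal K(x,\alpha)=\D(\alpha)x$ and that $\mathfrak m_{E_A}$ is the linear span of $\mathfrak m_{E_A}^+$, I would split a general $x\in\mathfrak m_{E_A}$ into its positive components to deduce both $\mathcal K(x,\alpha)\subset\mathfrak m_{E_A}$ and $\theta(\mathfrak m_{E_A})\subset\mathfrak m_{E_A}$.

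The main obstacle I anticipate is the careful bookkeeping in the extended positive cone: making precise the comparison of $E_A(\theta(x))\in\widehat A^+$ with $\theta(E_A(x))\in M^+$ as elements of $\widehat M^+$ via restriction of normal functionals, and extracting genuine boundedness of $E_A(\theta(x))$ from a mere domination in $\widehat M^+$. The passage from the exact algebraic identity to the limiting inequality rests entirely on the correct form of lower semicontinuity, so I would take care to verify that the normality of $E_A$ transfers to $\sigma$-weak lower semicontinuity of each weight $\psi\circ E_A$.
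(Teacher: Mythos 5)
Your proposal is correct and follows essentially the same route as the paper: establish the exact identity $E_A(f(x))=f(E_A(x))$ for $f\in\D_{\rm alg}(\alpha)$, pass to the limit along a net $f_i\to\theta$ using the $\sigma$-weak lower semicontinuity of the normal weights $\phi\circ E_A$, and deduce the ``in particular'' statements by decomposing a general element of $\mathfrak m_{E_A}$ into positive parts. The only cosmetic difference is that you test against $\omega\in M_*^+$ and restrict to $A$, whereas the paper tests directly against $\phi\in A_*^+$; your added care about the extended positive cone and the extraction of boundedness from domination is a slightly more explicit rendering of the same argument.
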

\begin{proof}
	Let $x\in \mathfrak m_{E_A}^+$ and $\theta\in \D(\alpha)$ be as in the statement. Observe that $ E_A(f(x)) =  f (E_A(x))$ for all $f\in \D_{\rm alg}(\alpha)$. Let $\D_{\rm alg}(\alpha )\ni f_i \to \theta \in \D(\alpha)$ be a convergence in the point $\sigma$-weak topology.  Let $\phi\in A_\ast^+$. Then since $\phi\circ E_A$ is lower semi-continuous, we have
\begin{align*}
	(\phi\circ E_A)(\theta(x)) 
	&= (\phi\circ E_A)(\lim_i f_i(x))\\
	&\leq \liminf_{i \in I} (\phi\circ E_A)(f_i(x))
	=\liminf_{i \in I} \phi( f_i(E_A(x)))\\
	&=\lim_{i \in I} \phi( f_i(E_A(x)))= \phi(\theta(E_A(x))).
\end{align*}
Since this holds for all $\phi\in A_\ast^+$, we get $E_A(\theta(x))\leq \theta(E_A(x))$.

	To see the second part, we let $x\in \mathfrak m_{E_A}$ and decompose $x = \sum_{i}c_i x_i$ for some finitely many $c_i\in \C$ and $x_i \in \mathfrak m_{E_A}^+$. Take any $a\in \mathcal K(x,\alpha)$ and find $\theta\in \D(\alpha)$ such that $\theta(x)=a$. As $E_A(\theta(x_i))\leq \theta(E_A(x_i))<\infty$,  $\theta(x)=a$ is contained in $ \mathfrak m_{E_A}$. This argument shows the last two statements.
\end{proof}

The following proposition is the key observation of this article. When $E_A$ is a conditional expectation, the result was established in \cite{Ma19}. The proof in our setting proceeds similarly to \cite{Ma19}, but a different argument is required in the latter part of the proof to deal with operator valued weights.

\begin{prop}\label{prop-averaging-typeIII}
	Let $A\subset M$ be an inclusion of von Neumann algebras with operator valued weight $E_A$. Assume that $A$ is of type $\rm III$. Then $\mathfrak m_{E_A}\subset \mathcal D(\Ad_A\colon \mathcal U(A)\actson M)$.
\end{prop}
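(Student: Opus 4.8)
The plan is to combine the Schwartz property with a reduction to Marrakchi's theorem on a corner. By Lemma~\ref{lem-Schwartz} applied with $M_0=\mathfrak m_{E_A}$, which is $\D(A\subset M)$-invariant by Lemma~\ref{lem-operator-bounded}, it suffices to produce a single $\Psi_0\in\D(A\subset M)$ with $\Psi_0(\mathfrak m_{E_A})\subset A'\cap M$. In fact I would build an honest conditional expectation $\mathcal E\colon M\to A'\cap M$ lying in $\D(A\subset M)$, which then serves as $\Psi_0$ on all of $M$. The whole point of the type III hypothesis will be to manufacture such an $\mathcal E$ even though $E_A$ is only an operator valued weight.

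\emph{Localization.} Fix a faithful normal semifinite weight $\varphi_0$ on $A$ and set $\varphi=\varphi_0\circ E_A$, so that $\sigma^{\varphi}_t|_A=\sigma^{\varphi_0}_t$ and $E_A$ is modular equivariant: $E_A\circ\sigma^\varphi_t=\sigma^{\varphi_0}_t\circ E_A$. Writing $h:=E_A(1)\in\widehat A^+$ and evaluating the equivariance at $1$ gives $\sigma^{\varphi_0}_t(h)=h$, so the spectral projections $e:=\chi_{[1/n,n]}(h)$ lie in the centralizer $A_{\varphi_0}=A\cap M_\varphi$ and satisfy $\|E_A(e)\|_\infty\le n<\infty$. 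Since $e\in M_\varphi$, the reduced weight $\varphi|_{eMe}$ has modular group $\sigma^\varphi_t|_{eMe}$ and preserves $eAe$, while $\varphi|_{eAe}$ is semifinite because $he$ is bounded and invertible in $eAe$; Takesaki's criterion then yields a faithful normal conditional expectation $eMe\to eAe$. As $h$ has full support by faithfulness of $E_A$, one has $e\nearrow 1$, so $e$ is nonzero for large $n$, and $eAe$ is again of type III. Hence Marrakchi's Theorem~\ref{thm-Marrakchi}, applied to the with-expectation inclusion $eAe\subset eMe$, produces a conditional expectation $\mathcal E_e\colon eMe\to (eAe)'\cap eMe$ with $\mathcal E_e\in\D(eAe\subset eMe)$.

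\emph{Transport.} Here type III enters decisively: $e$ is equivalent in $A$ to its central support $z:=z(e)$, so there is $w\in Az$ with $w^*w=z$ and $ww^*=e$. Conjugation $\Phi:=\Ad(w)$ is a normal $\ast$-isomorphism of $zMz$ onto $eMe$ carrying $Az$ onto $eAe$, hence $\Phi((Az)'\cap zMz)=(eAe)'\cap eMe$. Therefore $\mathcal E:=\Phi^{-1}\circ\mathcal E_e\circ\Phi$ is a conditional expectation of $zMz$ onto $(Az)'\cap zMz=(A'\cap M)z\subset A'\cap M$. Moreover $\mathcal E\in\D(Az\subset zMz)$: if $\mathcal E_e$ is a point $\sigma$-weak limit of convex combinations of $\Ad(v)$ with $v\in\mathcal U(eAe)$, then $\Phi^{-1}\circ\Ad(v)\circ\Phi=\Ad(w^*vw)$ with $w^*vw\in\mathcal U(Az)$, and pre/post-composition with the fixed normal maps $\Phi,\Phi^{-1}$ preserves the limit. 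Extending these maps to $M$ by the identity on $z^\perp Mz^\perp$ places them in $\D(A\subset M)$. When $A$ is a factor, $z=1$ and a single such $\mathcal E$ is already a conditional expectation $M\to A'\cap M$ in $\D(A\subset M)$, which finishes the proof via the first paragraph. In general, $e=\chi_{[1/n,n]}(h)$ has central supports $z_n\nearrow 1$, and I would assemble the resulting maps $\mathcal E_n$ into a point $\sigma$-weak cluster point $\Psi_0\in\D(A\subset M)$ using compactness of $\D(A\subset M)$; since $\mathcal E_n(z_n x z_n)\in A'\cap M$ and $z_n\nearrow 1$, the limit should satisfy $\Psi_0(\mathfrak m_{E_A})\subset A'\cap M$.

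I expect the main obstacle to be the localization step, namely guaranteeing that the cut-down inclusion $eAe\subset eMe$ is genuinely \emph{with expectation} despite $E_A$ being only an operator valued weight. The feature that makes it work is the modular equivariance of $E_A$, which forces $h=E_A(1)$ into $A_{\varphi_0}$ and thereby lets the cutting projection be chosen in the centralizer, so that Takesaki's criterion becomes available. A secondary technical point is the assembly over $z_n\nearrow 1$ in the non-factor case, which must be carried out carefully so that the limiting ucp map ranges in $A'\cap M$ itself rather than merely in weak closures computed inside the corners.
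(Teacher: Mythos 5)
Your reduction via Lemma \ref{lem-Schwartz} is fine, and the transport/assembly steps could probably be repaired, but the localization step contains a fatal gap: the spectral projections $e=\chi_{[1/n,n]}(h)$ of $h:=E_A(1)\in\widehat{A}^+$ need not increase to $1$ — they can all be \emph{zero}. Faithfulness of $E_A$ only says that the support of $h$ is $1$, and that support includes the ``infinite part'' of $h$ as an element of the extended positive cone. Semifiniteness of $E_A$ means that $\mathfrak m_{E_A}$ is $\sigma$-weakly dense in $M$; it does \emph{not} mean that $h$ is a semifinite element of $\widehat{A}^+$ (i.e.\ that its bounded spectral projections converge to $1$). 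Indeed, for $a\in A^+$ one has $E_A(a)=a^{1/2}ha^{1/2}$, so if $h=\infty\cdot 1$ then $\mathfrak m_{E_A}\cap A=\{0\}$ even though $\mathfrak m_{E_A}$ is dense in $M$. This is exactly what happens in the paper's main application: for the basic construction $\langle N,B\rangle$ with the canonical operator valued weight $\widehat E_B$ onto $N$ and an irreducible infinite-index inclusion $B\subset N$, one has $\widehat E_B(1)=\infty\cdot 1$, so no nonzero projection of $A=N$ lies in $\mathfrak m_{\widehat E_B}$ and your corners $eMe$ are all trivial. In other words, your strategy of cutting down to a corner where $E_A$ becomes (a multiple of) a conditional expectation only works in the regime where a conditional expectation essentially already exists, which is precisely the regime the proposition is designed to go beyond.

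For contrast, the paper's proof avoids any attempt to localize $E_A$ to a bounded piece. It fixes a faithful normal conditional expectation $E\colon A\to\mathcal Z(A)$ in the Dixmier semigroup of $A$ (Theorem \ref{thm-Marrakchi} with $A=M$), takes a minimal idempotent $\Phi\in\D(A\subset M)$ with $\Phi|_A=E$, and shows via $E\circ E_A$ and the operator inequality $\Psi(x)^*\Psi(x)\le\Psi(x^*x)$ that every element of $\Phi(\mathfrak m_{E_A})$ lies in the multiplicative domain of $\Phi$ (Choi--Effros product $=$ product of $M$); a parallelogram-law argument applied to the minimal idempotents $\theta\circ\Phi$ then forces $\Phi(\mathfrak m_{E_A})\subset A'\cap M$. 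The only place where $E_A$ enters is through the faithful normal weight $E\circ E_A$ and the invariance $\theta(\mathfrak m_{E_A})\subset\mathfrak m_{E_A}$ of Lemma \ref{lem-operator-bounded} — no boundedness of $E_A(1)$ is ever needed. If you want to salvage your approach you would have to replace the cut-down by projections in $A$ with something that uses only the density of $\mathfrak m_{E_A}$ in $M$, which is essentially what the minimal-idempotent argument accomplishes.
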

\begin{proof}
	By Theorem \ref{thm-Marrakchi} (for the case $A=M$), take a faithful normal conditional expectation $E\colon A \to \mathcal Z(A)$, which is contained in $\mathcal D(\Ad_A\colon \mathcal U(A)\actson A)$. 
Take $\widetilde{E}\in \mathcal D(\Ad_A\colon \mathcal U(A)\actson A)$ such that $\widetilde{E}|_{A} = E$. 
Take a minimal idempotent $\Phi\in \D(\Ad_A\colon \mathcal U(A)\actson M)$. Then since $\Phi\circ \widetilde{E}$ is a minimal idempotent by Proposition \ref{prop-Marrakchi}, up to replacing, we may assume that $\Phi|_{A}=E$. In particular $\Phi$ is a minimal idempotent that is faithful and normal on $A$. 

Next, let $\Psi\in \D(\Ad_A\colon \mathcal U(A)\actson M)$ be any idempotent such that $\Psi|_A = E$. We first show that $\Psi(x^*x)=x^*x$ for all $x\in \Psi(\mathfrak m_{E_A})$. Since $\Psi^2 = \Psi$ and since $\Psi$ is ucp, by the Cauchy--Schwartz inequality, we get
	$$x^*x = \Psi(x^*)\Psi(x) \leq  \Psi(x^*x).$$
Observe that $x\in \Psi(\mathfrak m_{E_A}) \subset \mathfrak m_{E_A}$ by Lemma \ref{lem-operator-bounded}, hence $E_A(x^*x)<\infty $. We apply $E_A$ to this inequality and get that 
	$$ E_A(x^*x)\leq E_A \circ \Psi(x^*x)\leq \Psi\circ E_A(x^*x) <\infty,$$
where we  used Lemma \ref{lem-operator-bounded} again. Since $\Psi|_A=E$, the right hand side coincides with $ E\circ E_A(x^*x) $. We then apply $E$ to the inequalities and get that 
	$$ E\circ E_A(x^*x)\leq E\circ E_A \circ \Psi(x^*x)\leq E\circ E_A(x^*x).$$
This means $E\circ E_A(x^*x)= E\circ E_A \circ \Psi(x^*x)$, and hence $E\circ E_A( \Psi(x^*x) - x^*x)=0$ as $ \Psi(x^*x)$ and $ x^*x$ are contained in $\mathfrak m_{E_A}$. 
Since $\Psi(x^*x) - x^*x\geq 0$ and since $E\circ E_A$ is faithful, we get $\Psi(x^*x) - x^*x=0$ as desired.

We keep $\Psi$ as above. We define a unital $C^*$-subalgebra
	$$ M_0:= \overline{\mathfrak m_{E_A}+A} \subset M,$$
where the closure is taken by the operator norm topology. 
We claim that $\Psi(M_0)\subset M$ is a unital $C^*$-subalgebra, hence the Choi--Effros product of $\Psi$ on $\Psi(M_0)$ coincides with the original product of $M$. 
To see this, if $x,y\in \Psi(\mathfrak m_{E_A})$, then the polarization and the result in the last paragraph show that $ \Psi(xy)=xy $. If $x,y$ are in $\Psi(\mathfrak m_{E_A}+A)$, then it is easy to see that $ \Psi(xy)=xy $, because we can use $\Psi(A) = \mathcal Z(A)$ and $\Psi$ is a $\mathcal Z(A)$-module map. Since $\Psi$ is norm continuous and since $\Psi(M_0)$ is norm closed by $\Psi^2=\Psi$, the claim is proven.

Now we go back to the fixed minimal idempotent $\Phi\in \D(\Ad_A\colon \mathcal U(A)\actson M)$ with $\Phi|_A=E$. By the last paragraph, the Choi--Effros product of $\Phi$ on $\Phi(M_0)$ coincides with the product of $M$. We will show that $\Phi(M_0)\subset A'\cap M$.

The next claim is important.

\begin{claim}
	For each $\theta\in \D(\Ad_A\colon \mathcal U(A)\actson M)$, $\theta\circ \Phi(M_0)\subset M$ is a unital $C^*$-subalgebra on which the Choi--Effros product coincides with the product of $M$. 
Further, the following map is a $\ast$-isomorphism (with respect to the products of $M$)
	$$ \theta\colon \Phi(M_0)\to \theta\circ \Phi(M_0);\quad \Phi(x)\mapsto \theta(\Phi(x)) .$$
\end{claim}
\begin{proof}[{\bf Proof of Claim}]
	By Proposition \ref{prop-Marrakchi}, $\Phi^\theta:=\theta\circ \Phi$ is a minimal idempotent. Since $\Phi^\theta|_A = \theta \circ E = E$, the Choi--Effros product of $\Phi^\theta$ on $\Phi^\theta(M_0)$ coincides with the product of $M$. Since $\Phi$ and $\Phi^\theta$ are minimal, by Proposition \ref{prop-Marrakchi}, the following map is a $\ast$-isomorphism with respect to the Choi--Effros products
	$$ \Phi^\theta\colon \Phi(M)\to \Phi^\theta(M);\quad \Phi(x)\mapsto \Phi^\theta(\Phi(x))=\theta\circ \Phi(x) .$$
Note that the restriction of $\Phi^\theta$ to $\Phi(M_0)$ coincides with the map $\theta$ in the statement of the claim, hence $\theta$ is a bijective map. Since  the Choi--Effros products on $\Phi(M_0)$ and $\Phi^\theta(M_0)$ coincides with the products of $M$, the claim is proven.
\end{proof}

Let $a\in \Phi(M_0)$ and we show that $a\in A'\cap M$. For this, suppose by contradiction that $a\not\in A'\cap M$. Then there exists $u\in \mathcal U(A)$ such that $uau^*\neq a$. We put $\theta := \frac{1}{2}(\Ad(u)+\id)$ and apply the claim. We get the following $\ast$-isomorphism
	$$ \theta\colon \Phi(M_0)\to \theta\circ \Phi(M_0);\quad \Phi(x)\mapsto \theta(\Phi(x))= \frac{1}{2}(u\Phi(x)u^* + \Phi(x)) .$$
Since it is a $\ast$-homomorphism with respect to the products of $M$, if $a=\Phi(x)$, then \begin{align*}
	\frac{1}{2}(ua^*au^* + a^*a)=\theta(a^*a)  = 
	\theta(a)^*\theta(a) = \frac{1}{4}(uau^* + a)^*(uau^* + a).
\end{align*}
We then compute that, by the parallelogram law, 
\begin{align*}
	 \theta(a^*a) 
	&= \frac{1}{2}(|uau^*|^2 + |a|^2|) = \frac{1}{4}(|uau^* + a|^2 + |uau^*-a|^2) \\
	&\geq \frac{1}{4}|uau^* + a|^2 = \theta(a)^*\theta(a) = \theta(a^*a) .
\end{align*}
This means $|uau^*-a|^2=0$, a contradiction. We get $\Phi(M_0)\subset A'\cap M$.

Let $a\in \mathfrak m_{E_A}$. Then for each $\theta\in \D(\Ad_A\colon \mathcal U(A)\actson M)$, one has $\theta(a)\in \mathfrak m_{E_A}\subset M_0$ by Lemma \ref{lem-operator-bounded}. We get $\Phi(\theta(a))\in A'\cap M$, hence it is contained in $\mathcal K(\theta(a),\Ad_A)^{\Ad_A}$. We conclude $a\in \mathcal D(\Ad_A)$. This proves $\mathfrak m_{E_A} \subset \mathcal D(\Ad_A)$, as desired.
\end{proof}

\subsection{Proof of Theorem \ref{thmA}}\label{Proof of Theorem thmA}

Now we prove Theorem \ref{thm-Dixmier-expectation}. Then Theorem A follows immediately.

\begin{proof}[{\bf Proof of Theorem \ref{thm-Dixmier-expectation}}]
	Let $z\in \mathcal Z(A)$ be the unique projection such that $Az$ is semifinite and $Az^\perp$ is of type III. We denote by $E_{Az}\colon zMz\to Az$ the restriction of $E_A$ on $zMz$, and by $E_{Az^\perp}$ similarly. Observe that 
	$$ \mathfrak m_{E_A} = \mathfrak m_{E_{Az}}\oplus  z \mathfrak m_{E_A}z^\perp \oplus z^\perp  \mathfrak m_{E_A}z\oplus  \mathfrak m_{E_{Az^\perp}}.$$
By Lemma \ref{lem-projection-unitary2}, we have the following decomposition 
	$$ \mathcal D(\Ad_A) = \mathcal D(\Ad_{Az}) \oplus  zMz^\perp \oplus z^\perp Mz\oplus  \mathcal D(\Ad_{Az^{\perp}}).$$
By Propositions \ref{prop-averaging-trace2} and \ref{prop-averaging-typeIII}, we have $\mathfrak{m}_{E_{Az}}\subset  \mathcal D(\Ad_{Az}) $ and $\mathfrak m_{E_{Az^{\perp}}}\subset \mathcal D(\Ad_{Az^{\perp}})$, hence we get that $\mathfrak m_{E_A}\subset \mathcal D(\Ad_A)$. 

The last assertion follows from Lemma \ref{lem-Schwartz}.
\end{proof}

\begin{proof}[{\bf Proof of Corollary \ref{corB}}]
	$(1)\Rightarrow(3)$ Let $x\in \mathfrak m_{E_A}^+$ and observe that $A'\cap \mathcal K(x,\Ad_A)$ is not empty by Theorem \ref{thmA}. Then by Lemma \ref{lem-operator-bounded}, every element in $A'\cap \mathcal K(x,\Ad_A)$ is contained in both $\mathfrak m_{E_A}$ and $(A'\cap M)^+$. By assumption, this implies that $A'\cap \mathcal K(x,\Ad_A)=\{0\}$. 

	$(3)\Rightarrow(2)$ This is obvious.

	$(2)\Rightarrow(1)$ Suppose that item (1) does not hold and there exists a nonzero positive $x\in A'\cap M$ such that $E_A(x)<\infty$. Then since $\mathcal K(x,\Ad_A)=\{x\}$, item (2) does not hold. 
\end{proof}
\begin{rem}
	In item (2) and (3) in Corollary \ref{corB}, the conditions actually hold for all $x\in \mathfrak m_{E_A}$. 
To see this, take $x\in \mathfrak m_{E_A}$. Take any $d\in A'\cap \mathcal K(x,\Ad_A)$ and find $\theta\in \D(\Ad_A)$ such that $\theta(x)=d$. By Lemma \ref{lem-Schwartz}, take $\Psi\in \D(\Ad_A)$ such that $\Psi(\mathcal D(\Ad_A))\subset A'\cap M$. By replacing $\Psi\circ \theta$ by $\Psi$, we may assume that $\Psi(x)=d$. 
Then since $x$ is a linear span of elements in $\mathfrak m_{E_A}^+$, we get $\Psi(x)=0$. This means $d=0$.
\end{rem}

\subsection{Amenable groups and subalgebras}

We say that a topological group $G$ has the \textit{fixed point property} if every continuous affine action of $G$ on a compact convex space admits a fixed point. We are only interested in locally compact amenable groups or unitary groups of amenable von Neumann algebras. 
Recall that a von Neumann algebra $M$ is \textit{amenable} \cite{Co75} if it is injective, or equivalently semidiscrete.

\begin{lem}
	If $M$ is amenable, then the unitary group $\mathcal U(M)$ with the strong topology has the fixed point property. 
\end{lem}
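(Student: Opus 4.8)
The plan is to establish the fixed point property by averaging an arbitrary point over $\mathcal U(M)$, using that $\mathcal U(M)$ is approximated from inside by compact subgroups. So let $\mathcal U(M)\actson K$ be a continuous affine action on a compact convex subset $K$ of a locally convex space, where $\mathcal U(M)$ carries the strong topology; I must produce a fixed point. The strategy, classical for unitary groups, is to average over the unitary groups of finite-dimensional subalgebras (which are compact) and then pass to a limit, the strong topology being exactly what makes this limiting procedure compatible with the action.

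For the model case, suppose $N\subset M$ is a finite-dimensional von Neumann subalgebra. Then $\mathcal U(N)$ is a compact group, so it carries a Haar probability measure $du$, and for any fixed $\xi_0\in K$ the Haar average $\xi_N:=\int_{\mathcal U(N)} u\cdot \xi_0\, du$ is a well-defined element of $K$, by compactness of $K$ and continuity of the action. By invariance of Haar measure and affineness of the action, $\xi_N$ is fixed by every unitary of $N$.

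The core step is to upgrade these partial fixed points to a genuine one. Since $M$ is amenable, it is approximately finite-dimensional, so its finite-dimensional von Neumann subalgebras $N$ are directed by inclusion and, by Kaplansky density, $\bigcup_N \mathcal U(N)$ is dense in $\mathcal U(M)$ for the strong topology. I would then take a cluster point $\xi_\infty\in K$ of the net $(\xi_N)_N$, which exists by compactness. To see that $\xi_\infty$ is fixed by an arbitrary $u\in\mathcal U(M)$, choose along a suitable subnet unitaries $v_N\in\mathcal U(N)$ converging strongly to $u$; then $v_N\cdot \xi_N=\xi_N\to\xi_\infty$, while the joint continuity of the action gives $v_N\cdot\xi_N\to u\cdot\xi_\infty$, whence $u\cdot\xi_\infty=\xi_\infty$.

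The main obstacle is the input ``amenable $\Rightarrow$ usable internal approximation'', and this is where care is needed. When $M$ has separable predual, Connes' theorem identifies injectivity with approximate finite-dimensionality, so the filtration by finite-dimensional subalgebras used above is genuinely available. In the general $\sigma$-finite case one has only semidiscreteness: the identity of $M$ is a point-weak$^\ast$ limit of normal ucp maps factoring through matrix algebras $M_{n_\lambda}(\C)$, whose unitary groups are again compact, but these maps are not multiplicative, so the Haar averages cannot be transported directly. The delicate point is therefore to replace the exact invariance exploited above by approximate invariance, phrase the conclusion in terms of a left-invariant mean on the bounded right-uniformly continuous functions on $\mathcal U(M)$, and extract such a mean as a weak$^\ast$-cluster point in the compact space of means. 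Controlling this approximation, rather than any individual averaging step, is the heart of the argument.
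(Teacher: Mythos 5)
Your treatment of the separable-predual case is essentially the paper's: average over the compact unitary groups of an increasing sequence of finite-dimensional subalgebras given by Connes' theorem, take a cluster point of the partial fixed points, and use strong density of $\bigcup_n \mathcal U(M_n)$ in $\mathcal U(M)$ together with joint continuity of the action. (One small imprecision: the collection of \emph{all} finite-dimensional subalgebras is not directed by inclusion, since two of them need not generate a finite-dimensional subalgebra; you should work with the fixed increasing sequence that Connes' theorem provides, which is all the density argument needs.)

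The genuine gap is in the general case. The lemma is stated for arbitrary amenable $M$, and your proposal explicitly stops short there: you correctly observe that semidiscreteness only yields non-multiplicative ucp factorizations through matrix algebras, so the Haar-averaging scheme does not transport, and you then defer to an unexecuted plan (``replace exact invariance by approximate invariance\ldots extract a left-invariant mean\ldots this is the heart of the argument''). That plan is not carried out, and it is not a routine completion --- it essentially amounts to proving amenability of the topological group $\mathcal U(M)$ directly from semidiscreteness, a substantive statement in its own right. The paper avoids semidiscreteness entirely and instead reduces in two stages. For $\sigma$-finite $M$, Takesaki's conditional expectation theorem produces an increasing net of subalgebras $M_i \subset M$ with separable predual and with expectation whose union is $\sigma$-weakly dense; each $M_i$ is again amenable (injectivity passes to subalgebras with expectation), hence has the fixed point property by the separable case, and the same dense-union-of-subgroups limit argument you already used finishes this step. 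The fully general case is then reduced to the $\sigma$-finite one via an increasing net of $\sigma$-finite projections $p_i \to 1$ and the subgroups $\mathcal U(p_i M p_i \oplus \C p_i^{\perp})$, whose union is strongly dense in $\mathcal U(M)$. You need to supply one of these reductions (or actually prove the invariant-mean statement) to close the argument.
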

\begin{proof}
	Assume first that $M$ has separable predual. Then by \cite{Co75}, there exists an increasing sequence $M_n \subset M$ of finite dimensional subalgebras such that $\bigcup_n M_n \subset M$ is a $\sigma$-weakly dense subalgebra. Then each $\mathcal U(M_n)$ is compact, so it has the fixed point property. It is easy to see that $\mathcal U(M)$ also has the fixed point property. 

	Assume next that $M$ is $\sigma$-finite and fix a faithful normal state $\varphi\in N_\ast$. Then by Takesaki's conditional expectation theorem, it is easy to construct an increasing net $N_i \subset N$ of von Neumann subalgebras with separable predual and with expectation such that $\bigcup_i M_i \subset M$ is a $\sigma$-weakly dense subalgebra. Then since each $M_i$ has the fixed point property, so does $M$. 

	Finally let $M$ be a general amenable von Neumann algebra. Take an increasing net of  $\sigma$-finite projections $p_i$ such that $p_i \to 1$ strongly. Put
	$$ \mathcal U_i:= \mathcal U(p_iMp_i\oplus \C p_i^\perp )\subset \mathcal U(M) .$$
It is not hard to see that it is an increasing net of subgroups in $\mathcal U(M)$ and that $\bigcup_i \mathcal U_i$ is strongly dense in $\mathcal U(M)$. Since each $\mathcal U_i$ has the fixed point, so does $\mathcal U(M)$.
\end{proof}

\begin{lem}\label{lem-averaging-amenable}
	Let $\gamma \colon \widetilde{G}\actson M$ be a discrete group action on a von Neumann algebra. Assume that $\widetilde{G}$ is generated by subgroups $H,G\leq \widetilde{G}$ satisfying  $gHg^{-1} = H$ for all $g\in G$. We denote by $\gamma^H,\gamma^{G}$ restrictions of $\gamma$ on $H,G$ respectively. 

\begin{enumerate}
	\item Assume that $G$ is a topological group with the fixed point property and that $\gamma^G$ is continuous. Then there exists $\Psi\in \D(\gamma)$ such that 
	$$\Psi(x)\in M^\gamma ,\quad \text{for all}\quad x\in \mathcal D(\gamma^H).$$
If a subset $M_0\subset \mathcal D(\gamma^H)$ satisfies that $\theta(M_0) \subset \mathcal D(\gamma^H)$ for all $\theta\in \D(\gamma)$, then we have $M_0\subset \mathcal D(\gamma)$. 

	\item Assume that $M$ has a $\gamma^{G}$-preserving f.n.s.\ weight $\varphi$, and that there exists a subset $A_0\subset \mathfrak m_{\varphi}$ such that
	$$ \theta(A_0) \subset  \mathfrak m_{\varphi},\quad \text{for all}\quad \theta\in \D(\gamma^H).$$
Then there exits $\Psi\in \D(\gamma)$ such that 
	$$\Psi(x)\in M^\gamma ,\quad \text{for all}\quad x\in A_0\cap \mathcal D(\gamma^H).$$
If a subset $M_0\subset \mathfrak m_\varphi\cap \mathcal D(\gamma^H)$ satisfies that $\theta(M_0) \subset \mathfrak m_\varphi\cap \mathcal D(\gamma^H)$ for all $\theta\in \D(\gamma)$, then we have $M_0\subset \mathcal D(\gamma)$.

\end{enumerate}

\end{lem}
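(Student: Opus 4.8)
The plan is to handle both parts through a single two-step averaging scheme: first average over the normal subgroup $H$ to land in $M^{\gamma^H}$, then average over $G$ to land in $M^{\gamma^G}$, and conclude using $M^{\gamma^H}\cap M^{\gamma^G}=M^\gamma$, which holds because $\widetilde G$ is generated by $H\cup G$. The mechanism that makes the two steps compatible is the normality hypothesis $gHg^{-1}=H$: for $x\in M^{\gamma^H}$ and $g\in G$ one has $\gamma_h(\gamma_g(x))=\gamma_{hg}(x)=\gamma_g(\gamma_{g^{-1}hg}(x))=\gamma_g(x)$, since $g^{-1}hg\in H$, so $\gamma_g$ preserves the $\sigma$-weakly closed subspace $M^{\gamma^H}$. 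Passing to convex combinations and point-$\sigma$-weak limits, every element of $\D(\gamma^G)\subset\D(\gamma)$ preserves $M^{\gamma^H}$. At the outset I also record the map $\Psi_H\in\D(\gamma^H)\subset\D(\gamma)$ furnished by Lemma \ref{lem-Schwartz} applied to $\gamma^H$, which satisfies $\Psi_H(\mathcal D(\gamma^H))\subset M^{\gamma^H}$.

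Next I would produce a $G$-averaging map $\Psi_G\in\D(\gamma^G)$ in each case. For (1), let $G$ act on the compact convex set $\D(\gamma^G)$ by $\theta\mapsto\gamma_g\circ\theta$; this is affine, lands in $\D(\gamma^G)$ by closedness under composition, and has continuous orbit maps since $\gamma^G$ is continuous and each $\gamma_g$ is normal. The fixed point property then yields $\Psi_G$ with $\gamma_g\circ\Psi_G=\Psi_G$ for all $g\in G$, i.e.\ $\Psi_G(M)\subset M^{\gamma^G}$. For (2), Lemma \ref{lem-averaging-trace1} applied to the $\gamma^G$-invariant weight $\varphi$ gives $\mathfrak m_\varphi\subset\mathcal D(\gamma^G)$, and Lemma \ref{lem-Schwartz} supplies $\Psi_G\in\D(\gamma^G)$ with $\Psi_G(\mathfrak m_\varphi)\subset M^{\gamma^G}$. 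In both cases I set $\Psi:=\Psi_G\circ\Psi_H\in\D(\gamma)$. For $x\in\mathcal D(\gamma^H)$ (in case (2), for $x\in A_0\cap\mathcal D(\gamma^H)$) we have $\Psi_H(x)\in M^{\gamma^H}$; moreover in case (2) also $\Psi_H(x)\in\Psi_H(A_0)\subset\mathfrak m_\varphi$, using the hypothesis $\theta(A_0)\subset\mathfrak m_\varphi$ with $\theta=\Psi_H\in\D(\gamma^H)$. Hence $\Psi_G$ sends $\Psi_H(x)$ into $M^{\gamma^G}$ while keeping it in $M^{\gamma^H}$ by the opening remark, so $\Psi(x)\in M^{\gamma^H}\cap M^{\gamma^G}=M^\gamma$, as required.

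For the second assertions I would invoke the general form of Lemma \ref{lem-Schwartz}. Put $M_0':=\D(\gamma)M_0=\{\theta(x)\mid\theta\in\D(\gamma),\,x\in M_0\}$, which is $\D(\gamma)$-invariant by closedness under composition and satisfies $M_0'\subset\mathcal D(\gamma^H)$ (and, in case (2), $M_0'\subset\mathfrak m_\varphi$) by the standing hypothesis on $M_0$ together with $\theta=\id$. Applying the first assertion --- with $A_0$ replaced by $M_0'$ in case (2), whose requirements $M_0'\subset\mathfrak m_\varphi$ and $\theta(M_0')\subset M_0'\subset\mathfrak m_\varphi$ for $\theta\in\D(\gamma^H)$ are met --- produces $\Psi\in\D(\gamma)$ with $\Psi(M_0')\subset M^\gamma$. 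Since $\theta(M_0')\subset M_0'$ for all $\theta\in\D(\gamma)$, the general form of Lemma \ref{lem-Schwartz} then yields $M_0'\subset\mathcal D(\gamma)$, and in particular $M_0\subset\mathcal D(\gamma)$.

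The routine points are the verifications that the relevant subspaces are $\sigma$-weakly closed and that the composed maps remain in $\D(\gamma)$. The main obstacle is conceptual rather than computational: ensuring that the second ($G$-)averaging does not destroy the fixed-point property already gained under $H$. This is precisely where the normality hypothesis $gHg^{-1}=H$ enters, as it forces $\gamma^G$ to leave $M^{\gamma^H}$ invariant, so that composing the two averaging maps lands in the full fixed-point algebra $M^\gamma$ rather than in only one of the two pieces. A secondary technical care is the continuity of the affine $G$-action on $\D(\gamma^G)$ required to invoke the fixed point property in part (1), which follows from the continuity of $\gamma^G$ together with the normality of each $\gamma_g$.
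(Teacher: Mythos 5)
Your proof is correct and follows essentially the same route as the paper: the two-step average $\Psi=\Psi_G\circ\Psi_H$, the observation that normality of $H$ in $\widetilde G$ makes $\D(\gamma)$ preserve $M^{\gamma^H}$, Lemma \ref{lem-averaging-trace1} for part (2), and the identity $M^{\gamma^H}\cap M^{\gamma^G}=M^\gamma$. Your only (harmless) deviations are applying the fixed point property to the left-composition action on $\D(\gamma^G)$ rather than to each $\mathcal K(x,\gamma^G)$, and deducing the final assertions via the general form of Lemma \ref{lem-Schwartz} applied to $\D(\gamma)M_0$ instead of checking $\mathcal K(\theta(x),\gamma)^\gamma\neq\emptyset$ directly.
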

\begin{proof}
	By the assumption of $H$ and $G$, it is easy to see that $\gamma_g(M^{\gamma^H}) = M^{\gamma^H}$ for all $g\in \widetilde{G}$. This implies that $\theta(M^{\gamma^H})\subset M^{\gamma^H}$ for all $\theta\in \D(\gamma)$. 
By Lemma \ref{lem-Schwartz}, take $\Psi^H\in \D(\gamma^H)$ and $\Psi^{G}\in \D(\gamma^{G})$ such that
	$$\Psi^H(\mathcal D(\gamma^H))\subset M^{\gamma^H},\quad \Psi^{G}(\mathcal D(\gamma^{G}))\subset M^{\gamma^{G}}.$$
Put $\Psi:=\Psi^{G}\circ \Psi^{H}\in D(\gamma)$, and observe that
	$$ \Psi(\mathcal D(\gamma^{H})) = \Psi^{G}\circ \Psi^{H}(\mathcal D(\gamma^{H}))
\subset \Psi^{G}(M^{\gamma^H})\subset M^{\gamma^H} .$$

	(1) For each $x\in M$, consider a compact convex subset
	$$\mathcal K(x,\gamma^{G})=\overline{\mathrm{conv}}^{\rm weak}\{ \gamma_g(x)\mid g\in G \}\subset M.$$
Since $G$ acts on $\mathcal K(x,\gamma^{G})$, it has a fixed point by assumption. Since this holds for all $x\in M$, we have $\mathcal D(\gamma^{G})=M$, hence Lemma \ref{lem-Schwartz} says that there exists $\Psi^{G}\in\D(\gamma^{G})$ such that $\Psi^{G}(M)\subset M^{\gamma^{G}}$. This is a conditional expectation from $M$ onto $M^{\gamma^{G}}$. If we use this $\Psi^G$, then combining it with the result in the first paragraph, we get
	$$ \Psi(\mathcal D(\gamma^{H})) \subset M^{\gamma^H} \cap M^{\gamma^G} = M^{\gamma}.$$
We get the conclusion. For the last statement, if $x\in M_0$, then for any $\theta\in \D(\gamma)$, we have $\theta(x)\in \mathcal D(\gamma^H)$ and hence $\Psi(\theta(x))\in M^\gamma$. This shows that $\mathcal K(\theta(x),\gamma)^\gamma\neq \emptyset$, hence $x\in \mathcal D(\gamma)$. 

	(2) Lemma \ref{lem-averaging-trace1} says that $\mathfrak m_{\varphi} \subset \mathcal D(\gamma^{G})$. Since $\Psi^H\in \D(\gamma^H)$, we have $\Psi^H(A_0)\subset \mathfrak m_{\varphi}$ by assumption, so
	$$ \Psi(A_0) = \Psi^{G}\circ \Psi^H(A_0)\subset \Psi^{G}(\mathfrak m_{\varphi})\subset M^{\gamma^{G}} .$$
Combining it with the result in the first paragraph, we get $\Psi(A_0\cap \mathcal D(\gamma^H))\subset M^{\gamma^{G}}\cap M^{\gamma^{H}}=M^\gamma$. 
For the last statement, let $\theta\in \D(\gamma)$. Then since $\Psi^H\circ \theta \in \D(\gamma)$, the assumption shows that
	$$\theta(M_0) \subset \mathcal D(\gamma^H),\quad \Psi^H\circ \theta(M_0) \subset \mathfrak m_{\varphi} .$$
These imply that
	$$ \Psi (\theta(M_0) )\subset \Psi(\mathcal D(\gamma^H))\subset M^{\gamma^{H}},\quad \Psi(\theta(M_0)) \subset \Psi^{G}(\mathfrak m_{\varphi})\subset M^{\gamma^{G}} .$$
We get $\Psi(\theta(M_0)) \subset M^{\gamma^{G}}\cap M^{\gamma^{H}}=M^\gamma$ and hence $\mathcal K(\theta(x),\gamma)^\gamma\neq \emptyset$ and $x\in \mathcal D(\gamma)$. 
\end{proof}

\begin{prop}\label{prop-averaging-AFD}
	Let $A\subset M$ be an inclusion of von Neumann algebras and assume that $A$ has a tensor decomposition $A=A_0\ovt N$ with $N$ amenable. Then there exists $\Psi\in \D(\Ad_A)$ such that
	$$\Psi(x)\in A'\cap M \quad \text{for all}\quad x\in \mathcal D(\Ad_{A_0}).$$
We further have
	$$\mathcal D(\Ad\colon \mathcal U(A_0)\actson M)\subset \mathcal D(\Ad\colon \mathcal U(A_0)\times \mathcal U(N)\actson M).$$
\end{prop}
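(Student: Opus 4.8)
The plan is to apply Lemma \ref{lem-averaging-amenable}(1) to the combined conjugation action. I set $\widetilde G := \mathcal U(A_0)\times\mathcal U(N)$ acting on $M$ by $\gamma_{(u,v)} := \Ad(uv)$; this is well defined because $A_0$ and $N$ are commuting tensor factors, so $uv=vu$ and $\gamma_{(u,v)}=\Ad(u)\circ\Ad(v)$. Put $H:=\mathcal U(A_0)\times\{1\}$ and $G:=\{1\}\times\mathcal U(N)$. Since $H$ and $G$ commute elementwise, $gHg^{-1}=H$ for all $g\in G$, and $\gamma^H=\Ad_{A_0}$, $\gamma^G=\Ad_N$. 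As $N$ is amenable, $\mathcal U(N)$ with the strong topology has the fixed point property and $\gamma^G$ is continuous, so the hypotheses of Lemma \ref{lem-averaging-amenable}(1) are met. Finally, since $\mathcal U(A_0)$ and $\mathcal U(N)$ generate $A_0\ovt N=A$, we have $M^\gamma=A'\cap M$, and since every generator $\gamma_{(u,v)}=\Ad(uv)$ has $uv\in\mathcal U(A)$, we get $\D(\gamma)\subset\D(\Ad_A)$. With this setup the first assertion is immediate: Lemma \ref{lem-averaging-amenable}(1) produces $\Psi\in\D(\gamma)\subset\D(\Ad_A)$ with $\Psi(x)\in M^\gamma=A'\cap M$ for every $x\in\mathcal D(\gamma^H)=\mathcal D(\Ad_{A_0})$.

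For the second assertion I would invoke the last statement of Lemma \ref{lem-averaging-amenable}(1) with $M_0:=\mathcal D(\Ad_{A_0})$; it then remains to verify the invariance $\theta(\mathcal D(\Ad_{A_0}))\subset\mathcal D(\Ad_{A_0})$ for all $\theta\in\D(\gamma)$. On generators this is clean: $\Ad(u)$ with $u\in\mathcal U(A_0)$ preserves $\mathcal D(\Ad_{A_0})$ since $\Ad(u)\in\D(\Ad_{A_0})$ and $\theta(\mathcal D(\alpha))\subset\mathcal D(\alpha)$ holds in general, while $\Ad(v)$ with $v\in\mathcal U(N)$ is an automorphism commuting with $\Ad(u)$ for all $u\in\mathcal U(A_0)$ (as $v$ commutes with $A_0$). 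Being normal, $\Ad(v)$ then commutes with every $\theta\in\D(\Ad_{A_0})$, and it satisfies $\Ad(v)(\mathcal K(x,\Ad_{A_0}))=\mathcal K(\Ad(v)x,\Ad_{A_0})$ while carrying $\Ad_{A_0}$-fixed points to $\Ad_{A_0}$-fixed points; hence $\Ad(v)$ preserves $\mathcal D(\Ad_{A_0})$ as well. As $\mathcal D(\Ad_{A_0})$ is a subspace by Corollary \ref{cor-subspace}, invariance under all of $\D_{\rm alg}(\gamma)$ follows.

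The main obstacle is to propagate this invariance through the point $\sigma$-weak closure: by Corollary \ref{cor-subspace} the subspace $\mathcal D(\Ad_{A_0})$ is only norm closed, not $\sigma$-weakly closed, so invariance under $\D_{\rm alg}(\gamma)$ does not automatically pass to $\D(\gamma)$. To handle this I would fix the conditional expectation $E_N\in\D(\Ad_N)\subset\D(\gamma)$ onto $N'\cap M$ coming from an invariant mean on the amenable group $\mathcal U(N)$, so that $\Ad(v)\circ E_N=E_N\circ\Ad(v)=E_N$ for $v\in\mathcal U(N)$. Since $A_0\subset N'\cap M$, Tomiyama's theorem makes $E_N$ an $(N'\cap M)$-bimodule map, giving $E_N\circ\Ad(u)=\Ad(u)\circ E_N$ for $u\in\mathcal U(A_0)$ and hence the factorization $E_N\circ\gamma_{(u,v)}=\Ad(u)\circ E_N$ on generators, that is $E_N\circ\D_{\rm alg}(\gamma)\subset\D_{\rm alg}(\Ad_{A_0})\circ E_N$. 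The idea is to reduce the mixed averaging of $\theta(x)$ to a pure $\mathcal U(A_0)$-averaging of $E_N(x)$, which is controlled by the hypothesis $x\in\mathcal D(\Ad_{A_0})$, together with the facts that $\D(\Ad_{A_0})$ preserves the $\sigma$-weakly closed algebra $N'\cap M$ and that $A'\cap M=(A_0'\cap M)\cap(N'\cap M)$.

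The delicate point, where I expect the real work, is that $E_N$ may fail to be normal precisely when $N$ has a semifinite properly infinite summand (cf.\ the remark following Theorem \ref{thm-Marrakchi}), so the factorization must be exploited at the level of the maps inside the compact semigroup $\D(\gamma)$, where right composition by $E_N$ is continuous, rather than by naively applying $E_N$ to $\sigma$-weakly convergent nets $\theta_i(x)\to\theta(x)$; all target elements should be kept inside the $\sigma$-weakly closed algebras $N'\cap M$, $A_0'\cap M$, and $A'\cap M$ so that the relevant limits remain there. Alternatively, one can split off the semifinite properly infinite summand of $N$ along its central projection (which is central in $A$) and treat that piece by the tracial averaging of Proposition \ref{prop-averaging-trace2}, reserving the amenable fixed-point argument for the finite and type $\mathrm{III}$ summands, where $E_N$ can be chosen normal and the closure step is then automatic.
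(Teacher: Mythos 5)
Your handling of the first assertion is correct and is exactly the paper's argument: form $\widetilde G=\mathcal U(A_0)\times\mathcal U(N)$, use the fixed point property of $\mathcal U(N)$, apply the first part of Lemma \ref{lem-averaging-amenable}(1), and note $M^\gamma=A'\cap M$ and $\D(\gamma)\subset\D(\Ad_A)$.

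For the second assertion you have correctly isolated the step on which everything hinges: the last part of Lemma \ref{lem-averaging-amenable}(1) with $M_0=\mathcal D(\Ad_{A_0})$ requires $\theta(\mathcal D(\Ad_{A_0}))\subset\mathcal D(\Ad_{A_0})$ for \emph{every} $\theta\in\D(\gamma)$, and your verification of this on $\D_{\rm alg}(\gamma)$ (via $\Ad(u)\in\D(\Ad_{A_0})$, the normality of $\Ad(v)$ and its commutation with $\D(\Ad_{A_0})$, and Corollary \ref{cor-subspace}) is correct but does not automatically survive the point $\sigma$-weak closure, since $\mathcal D(\Ad_{A_0})$ is only norm closed. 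The paper's own proof simply asserts that this hypothesis ``is satisfied,'' so you have put your finger on the genuinely delicate point. However, neither of your proposed repairs closes the gap. The identity $E_N\circ\Ad(v)=E_N$ is not available: $\mathcal U(N)$ is not amenable as a \emph{discrete} group (already $\mathcal U(M_2(\C))$ contains free subgroups), so there is no invariant mean; the strong-topology fixed point property together with Lemma \ref{lem-Schwartz} only yields $\Ad(v)\circ E_N=E_N$, which is automatic because the range of $E_N$ lies in $N'\cap M$. Right invariance is genuinely false in general: taking $A_0=\C$ and $N=A=M=\B(H)$ (an admissible instance of the proposition), $E_N\circ\Ad(v)=E_N$ for all $v$ would make the singular state $E_N$ tracial on $\B(H)$; for $N$ a type III factor acting on itself it would make $E_N$ a trace. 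Hence the factorization $E_N\circ\D_{\rm alg}(\gamma)\subset\D_{\rm alg}(\Ad_{A_0})\circ E_N$ already fails on generators.

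Your fallback of making $E_N$ normal, or splitting off the semifinite properly infinite summand of $N$, also does not go through under the stated hypotheses: Theorem \ref{thm-Marrakchi} produces a (faithful) normal expectation onto $N'\cap M$ only when $N\subset M$ is \emph{with expectation}, and Proposition \ref{prop-averaging-trace2} needs a weight on $M$ centralizing $N$; neither is assumed in Proposition \ref{prop-averaging-AFD}. Moreover, even granting a normal $E_N$ commuting with $\Ad(u)$ for $u\in\mathcal U(A_0)$, you would still have to show $E_N(\theta(x))\in\mathcal D(\Ad_{A_0})$ for arbitrary $\theta\in\D(\gamma)$, which is the same non-normality obstruction one step later. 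So the inclusion $\mathcal D(\Ad_{A_0})\subset\mathcal D(\gamma)$ remains unproved in your proposal: what is missing (and what the paper does not supply either) is an argument producing, for \emph{every} $\theta\in\D(\gamma)$ and not merely for $\theta$ in the algebraic part, an element of $\mathcal K(\theta(x),\gamma)$ lying in $A'\cap M$.
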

\begin{proof}
	Put $\widetilde{G} = H\times G =\mathcal U(A_0)\times \mathcal U(N)$ and $\gamma = \Ad\colon \widetilde{G}\actson M$. Since $\mathcal U(N)$ has the fixed point property, we can apply Lemma \ref{lem-averaging-amenable}(1). If we put $M_0:=\mathcal D(\alpha^H)$, then the assumption of the last part in Lemma \ref{lem-averaging-amenable}(1) is satisfied. We get the conclusion.
\end{proof}

\begin{prop}\label{prop-averaging-amenable}
	Let $A\subset M$ be an inclusion of von Neumann algebras with operator valued weight $E_A$. Let $\alpha \colon G\actson M$ be a discrete group action such that $\alpha_g(A)=A$ for all $g\in G$. Define a new action by 
	$$ \gamma \colon \widetilde{G}:=\mathcal U(A)\rtimes G \actson M;\quad \gamma_{(u,g)}:= \Ad(u)\circ \alpha_g = \alpha_g \circ \Ad( \alpha_g^{-1}(u)).$$
\begin{enumerate}
	\item If $G$ is a locally compact amenable group, then there exists $\Psi\in \D(\gamma)$ such that
	$$\Psi(x)\in A'\cap M^\alpha,\quad \text{for all}\quad x\in \mathfrak m_{E_A}.$$
If $\alpha_g\circ E_A = E_A\circ \alpha_g$  for all $g\in G$, then we have $\mathfrak{m}_{E_A}\subset \mathcal D(\gamma)$. 

	\item If there exists a faithful normal state $\psi_A\in A_\ast$ that is preserved by $\alpha$, and if $\alpha_g\circ E_A = E_A\circ \alpha_g$ for all $g\in G$, then we have $\mathfrak{m}_{E_A}\subset \mathcal D(\gamma)$.

If we further assume that $E_A(1)=1$ and $A$ does not have any direct summand that is semifinite and properly infinite, then there exists a faithful normal conditional expectation from $M$ onto $M^\gamma=A'\cap M^\alpha$ that is contained in $\D(\gamma)$.

\end{enumerate}
\end{prop}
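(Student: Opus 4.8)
The plan is to realize $\gamma$ inside the framework of Lemma~\ref{lem-averaging-amenable}. Writing $H:=\mathcal U(A)$ and keeping $G$, the semidirect product $\widetilde G=\mathcal U(A)\rtimes G$ is generated by the normal subgroup $H$ and by $G$ with $gHg^{-1}=H$, and one has $\gamma^H=\Ad_A$ and $\gamma^G=\alpha$. Consequently $M^{\gamma^H}=A'\cap M$, $M^{\gamma^G}=M^\alpha$, and $M^\gamma=A'\cap M^\alpha$, which already identifies the target algebra. The one external input I would feed in is Theorem~\ref{thm-Dixmier-expectation}: it gives $\mathfrak m_{E_A}\subset\mathcal D(\Ad_A)=\mathcal D(\gamma^H)$, so $\mathfrak m_{E_A}$ is an admissible subset in every application of Lemma~\ref{lem-averaging-amenable}. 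I would also record once and for all the commutation $E_A\circ\gamma_h=\gamma_h\circ E_A$ for all $h\in\widetilde G$: for $u\in\mathcal U(A)$ the $A$-bimodularity of $E_A$ gives $E_A\circ\Ad(u)=\Ad(u)\circ E_A$, and this combines with the hypothesis $E_A\circ\alpha_g=\alpha_g\circ E_A$. By Lemma~\ref{lem-operator-bounded} this yields $\theta(\mathfrak m_{E_A})\subset\mathfrak m_{E_A}$ for every $\theta\in\D(\gamma)$, the invariance needed to invoke the ``last statement'' parts of Lemma~\ref{lem-averaging-amenable}.

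For item~(1), since $G$ is locally compact and amenable it has the fixed point property, and $\gamma^G=\alpha$ is continuous, so Lemma~\ref{lem-averaging-amenable}(1) produces $\Psi\in\D(\gamma)$ with $\Psi(\mathcal D(\gamma^H))\subset M^\gamma$; restricting to $\mathfrak m_{E_A}\subset\mathcal D(\gamma^H)$ gives $\Psi(\mathfrak m_{E_A})\subset A'\cap M^\alpha$, the first assertion. For the refinement, I take $M_0=\mathfrak m_{E_A}$; the commutation above gives $\theta(M_0)\subset\mathfrak m_{E_A}\subset\mathcal D(\gamma^H)$ for all $\theta\in\D(\gamma)$, so the last statement of Lemma~\ref{lem-averaging-amenable}(1) gives $\mathfrak m_{E_A}\subset\mathcal D(\gamma)$.

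For item~(2), I set $\varphi:=\psi_A\circ E_A$, a faithful normal semifinite weight on $M$; it is $\gamma^G$-invariant because $\alpha$ preserves $\psi_A$ and commutes with $E_A$, and $\mathfrak m_{E_A}\subset\mathfrak m_\varphi$ because $E_A$ maps $\mathfrak m_{E_A}^+$ into $A^+$, on which the state $\psi_A$ is finite. With $M_0=\mathfrak m_{E_A}$ one has $M_0\subset\mathfrak m_\varphi\cap\mathcal D(\gamma^H)$ and, by the recorded commutation, $\theta(M_0)\subset\mathfrak m_{E_A}\subset\mathfrak m_\varphi\cap\mathcal D(\gamma^H)$ for all $\theta\in\D(\gamma)$, so the last statement of Lemma~\ref{lem-averaging-amenable}(2) yields $\mathfrak m_{E_A}\subset\mathcal D(\gamma)$.

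The delicate part is the faithful normal conditional expectation in the last assertion of~(2). Here $E_A(1)=1$, so $E_A$ is a conditional expectation, $\mathfrak m_{E_A}=M$, and $\varphi$ is a faithful normal state. A single Schwartz-type averaging over $\gamma$ (Lemma~\ref{lem-Schwartz}) would give only a possibly non-normal expectation onto $M^\gamma$, since $\gamma$ carries no invariant normal state unless $\psi_A$ is a trace. I would instead average the two commuting directions separately and compose. Applying Lemma~\ref{lem-averaging-trace1} to $(\gamma^G,\varphi)$ gives a faithful normal conditional expectation $E^G\colon M\to M^\alpha$ with $\varphi\circ E^G=\varphi$ and $E^G\in\D(\gamma^G)\subset\D(\gamma)$; applying Theorem~\ref{thm-Marrakchi} to $A\subset M$ (with expectation $E_A$, and using that $A$ has no semifinite properly infinite summand) gives a faithful normal conditional expectation $E^H\colon M\to A'\cap M$ in $\D(\Ad_A)=\D(\gamma^H)\subset\D(\gamma)$. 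I then set $E:=E^G\circ E^H$. Since each $\alpha_g$ preserves $A'\cap M$, so does $E^G$, whence $E(M)\subset(A'\cap M)\cap M^\alpha=M^\gamma$; the map $E$ is faithful and normal as a composition, lies in $\D(\gamma)$ because $\D(\gamma)$ is closed under composition, and restricts to the identity on $M^\gamma$ (both factors fix it), so by Tomiyama's theorem it is a conditional expectation onto $M^\gamma$. The main obstacle to watch is precisely this compatibility: the order must be $E^G\circ E^H$ and $E^G$ must preserve $A'\cap M$, whereas the reverse composition fails since $\Ad_A$ need not preserve $M^\alpha$.
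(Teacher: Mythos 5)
Your proposal is correct and follows essentially the same route as the paper: the commutation claim $E_A\circ\gamma_h=\gamma_h\circ E_A$ feeding into Lemma \ref{lem-operator-bounded}, then Lemma \ref{lem-averaging-amenable}(1) and (2) with $M_0=\mathfrak m_{E_A}$ and Theorem \ref{thm-Dixmier-expectation} supplying $\mathfrak m_{E_A}\subset\mathcal D(\gamma^H)$. For the final assertion of (2) the paper likewise composes Marrakchi's faithful normal expectation onto $A'\cap M$ with the $\varphi$-preserving expectation coming from Lemma \ref{lem-averaging-trace1} in the same order (averaging over $\mathcal U(A)$ first, then over $G$); the only cosmetic difference is that the paper defines the $G$-average on $A'\cap M$ and extends by compactness, whereas you take it on all of $M$ and check it preserves $A'\cap M$, which is equally valid.
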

\begin{proof}
	We first prove a claim.

\begin{claim}
	If $\alpha_g\circ E_A = E_A\circ \alpha_g$ for all $g\in G$, then we have  $\theta(\mathfrak{m}_{E_A})\subset \mathfrak{m}_{E_A}$ for all $\theta\in \D(\gamma)$.
\end{claim}
\begin{proof}
	Since $E_A$ is an operator valued weight to $A$, by assumption, we have that $\gamma_a\circ E_A = E_A \circ \gamma_a$ for all $a\in \mathcal U(A)\rtimes G$. Then we can apply Lemma \ref{lem-operator-bounded}, and get the conclusion.
\end{proof}

	(1) Since $G$ is amenable, we apply Lemma \ref{lem-averaging-amenable}(1) and get $\Psi\in \D(\gamma)$ such that
	$$\Psi(x)\in M^{\gamma}=A'\cap M^\alpha,\quad \text{for all}\quad x\in \mathcal D(\gamma|_{\mathcal U(A)}).$$
Since $\gamma|_{\mathcal U(A)} = \Ad_A$, we have $\mathfrak m_{E_A}\subset D(\gamma|_{\mathcal U(A)})$ by Theorem \ref{thm-Dixmier-expectation}. 

For the last statement, we use Lemma \ref{lem-averaging-amenable}(1) in the case that $M_0=\mathfrak m_{E_A}$. For this, we have to show that $\theta(\mathfrak m_{E_A})\subset \mathfrak m_{E_A}$ for all $\theta\in \D(\gamma)$. This is exactly the claim above.

	(2) Put $\varphi:= \psi_A \circ E_A$ and then it is a f.n.s.\ weight on $M$ preserved by $\alpha$, satisfying $\mathfrak m_{E_A}\subset \mathfrak m_{\varphi}$. We put $M_0=\mathfrak m_{E_A}$ and apply Lemma \ref{lem-averaging-amenable}(2). For this, observe that $\mathfrak m_{E_A}\subset \mathfrak m_{\varphi}\cap \mathcal D(\gamma|_{\mathcal U(A)})$ by Theorem \ref{thm-Dixmier-expectation}. For any $\theta\in \D(\gamma)$, the claim shows that $\theta(\mathfrak m_{E_A})\subset \mathfrak m_{E_A}$, hence 
	$$ \theta(M_0)\subset \mathfrak m_{E_A} \subset m_\varphi\cap \mathcal D(\gamma|_{\mathcal U(A)}) .$$
Then we can apply Lemma \ref{lem-averaging-amenable}(2). 

We verify the last statement. By Theorem \ref{thm-Marrakchi}, there exists a faithful normal conditional expectation $E_{A'\cap M}\colon M\to A'\cap M $, which is contained in $\D(\Ad_A)$. Next, we consider the action $\alpha$ on $A'\cap M$. Since it has a faithful normal $\alpha$-invariant state $\psi_A\circ E_A$, we can apply Lemma \ref{lem-averaging-trace1} and find a faithful normal conditional expectation $E\colon A'\cap M\to A'\cap M^\alpha$, which is contained in $\D(\alpha\colon G\actson A'\cap M)$. We extend $E$ to a map on $\D(\alpha\colon G\actson M)$ by the compactness. Then $E\circ E_{A'\cap M}$ is a faithful normal conditional expectation onto $A'\cap M^\alpha$, which is contained in $\D(\gamma)$. This is the conclusion.
\end{proof}

The next lemma will be used in application.

\begin{lem}\label{lem-operator-valued-weight-extension}
	Let $A\subset M$, $E_A$, $\alpha$ and $\gamma$ be as in Proposition \ref{prop-averaging-amenable} and assume that $\alpha_g \circ E_A = E_A \circ \alpha_g$ for all $g\in G$.
Let $z\in M$ be a central projection such that $\alpha_g(z)=z$ for all $g\in G$.  
Suppose that for any $d\in \mathfrak m_{E_A}^+$, 
	$$\mathcal K(dz,\gamma)= \overline{ \mathrm{co} }^{\rm weak}\{u \alpha_g(d)z u^*\mid u\in \mathcal U(A),\ g\in G\} $$
contains $0$. Then there exists $\Psi\in \D(\gamma)$ such that $\Psi(\mathfrak m_{E_A}z)=0$.
\end{lem}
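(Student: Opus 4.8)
The plan is to recognize this lemma as a direct application of Lemma \ref{lem-Schwartz2} to the action $\gamma \colon \widetilde{G}\actson M$, taking the invariant subset to be the positive cone $M_0 := \mathfrak m_{E_A}^+ z = \{dz \mid d\in \mathfrak m_{E_A}^+\}$. Lemma \ref{lem-Schwartz2} requires two inputs: that $\theta(M_0)\subset M_0$ for every $\theta\in \D(\gamma)$, and that $0\in \mathcal K(x,\gamma)$ for every $x\in M_0$. The second of these is precisely the standing hypothesis of the present lemma, since for $x=dz$ with $d\in \mathfrak m_{E_A}^+$ one has $\gamma_{(u,g)}(dz)=u\alpha_g(d)\alpha_g(z)u^*=u\alpha_g(d)zu^*$, so $\mathcal K(dz,\gamma)$ is exactly $\overline{\mathrm{co}}^{\rm weak}\{u\alpha_g(d)zu^*\mid u\in\mathcal U(A),\ g\in G\}$, which is assumed to contain $0$. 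Thus the only thing left to verify is the invariance of $M_0$.

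To verify invariance, I would first observe that $z$ is fixed by $\gamma$: it is central in $M$ and $\alpha_g(z)=z$, so $\gamma_{(u,g)}(z)=u\alpha_g(z)u^*=z$. Consequently every $\theta\in \D(\gamma)$ commutes with right multiplication by $z$, i.e.\ $\theta(xz)=\theta(x)z$ for all $x\in M$: this holds for the generators $\gamma_{(u,g)}$, passes to convex combinations by linearity, and survives the point $\sigma$-weak limit because right multiplication by the central projection $z$ is $\sigma$-weakly continuous. Next, the Claim established inside the proof of Proposition \ref{prop-averaging-amenable} gives $\theta(\mathfrak m_{E_A})\subset \mathfrak m_{E_A}$ for all $\theta\in \D(\gamma)$, using exactly the standing assumption $\alpha_g\circ E_A=E_A\circ \alpha_g$. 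Since each $\theta$ is ucp and hence positive, $\theta(\mathfrak m_{E_A}^+)\subset \mathfrak m_{E_A}\cap M^+=\mathfrak m_{E_A}^+$. Combining these facts, $\theta(dz)=\theta(d)z\in \mathfrak m_{E_A}^+ z$ for $d\in \mathfrak m_{E_A}^+$, so $\theta(M_0)\subset M_0$ as required.

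With both hypotheses of Lemma \ref{lem-Schwartz2} in hand, I obtain $\Psi\in \D(\gamma)$ with $\Psi(M_0)=0$, that is $\Psi(dz)=0$ for every $d\in \mathfrak m_{E_A}^+$. Finally, since $\mathfrak m_{E_A}$ is linearly spanned by its positive part $\mathfrak m_{E_A}^+$ and $\Psi$ is linear, one concludes $\Psi(\mathfrak m_{E_A}z)=0$, which is the desired statement.

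I do not expect a serious obstacle: the entire analytic content already resides in the standing hypothesis (that each $\mathcal K(dz,\gamma)$ contains $0$) and in the previously established Lemma \ref{lem-Schwartz2}. The only points demanding care are the two bookkeeping facts about $\D(\gamma)$ — that $\theta$ commutes with the central $\gamma$-fixed projection $z$ after passing to point $\sigma$-weak limits, and that $\theta$ preserves $\mathfrak m_{E_A}$ — which are supplied respectively by the $\sigma$-weak continuity of multiplication by $z$ and by the Claim in Proposition \ref{prop-averaging-amenable}.
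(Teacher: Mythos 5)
Your proposal is correct and follows essentially the same route as the paper: the paper likewise applies Lemma \ref{lem-Schwartz2} to $M_0=\mathfrak m_{E_A}^+z$, invoking the Claim in the proof of Proposition \ref{prop-averaging-amenable} for the invariance $\theta(\mathfrak m_{E_A}^+z)\subset\mathfrak m_{E_A}^+z$ and concluding by the linear span of the positive part. Your extra verification that $\theta(xz)=\theta(x)z$ survives point $\sigma$-weak limits is a detail the paper leaves implicit, and it is correct.
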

\begin{proof}
	By the claim in the proof of Proposition \ref{prop-averaging-amenable}, we have $\theta(\mathfrak m_{E_A}^+)\subset \mathfrak m_{E_A}^+$, hence $\theta(\mathfrak m_{E_A}^+z)\subset \mathfrak m_{E_A}^+ z$ for all $\theta\in \D(\gamma)$. 
Then we can use Lemma \ref{lem-Schwartz2} for the case $M_0 = \mathfrak m_{E_A}^+z$, and get $\Psi\in \D(\gamma)$ such that $\Psi(\mathfrak m_{E_A}^+z)=0$. Since $\mathfrak m_{E_A}$ is a linear span of $\mathfrak m_{E_A}^+$, we get the conclusion.
\end{proof}

\section{Popa's intertwining-by-bimodules}

\subsection{Proof of Theorem \ref{thmC}}

We first recall the definition of Popa's intertwining condition. It first appeared in \cite{Po01,Po03}, and the definition here was given in \cite{HI15,Is19}.

\begin{df}\label{def-intertwining1}\upshape
	Let $M$ be a $\sigma$-finite von Neumann algebra and $A,B\subset M$ (possibly non-unital) von Neumann subalgebras with expectation $E_A,E_B$ respectively. We say that $A$ \textit{embeds with expectation into $B$ inside $M$} and write $A \preceq_{M} B$ 
if there exist $(H,f,\pi,w)$:
\begin{itemize}
	\item $H$ is a separable Hilbert space with a fixed minimal projection $e_{1,1}\in \B(H)$;
	\item $f\in B\ovt \B(H)$ is a projection and $\pi\colon A \to f(B\ovt \B(H))f$ is a unital normal $\ast$-homomorphism such that  $\pi(A) \subset f(B \ovt \B(H))f$ is with expectation;

	\item  $w\in (1_A\otimes e_{1,1})(M\ovt \B(H))f$ is a partial isometry (called an \emph{intertwiner}) such that 
	$$w\pi(a)= (a\otimes e_{1,1}) w = (a\otimes 1) w \quad \text{for all}\quad  a\in A.$$

\end{itemize}
\end{df}

We prove the following theorem, from which Theorem \ref{thmC} follows immediately.  As explained in Introduction, it generalizes Popa's original works \cite{Po01,Po03}. Items (4) and $(4')$ are new and serve as substitutes for item (2), that is the analytic criterion involving a net of unitaries. Items (2) and (3) have appeared in previous works, but they are new in the case where $A$ is of type III. The proofs rely on our Theorem \ref{thmA}.

In the following theorem, we use the same notation as in \cite[Theorem 4.3]{HI15}. So let $L^2(M)$ be the standard representation of $M$, and we denote the Jones projection of $\widetilde{B}:=B\oplus \C (1_M-1_B)$ by $e_{\widetilde{B}}$.
Let $\widehat{E}_{\widetilde{B}}$ be a canonical operator valued weight from $\langle M ,\widetilde{B}\rangle$ onto $M$ such that $\widehat{E}_{\widetilde{B}}(xe_{\widetilde{B}}x^*)=xx^*$ for all $x\in M$. See \cite[Section 2 and 4]{HI15} for more about these objects.

\begin{thm}\label{thm-intertwining1}
	Keep the setting as in Definition \ref{def-intertwining1}. Assume that either
\begin{itemize}
	\item[$\rm (a)$] $A$ does not have any direct summand that is semifinite and properly infinite; or 
	\item[$\rm (b)$] $B$ is properly infinite.

\end{itemize}
Then the  following conditions are equivalent. 
\begin{enumerate}
	\item We have $A \preceq_M B$.
	\item There exists no nets $(u_i)_{i}$ of unitaries in $\mathcal U(A)$ such that 
	$$E_B(b^* u_i a ) \rightarrow 0, \quad \text{$\sigma$-strongly for all $a,b\in M1_B$.}$$

	\item There exists $d_0\in \langle M,\widetilde{B}\rangle^+$ such that $\widehat{E}_{\widetilde{B}}(d_0)<\infty$, $d_0 = d_0 J1_BJ$ and 
	$$\mathcal K(d_0,\Ad_A)= \overline{{\rm co}}^{\rm weak} \left \{ u d_0 u^* \mid u \in \mathcal U(A)\right \} \subset (1_A \langle M, \widetilde B\rangle 1_A)^+$$
does not contain $0$. 

	\item There exists no $\Psi \in \D(A\subset 1_A\langle M,\widetilde{B}\rangle 1_A)$ such that $\Psi(ae_{\widetilde{B}} b^*) = 0$  for all $a,b\in M1_B$.

	\item[$\rm (4')$] There exists no $\Psi \in \D(A\subset 1_A\langle M,\widetilde{B}\rangle 1_A)$ such that $\Psi(\mathfrak m_{\widehat{E}_{\widetilde{B}}}J1_BJ) = 0$ and $\Psi|_{1_AM1_A}$ is a conditional expectation onto $A'\cap 1_AM1_A$.

\end{enumerate}
In assumption $\rm (a)$, if $A\not\preceq_M B$, we can choose $\Psi$ in item $(4')$ so that $\Psi|_{1_AM1_A}$ is a faithful normal conditional expectation.
\end{thm}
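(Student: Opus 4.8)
The plan is to carry out all of the analysis inside the basic construction, writing $\mathcal{M}:=1_A\langle M,\widetilde B\rangle 1_A$ and equipping the inclusion $A\subset\mathcal{M}$ with the composed operator valued weight $E_A\circ\widehat E_{\widetilde B}$ onto $A$, so that Theorem \ref{thm-Dixmier-expectation} becomes available for $A\subset\mathcal{M}$; note $\widehat E_{\widetilde B}(d_0)<\infty$ forces finiteness of the composed weight, so the hypotheses apply. Writing $q:=J1_BJ$, I would first record the structural facts used throughout: $q$ commutes with $A$ (hence with every $\theta\in\D(A\subset\mathcal{M})$), $\widehat E_{\widetilde B}$ is $A$-bimodular so that Lemma \ref{lem-operator-bounded} applies, and every $\theta\in\D(A\subset\mathcal{M})$ preserves $M$, preserves $\mathfrak m_{\widehat E_{\widetilde B}}q$, and fixes $A'\cap\mathcal{M}$ pointwise. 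The conceptual anchor is the bimodule reformulation of intertwining from \cite{HI15}: $A\preceq_M B$ holds if and only if $A'\cap\mathcal{M}$ contains a nonzero positive element $x$ with $x=xq$ and $\widehat E_{\widetilde B}(x)<\infty$. Recording this converts item $(1)$ into a statement about positive $A$-central elements of finite operator valued weight under $q$.

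The heart of the argument, and the place where the type III case is genuinely new, is the equivalence $(1)\Leftrightarrow(3)$, which I would prove directly using Theorem \ref{thmA}. For $(3)\Rightarrow(1)$, given $d_0$ as in item $(3)$, Theorem \ref{thm-Dixmier-expectation} gives $d_0\in\mathcal D(A\subset\mathcal{M})$, so $A'\cap\mathcal K(d_0,\Ad_A)\neq\emptyset$; the intersecting element is positive, lies under $q$, has finite $\widehat E_{\widetilde B}$ by Lemma \ref{lem-operator-bounded}, and is nonzero precisely because $0\notin\mathcal K(d_0,\Ad_A)$, whence the bimodule criterion yields $A\preceq_M B$. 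For the converse I would simply take $d_0$ to be the nonzero positive $A$-central element $x$ supplied by $(1)$: then $\mathcal K(x,\Ad_A)=\{x\}$, which does not contain $0$. When $A$ is semifinite this is the classical Hilbert space argument, and the only new input for type III $A$ is Theorem \ref{thmA}.

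To incorporate items $(2)$, $(4)$ and $(4')$ I would close the cycle $\neg(1)\Rightarrow\neg(3)\Rightarrow\neg(4')\Rightarrow\neg(4)\Rightarrow\neg(2)\Rightarrow\neg(1)$. The first arrow is the contrapositive of $(3)\Rightarrow(1)$. For $\neg(3)\Rightarrow\neg(4')$ I would apply Lemma \ref{lem-Schwartz2} with $M_0=\{x\in\mathfrak m_{\widehat E_{\widetilde B}}^+ : x=xq\}$ (which is $\D(A\subset\mathcal{M})$-invariant by Lemma \ref{lem-operator-bounded}) to produce $\Psi_1\in\D(A\subset\mathcal{M})$ annihilating $\mathfrak m_{\widehat E_{\widetilde B}}q$, and then upgrade its $M$-part: take a conditional expectation $E\in\D(A\subset 1_AM1_A)$ onto $A'\cap 1_AM1_A$ from Theorem \ref{thm-Marrakchi}, extend it to $\widetilde E\in\D(A\subset\mathcal{M})$, and set $\Psi:=\Psi_1\circ\widetilde E$. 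Since $\widetilde E$ preserves $\mathfrak m_{\widehat E_{\widetilde B}}q$ (Lemma \ref{lem-operator-bounded}), $\Psi$ still kills it, while for $m\in 1_AM1_A$ one has $\widetilde E(m)=E(m)\in A'\cap\mathcal{M}$, which $\Psi_1$ fixes, so $\Psi|_{1_AM1_A}=E$. The arrow $\neg(4')\Rightarrow\neg(4)$ is trivial, and $\neg(2)\Rightarrow\neg(1)$ is the classical easy direction of Popa's criterion (an intertwiner $w$ would give $w^*(u_i\otimes 1)w=w^*w\,\pi(u_i)$, which cannot tend to $0$ while $E_B(b^*u_ia)\to0$). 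The remaining arrow $\neg(4)\Rightarrow\neg(2)$ is the bridge highlighted in the Comment on item $(3)$: from $\Psi(ae_{\widetilde B}b^*)=0$ (equivalently $\Psi(ae_Bb^*)=0$ with $e_B=1_Be_{\widetilde B}1_B$) I would write $\Psi$ as a point-$\sigma$-weak limit of convex combinations $\sum_k\lambda_k\Ad(v_k)$ and, using that the matrix coefficients of $v_ke_Bv_k^*$ compute the quantities $\|E_B(b^*v_ka)\|$, extract by a pigeonhole argument over finite families of pairs $(a,b)$ a net of single unitaries $u_i$ with $E_B(b^*u_ia)\to0$.

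Finally, the last statement follows by choosing $E$ in the upgrade above to be the faithful normal conditional expectation onto $A'\cap 1_AM1_A$ furnished by the second part of Theorem \ref{thm-Marrakchi}; this is exactly where assumption $\rm(a)$ enters, as it guarantees that $A$ has no semifinite properly infinite summand and hence that such a faithful normal expectation exists (otherwise, as noted after Theorem \ref{thm-Marrakchi}, any expectation in $\D(A\subset\,\cdot\,)$ onto the relative commutant would annihilate finite projections). The two steps I expect to be most delicate are the net-extraction $\neg(4)\Rightarrow\neg(2)$, where the failure of normality of $\Psi$ is precisely what forces the passage through convex combinations and the pigeonhole selection of single unitaries, and the consistency of the $q$-bookkeeping across the basic construction—in particular checking that the positive $A$-central elements produced by Theorem \ref{thmA} genuinely sit under $q$ and are nonzero, and arranging the alternative hypothesis $\rm(b)$ (proper infiniteness of $B$) to cover the cases where assumption $\rm(a)$ is unavailable.
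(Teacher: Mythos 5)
Your proposal is correct, and all the individual arrows are sound, but the logical architecture differs from the paper's. The paper runs two chains, $(1)\Rightarrow(2)\Rightarrow(3)\Rightarrow(1)$ and $(1)\Rightarrow(4)\Rightarrow(4')\Rightarrow(3)$, so the finite-dimensional-intertwiner computation is performed twice (once against a net of unitaries for $(1)\Rightarrow(2)$, once against a ucp map for $(1)\Rightarrow(4)$). You instead prove $(1)\Leftrightarrow(3)$ directly via the bimodule characterization of $\preceq_M$ (your $(1)\Rightarrow(3)$, taking $d_0$ to be the $A$-central element so that $\mathcal K(d_0,\Ad_A)=\{d_0\}$, is a legitimate shortcut the paper does not need), and then close a single cycle $(1)\Rightarrow(2)\Rightarrow(4)\Rightarrow(4')\Rightarrow(3)\Rightarrow(1)$. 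The genuinely new arrow in your route is $(2)\Rightarrow(4)$, i.e.\ $\neg(4)\Rightarrow\neg(2)$: extracting a net of single unitaries from the convex combinations approximating $\Psi$ by evaluating the diagonal coefficients $\langle f_\lambda(b e_B b^*)\Lambda_\varphi(a),\Lambda_\varphi(a)\rangle=\sum_k t_k\|E_B(b^*u_k a)\|_\varphi^2$ and applying a Markov/pigeonhole selection over finite families of pairs. This does work (the resulting convergence is in $\|\cdot\|_\varphi$ on a bounded set, hence $\sigma$-strong), and it is essentially the mechanism of Lemma \ref{lem-intertwining1}(1) run in reverse; the paper never proves this implication, deriving $(2)$ and $(4)$ independently from $(1)$. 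The trade-off is that you eliminate one intertwiner computation at the price of the extraction argument. Two points you should make explicit: first, your $\neg(2)\Rightarrow\neg(1)$ still relies on choosing the Hilbert space $H$ in the intertwining data to be finite-dimensional (via \cite[Lemma 2.6]{Is19}), which is precisely where hypotheses $\rm(a)$/$\rm(b)$ enter — your closing remark gestures at this but does not tie it to that specific step; second, in that computation the unitary should be moved to the left of $E(w^*w)$ (i.e.\ use $\pi(u_i)w^*w=w^*(u_i\otimes 1)w$ and the left-invariance of the GNS norm), since right multiplication by unitaries does not preserve $\|\cdot\|_{2,\psi}$ for a non-tracial $\psi$. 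The remaining steps — $(3)\Rightarrow(1)$ via Theorem \ref{thm-Dixmier-expectation} applied to the composed weight $E_A\circ\widehat E_{\widetilde B}$ together with \cite[Theorem 2(ii)]{BH16}, $(4')\Rightarrow(3)$ via Lemma \ref{lem-Schwartz2} and the upgrade $\Psi_1\circ\widetilde E$ with $E$ from Theorem \ref{thm-Marrakchi}, and the final faithful-normal refinement under $\rm(a)$ — coincide with the paper's.
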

\begin{rem}\label{rem-intertwining2}\upshape
We will use Theorem~\ref{thmA} to prove the implication $(3)\Rightarrow(1)$. The proof proceeds exactly as in Corollary~\ref{corB}, indicating that the phenomenon observed there is intimately connected to Popa's intertwining techniques.
\end{rem}
\begin{proof}
	Fix faithful states $\varphi\in M_\ast$ such that $1_B \in M_\varphi$ and $\varphi\circ E_B = \varphi $ on $1_BM1_B$. For the reader's convenience, we assume that $A,B\subset M$ are unital subalgebras. This assumption is not essential and the proof works without any change, see the proofs of \cite[Theorem 4.3]{HI15} and \cite[Theorem 3.2]{Is19}.

We note that by \cite[Lemma 2.6]{Is19}, in assumption (a) or (b), the Hilbert space $H$ appeared in $A\preceq_MB$ can be taken finite-dimensional.

	$(1)\Rightarrow(2)$ We can choose $(H,f,\pi,w)$ witnessing $A\preceq_M B$ such that $H$ is finite-dimensional. Suppose that there exists a net $(u_i)_i\subset \mathcal U(A)$ as in (2). Fix a faithful normal state $\omega\in \B(H)_\ast$ and consider the $L^2$-norm by $\varphi\otimes \omega$. Then with $E:= E_{B\ovt \B(H)}$, we have
\begin{align*}
	\|E(w^*w) \|_{2}
	&=\|E(fw^*w) \|_{2}
	= \|fE(w^*w) \|_{2}\\
	&=\|\pi(u_i)E(w^*w) \|_{2}
	=\|E(\pi(u_i)w^*w) \|_{2}
	=\|E(w^* (u_i\otimes 1)w) \|_{2}.
\end{align*}
Then since $H$ is finite dimensional, the last term is bounded by a finite sum of elements of the forms $\|E_B(b^*u_i a)\|_2$ for $a,b\in M$. Hence it converges to 0 and we get $w=0$, a contradiction.

	$(2)\Rightarrow(3)$ There exist $\delta>0$ and a finite set $F\subset M$ such that
	$$\sum_{x,y\in F}\|E_B(b^*ua)\|_{\varphi}^2\geq \delta,  \quad \text{for all }u\in\mathcal{U}(A).$$ 
Put $d_0:=\sum_{b\in F}be_B b^*\in \langle M,B\rangle^+$ and note that $\widehat{E}_B(d_0) = \sum_{b\in F}bb^* <\infty$. For any $u\in \mathcal U(A)$, in $L^2(M,\varphi)$ with the cyclic vector $\xi_\varphi$, we have 
\begin{equation*}
	\sum_{a\in F}\langle ud_0u^* \, a\xi_\varphi, a \xi_\varphi\rangle
	=\sum_{a,b\in F}\langle e_B b^*u^* a\xi_\varphi, b^*u^* a\xi_\varphi\rangle
	=\sum_{a,b\in F }\| E_B( b^*u^* a)\|_{\varphi}^2\geq \delta.
\end{equation*}
This inequality shows that (3) holds.

	$(3)\Rightarrow(1)$ By Theorem \ref{thmA}, there exists $d\in A'\cap \mathcal K(d_0,\Ad_A)$ with $d=dJ1_BJ$. It is a nonzero positive element by assumption, and satisfies $\widehat{E}_B(d)<\infty$ by Lemma \ref{lem-operator-bounded}. We get item (1) by \cite[Theorem 2(ii)]{BH16}.
%We note that the construction of $d$ is technically closed related to the proof of Corollary \ref{corB}, although we can not directly use it here as we need to know $\widehat{E}_B$ rather than $E_A\circ \widehat{E}_B$.

	$(1)\Rightarrow(4)$ Take $(H,f,\pi,w)$ witnessing $A\preceq_M B$ such that $H$ is finite-dimensional. Suppose that there exists $\Psi$ as in item (4). Then for any $a\in \mathcal U(A)$, 
\begin{align*}
	(a\otimes 1_{H})( w (e_B\otimes 1_{H}) w^*)(a^*\otimes 1_{H})
	=  w \pi(a) (e_B\otimes 1_H)\pi(a^*)w^* = w(e_B\otimes 1_H)w^*.
\end{align*}
Observe that $\Psi\otimes \id_{\B(H)}\in \D(A\otimes \C 1_H \subset \langle M,B\rangle\ovt \B(H))$ as $H$ is finite-dimensional (note that it actually holds for arbitrary $H$). We have 
\begin{align*}
	(\Psi\otimes \id_{\B(H)})( w (e_B\otimes 1_{H}) w^*)
	=  w(e_{B}\otimes 1_{H})w^*.
\end{align*}
Since $H$ is finite dimensional, the left hand side of this equation is a finite sum of elements of the form $	\Psi( a e_B b^*)\otimes c$ for $a,b\in M$ and $c\in \B(H)$. By assumption, the left hand side is zero, hence $w=0$, a contradiction. 

	$(4)\Rightarrow(4')$ This is trivial.

	$(4')\Rightarrow(3)$ Suppose that (3) does not hold and we will construct $\Psi$ as in item $(4')$. Since (3) does not hold, $\mathcal K(d,\Ad_A)$ contains $0$ for all $d\in \mathfrak m_{\widehat{E}_B}^+$, hence by Lemma \ref{lem-operator-valued-weight-extension}, there exists $\Psi_0\in \D(A\subset \langle M,B\rangle )$ such that $\Psi_0(\mathfrak m_{\widehat{E}_B})=0$. We note that if $1_B\neq 1_M$, then $J1_BJ$ is a central projection in $\langle M,\widetilde{B}\rangle$, and one can still use Lemma \ref{lem-operator-valued-weight-extension}.

Since $A\subset M$ is with expectation, by Theorem \ref{thm-Marrakchi}, there exists a conditional expectation $E\in \D(A\subset M)$ onto $A'\cap M$. Take $\widetilde{E}\in \D(A\subset \langle M,B\rangle)$ such that $\widetilde{E}|_M=E$ by the compactness, and put $\Psi:=\Psi_0\circ E$. Then since $E(\mathfrak m_{\widehat{E}_B}) \subset \mathfrak m_{\widehat{E}_B}$ by Lemma \ref{lem-operator-bounded}, we have $\Psi(\mathfrak m_{\widehat{E}_B})=0$. Since $\Psi_0|_{A'\cap M}=\id$, $\Psi|_{M} = E$ is a conditional expectation. Thus $\Psi$ satisfies the condition in item (3). 

Finally, if assumption (a) holds, then the conditional expectation $E$ in the last paragraph can be taken as a faithful and normal conditional expectation, so that $\Psi|_{M} = E$ is faithful and normal. 
\end{proof}

The next proposition gives a general characterization of the condition $A\preceq_MB$. However, it is not useful because we cannot have the normality of $\Psi$, as we explained in Subsection \ref{Weak Dixmier semigroups}.

\begin{prop}\label{prop-intertwining1}
	Keep the setting as in Definition \ref{def-intertwining1}. The following conditions are equivalent. 
\begin{enumerate}
	\item We have $A \preceq_M B$.
	\item There exists no nets $(u_i)_{i}$ of unitaries in $\mathcal U(A)$ such that 
	$$(E_B\otimes \id_{\B(\ell^2)})(b^* (u_i\otimes 1) a ) \rightarrow 0, \quad \text{$\sigma$-strongly for all $a,b\in M1_B\ovt \B(\ell^2)$.}$$

	\item There exists no $\Psi \in \D(A\subset 1_A\langle M,\widetilde{B}\rangle 1_A)$ such that $(\Psi\otimes \id_{\B(\ell^2)})(a(e_{\widetilde{B}}\otimes 1) b^*) = 0$  for all $a,b\in M1_B\ovt \B(\ell^2)$.

\end{enumerate}
\end{prop}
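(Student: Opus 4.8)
The plan is to deduce the Proposition from Theorem~\ref{thm-intertwining1} by amplifying with $\B(\ell^2)$, which forces the target algebra to be properly infinite and so places us under hypothesis~$\rm (b)$. As in the proof of Theorem~\ref{thm-intertwining1}, I would first reduce to the case that $A,B\subset M$ are unital, the general case following by the same reductions used there. Set $\widetilde M:=M\ovt\B(\ell^2)$, $\widetilde A:=A\otimes\C 1\subset\widetilde M$ and $\widehat B:=B\ovt\B(\ell^2)\subset\widetilde M$. Then $\widehat B$ is properly infinite, and $E_B\otimes\id$ is a faithful normal conditional expectation $\widetilde M\to\widehat B$, while $(\id_A\otimes\omega)\circ(E_A\otimes\id)$ for a faithful normal state $\omega$ on $\B(\ell^2)$ is one onto $\widetilde A$; thus $\widetilde A,\widehat B\subset\widetilde M$ are unital with expectation, $\widetilde M$ is $\sigma$-finite, and hypothesis~$\rm (b)$ of Theorem~\ref{thm-intertwining1} holds. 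I would also record the standard identification $\langle\widetilde M,\widehat B\rangle\cong\langle M,B\rangle\ovt\B(\ell^2)$, under which the Jones projection is $e_B\otimes 1$ and the canonical operator valued weight is $\widehat E_B\otimes\id$.

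The crux, which I expect to be the main obstacle, is the amplification invariance
\[
A\preceq_M B\quad\Longleftrightarrow\quad \widetilde A\preceq_{\widetilde M}\widehat B.
\]
I would prove this through the characterization \cite[Theorem 2(ii)]{BH16} already used for $(3)\Rightarrow(1)$ in Theorem~\ref{thm-intertwining1}: $A\preceq_M B$ holds iff there is a nonzero $d\in(A'\cap\langle M,B\rangle)^+$ with $\widehat E_B(d)<\infty$, and $\widetilde A\preceq_{\widetilde M}\widehat B$ holds iff there is a nonzero $D\in\bigl((A'\cap\langle M,B\rangle)\ovt\B(\ell^2)\bigr)^+$ with $(\widehat E_B\otimes\id)(D)<\infty$, where I have used $(A\otimes\C 1)'\cap\langle\widetilde M,\widehat B\rangle=(A'\cap\langle M,B\rangle)\ovt\B(\ell^2)$. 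The forward implication is then immediate by taking $D:=d\otimes e$ for a rank-one projection $e\in\B(\ell^2)$; the converse follows by slicing, $d:=(\id\otimes\omega)(D)$, using that $D\geq 0$ nonzero forces $d\neq 0$ for a suitable normal state $\omega$, and that $\widehat E_B\circ(\id\otimes\omega)=(\id\otimes\omega)\circ(\widehat E_B\otimes\id)$, so $\widehat E_B(d)<\infty$. The delicate points to verify are that slicing preserves positivity and finiteness of the operator valued weight, and the support condition $d=dJ1_BJ$ (trivial in the unital case, but relevant in the non-unital reduction, where $\widetilde J(1_B\otimes 1)\widetilde J=(J1_BJ)\otimes 1$ makes it compatible with $\id\otimes\omega$).

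It remains to match the three conditions of the Proposition with those of Theorem~\ref{thm-intertwining1} for the amplified inclusion. Condition~(2) is literally condition~(2) of Theorem~\ref{thm-intertwining1} applied to $\widetilde A\subset\widetilde M\supset\widehat B$ with expectation $E_B\otimes\id$, since $\mathcal U(\widetilde A)=\{u\otimes 1\mid u\in\mathcal U(A)\}$ and $\widetilde M 1_{\widehat B}=M1_B\ovt\B(\ell^2)$. For condition~(3), I would use $\Ad(u\otimes 1)=\Ad(u)\otimes\id$ together with a short compactness argument to identify
\[
\D\bigl(\widetilde A\subset\langle M,B\rangle\ovt\B(\ell^2)\bigr)=\{\Psi\otimes\id\mid\Psi\in\D(A\subset\langle M,B\rangle)\},
\]
the point being that any point $\sigma$-weak limit of maps $f_i\otimes\id$ still acts as $\id$ on the $\B(\ell^2)$-factor, hence equals $\Psi\otimes\id$ for a subnet limit $\Psi$ of $(f_i)_i$ in the compact set $\D(A\subset\langle M,B\rangle)$. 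Under this identification the condition $(\Psi\otimes\id)(a(e_B\otimes 1)b^*)=0$ for $a,b\in M1_B\ovt\B(\ell^2)$ is exactly condition~(4) of Theorem~\ref{thm-intertwining1} for the amplified inclusion. Combining these identifications with the amplification invariance above and the equivalences $(1)\Leftrightarrow(2)\Leftrightarrow(4)$ of Theorem~\ref{thm-intertwining1} (valid since hypothesis~$\rm (b)$ holds) yields the three equivalences of the Proposition.
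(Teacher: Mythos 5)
Your proposal is correct and follows the paper's own route: amplify by $\B(\ell^2)$ so that $B\ovt\B(\ell^2)$ becomes properly infinite, and apply Theorem~\ref{thm-intertwining1} under hypothesis~$\rm(b)$, matching conditions (2) and (3) with their amplified counterparts via $\mathcal U(A\otimes\C 1)=\{u\otimes 1\}$ and $\D(A\otimes\C 1\subset\langle M,\widetilde B\rangle\ovt\B(\ell^2))=\{\Psi\otimes\id\}$. The only divergence is that you prove the amplification invariance $A\preceq_MB\Leftrightarrow A\otimes\C 1\preceq_{M\ovt\B(\ell^2)}B\ovt\B(\ell^2)$ through the characterization of \cite{BH16} and a slice-map argument, whereas the paper observes that it follows directly from Definition~\ref{def-intertwining1}, which already quantifies over auxiliary algebras $B\ovt\B(H)$ with $H$ separable; your detour is sound but not needed.
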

\begin{proof}
	Observe that $A\preceq _M B$ holds if and only if so does $A\otimes \C 1\preceq_{M\ovt \B(\ell^2)}B\ovt \B(\ell^2) $. This indeed follows from definition. Then since $B\ovt \B(\ell^2)$ is properly infinite, we can apply Theorem \ref{thm-intertwining1} and get that all conditions are equivalent.
\end{proof}

\subsection*{Other formulations}

The following corollary is useful.

\begin{cor}\label{cor-intertwining1}
	Let $M$ be a $\sigma$-finite von Neumann algebra and $B_i\subset 1_{B_i}M1_{B_i}$ von Neumann subalgebras with expectations $E_{B_i}$ for $i\in I$. Let $A\subset 1_A M1_A$ be a von Neumann subalgebra with expectation. Assume that $A$ does not have any direct summand that is semifinite and properly infinite.

If $A\not\preceq_M B_i$ for all $i\in I$, then there exists $\Psi\in \D(A\subset 1_A \B(L^2(M))1_A)$ such that $\Psi|_{1_AM1_A} $ is a faithful normal conditional expectation onto $A'\cap 1_AM1_A$, and that
	$$ \Psi (\mathfrak m_{\widetilde{B}_i} J1_{B_i}J) =0,\quad \text{for all}\quad  i\in I.$$
\end{cor}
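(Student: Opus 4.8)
The plan is to produce $\Psi$ as a composition $\Psi = \Psi_0 \circ \widetilde{E}$, where $\widetilde{E}$ supplies the faithful normal conditional expectation onto $A' \cap 1_A M 1_A$ and $\Psi_0$ simultaneously annihilates all of the subspaces $\mathfrak m_{\widehat E_{\widetilde B_i}} J 1_{B_i} J$. This mirrors the implication $(4')\Rightarrow(3)$ in the proof of Theorem~\ref{thm-intertwining1}, the only genuinely new point being that one single $\Psi_0$ must handle the whole family $\{B_i\}_{i\in I}$ at once. First I would record the consequence of the hypothesis $A\not\preceq_M B_i$: by the equivalence $(1)\Leftrightarrow(3)$ of Theorem~\ref{thm-intertwining1} (valid since $A$ satisfies assumption~(a)), item~(3) fails for each $i$, i.e.\ $0\in\mathcal K(d_0,\Ad_A)$ for every $d_0\in\langle M,\widetilde B_i\rangle^+$ with $\widehat E_{\widetilde B_i}(d_0)<\infty$ and $d_0=d_0J1_{B_i}J$. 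Since $J1_{B_i}J$ is central in $\langle M,\widetilde B_i\rangle$, the element $dJ1_{B_i}J$ lies in this class for every $d\in\mathfrak m_{\widehat E_{\widetilde B_i}}^+$, so $0\in\mathcal K(dJ1_{B_i}J,\Ad_A)$; and because $\langle M,\widetilde B_i\rangle$ is $\sigma$-weakly closed in $\B(L^2(M))$, this membership persists when the weak closure is computed inside $1_A\B(L^2(M))1_A$.

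Next I would run a Schwartz-type averaging on the whole family via Lemma~\ref{lem-Schwartz2}. Set $M_0:=\bigcup_{i\in I}\mathfrak m_{\widehat E_{\widetilde B_i}}^+J1_{B_i}J$ inside $1_A\B(L^2(M))1_A$. The key verification is that $M_0$ is invariant under every $\theta\in\D(A\subset 1_A\B(L^2(M))1_A)$. Indeed, the restriction $\theta|_{\langle M,\widetilde B_i\rangle}$ lies in $\D(A\subset\langle M,\widetilde B_i\rangle)$, since a point-$\sigma$-weak limit of convex combinations of $\Ad(u)$ restricts to such a limit on the $\sigma$-weakly closed, $\Ad_A$-invariant subalgebra $\langle M,\widetilde B_i\rangle$. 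As $\widehat E_{\widetilde B_i}$ is $M$-bimodular it commutes with $\Ad(u)$ for $u\in\mathcal U(A)\subset M$, so Lemma~\ref{lem-operator-bounded} gives $\theta(\mathfrak m_{\widehat E_{\widetilde B_i}}^+)\subset\mathfrak m_{\widehat E_{\widetilde B_i}}^+$; and since $J1_{B_i}J$ commutes with $\mathcal U(A)$ we obtain $\theta(dJ1_{B_i}J)=\theta(d)J1_{B_i}J\in M_0$. Combined with the previous paragraph, this is exactly the hypothesis of Lemma~\ref{lem-Schwartz2}, which yields $\Psi_0\in\D(A\subset 1_A\B(L^2(M))1_A)$ with $\Psi_0(M_0)=0$, hence $\Psi_0(\mathfrak m_{\widehat E_{\widetilde B_i}}J1_{B_i}J)=0$ for all $i$ after passing to linear spans.

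Finally I would attach the conditional expectation. By Theorem~\ref{thm-Marrakchi} (again using assumption~(a)) there is a faithful normal conditional expectation $E\colon 1_AM1_A\to A'\cap 1_AM1_A$ with $E\in\D(A\subset 1_AM1_A)$; I extend it to $\widetilde E\in\D(A\subset 1_A\B(L^2(M))1_A)$ with $\widetilde E|_{1_AM1_A}=E$ and set $\Psi:=\Psi_0\circ\widetilde E$, which lies in $\D(A\subset 1_A\B(L^2(M))1_A)$ as the semigroup is closed under composition. Since $\widetilde E|_{\langle M,\widetilde B_i\rangle}\in\D(A\subset\langle M,\widetilde B_i\rangle)$, Lemma~\ref{lem-operator-bounded} gives $\widetilde E(\mathfrak m_{\widehat E_{\widetilde B_i}}J1_{B_i}J)\subset\mathfrak m_{\widehat E_{\widetilde B_i}}J1_{B_i}J$, whence $\Psi(\mathfrak m_{\widehat E_{\widetilde B_i}}J1_{B_i}J)\subset\Psi_0(\mathfrak m_{\widehat E_{\widetilde B_i}}J1_{B_i}J)=0$. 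Moreover $\Psi_0$ fixes $A'\cap 1_AM1_A$ pointwise, because every element there is $\Ad(u)$-invariant for $u\in\mathcal U(A)$ and hence fixed by any element of $\D$; as $E$ ranges in $A'\cap 1_AM1_A$, this gives $\Psi|_{1_AM1_A}=\Psi_0\circ E=E$, a faithful normal conditional expectation onto $A'\cap 1_AM1_A$. Thus $\Psi$ has all the required properties.

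I expect the main obstacle to be precisely the simultaneous treatment of the family $\{B_i\}_{i\in I}$: whereas Theorem~\ref{thm-intertwining1} disposes of one $B$ at a time, here one must verify that the union $M_0$ is stable under the \emph{full} semigroup $\D(A\subset 1_A\B(L^2(M))1_A)$, so that a single application of Lemma~\ref{lem-Schwartz2} covers all indices at once. The crux is the restriction argument showing $\theta|_{\langle M,\widetilde B_i\rangle}\in\D(A\subset\langle M,\widetilde B_i\rangle)$, which is what licenses Lemma~\ref{lem-operator-bounded} on each basic construction sitting inside $\B(L^2(M))$. As usual, I would first write everything for unital $A,B_i$ and then observe that the non-unital case is identical, using that $J1_{B_i}J$ is central in $\langle M,\widetilde B_i\rangle$, exactly as in the proof of Theorem~\ref{thm-intertwining1}.
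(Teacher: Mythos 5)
Your proof is correct. It assembles the same ingredients as the paper's proof (Theorem \ref{thm-intertwining1}, Theorem \ref{thm-Marrakchi}, Lemma \ref{lem-operator-bounded}, and a compactness argument in $\D(A\subset 1_A\B(L^2(M))1_A)$), but organizes them differently: the paper first extracts, for each $i$, a map $\Psi_i$ from Theorem \ref{thm-intertwining1} annihilating $\mathfrak m_{\widehat E_{\widetilde B_i}}J1_{B_i}J$, normalizes each so that $\Psi_i|_{1_AM1_A}=E$, composes the $\Psi_i$ over finite subsets $F\subset I$, and passes to a limit point of the net $\{\Psi_F\}_F$; you instead work at the level of elements, using the negation of item $(3)$ of Theorem \ref{thm-intertwining1} to see that $0\in\mathcal K(dJ1_{B_i}J,\Ad_A)$ for every $i$ and every $d\in\mathfrak m_{\widehat E_{\widetilde B_i}}^+$, and then apply Lemma \ref{lem-Schwartz2} once to the union $M_0=\bigcup_i\mathfrak m_{\widehat E_{\widetilde B_i}}^+J1_{B_i}J$. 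The finite-subset-plus-compactness step that the paper performs by hand on the maps $\Psi_i$ is thereby absorbed into the proof of Lemma \ref{lem-Schwartz2}; the price you pay is the extra (correctly executed) verification that $M_0$ is invariant under the full semigroup $\D(A\subset 1_A\B(L^2(M))1_A)$, via the restriction argument $\theta|_{1_A\langle M,\widetilde B_i\rangle 1_A}\in\D(A\subset 1_A\langle M,\widetilde B_i\rangle 1_A)$ and Lemma \ref{lem-operator-bounded}. Your final composition $\Psi=\Psi_0\circ\widetilde E$ and the observation that $\Psi_0$ fixes $A'\cap 1_AM1_A$ pointwise match the paper's treatment of the $(4')\Rightarrow(3)$ step of Theorem \ref{thm-intertwining1} exactly, so the boundary conditions ($\Psi|_{1_AM1_A}=E$ faithful normal, $\Psi(\mathfrak m_{\widehat E_{\widetilde B_i}}J1_{B_i}J)=0$) all check out.
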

\begin{proof}
	By Theorem \ref{thm-Marrakchi}, take any $E\in \D(A\subset 1_AM1_A)$ that is a faithful normal conditional expectation onto $A'\cap 1_AM1_A$. By Theorem \ref{thm-intertwining1}(4), take $\Psi_i \in \D(A\subset 1_A \langle M,\widetilde{B}_i\rangle 1_A)$ such that $\Psi_i (\mathfrak m_{\widetilde{B}_i} J1_{B_i}J) =0 $. By the compactness of $\D(A\subset 1_A \B(L^2(M))1_A)$, we can extend $\Psi_i$ and $E$ on $1_A\B(L^2(M))1_A$, so that $\Psi_i,E\in \D(A\subset 1_A \B(L^2(M))1_A)$. As in the last part of the proof in Theorem \ref{thm-intertwining1}, we can replace $\Psi_i\circ E$ by $\Psi_i$, so that we can assume $\Psi_i |_{1_AM1_A} = E$ for all $i\in I$. 

Take a finite subset $F\subset I $ and we construct $\Psi_F\in \D(A\subset 1_A \B(L^2(M))1_A) $ such that $\Psi_F ( \mathfrak m_{B_i} J1_{B_i}J ) = 0$ for all $i\in F$ and $\Psi_F|_{1_AM1_A} = E $. For this, write $F=\{i_1,\ldots,i_n\}\subset I$ and define 
	$$ \Psi_F:=\Psi_{i_1}\circ \cdots \circ \Psi_{i_n}\in \D(A\subset 1_A \B(L^2(M))1_A)  .$$
Then as in the last part of the proof in Theorem \ref{thm-intertwining1}, we have $\Psi_F ( \mathfrak m_{B_i} J1_{B_i}J ) = 0$ for all $i\in F$ and $\Psi_F|_{1_AM1_A} = E $. 

Then consider a net $\{\Psi_F\}_F$ and find a subnet of it that converges to some $\Psi\in \D(A\subset 1_A \B(L^2(M))1_A) $ by the compactness. It is easy to see that $\Psi $ satisfies the desired condition. 
\end{proof}

\begin{rem}\upshape\label{rem-intertwining1}
	In Corollary \ref{cor-intertwining1}, assume further that there exists a large von Neumann algebra $M\subset \widetilde{M}$ with expectation $E_M \colon \widetilde{M}\to M$. Then there exists $\Psi\in  \D(A\subset 1_A \B(L^2(\widetilde{M}))1_A)$ such that $\Psi (e_M \cdot e_M)$ on $1_A \B(L^2(M))1_A$ satisfies the conclusion of Corollary \ref{cor-intertwining1}, and that $\Psi$ is faithful and normal on $1_A\widetilde{M}1_A$. 

To see this, take first $\Psi\in \D(A\subset 1_A \B(L^2(M))1_A)$ by Corollary \ref{cor-intertwining1} and approximate it by elements in $\D_{\rm alg}(A\subset 1_A \B(L^2(M))1_A)$. Then since each element in $\D_{\rm alg}(A\subset 1_A \B(L^2(M))1_A)$ naturally defines an element in $\D_{\rm alg}(A\subset 1_A \B(L^2(\widetilde{M}))1_A)$, by the compactness of $\D(A\subset 1_A \B(L^2(\widetilde{M}))1_A)$, we can find $\widetilde{\Psi}\in \D(A\subset 1_A \B(L^2(\widetilde{M}))1_A)$ such that $\Psi = \widetilde{\Psi} (e_M \, \cdot \, e_M) $ on $1_A \B(L^2(M))1_A$. Then as in the proof above, we compose $\widetilde{\Psi}\circ E$, where $E\in \D(A\subset 1_A \B(L^2(\widetilde{M}))1_A)$ satisfies that $E|_{1_A \widetilde{M}1_A}$ is a faithful normal conditional expectation onto $A'\cap 1_A\widetilde{M}1_A$, which is the desired element.
\end{rem}

Let $M$ be a von Neumann algebra with a faithful normal semifinite weight $\varphi$. Then $L^2(M,\varphi)$ is the completion of $\mathfrak n_\varphi$ by $\|\, \cdot\, \|_\varphi$, and we use the notation 
	$$ \Lambda_\varphi \colon \mathfrak n_\varphi \to L^2(M,\varphi) $$
for the canonical embedding.

\begin{lem}\label{lem-intertwining1}
Keep the setting as in Theorem \ref{thm-intertwining1}, thus we assume condition $\rm (a)$ or $\rm (b)$ in that theorem. Let $\varphi$ be a faithful normal semifinite weight $\varphi$ on $M$ such that $1_B \in M_\varphi$ and $\varphi \circ E_B = \varphi$ on $1_B M 1_B$. Take subsets $X \subset M$ and $Y \subset \mathfrak{n}_\varphi$ such that:
\begin{itemize}
\item $\mathop{\mathrm{span}} X^* \subset M$ is dense in the strong topology;
\item $\mathop{\mathrm{span}} Y \subset \mathfrak{n}_\varphi$ is dense in the $L^2$-norm $\|\, \cdot\, \|_\varphi$.
\end{itemize}
\begin{enumerate}
	\item Let $f = \sum_{i=1}^n t_i \Ad(u_i^*) \in \mathrm{DSG}_{\rm alg}(A \subset 1_A \langle M, \widetilde{B} \rangle 1_A)$, where $\sum_{i=1}^nt_i=1$ with $t_i\geq 0$ and $u_i\in \mathcal U(A)$. Then for each $x \in M$ and $y \in \mathfrak{n}_\varphi$,
	$$\langle f(x e_{\widetilde{B}}1_B x^*)\Lambda_\varphi(y), \Lambda_\varphi(y)\rangle= \sum_{i=1}^n t_i \| E_{B} (1_Bx^*u_i y1_B)\|_{\varphi}^2.$$

	\item Suppose there exists $\Psi \in \mathrm{DSG}(A \subset 1_A \langle M, \widetilde{B} \rangle 1_A)$ such that $\Psi|_{1_AM1_A}$ is normal and that
\[
\langle \Psi( x e_B x^* ) \Lambda_\varphi(y), \Lambda_\varphi(y) \rangle = 0, \quad \text{for all } x \in X,\, y \in Y.
\]
Then we have $A \not\preceq_M B$.

\end{enumerate}
\end{lem}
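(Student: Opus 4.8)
The plan is to prove the two parts in turn, using part (1) as the computational engine for part (2).

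For part (1), I would first use linearity of $f$ to reduce to a single adjoint map, so that it suffices to evaluate $\langle u_i^* x e_{\widetilde B} 1_B x^* u_i \Lambda_\varphi(y), \Lambda_\varphi(y)\rangle$. Moving $u_i$ and $x$ to the other side of the inner product and using $a\Lambda_\varphi(y)=\Lambda_\varphi(ay)$, this equals $\langle e_{\widetilde B} 1_B \Lambda_\varphi(x^* u_i y), \Lambda_\varphi(x^* u_i y)\rangle$. Since $1_B\in\widetilde B$ commutes with the Jones projection $e_{\widetilde B}$, the product $e_{\widetilde B}1_B$ is again an orthogonal projection, so this inner product is just $\|e_{\widetilde B}1_B\Lambda_\varphi(x^*u_iy)\|^2$. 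Combining $e_{\widetilde B}\Lambda_\varphi(m)=\Lambda_\varphi(E_{\widetilde B}(m))$ with the identity $E_{\widetilde B}(1_B m)=1_B E_{\widetilde B}(m)=E_B(1_B m 1_B)$ — valid because $1_B$ is central in $\widetilde B$ and $E_{\widetilde B}$ agrees with $E_B$ on the corner $1_B M 1_B$ — this norm is exactly $\|E_B(1_B x^* u_i y 1_B)\|_\varphi^2$. Summing over $i$ against the weights $t_i$ yields the asserted formula; note also $x e_B x^* = x e_{\widetilde B} 1_B x^*$, so the same computation applies to $e_B$.

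For part (2), I would aim to verify one of the equivalent negations in Theorem \ref{thm-intertwining1}, namely the failure of condition (2). Since $\Psi$ is completely positive, each $\Psi(x e_B x^*)$ is a positive operator, so the hypothesis $\langle\Psi(xe_Bx^*)\Lambda_\varphi(y),\Lambda_\varphi(y)\rangle=0$ forces $\Psi(xe_Bx^*)\Lambda_\varphi(y)=0$ for $y\in Y$; by $L^2$-density of $\mathrm{span}\,Y$ and boundedness this promotes to the operator identity $\Psi(xe_Bx^*)=0$ for every $x\in X$. Applying $\Psi\otimes\id_{M_2}$ to the positive matrix built from $(a,x)$ and using that a positive operator matrix with a zero diagonal entry has vanishing off-diagonal entries, I get $\Psi(a e_B x^*)=0$ for all $a\in M$ and $x\in X$, and hence, by conjugate-linearity and the $\ast$-preservation of $\Psi$, for all $x\in\mathrm{span}\,X$ and for $\Psi(x e_B a^*)$ as well.

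To reach the analytic statement I would write $\Psi=\lim_j f_j$ with $f_j\in\D_{\rm alg}(A\subset 1_A\langle M,\widetilde B\rangle 1_A)$ and feed part (1) into the vanishing above: it gives $\sum_i t_i^{(j)}\|E_B(1_B x^* u_i^{(j)} y 1_B)\|_\varphi^2\to 0$ for each $x\in X$, $y\in Y$. A routine averaging/extraction argument — for each finite $F\subset X\times Y$ and $\epsilon>0$, pick $j$ large and then a single index $i$ whose term does not exceed the weighted average — produces a net $(u_\nu)\subset\mathcal U(A)$ with $\|E_B(1_B x^* u_\nu y 1_B)\|_\varphi\to 0$ for all $x\in X$, $y\in Y$. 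I would then extend this to $E_B(b^* u_\nu a)\to 0$ $\sigma$-strongly for all $a,b\in M1_B$: the $L^2$-slot is handled by $L^2$-density of $\mathrm{span}\,Y$ together with the uniform bound $\|e_B b^* u_\nu\|\le\|b\|$, and the multiplier slot by the strong density of $\mathrm{span}\,X^*$ via Kaplansky density, with the normality of $\Psi|_{1_AM1_A}$ supplying the continuity needed to pass from the dense data to the full statement. By Theorem \ref{thm-intertwining1} this means condition (2) fails, so $A\not\preceq_M B$.

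The hard part is this last extension. Enlarging the $L^2$-slot is routine, but the multiplier slot is only strongly (not norm) dense, and the vectors $u_\nu\Lambda_\varphi(a)$ drift with the net, so a naive approximation is not uniform in $\nu$; moreover $\Psi$ is not normal on the large algebra $\langle M,\widetilde B\rangle$, so one cannot simply pass the limit through $\Psi$ on elements of the form $a e_B b^*$. This is precisely the point at which the hypothesis that $\Psi|_{1_AM1_A}$ be normal — rather than $\Psi$ merely being an element of the weak Dixmier semigroup — becomes indispensable, and I expect the careful bookkeeping required to make the multiplier-slot extension uniform to be the technical heart of the argument.
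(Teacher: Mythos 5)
Your part (1) and the first half of part (2) are correct and essentially match the paper: the computation with $e_{\widetilde B}1_B$ is the same, and your route from the hypothesis to the operator identity $\Psi(a e_B x^*)=0$ for all $a\in M$ and $x\in\operatorname{span}X$ (positivity of $\Psi(xe_Bx^*)$, $L^2$-density of $\operatorname{span}Y$, then the $2\times 2$ matrix trick) is a clean variant of the paper's polarization argument and even handles the first slot for arbitrary $a\in M$ in one stroke.

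The gap is in your final step. You abandon the ucp map $\Psi$, extract a net $(u_\nu)\subset\mathcal U(A)$ satisfying $\|E_B(1_Bx^*u_\nu y1_B)\|_\varphi\to 0$ only for $x\in X$, $y\in Y$, and then try to upgrade this to $E_B(b^*u_\nu a)\to 0$ for all $a,b\in M1_B$ so as to negate condition (2) of Theorem \ref{thm-intertwining1}. This upgrade is not merely ``the technical heart'' that needs careful bookkeeping --- it is false in general. The paper proves exactly this in the proposition following the lemma: for the inclusion of continuous cores of a type $\rm III_\lambda$ factor there is a net of unitaries for which the convergence holds on a $\sigma$-weakly dense subspace $X$ but which cannot satisfy the convergence for all of $M$ (since $A\preceq_M B$ there). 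The obstruction is quantitative: to pass from $x^*\in\operatorname{span}X^*$ to $b^*\in M$ in the multiplier slot you would need $\sup_\nu\|(b^*-x^*)u_\nu\|_{2,\varphi}\to 0$, a uniform-over-the-net estimate that is available only when $A$ is finite with an invariant state (this is precisely Lemma \ref{lem-intertwining-action-finite1}), not in the type III setting of the present lemma. The normality of $\Psi|_{1_AM1_A}$ cannot be transferred to the extracted net, so it cannot ``supply the continuity'' once $\Psi$ has been discarded. The fix is to stay with $\Psi$ and conclude via condition (4) of Theorem \ref{thm-intertwining1} rather than condition (2): having $\Psi(ae_Bx_2^*)=0$ for $x_2\in\operatorname{span}X$, take a bounded net $x_i^*\to b^*$ $\sigma$-strongly in $\operatorname{span}X^*$ and estimate, for any normal state $\omega$,
\[
|\omega\circ\Psi((x_i-b)e_Bx_2^*)|\le \|(x_i^*-b^*)1_A\|_{\omega\circ\Psi}\,\|e_Bx_2^*1_A\|_{\omega\circ\Psi},
\]
where the first factor tends to $0$ because $\omega\circ\Psi$ restricts to a normal positive functional on $1_AM1_A$. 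This yields $\Psi(be_Bx_2^*)=0$, and repeating in the other slot gives $\Psi(ae_Bb^*)=0$ for all $a,b\in M$, whence $A\not\preceq_MB$ by Theorem \ref{thm-intertwining1}(4).
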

\begin{proof}
(1) This is just a computation. We set $e_B = e_{\widetilde{B}} 1_B = e_{\widetilde{B}} J 1_B J$. Then,
\begin{align*}
	\langle f(x e_{B} x^*)\Lambda_\varphi(y), \Lambda_\varphi(y) \rangle
	&= \sum_{i=1}^n t_i \langle u_i^* x e_{B} x^* u_i \Lambda_\varphi(y), \Lambda_\varphi(y) \rangle 
	= \sum_{i=1}^n t_i \| E_{B}(1_B x^* u_i y 1_B) \|_{\varphi}^2.
\end{align*}

(2) By definition, there exists a net $\{f_\lambda\}_\lambda$ in $\D_{\rm alg}(A\subset 1_A \langle M,\widetilde{B}\rangle 1_A)$ that converges to $\Psi$ in the $\sigma$-weak topology. Take one such $f_\lambda$ and write $f_\lambda = \sum_{i=1}^n t_i \Ad(u_i^*)$ for some $t_i\geq 0$ and $u_i \in \mathcal U(A)$. Let $x \in \mathop{\mathrm{span}} X$ and $y \in \mathop{\mathrm{span}} Y$ and we write $x = \sum_{k=1}^{m_x} x_k$ with $x_k \in X$ and $y = \sum_{\ell=1}^{m_y} y_\ell$ with $y_\ell \in Y$. By using the inequality $\left\| \sum_{k=1}^m a_k \right\|_\varphi^2 \leq 2^{m-1} \sum_{k=1}^m \|a_k\|_\varphi^2$ that follows by the parallelogram law, and by using item (1), we have:
\begin{align*}
	\langle f_\lambda(x e_{B} x^*)\Lambda_\varphi(y), \Lambda_\varphi(y) \rangle
	&= \sum_{i=1}^n t_i \| E_{B}(1_B x^* u_i y 1_B) \|_{\varphi}^2 \\
	&\leq 2^{m_x-1} 2^{m_y-1} \sum_{k=1}^{m_x} \sum_{\ell=1}^{m_y} \sum_{i=1}^n t_i \| E_{B}(1_B x_k^* u_i y_\ell 1_B) \|_{\varphi}^2 \\
	&= 2^{m_x + m_y - 2} \sum_{k=1}^{m_x} \sum_{\ell=1}^{m_y} \langle f_\lambda(x_k e_{B} x_k^*)\Lambda_\varphi(y_\ell), \Lambda_\varphi(y_\ell) \rangle.
\end{align*}
We take the limit with respect to $\lambda$ and obtain
\[
0 \leq \langle \Psi(x e_{B} x^*)\Lambda_\varphi(y), \Lambda_\varphi(y) \rangle \leq 2^{m_x + m_y - 2} \sum_{k=1}^{m_x} \sum_{\ell=1}^{m_y} \langle \Psi(x_k e_{B} x_k^*)\Lambda_\varphi(y_\ell), \Lambda_\varphi(y_\ell) \rangle = 0.
\]
Thus, $\langle \Psi(x e_{B} x^*)\Lambda_\varphi(y), \Lambda_\varphi(y) \rangle =0$ for all $x \in \mathop{\mathrm{span}} X$ and $y \in \mathop{\mathrm{span}} Y$, and by the polarization identity we conclude $\Psi(x_1 e_{B} x_2^*) = 0$ for all $x_1, x_2 \in \mathop{\mathrm{span}} X$. 

Let $x \in M$ and $x_2\in \mathop{\mathrm{span}} X$ be arbitrary elements and take a net  $(x_i)_i$ in $\mathop{\mathrm{span}} X$ such that $x_i^* \to x^*$ $\sigma$-strongly. Take a normal state $\omega$, then by the Cauchy-Schwarz inequality, we have:
\[
|\omega \circ \Psi((x_i - x) e_{B} x_2^*)| \leq \| (x_i^* - x^*) 1_A\|_{\omega \circ \Psi} \| e_{B} x_2^* 1_A\|_{\omega \circ \Psi} \to 0,\quad \text{as }i\to \infty,
\]
where we used the fact that $\Psi$ is normal on $1_AM1_A$. Since $\omega$ is any normal state, this shows that $\Psi((x_i - x) e_{B} x_2^*) \to 0$ in the $\sigma$-weak topology. We conclude that $\Psi(x e_{B} x_2^*) = 0$. A similar computation shows that $\Psi(x e_{B} y^*) = 0$ for all $x,y\in M$ and this implies $A\not\preceq_M B$ by Theorem \ref{thm-intertwining1}(4). 
\end{proof}

\subsection*{Remark on Theorem \ref{thm-intertwining1}(2)}

	As in \cite[Theorem 4.3(5)]{HI15}, it is natural to ask the following conditions are also equivalent to conditions in Theorem \ref{thm-intertwining1}: let $X\subset M$ be a $\sigma$-weakly dense subset, then
\begin{enumerate}
	\item[$\rm (2')$] the same condition as in item $(2)$, but the convergence holds only for $a,b\in 1_A X 1_B$;

	\item[$\rm (2'')$] the same condition as in item $(2)$, but the convergence holds only for $a\in 1_A M1_B$ and $b\in 1_A X 1_B$.

\end{enumerate}
By the standard triangle inequality argument, it is easy to see that $(2)$ and $(2'')$ are equivalent. Here we show that item $(2')$ is \textit{not} equivalent to item (2).

We note that Lemma \ref{lem-intertwining1} is a substitute of item $(2')$ in terms of the weak Dixmier property, which works by the normality of the associated ucp map.

\begin{prop}
	Let $M$ be a $\sigma$-finite type $\rm III_\lambda$ factor for $0<\lambda <1$ with the discrete decomposition $B\rtimes \Z = M$. Then the inclusion of continuous cores $C(B)\subset C(M)$ satisfy that $C(M)\preceq_{C(M)}C(B)$, hence item $(2)$ does not hold, while it has a net $(u_i)_i$ in $\mathcal U(C(M))$ such that $(2')$ does hold for 
	$$ X = \mathrm{span}\{ x\lambda_t \mid x\in M,\ t\in \R\}\subset C(M). $$
In particular, the convergence condition in item $(2')$ can not be extended on the one for $M$.
\end{prop}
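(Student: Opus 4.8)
The plan is to establish the two assertions separately: that $C(M)\preceq_{C(M)}C(B)$ (so that, by Theorem~\ref{thm-intertwining1}, no net in $\mathcal U(C(M))$ can realize the convergence of item~(2) for all $a,b\in C(M)$), and that nevertheless the explicit net $u_i=u^i$ realizes the convergence of item~$(2')$ on the dense set $X$. First I would fix the reference weight: let $\varphi$ be the dual weight on $M=B\rtimes_\theta\Z$ of the canonical trace of the type $\mathrm{II}_\infty$ factor $B$, so that $M_\varphi=B$, $\sigma^\varphi|_B=\id$, and $\sigma^\varphi$ is periodic of period $2\pi/|\log\lambda|$. Writing $u$ for the generating unitary of $\Z$ (so $ubu^*=\theta(b)$) and $\lambda_t$ for the canonical unitaries of the core, the relations $\lambda_t b\lambda_t^*=b$ and $\lambda_t u\lambda_t^*=\lambda^{it}u$ identify $C(B)=B\ovt W^*\{\lambda_t\}$ and exhibit $C(M)=C(B)\rtimes_{\Ad(u)}\Z$, in which $\Ad(u)$ is properly outer and scales the canonical trace of $C(B)$ by $\lambda$. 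Since $C(B)$ is properly infinite, hypothesis~$(b)$ of Theorem~\ref{thm-intertwining1} holds for the pair $A:=C(M)$, $B_{\mathrm{target}}:=C(B)$ inside $C(M)$, equipped with the canonical crossed-product expectation $E_{C(B)}$.

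For $C(M)\preceq_{C(M)}C(B)$ I would argue through Corollary~\ref{corB}, applied to the inclusion $A=C(M)$ of the basic construction $\langle C(M),C(B)\rangle$ with its canonical operator valued weight $\widehat E_{C(B)}\colon\langle C(M),C(B)\rangle\to C(M)$. Because $\theta$ is trace-scaling (hence $\theta^n$ is outer for all $n\ne 0$), a direct computation gives $C(B)'\cap C(M)=Z(C(B))$, which is \emph{diffuse}; consequently $A'\cap\langle C(M),C(B)\rangle=J\,Z(C(B))\,J$ is diffuse as well. This is the decisive contrast with the type~$\mathrm{III}$ level, where $B'\cap M=\C$ forces $M\not\preceq_M B$. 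By Corollary~\ref{corB}, $C(M)\preceq_{C(M)}C(B)$ is equivalent to the existence of a nonzero positive $y\in J\,Z(C(B))\,J$ with $\widehat E_{C(B)}(y)<\infty$. To produce such a $y$ I would disintegrate over $Z(C(B))=L^\infty(\R)$ along the $\Z$-orbits of $\Ad(u)$: over one orbit the inclusion becomes the weighted diagonal $B\ovt\ell^\infty(\Z)\subset B\ovt B(\ell^2(\Z))$, where the $n$-th summand carries trace $\lambda^n\tau_B$ by the trace-scaling. Choosing $h\in Z(C(B))$ to be the indicator of an appropriate half-line in the $p$-variable then renders $\widehat E_{C(B)}(JhJ)$ a convergent geometric series, so this half-line element is the desired nonzero finite $y$.

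The precise disintegration identifying $\widehat E_{C(B)}$ on $J\,Z(C(B))\,J$, and in particular checking that the geometric summability $\lambda<1$ is exactly what makes this value finite and nonzero, is the main obstacle; it is here that the trace-scaling of the discrete decomposition does the essential work, and where the argument genuinely departs from the type~$\mathrm{III}$ heuristic.

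For the net I would take $u_i:=u^i$ with $u\in M\subset C(M)$. For $a=x\lambda_s$ and $b=y\lambda_t$ with $x,y\in M$, the $C(B)$-bimodularity of $E_{C(B)}$ together with the identity $E_{C(B)}|_M=E_B^M$ gives
\[
E_{C(B)}(b^*u^i a)=\lambda_{-t}\,E_B^M(y^*u^i x)\,\lambda_s,
\]
so, by sesquilinearity and the triangle inequality, convergence on $X$ reduces to $E_B^M(y^*u^i x)\to 0$ $\sigma$-strongly for all $x,y\in M$. This last statement is precisely the standard fact that the powers of the generator witness $M\not\preceq_M B$ in the crossed product $M=B\rtimes_\theta\Z$, proved by the vanishing of the tail Fourier coefficients; multiplication by the fixed unitaries $\lambda_{-t},\lambda_s$ preserves $\sigma$-strong convergence, so $(u^i)_i$ fulfils $(2')$ for $X$. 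Combining the two parts: by the first, $C(M)\preceq_{C(M)}C(B)$, so Theorem~\ref{thm-intertwining1} forbids \emph{any} net in $\mathcal U(C(M))$ with $E_{C(B)}(b^*u_i a)\to 0$ for all $a,b\in C(M)$; in particular the convergence of $(u^i)_i$ cannot be extended from $X$ to all of $C(M)$. This is exactly the asserted inequivalence of $(2')$ and $(2)$, the mechanism being that for elements of $C(M)$ outside $X$ the trace-scaling makes the relevant $\|b^*u^i a\|$-quantities blow up.
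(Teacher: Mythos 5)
The decisive problem is your choice of net. You take $u_i=u^i$ and assert that $E_B(y^*u^ix)\to 0$ $\sigma$-strongly for all $x,y\in M$ is ``the standard fact that the powers of the generator witness $M\not\preceq_M B$, proved by the vanishing of the tail Fourier coefficients.'' That Fourier argument is a \emph{tracial} argument, and it breaks down here precisely because $\theta$ scales the trace of $B$. Concretely, choose a nonzero finite projection $f\in B$ whose forward orbit $\theta^i(f)$ consists of mutually orthogonal projections (such a wandering $f$ exists because $\sum_i\tau(\theta^i(f))=\sum_i\lambda^i\tau(f)<\infty$; pass to a power of $\theta$ if necessary), pick mutually orthogonal projections $s_i$ with $\tau(s_i)=\tau(f)$, and let $v_i$ be partial isometries with $v_iv_i^*=\theta^i(f)$ and $v_i^*v_i=\theta^i(s_i)$. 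Then $y:=\sum_{i\geq 1}v_iu^i$ satisfies $y^*y=\sum_i s_i\leq 1$, so $y\in M$ with $\|y\|\leq 1$, while $E_B(y^*u^i)=\theta^{-i}(v_i^*)$ is for every $i$ a partial isometry with source projection $\theta^{-i}(v_iv_i^*)=f$; hence $\|E_B(y^*u^i)\xi\|=\|f\xi\|$ does not tend to $0$. Since $1$ and $y$ both lie in $X$, your net fails condition $(2')$ already on $X$. This is not a repairable detail: it is exactly the phenomenon the proposition is meant to illustrate. The paper's proof instead invokes $(M,\sigma^\varphi)\not\preceq_M(B,\sigma^\varphi)$ and Theorem~\ref{thm-intertwining2} to produce pairs $(u_i,g_i)\in\mathcal U(M)\times\R$ with $E_B(\sigma^\varphi_{g_i}(b^*)u_ia)\to 0$, and the working net is $w_i=\lambda_{-g_i}u_i$, which genuinely uses the modular direction of the core. (Alternatively one could use the abstract net in $\mathcal U(M)$ furnished by $M\not\preceq_MB$ together with Theorem~\ref{thm-intertwining1}(2), since $B$ is properly infinite; but not the explicit powers of $u$.)

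The first half also has an error, though a fixable one that you flagged yourself. Your criterion (a nonzero positive element of $C(M)'\cap\langle C(M),C(B)\rangle\cong J\mathcal Z(C(B))J\cong L^\infty(\R)$ with finite value under $\widehat E_{C(B)}$) is the right one, but the half-line indicator does not satisfy it: for $z\in\mathcal Z(C(B))$ one has $\widehat E_{C(B)}(JzJ)=\sum_{n\in\Z}\beta^{-n}(z)$, where $\beta$ is the translation by $\log\lambda$ induced by $\Ad(u)$ on $\mathcal Z(C(B))\cong L^\infty(\R)$, and for a half-line this sum is an unbounded function. No geometric series appears in the operator valued weight (only in the trace). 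What works is the indicator $p$ of a fundamental domain $[0,|\log\lambda|)$, for which $\widehat E_{C(B)}(JpJ)=1$; even more directly, that same $p$ satisfies $pu^np=0$ for $n\neq 0$, hence $pC(M)p=pC(B)p$, which gives $C(M)\preceq_{C(M)}C(B)$ with no computation in the basic construction. The paper simply cites \cite{HI17} for this half.
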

\begin{proof}
	Let $E_B$ be the unique faithful normal conditional expectation and $\varphi\in M_\ast$ a faithful normal state such that $\varphi \circ E_B=\varphi$. By \cite[Theorem 4.9]{HI17}, we have $M\not \preceq_M B$ and $C_\varphi(M)\preceq_{C_\varphi(M)}C_\varphi(B)$. Note that $M,B,C_\varphi(M)$ and $C_\varphi(B)$ are all properly infinite. 

Since $M$ of type III and $B$ is semifinite, we also have $(M,\sigma^\varphi)\not\preceq_{M}(B,\sigma^\varphi)$, see Definition \ref{def-intertwining2} below. Hence by Theorem \ref{thm-intertwining2} below, there exists a net $(u_i)_{i}$ in $\mathcal U(A)$ and $(g_i)_i$ in $\R$ such that 
	$$E_B(\sigma^\varphi_{g_i}(b^*) u_i a ) \rightarrow 0, \quad \text{$\sigma$-strongly for all $a,b\in M$.}$$
We show that the net $w_i:= \lambda_{g_i^{-1}}u_i $ does the job. Indeed, for any $a,b\in M$ and $s,s'\in \R$, we have 
\begin{align*}
	 E_{C_\varphi(B)}(\lambda_s b^* w_i a\lambda_{s'}) 
	= \lambda_{sg_i^{-1}} E_{B}(\sigma^\varphi_{g_i}(b^*) u_i a)\lambda_{s'}.
\end{align*}
Since $\lambda_{sg_i^{-1}}$ is a unitary, this term converges to 0 strongly as $i\to \infty$. 
\end{proof}

\subsection{Intertwining theorem with group actions}

Let $M$ be a $\sigma$-finite von Neumann algebra and $A,B\subset M$ (possibly non-unital) von Neumann subalgebras with expectations $E_A,E_B$. We define the following group actions. Let $G$ be a locally compact second countable group, and consider continuous actions $\alpha$ and $\beta$ of $G$ on $M$ such that 
\begin{itemize}
	\item $\alpha_g(A) =A$ and $\beta_g(B) = B$ for all $g\in G$;
	\item $\alpha_g\circ E_A = E_A \circ \alpha_g$ on $1_AM1_A$ and $\beta_g\circ E_B = E_B\circ \beta_g$ on $1_BM1_B$ for all $g\in G$;
	\item $\alpha$ and $\beta$ are cocycle conjugate, that is,  there exists 
a $\beta$-cocycle $\omega \colon G \to M$ such that $\alpha_g=\Ad(\omega_g) \circ \beta_g $ for all $g\in G$.

\end{itemize}
Recall that a \emph{generalized $\beta$-cocycle with support projection $p\in M$} is a strongly continuous map $u\colon G \to pM$ that satisfies $u_gu_g^* =p$, $u_g^*u_g=\beta_g(p)$, and $u_{gh} = u_g \beta_g(u_h)$ for all $g,h\in G$. When $p=1$, we simply say that it is a $\beta$-cocycle. We are mainly interested in the case that $G=\R$, $\alpha = \sigma^\psi$, $\beta=\sigma^\varphi$.

The following definition comes from \cite[Lemma 2.6 and Definition 3.1]{Is19}. The original concept was found in some proofs in \cite{Po05a}.

\begin{df}\label{def-intertwining2}\upshape
	We say that $(A,\alpha)$ \textit{embeds with expectation into $(B,\beta)$ inside $M$} and write $(A,\alpha) \preceq_{M} (B,\beta)$ if there exist $(H,f,\pi,w)$ witnessing $A\preceq_M B$ and a generalized cocycle $(u_g)_{g\in G}$ for $\beta\otimes \id_H$ with values in $B\ovt \B(H)$ and with support projection $f$ such that 
\begin{itemize}
	\item[$\rm(i)$] $wu_g =  (\omega_g\otimes 1_H )(\beta_g\otimes \id_H) (w)$ for all $g\in G$;
	\item[$\rm(ii)$]  $u_g(\beta_g\otimes \id_H)(\pi(a))u_g^* = \pi(\alpha_g(a))$ for all $g\in G$ and $a\in A$.
\end{itemize}
In this case, we say that \textit{$(H,f,\pi,w)$ and $(u_g)_{g\in G}$ witness $(A,\alpha) \preceq_{M} (B,\beta)$}. 
\end{df}

As explained in \cite[Definition 3.1]{Is19}, if $w$ in the above definition is not a partial isometry, we can exchange it with a partial isometry by the polar decomposition. 

\subsection*{Basic theory}

Keep the setting as in Definition \ref{def-intertwining2}.

\begin{lem}\label{lem-intertwining-action1}
	Let $(H,f,\pi,w)$ and $\{u_g\}_g$ be witnessing $(A,\alpha) \preceq_{M} (B,\beta)$. We do not assume that $w$ is a partial isometry.
\begin{enumerate}

	\item If there is a projection $z\in \mathcal Z(B)^\beta$ such that $w(z\otimes 1)\neq 0$, then putting $z^H:=z\otimes 1\in \mathcal Z(B)^\beta\ovt \B(H)$, $(H,fz^H,\pi(\,\cdot\, )z^H,wz^H)$ and $\{u_gz^H\}_g$ witness $(A,\alpha) \preceq_{M} (Bz,\beta)$. 

	\item If there is a projection $e\in A^\alpha$ such that $\pi(e)w\neq 0$, then $(H,\pi(e),\pi|_{eAe},w\pi(e))$ and $\{\pi(e)u_g\}_g$ witness $(eAe,\alpha) \preceq_{M} (B,\beta)$.

	\item If there is a projection $f'\in B\ovt \B(H)$ that is equivalent to $f$, then with a partial isometry $v\in  B\ovt \B(H)$ with $v^*v =f$ and $vv^* =f'$, $(H,f',\Ad(v)\circ \pi,wv^*)$ and $\{vu_g\beta_g^H(v^*)\}_g$ witness $(A,\alpha) \preceq_{M} (B,\beta)$.

\end{enumerate}
\end{lem}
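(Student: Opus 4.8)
The plan is to treat each of the three parts as a direct verification against Definition~\ref{def-intertwining2}: in every case I produce the new data and check, one by one, the three requirements, namely (a) that the new quadruple $(\widetilde H,\widetilde f,\widetilde\pi,\widetilde w)$ witnesses the plain embedding $\preceq_M$ of Definition~\ref{def-intertwining1} (recalling the remark after Definition~\ref{def-intertwining2} that $\widetilde w$ need not be a partial isometry), (b) that the new maps form a generalized $\beta\otimes\id_H$-cocycle with the indicated support projection, and (c) that conditions $\rm(i)$ and $\rm(ii)$ hold. Throughout I write $\beta_g^H:=\beta_g\otimes\id_H$ and use repeatedly the cocycle identities $u_gu_g^*=f$, $u_g^*u_g=\beta_g^H(f)$, the support convention $u_g=fu_g$ (whence $u_g\beta_g^H(f)=fu_g=u_g$), and $w=wf$, $\pi(a)=f\pi(a)f$. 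The ``with expectation'' clauses in (a) are obtained by restricting or conjugating the given faithful normal expectation onto $\pi(A)$, which is routine and I will only indicate.

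For (1) the single point is that $z^H:=z\otimes 1$ is central in $B\ovt\B(H)$ (because $z\in\mathcal Z(B)$) and fixed by $\beta_g^H$ (because $z\in\mathcal Z(B)^\beta$). Hence $z^H$ commutes with $f,\pi(a),u_g,w$ and satisfies $\beta_g^H(z^H)=z^H$, so every defining relation for the new data is obtained by multiplying the corresponding relation for the old data on the right by $z^H$; for instance $(u_gz^H)(u_gz^H)^*=fz^H$, and condition $\rm(i)$ becomes $(\omega_g\otimes1)\beta_g^H(w)z^H=(\omega_g\otimes1)\beta_g^H(wz^H)$. The cut-down expectation onto $\pi(A)z^H$ supplies the ``with expectation'' property, while the hypothesis $w(z\otimes1)\ne0$ only guarantees that the resulting intertwiner is nonzero.

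For (2) the crux is the commutation relation produced by condition $\rm(ii)$ at the $\alpha$-fixed projection $e\in A^\alpha$. Applying $\rm(ii)$ with $a=e$ gives $u_g\beta_g^H(\pi(e))u_g^*=\pi(\alpha_g(e))=\pi(e)$; multiplying by $u_g$ and $u_g^*$ and using $u_g^*u_g=\beta_g^H(f)$ together with $\pi(e)\le f$, I obtain the two identities $u_g\beta_g^H(\pi(e))=\pi(e)u_g$ and $u_g^*\pi(e)u_g=\beta_g^H(\pi(e))$. From these, $\{\pi(e)u_g\}_g$ is at once a generalized cocycle with support $\pi(e)$ (using $\pi(e)f=\pi(e)$), its cocycle identity following from $\pi(e)u_g\beta_g^H(\pi(e))=\pi(e)u_g$. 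The intertwiner $w\pi(e)=(e\otimes1)w$ witnesses $eAe\preceq_M B$, and conditions $\rm(i)$, $\rm(ii)$ for the new data reduce to the originals after inserting $u_g\beta_g^H(\pi(e))=\pi(e)u_g$; the restriction $\pi|_{eAe}$ lands in $\pi(e)(B\ovt\B(H))\pi(e)$ with expectation by cutting the expectation by $\pi(e)$.

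For (3) conjugation by the partial isometry $v$ (with $v^*v=f$, $vv^*=f'$) transports everything: $\Ad(v)\circ\pi$, $wv^*$, and $u_g':=vu_g\beta_g^H(v^*)$. Checking that $\{u_g'\}_g$ is a generalized cocycle with support $f'$ uses $\beta_g^H(v^*)\beta_g^H(v)=\beta_g^H(f)$, the absorption $u_g\beta_g^H(f)=u_g$, and $vfv^*=f'$. The delicate step, and the one I expect to be the main obstacle, is the cocycle relation $u_{gh}'=u_g'\beta_g^H(u_h')$, since it interleaves the conjugating isometry with both the $\beta$-action and the original cocycle; it works out precisely because $\beta$ is a genuine action, so $\beta_{gh}^H=\beta_g^H\circ\beta_h^H$, whence $\beta_{gh}^H(v^*)=\beta_g^H(\beta_h^H(v^*))$ lets the inner $v^*v=f$ be absorbed by $u_g$. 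Strong continuity of $g\mapsto u_g'$ follows from that of $u_g$ and the continuity of $\beta$, and conditions $\rm(i)$, $\rm(ii)$ reduce to the originals after cancelling $v^*v=f$ and $\beta_g^H(v^*v)=\beta_g^H(f)$ against $w=wf$ and $\pi(a)=f\pi(a)f$ respectively.
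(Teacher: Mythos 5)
Your proposal is correct and follows essentially the same route as the paper's proof: a direct verification of the defining conditions, with the key steps being the commutation relation $u_g\beta_g^H(\pi(e))=\pi(e)u_g$ extracted from condition $\rm(ii)$ at the fixed projection $e$ in part (2), and the absorption $u_g\beta_g^H(f)=u_g$ in the cocycle identity for part (3). You supply more of the routine computations that the paper dismisses as ``easy to check,'' but the argument is the same.
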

\begin{proof}
	(1) This is easy because $z\otimes 1$ commutes with $\pi(A)$ and $\beta\otimes \id$. 

	(2) Observe that $\pi\colon eAe \to \pi(e)(B\ovt \B(H))\pi(e)$ is a unital $\ast$-homomorphism, $w\pi(e) = (1\otimes e_{11})w\pi(e)$, and $(H,fz^H,\pi(\,\cdot\, )z^H,wz^H)$ witness $eAe \preceq_{M} B$. 
By item (ii) in the definition, for all $a\in A$ and $g\in G$,
	$$\pi(\alpha_g(eae))=\pi(e\alpha_g(eae)e)=\pi(e)u_g \beta_g^H(\pi(eae))u_g^*\pi(e) .$$
We get item (ii) for $eAe$ and $g\in G$. Since $\alpha_g(e)=e$, this implies for $g\in G$,
	$$\pi(e) = \pi(\alpha_g(e)) = u_g\beta_g^H(\pi(e))u_g^*.$$
This means that $\pi(e)u_g$ has the left support $\pi(e)$ and the right support $\beta_g^H(\pi(e))$. Now it is easy to check that $\{\pi(e)u_g\}_g$ is a generalized cocycle and that item (i) holds. 

	(3) It is easy to see that $(H,f',\Ad(v)\circ \pi,wv^*)$ witness $A \preceq_{M} B$. Put  $\tilde{u}_g:= vu_g \beta^H_g(v^*)$ for $g\in G$, and observe that it is a partial isometry that has the left support $f'$ and the right support $\beta_g^H(f')$. It is easy to compute that 
$\tilde{u}_{gh} = \tilde{u}_g \beta^H_g(\tilde{u}_h)$ for $g,h\in G$. It is also easy to check item (i) and (ii). 
\end{proof}

\begin{lem}\label{lem-intertwining-action2}
	The following assertions hold.
\begin{enumerate}
	\item Put $\B:=\B(\ell^2)$, and extend $\alpha,\beta $ on $M\ovt \B$ as trivial actions on $\B$. Then $(A,\alpha) \preceq_{M} (B,\beta)$ holds if and only if $(A\otimes \C ,\alpha\otimes \id) \preceq_{M\ovt \B} (B\ovt \B,\beta\otimes \id)$. 

	\item If there exist projections $z\in \mathcal Z(A)^\alpha$ and $q\in B^\beta$ such that $(Az,\alpha) \preceq_{M} (qBq,\beta)$, then $(A,\alpha) \preceq_{M} (B,\beta)$.

	\item Let $z_i\in \mathcal Z(A)^\alpha$ and $w_j\in \mathcal Z(B)^\beta$ be projections with $1_A = \sum_{i\in I} z_i$ and $1_B = \sum_{j\in J} w_j$. Then $(A,\alpha) \preceq_{M} (B,\beta)$ holds if and only if there exist some $i,j$ such that $(Az_i,\alpha) \preceq_{M} (Bw_j,\beta)$.
\end{enumerate}
\end{lem}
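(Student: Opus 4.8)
The plan is to reduce all three parts to Lemma \ref{lem-intertwining-action1} together with the already-known behaviour of the plain relation $\preceq_M$ under amplification and cutting (as in \cite{HI15,Is19}), carrying the generalized cocycle $\{u_g\}_g$ along at each step and checking that conditions $\rm(i)$ and $\rm(ii)$ of Definition \ref{def-intertwining2} persist. The guiding observation is that the cocycle identities $u_gu_g^*=f$, $u_g^*u_g=(\beta_g\otimes\id)(f)$ and $u_{gh}=u_g(\beta_g\otimes\id)(u_h)$ involve only $\beta$ and the support projection $f$, so any operation leaving $\beta$ and $f$ essentially untouched automatically preserves them.

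For item (1) I would use the identification $(B\ovt\B)\ovt\B(H)\cong B\ovt\B(\ell^2\otimes H)$, placing the data $(f,\pi,w)$ in the corner attached to a fixed rank-one projection of $\ell^2$. Since $\alpha\otimes\id$ and $\beta\otimes\id$ act trivially on the new factor $\B=\B(\ell^2)$, the same $u_g$ (viewed in that corner) is a generalized $(\beta\otimes\id)$-cocycle, and both $\rm(i)$ and $\rm(ii)$ read verbatim as before; the reverse implication is the same identification read in the opposite direction, compressing by a rank-one projection of $\ell^2$. This part is the fiddliest purely because of the several Hilbert-space tensor factors, but it is formal.

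For item (2) I would proceed in two steps. First, because $q\in B^\beta$, the automorphism $\beta_g$ restricts to $\beta_g$ on $qBq$, so a generalized $(\beta|_{qBq})$-cocycle with values in $qBq\ovt\B(H)$ is at once a generalized $\beta$-cocycle with values in $B\ovt\B(H)$; hence the same witnessing data upgrades $(Az,\alpha)\preceq_M(qBq,\beta)$ to $(Az,\alpha)\preceq_M(B,\beta)$. Second, I would inflate the source by setting $\tilde\pi(a):=\pi(az)$ for $a\in A$. Since $z\in\mathcal Z(A)$, this is a unital normal $\ast$-homomorphism onto $f(B\ovt\B(H))f$ with $\tilde\pi(1_A)=\pi(z)=f$, and using $w=(z\otimes e_{1,1})w$ one computes $w\tilde\pi(a)=w\pi(az)=(az\otimes e_{1,1})w=(a\otimes e_{1,1})w$, so $(H,f,\tilde\pi,w)$ witnesses $A\preceq_M B$. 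The same $\{u_g\}_g$ works: condition $\rm(i)$ is unchanged, and $\rm(ii)$ becomes $u_g(\beta_g\otimes\id)(\tilde\pi(a))u_g^*=\pi(\alpha_g(az))=\pi(\alpha_g(a)z)=\tilde\pi(\alpha_g(a))$, where $\alpha_g(z)=z$ is exactly the hypothesis $z\in\mathcal Z(A)^\alpha$. This yields $(A,\alpha)\preceq_M(B,\beta)$.

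Finally, for item (3) the implication $(\Leftarrow)$ is immediate from item (2) applied with $z=z_i\in\mathcal Z(A)^\alpha$ and $q=w_j\in\mathcal Z(B)^\beta\subset B^\beta$. For $(\Rightarrow)$, let $(H,f,\pi,w)$ and $\{u_g\}_g$ witness $(A,\alpha)\preceq_M(B,\beta)$. Each $w_j\otimes 1$ is central in $B\ovt\B(H)$ and fixed by $\beta\otimes\id$, so from $w=\sum_j w(w_j\otimes 1)\neq 0$ I pick $j$ with $w(w_j\otimes 1)\neq 0$ and invoke Lemma \ref{lem-intertwining-action1}(1) to get $(A,\alpha)\preceq_M(Bw_j,\beta)$; then, writing the resulting intertwiner $w'=w(w_j\otimes 1)$, from $\sum_i (z_i\otimes e_{1,1})w'=w'\neq 0$ I pick $i$ with $(z_i\otimes e_{1,1})w'\neq 0$ and invoke Lemma \ref{lem-intertwining-action1}(2) with $e=z_i\in\mathcal Z(A)^\alpha\subset A^\alpha$ (so $z_iAz_i=Az_i$) to obtain $(Az_i,\alpha)\preceq_M(Bw_j,\beta)$. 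The hardest part of the whole argument is bookkeeping the support projections through the source-inflation of item (2) and through the two successive cuts of item (3), making sure each intermediate object is still a genuine generalized cocycle with the correct support; none of the individual checks is deep, but they must all be carried out consistently.
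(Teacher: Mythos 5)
Your proposal is correct and follows essentially the same route as the paper's proof: item (1) by the identification $(B\ovt\B)\ovt\B(H)\cong B\ovt\B(\ell^2\otimes H)$ in both directions, item (2) by precomposing $\pi$ with $a\mapsto az$ and keeping the same cocycle (the paper dismisses this as "obvious"), and item (3) by picking $i,j$ with $(z_i\otimes 1)w(w_j\otimes 1)\neq 0$ and applying Lemma \ref{lem-intertwining-action1}(1) and (2). Your write-up simply carries out explicitly the cocycle bookkeeping that the paper leaves implicit.
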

\begin{proof}
	(1) Let $(H,f,\pi,w)$ and $\{u_g\}_g$ be witnessing $(A,\alpha) \preceq_{M} (B,\beta)$. Since $H$ is separable, we may assume $\B(H)\subset \B$, hence $(H,f,\pi,w)$ and $\{u_g\}_g$ witness $(A\otimes \C ,\alpha\otimes \id) \preceq_{M\ovt \B} (B\ovt \B,\beta\otimes \id)$. 

Conversely if $(H,f,\pi,w)$ and $\{u_g\}_g$ witness $(A\otimes \C ,\alpha\otimes \id) \preceq_{M\ovt \B} (B\ovt \B,\beta\otimes \id)$. Then using $H\ovt \ell^2$, we get $(A,\alpha) \preceq_{M} (B,\beta)$.

	(2) This is obvious, one can compose the map $A \mapsto Az$. 

	(3) If $(Az_i,\alpha) \preceq_{M} (Bw_j,\beta)$, we have $(A,\alpha) \preceq_{M} (B,\beta)$ by item (2). To see the converse, let $(H,f,\pi,w)$ and $\{u_g\}_g$ be witness $(A,\alpha) \preceq_{M} (B,\beta)$. Since $w = (1\otimes e_{11})wf$, there exist $z_i,w_j$ such that 
	$$(z_i\otimes 1)w(w_j\otimes 1)\neq 0 .$$
We can then apply Lemma \ref{lem-intertwining-action1}(1) and (2) repeatedly. 
\end{proof}

\begin{lem}\label{lem-intertwining-finite-dimensional1}
	Assume that either $A$ is of type $\rm III$, or $B$ is properly infinite. If $(A,\alpha) \preceq_M (B,\beta)$ holds, then there exist $(H,f,\pi,w)$ and $\{u_g\}_g$ witnessing the condition such that $H$ is finite-dimensional.
\end{lem}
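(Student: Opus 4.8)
The plan is to split the statement along the two stated hypotheses, treat the properly infinite case by a comparison-of-projections argument, and then reduce the type~III case to it. Throughout I would first invoke the remark following Definition~\ref{def-intertwining2} to assume that the intertwiner $w$ is a partial isometry, so that $ww^*\leq 1_A\otimes e_{1,1}$ and $w^*w\leq f$.

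First I would handle the case where $B$ is properly infinite, which is the conceptual core: over a properly infinite base a single matrix coordinate already suffices. Fix the minimal projection $e_{1,1}\in\B(H)$. Since $B$ is properly infinite, the projection $1_B\otimes e_{1,1}\in B\ovt\B(H)$ is properly infinite (its reduced algebra is isomorphic to $B$) and has full central support, namely the central support of $1_{B\ovt\B(H)}$; hence $1_B\otimes e_{1,1}\sim 1_{B\ovt\B(H)}$ by the comparison theorem for properly infinite projections. As $f\leq 1_{B\ovt\B(H)}$, this gives $f\preceq 1_B\otimes e_{1,1}$, so I obtain a partial isometry $v\in B\ovt\B(H)$ with $v^*v=f$ and $f':=vv^*\leq 1_B\otimes e_{1,1}$. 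Applying Lemma~\ref{lem-intertwining-action1}(3) transports $(H,f,\pi,w)$ and $\{u_g\}_g$ to data based at $f'$ that still witnesses $(A,\alpha)\preceq_M(B,\beta)$. Since $f'$ lies in the corner $(1_B\otimes e_{1,1})(B\ovt\B(H))(1_B\otimes e_{1,1})\cong B$, the transported projection, homomorphism, intertwiner, and cocycle all take values in this corner, so I may replace $H$ by the one-dimensional space $e_{1,1}H$, producing the desired finite-dimensional witnessing data.

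Next I would bootstrap the type~III case to the previous one. If $B$ happens to be properly infinite we are already done, so assume it is not and let $z_0\in\mathcal Z(B)$ be the canonical central projection with $Bz_0$ finite and $Bz_0^\perp$ properly infinite. By uniqueness this decomposition is preserved by every automorphism, so $z_0\in\mathcal Z(B)^\beta$. Applying Lemma~\ref{lem-intertwining-action2}(3) with the trivial decomposition of $1_A$ and the decomposition $1_B=z_0+z_0^\perp$ yields either $(A,\alpha)\preceq_M(Bz_0,\beta)$ or $(A,\alpha)\preceq_M(Bz_0^\perp,\beta)$. The first is impossible: its witnessing tuple would embed the type~III algebra $A\cong\pi(A)$ with expectation into the corner $f(Bz_0\ovt\B(H))f$ of the semifinite algebra $Bz_0\ovt\B(H)$, forcing $A$ to be semifinite. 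Hence $(A,\alpha)\preceq_M(Bz_0^\perp,\beta)$ holds, and since $Bz_0^\perp$ is properly infinite the previous case supplies finite-dimensional witnessing data for it. Finally, Lemma~\ref{lem-intertwining-action2}(2) with $q=z_0^\perp$ promotes this to finite-dimensional witnessing data for $(A,\alpha)\preceq_M(B,\beta)$, since that reduction composes with the inclusion $z_0^\perp B z_0^\perp\subset B$ and does not enlarge $H$.

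The comparison step is routine; the genuine content lies in the type~III case. I expect the main obstacle to be the exclusion of the finite summand of $B$, which rests on the fact that a type~III von Neumann algebra cannot be the image of a faithful normal conditional expectation from a semifinite one (so that the embedding into $f(Bz_0\ovt\B(H))f$ is ruled out). A secondary point requiring care is the bookkeeping across each application of Lemmas~\ref{lem-intertwining-action1} and~\ref{lem-intertwining-action2}: one must check that the generalized $\beta$-cocycle $\{u_g\}_g$ is genuinely carried along at every step and that no reduction ever increases $\dim H$.
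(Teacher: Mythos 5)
Your proof is correct and follows essentially the same route as the paper: handle the properly infinite case by comparing $f$ with $1_B\otimes e_{1,1}$ and transporting the data via Lemma \ref{lem-intertwining-action1}(3), then reduce the type III case to it through a central decomposition of $B$ combined with Lemma \ref{lem-intertwining-action2}, using that a type III algebra cannot sit with expectation inside a semifinite one. The only (harmless) deviation is that you split $B$ into its finite and properly infinite summands where the paper splits it into type III and semifinite summands; both exclusions rest on the same type obstruction.
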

\begin{rem}\upshape
	Let $A=A_1\oplus A_2$, where $A_1$ is of type $\rm III$ and $A_2$ is finite (possibly $A_1=0$ or $A_2=0$), and assume that $A_2$ has a $\alpha$-invariant faithful normal state $\psi_2 \in (A_2)_\ast$. In this setting, we can also choose $H$ to be finite-dimensional. This will be proved in the last part in the proof of Theorem \ref{thm-intertwining2}.
\end{rem}
\begin{proof}
	Assume that $B$ is properly infinite. Take $(H,f, \pi,w)$ and $\{u_g\}_g$. Then for the minimal projection $e_{11}$ in $\B(H)$, since $1\otimes e_{11}$ is infinite in $B\ovt \B(H)$ and since $f\in B\ovt \B(H)$ is a $\sigma$-finite projection, we have $f\sim f' \leq 1\otimes e_{11}$ in $B\ovt \B(H)$. We apply Lemma \ref{lem-intertwining-action1}(3) and we may assume $f\leq 1\otimes e_{11}$. Then we indeed get that $(\C e_{11},f, \pi,w)$ and $\{u_g\}_g$ witness the condition. 

Assume next that $A$ is of type III. 
We  decompose $B = B_1 \oplus B_2$ as $B_1$ is of type III and $B_2$ is semifinite. Then by Lemma \ref{lem-intertwining-action2}(3), we get $(A,\alpha) \preceq_M (B_k,\beta)$ for some $k\in \{1,2\}$. But since $A$ is of type III, we indeed get $k=1$. Then since $B_1$ is properly infinite, we can use the result in the previous paragraph.
\end{proof}

\subsection*{Intertwining theorem with group actions}

The following theorem is an appropriate generalization of results in \cite{Po05a,HSV16} and \cite[Theorem 3.2]{Is19}. 

\begin{thm}\label{thm-intertwining2}
	Keep the setting as in Definition \ref{def-intertwining2}. Assume that either
\begin{itemize}
	\item[$\rm (a)$] $A=A_1\oplus A_2$, where $A_1$ is of type $\rm III$,  and $A_2$ is finite with $\alpha$-invariant faithful normal state $\psi_2 \in (A_2)_\ast$ (possibly $A_1=0$ or $A_2=0$); or
	\item[$\rm (b)$] $B$ is properly infinite.
\end{itemize}
Further, assume that either
\begin{itemize}
	\item[$\rm (a')$] there exists an $\alpha$-invariant faithful normal state on $A$; or
	\item[$\rm (b')$]  $G$ is amenable.
\end{itemize}
Put $\widetilde{G}:=\mathcal U(A)\rtimes_\alpha G$ and define an action (as a discrete group) by
	$$ \gamma \colon \widetilde{G}  \actson 1_A \langle M,\widetilde{B}\rangle 1_A;\quad \gamma_{(u,g)}(T) = u \widehat{\alpha}_g(T) u^*.$$
Then the following conditions are equivalent. 
\begin{enumerate}
	\item We have $(A,\alpha) \preceq_M (B,\beta)$.
	\item There exists no nets $(u_i)_{i}$ of unitaries in $\mathcal U(A)$ and $(g_i)_i$ in $G$ such that 
	$$E_B(\beta_{g_i}(b^*)\omega_{g_i}^* u_i a ) \rightarrow 0, \quad \text{$\sigma$-strongly for all $a,b\in M1_B$.}$$

	\item There exists $d_0\in \langle M,\widetilde{B}\rangle^+$ such that $\widehat{E}_{\widetilde{B}}(d_0)<\infty$, $d_0 = d_0 J1_BJ$ and 
	$$ \mathcal K(d_0,\gamma)=\overline{{\rm co}}^{\rm weak} \left \{ u \widehat{\alpha}_g(d_0) u^* \mid u \in \mathcal U(A),\ g\in G\right \} \subset (1_A \langle M, \widetilde B\rangle 1_A)^+$$
does not contain $0$. Here $\widehat{\alpha}_g := \Ad(\omega_g)\circ \widehat{\beta}_g$, $\widehat{\beta}_g := \Ad(u_g^\beta)$, and $u_g^\beta \in \mathcal U(L^2(M))$ is the canonical implementation of $\beta_g$ for all $g\in G$.

	\item There exists no $\Psi \in \D(\gamma)$ such that $\Psi(ae_{\widetilde{B}}b^*) = 0$ for all $a,b\in M1_B$.

	\item[$(4')$] There exists no $\Psi \in \D(\gamma)$ such that $\Psi(\mathfrak m_{\widehat{E}_{\widetilde{B}}} J1_BJ) = 0$  and $\Psi|_{1_AM1_A}$ is a conditional expectation onto $A'\cap 1_A M^\alpha 1_A$.

\end{enumerate}
In assumption $\rm (a)$ and $\rm (a')$, if $(A,\alpha) \not\preceq_M (B,\beta)$, then we can choose $\Psi$ in item $(4')$ so that $\Psi|_{1_AM1_A}$ is a faithful normal conditional expectation. 
\end{thm}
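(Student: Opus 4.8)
The plan is to reuse the construction of $\Psi$ from the proof of the implication $(4')\Rightarrow(3)$ and to upgrade the conditional expectation appearing there to a faithful normal one via Proposition~\ref{prop-averaging-amenable}(2). Since we assume $(A,\alpha)\not\preceq_M(B,\beta)$, the equivalence $(1)\Leftrightarrow(3)$ already established shows that condition $(3)$ fails, so $\mathcal K(d_0,\gamma)$ contains $0$ for every $d_0\in\mathfrak m_{\widehat E_{\widetilde B}}^+$ with $d_0=d_0J1_BJ$. First I would apply Lemma~\ref{lem-operator-valued-weight-extension} with the central projection $z=J1_BJ$ to obtain $\Psi_0\in\D(\gamma)$ satisfying $\Psi_0(\mathfrak m_{\widehat E_{\widetilde B}}J1_BJ)=0$. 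Here one needs $J1_BJ$ to be a $\widehat\alpha$-invariant central projection in $\langle M,\widetilde B\rangle$: centrality is as in the proof of Theorem~\ref{thm-intertwining1}, while invariance follows from $\beta_g(1_B)=1_B$ together with the commutation of the canonical implementations $u_g^\beta$ with $J$ and the fact that $\omega_g\in M$ commutes with $M'\ni J1_BJ$.

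Next I would produce a faithful normal conditional expectation onto $A'\cap 1_AM^\alpha 1_A$. The key observation is that $\widehat\alpha_g$ restricts to $\alpha_g$ on $M$ (using $\alpha_g=\Ad(\omega_g)\circ\beta_g$ and that $u_g^\beta$ implements $\beta_g$), so $\gamma|_{1_AM1_A}$ is precisely the action $(u,g)\mapsto\Ad(u)\circ\alpha_g$ appearing in Proposition~\ref{prop-averaging-amenable} for the inclusion $A\subset 1_AM1_A$. Under assumption $(a')$ there is an $\alpha$-invariant faithful normal state on $A$, by the setup we have $\alpha_g\circ E_A=E_A\circ\alpha_g$ on $1_AM1_A$ and $E_A(1_A)=1_A$, and assumption $(a)$ guarantees that $A$ has no semifinite, properly infinite direct summand. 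Hence the last part of Proposition~\ref{prop-averaging-amenable}(2) applies and yields a faithful normal conditional expectation $E\colon 1_AM1_A\to A'\cap 1_AM^\alpha 1_A$ with $E\in\D(\gamma|_{1_AM1_A})$. By the compactness of $\D(\gamma)$, exactly as in the proof of Theorem~\ref{thm-intertwining1}, I would extend $E$ to some $\widetilde E\in\D(\gamma)$ with $\widetilde E|_{1_AM1_A}=E$.

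Finally I would set $\Psi:=\Psi_0\circ\widetilde E\in\D(\gamma)$ and check the two requirements of item $(4')$. For the first, Lemma~\ref{lem-operator-bounded} gives $\widetilde E(\mathfrak m_{\widehat E_{\widetilde B}})\subset\mathfrak m_{\widehat E_{\widetilde B}}$, and since $J1_BJ$ is $\gamma$-fixed and central the map $\widetilde E$ is $J1_BJ$-modular, so $\Psi(\mathfrak m_{\widehat E_{\widetilde B}}J1_BJ)=\Psi_0(\widetilde E(\mathfrak m_{\widehat E_{\widetilde B}})J1_BJ)=0$. For the second, any $x\in A'\cap 1_AM^\alpha 1_A$ is fixed by every $\Ad(u)$ with $u\in\mathcal U(A)$ (as $x\in A'$) and by every $\widehat\alpha_g$ (as $x\in M^\alpha$), hence lies in the $\gamma$-fixed points and satisfies $\Psi_0(x)=x$; since $E$ takes values in $A'\cap 1_AM^\alpha 1_A$, it follows that $\Psi|_{1_AM1_A}=\Psi_0\circ E=E$, the desired faithful normal conditional expectation.

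I expect the only genuinely delicate point to be the bookkeeping around $J1_BJ$ in the first step: one must confirm that this projection is central in $\langle M,\widetilde B\rangle$ and invariant under the full action $\gamma$ (equivalently under $\widehat\alpha$), so that Lemma~\ref{lem-operator-valued-weight-extension} can legitimately be invoked with $z=J1_BJ$. Once this is in place, the remaining steps are formal consequences of the averaging machinery, and the composition $\Psi_0\circ\widetilde E$ behaves as required precisely because $\Psi_0$ acts as the identity on the $\gamma$-fixed target of $E$.
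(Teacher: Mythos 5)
Your construction of $\Psi$ and the upgrade to a faithful normal conditional expectation are correct and coincide with the paper's own argument for $(4')\Rightarrow(3)$ and the final assertion: the paper likewise obtains $\Psi_0$ from Lemma \ref{lem-operator-valued-weight-extension} applied with the central, $\widehat{\alpha}$-invariant projection $J1_BJ$, produces $E$ from Proposition \ref{prop-averaging-amenable}(2) (using $(a)$ and $(a')$ for faithfulness and normality), extends it to $\widetilde{E}\in\D(\gamma)$ by compactness, and sets $\Psi=\Psi_0\circ\widetilde{E}$; your checks that $\Psi_0$ is the identity on $A'\cap 1_AM^\alpha 1_A$ and that $\widetilde{E}$ preserves $\mathfrak m_{\widehat{E}_{\widetilde{B}}}$ via Lemma \ref{lem-operator-bounded} are exactly how the paper closes this step.

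The genuine gap is that this covers only a small part of the theorem. You explicitly take "the equivalence $(1)\Leftrightarrow(3)$ already established" as an input, but the statement to be proved is precisely the equivalence of $(1),(2),(3),(4),(4')$, and none of the remaining implications appear in your proposal. In particular you do not prove: $(1)\Rightarrow(2)$ and $(1)\Rightarrow(4)$, which in the paper require first showing that a witness $(H,f,\pi,w)$ with $H$ finite-dimensional can be chosen under $(a)$ or $(b)$ --- under $(a)$ this is not a formality and occupies Lemma \ref{lem-intertwining-finite-dimensional1} together with Proposition \ref{prop-intertwining-action-finite} for the case where $A$ and $B$ are both finite; $(2)\Rightarrow(3)$, the computation producing $d_0=\sum_{b\in F}be_{\widetilde{B}}b^*$ and the lower bound $\delta$; and $(3)\Rightarrow(1)$, which is where hypotheses $(a')$/$(b')$ actually enter, via Proposition \ref{prop-averaging-amenable} applied to $E=E_A\circ\widehat{E}_{B}$ to get $\mathfrak m_{E}\subset\mathcal D(\gamma)$, hence a nonzero $\gamma$-fixed element $d\in\mathcal K(d_0,\gamma)$ with $\widehat{E}_B(d)<\infty$, concluded by \cite[Theorem 3.2(4)]{Is19}. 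As written, the proposal establishes only the last sentence of the theorem, conditionally on all of the above.
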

\begin{rem}\upshape
	We note that assumption (a) or (b) is used to show that, if $(A,\alpha) \preceq_M (B,\beta)$, then we can choose $(H,f,\pi,w)$ and $\{u_g\}_g$ such that $H$ is finite-dimensional. 
Assumption $\rm (a')$ or $\rm (b')$ is used to prove $(4)\Rightarrow(3)$ and $(3)\Rightarrow(1)$.
\end{rem}
\begin{proof}
	Fix faithful states $\varphi\in M_\ast$ such that $1_B \in M_\varphi$ and $\varphi\circ E_B = \varphi $ on $1_BM1_B$.  As in the proof of Theorem \ref{thm-intertwining1}, we assume that $A,B\subset M$ are unital subalgebras. We first accept the following fact: in assumption (a) or (b), if $(A,\alpha) \preceq_M (B,\beta)$, then we can choose $(H,f,\pi,w)$ and $\{u_g\}_g$ such that $H$ is finite-dimensional. We prove this fact in the last part of the proof.

	For implications of $(1)\Rightarrow(2)$ and $(2)\Rightarrow(3)$, proofs are really the same as ones in Theorem \ref{thm-intertwining1}. Indeed, one can compute in $(1)\Rightarrow(2)$ that 
\begin{align*}
	\|E(w^*w) \|_{2}
	=\|E((\beta_{g_i}\otimes \id_H)(w^*) (\omega_{g_i}^* u_i\otimes e_{1,1})w) \|_{2},
\end{align*}
and in $(2)\Rightarrow(3)$ that 
\begin{align*}
	\sum_{y\in\mathcal{F}}\langle w^*\widehat{\alpha}_g(d_0)w \, a\xi_\varphi,a\xi_\varphi\rangle_{\varphi}
	&=\sum_{a,b\in F}\|\rE_B(\beta_g(b^*)\omega_g wa)\|_{\varphi}^2\geq \delta.
\end{align*}
Then exactly same arguments work, so we omit them. See \cite[Theorem 3.2]{Is19} for similar computations.

	$(3)\Rightarrow(1)$ Put $E:=E_A \circ \widehat{E}_{B}$ and observe that $\widehat{E}_{B} \circ \widehat{\gamma}_g = \widehat{\gamma}_g \circ \widehat{E}_B$ and $E\circ \widehat{\gamma}_g = \widehat{\gamma}_g\circ E$ for all $g\in \widetilde{G}$. 
By assumption $\rm (a')$ or $\rm (b')$, we can use Proposition \ref{prop-averaging-amenable} for the action $\gamma$ and $E$. We get $ \mathfrak m_{E}\subset \mathcal D(\gamma)$, and hence $\mathcal K(d_0, \gamma)^{\gamma}\neq \emptyset$, as $d_0$ is contained in $\mathfrak m_E$. 
Take any $d\in \mathcal K(d_0, \gamma)^{\gamma}$ that is nonzero by assumption. Observe that $d$ is contained in $A'\cap \langle M,B\rangle^{\widehat{\alpha}}$. 
Next we apply Lemma \ref{lem-operator-bounded} for $\gamma$ and $\widehat{E}_B$, and get that $d\in \mathcal K(d_0,\gamma) \subset \mathfrak m_{\widehat{E}_B}$. We conclude $\widehat{E}_B(d)<\infty$, and hence we get item (1) by \cite[Theorem 3.2(4)]{Is19}.

	$(1)\Rightarrow(4)$ We can take $(\pi,H,f,w)$ and $\{u_g\}_g$ such that $H$ is finite-dimensional. If there exists $\Psi$ as in (4), Then for each $a\in \mathcal U(A)$ and $g\in G$, 
\begin{align*}
	(a\otimes 1_{H})\widehat{\alpha}_g( w (e_B\otimes 1_{H}) w^*)(a^*\otimes 1_{H})
	&=  	(a\omega_g^H\otimes 1_{H})( \beta_g^H(w) (e_B\otimes 1_{H}) \beta_g^H(w^*)(\omega_g^H)^*a^*\otimes 1_{H})\\
	&=  	(a\otimes 1_{H})( wu_g (e_B\otimes 1_{H}) u_g^* w^*(a^*\otimes 1_{H})\\
	&=  	 w \pi(a)u_g(e_B\otimes 1_{H}) u_g^* \pi(a^*)w^*
	 = w(e_B\otimes 1_{H})w^*.
\end{align*}
This implies that 
\begin{align*}
	(\Psi\otimes \id_{\B(H)})( w (e_B\otimes 1_{H}) w^*)
	=  w(e_{B}\otimes 1_{H})w^*.
\end{align*}
Then since $H$ is finite-dimensional, by assumption of $\Psi$, the left hand side of the equation is zero. This implies that $w=0$, a contradiction.

	$(4)\Rightarrow(4')$ This is trivial.

	$(4')\Rightarrow(3)$ Assume that (3) does not hold and we construct $\Psi$ as in item $(4')$. Since (3) does not hold, for each $d\in \mathfrak m_{\widehat{E}_B}^+$, $\mathcal K(d,\gamma)$ contains $0$. 
We apply Lemma \ref{lem-operator-valued-weight-extension} and find $\Psi_0\in \D(\gamma)$ such that $\Psi_0(\mathfrak m_{\widehat{E}_{B}})=0$. 

Since $A\subset M$ is with expectation $E_A$ that commutes with $\alpha$, by assumption $(a')$ or $(b')$, we can use Proposition \ref{prop-averaging-amenable} to the inclusion $A\subset^{E_A} M$ with $\alpha$, and get 
	$$M=\mathfrak m_{E_A}\subset \mathcal D(\gamma\colon \widetilde{G}\actson M).$$
By Lemma \ref{lem-Schwartz}, take a conditional expectation $E\in \D(\gamma\colon \widetilde{G}\actson M)$ onto $M^\gamma = A'\cap M^\alpha$. We extend $E$ to a map $\widetilde{E}$ contained in $\D(\gamma \colon \widetilde{G}\actson \langle M,B\rangle)$. 
Put $\Psi:=\Psi_0\circ \widetilde{E}\in \D(\gamma)$ and we show that it satisfies the conclusion. 
Indeed, by Lemma \ref{lem-operator-bounded}, we have $\widetilde{E}(\mathfrak m_{\widehat{E}_{B}})\subset \mathfrak m_{\widehat{E}_{B}}$ and hence 
$\Psi(\mathfrak m_{\widehat{E}_{B}})=0$. Since $\Psi_0=\id$ on $A'\cap M^\alpha$, $\Psi|_{M} = E$ is a conditional expectation. Thus we get the conclusion.

Suppose that assumption (a) and $\rm (a')$ holds. Then by Proposition \ref{prop-averaging-amenable}, the conditional expectation $E\in \D(\gamma\colon \widetilde{G}\actson M)$ above can be chosen as a faithful and normal one.

\bigskip

Finally we prove that if $(A,\alpha)\preceq_M (B,\beta)$ and if assumption (a) or (b) holds, then we can choose $(H,f,\pi,w)$ and $\{u_g\}_g$ such that $H$ is finite-dimensional. The case of assumption (b) is already proved in Lemma \ref{lem-intertwining-finite-dimensional1}, so we consider the case of assumption (a). 

By Lemma \ref{lem-intertwining-action2}, we may assume that $A$ is finite or of type III, and that $B$ is finite or properly infinite. Then the case $B$ is properly infinite was already proved, hence we assume $B$ is finite. Then $A$ must be finite, hence we can assume $A,B$ are finite. We then prove this case in Proposition \ref{prop-intertwining-action-finite} below. 
\end{proof}

Since we have already proved Theorem \ref{thm-intertwining2} in the case that $B$ is properly infinite, we immediately obtain the following proposition.

\begin{prop}\label{prop-intertwining2}
	Keep the setting as in Definition \ref{def-intertwining2}. Assume that either
\begin{itemize}
	\item $G$ is amenable; or $A$ has a $\alpha$-invariant faithful normal state $\psi\in A_\ast$.
\end{itemize}
Then the following conditions are equivalent.
\begin{enumerate}
	\item $(A,\alpha) \preceq_M (B,\beta)$.

	\item There exist no net $(u_i)_{i}$ in $\mathcal U(A)$ and $(g_i)_i$ in $G$ such that
	$$(E_{B}\otimes \id_{\B(\ell^2)})( (\beta_{g_i}\otimes \id)(b^*)(\omega_{g_i}^* u_i \otimes 1)  a ) \rightarrow 0, \quad \text{$\sigma$-strongly for all $a,b\in M1_B\ovt \B(\ell^2)$.}$$

	\item There exists no $\Psi \in \D(A\subset 1_A\langle M,\widetilde{B}\rangle 1_A)$ such that $(\Psi\otimes \id_{\B(\ell^2)})(a (e_{\widetilde{B}}\otimes 1) b^*J1_BJ) = 0$ for all $a,b\in M1_B\ovt \B(\ell^2)$.

\end{enumerate}
\end{prop}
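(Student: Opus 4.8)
The plan is to deduce the proposition from Theorem~\ref{thm-intertwining2} by amplifying with $\B(\ell^2)$ so as to force the target algebra to be properly infinite, exactly as Proposition~\ref{prop-intertwining1} was deduced from Theorem~\ref{thm-intertwining1}. The whole point of the amplification is that it converts the two dichotomous hypotheses of Theorem~\ref{thm-intertwining2} into something automatic on one side (assumption $\rm(b)$) while preserving the other side (assumption $\rm(a')$ or $\rm(b')$), so that the theorem becomes applicable with no extra work.

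First I would set $\B:=\B(\ell^2)$ and pass to the amplified inclusion $A\otimes \C\subset M\ovt \B$, with subalgebra $B\ovt \B$, the trivially extended actions $\alpha\otimes \id$ and $\beta\otimes \id$, and the $\beta\otimes\id$-cocycle $g\mapsto \omega_g\otimes 1$. By Lemma~\ref{lem-intertwining-action2}(1), condition~(1) is equivalent to $(A\otimes \C,\alpha\otimes \id)\preceq_{M\ovt \B}(B\ovt \B,\beta\otimes \id)$, so it suffices to verify the hypotheses of Theorem~\ref{thm-intertwining2} for the amplified data. Since $B\ovt \B$ is properly infinite, assumption~$\rm(b)$ of that theorem holds automatically. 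For the second dichotomy, if $G$ is amenable then assumption~$\rm(b')$ holds; if instead $A$ carries an $\alpha$-invariant faithful normal state $\psi$, then $\psi$, viewed on $A\otimes \C\cong A$, is $(\alpha\otimes\id)$-invariant and gives assumption~$\rm(a')$. Thus Theorem~\ref{thm-intertwining2} applies to the amplified inclusion and yields the equivalence of its conditions~(1), (2) and~(4).

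It then remains to transcribe these three conditions. Condition~(1) is handled above. Theorem~\ref{thm-intertwining2}(2), read for the amplified data, asserts the non-existence of nets $(u_i)_i\subset\mathcal U(A)$ and $(g_i)_i\subset G$ with $(E_B\otimes\id)((\beta_{g_i}\otimes\id)(b^*)(\omega_{g_i}^*u_i\otimes 1)a)\to 0$ $\sigma$-strongly for all $a,b\in (M1_B)\ovt\B$, which is precisely condition~(2). For condition~(3) I would use the canonical identification $\langle M\ovt\B,\widetilde{B\ovt\B}\rangle\cong \langle M,\widetilde B\rangle\ovt\B$, coming from $L^2(B\ovt\B)=L^2(B)\otimes \mathrm{HS}(\ell^2)$, under which the amplified Jones projection becomes $e_{\widetilde B}\otimes 1$ and the central projection $J1_{B\ovt\B}J$ becomes $J1_BJ\otimes 1$; transcribing condition~(4) of Theorem~\ref{thm-intertwining2} (the factor $J1_BJ$ reflecting the passage $e_{\widetilde B}\mapsto e_B$) for the amplified inclusion then yields condition~(3).

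The hard part, and the only non-formal point, is this last identification: one must check that the weak Dixmier semigroup of the amplified inclusion, together with its group action $\gamma$ built from the cocycle $\omega\otimes 1$ and the implementing unitaries $u_g^\beta$, is faithfully reflected in the averaging condition~(3), so that the averaging genuinely runs over both $\mathcal U(A)$ and $G$ through $\widehat{\alpha}\otimes\id$. Concretely, I would establish the isomorphism of basic constructions compatibly with the Jones projections and with the extended action, and verify that elements of $\D(\gamma)$ for the amplified system are exactly the ones captured by the stated condition~(3). Once this compatibility is in place, every implication is a direct transcription of the corresponding statement in Theorem~\ref{thm-intertwining2}, and the proof is complete; this is precisely the sense in which the proposition follows "immediately" from the properly infinite case of that theorem.
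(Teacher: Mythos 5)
Your proposal is correct and takes essentially the same route as the paper: the paper's proof is exactly the one-line observation that, by Lemma \ref{lem-intertwining-action2}(1), condition (1) is equivalent to $(A\otimes \C,\alpha\otimes \id) \preceq_{M\ovt \B(\ell^2)} (B\ovt \B(\ell^2),\beta\otimes \id)$, after which $B\ovt \B(\ell^2)$ is properly infinite and Theorem \ref{thm-intertwining2} applies. The transcription details you flag as the "hard part" (the identification of the amplified basic construction and Jones projection) are simply left implicit in the paper.
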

\begin{proof}
	By Lemma \ref{lem-intertwining-action2}, (1) is equivalent to $(A\otimes \C,\alpha\otimes \id) \preceq_{M\ovt \B(\ell^2)} (B\ovt \B(\ell^2),\beta\otimes \id)$. Then $B\ovt \B(\ell^2)$ is properly infinite and we can apply Theorem \ref{thm-intertwining2}. 
\end{proof}

\subsection*{Technical lemmas when $A$ is finite with $\alpha$-invariant state}

\begin{lem}\label{lem-intertwining-action-finite1}
	Keep the setting as in Definition \ref{def-intertwining2} and assume that $A$ is finite with $\alpha$-invariant faithful normal state $\psi_A$. Fix a $\ast$-strongly dense subset $X\subset (M)_1$. The following conditions are equivalent.
\begin{enumerate}
	\item There exists no nets $(u_i)_{i}$ in $\mathcal U(A)$ and $(g_i)_i$ in $G$ such that 
	$$E_{B}( \beta_{g_i}(b^*)\omega_{g_i}^* u_ia ) \rightarrow 0, \quad \text{$\sigma$-strongly for all $a,b\in 1_AM1_B$.}$$

	\item The same condition as item $(1)$, but the convergence holds for all  $a,b\in X1_B$.

\end{enumerate}
\end{lem}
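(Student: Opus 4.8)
The plan is to prove the contrapositive equivalence: a net $(u_i)_i\subset\mathcal U(A)$, $(g_i)_i\subset G$ witnessing the convergence on the small test set $X1_B$ exists if and only if one witnessing it on the full set $1_AM1_B$ exists. Throughout I would fix a faithful normal state $\varphi\in M_*$ with $1_B\in M_\varphi$ and $\varphi\circ E_B=\varphi$, and write $Y_i(a,b):=E_B(\beta_{g_i}(b^*)\omega_{g_i}^*u_ia)$. The first reduction is that each $Y_i(a,b)$ lies in $B$ and the net is uniformly bounded, while the cyclic vector $\xi_\varphi$ is separating for $M$ and hence for $B$; consequently, for a bounded net in $B$, $\sigma$-strong convergence $Y_i\to0$ is equivalent to the single-vector estimate $\|Y_i\|_\varphi\to0$ (test against $b'\xi_\varphi$, $b'\in B'$, using that $B'\xi_\varphi$ is dense). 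This turns both conditions into assertions about the $L^2$-norms $\|Y_i(a,b)\|_\varphi$, which is what makes the triangle-inequality estimates below available.

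The easy direction $(2)\Rightarrow(1)$ is that every term built from $a=x1_B,\ b=y1_B$ with $x,y\in X$ is already a term of the full family. Using $u_i=1_Au_i$ and the cocycle consequence $\omega_{g}^*1_A=\beta_{g}(1_A)\omega_{g}^*$ (which follows from $\alpha_g(1_A)=1_A$), one rewrites $\beta_{g_i}(y^*)\omega_{g_i}^*u_i=\beta_{g_i}((1_Ay)^*)\omega_{g_i}^*u_i$, so that the term equals $E_B(\beta_{g_i}((1_Ay1_B)^*)\omega_{g_i}^*u_i(1_Ax1_B))$ with $1_Ay1_B,\,1_Ax1_B\in1_AM1_B$. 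Hence any net converging on $1_AM1_B$ converges on $X1_B$, giving $(2)\Rightarrow(1)$.

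The substantive direction is $(1)\Rightarrow(2)$: given a net converging on $X1_B$, I would show the same net converges on all of $1_AM1_B$. Fixing $a,b\in1_AM1_B$ and choosing $a',b'\in X1_B$ with $\|a-a'\|_\varphi,\|b-b'\|_\varphi\to0$ (possible since $X$ is $*$-strongly dense and $\varphi$ is a state), I split
$$Y_i(a,b)=Y_i(a',b')+Y_i(a-a',b)+Y_i(a',b-b').$$
The first term tends to $0$ by hypothesis. The right-variable error is harmless: since the Jones projection is an $L^2$-contraction and the left factor has operator norm at most $\|b\|_\infty$, one gets the uniform bound $\|Y_i(a-a',b)\|_\varphi\le\|b\|_\infty\|a-a'\|_\varphi\to0$, with no appeal to finiteness.

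The hard part, and the main obstacle, is the left-variable error $Y_i(a',b-b')$: the small factor $h:=b-b'$ sits inside $\beta_{g_i}(h^*)$ on the far left, shielded by the bounded operators $\omega_{g_i}^*u_ia'$, so its $L^2$-smallness is not directly accessible. My approach would be to push the action off $h$ via the cocycle identity $\beta_{g}(h^*)\omega_{g}^*=\omega_{g}^*\alpha_{g}(h^*)$ and then apply Kadison--Schwarz together with $\varphi\circ E_B=\varphi$ to obtain
$$\|Y_i(a',b-b')\|_\varphi^2\le\varphi\bigl(a'^*u_i^*\,\alpha_{g_i}(hh^*)\,u_ia'\bigr).$$
The finiteness of $A$ and the $\alpha$-invariant faithful normal state $\psi_A$ must enter precisely in making this quantity small \emph{uniformly in $i$}: the conjugation by the moving unitary $u_i\in\mathcal U(A)$ and the moving automorphism $\alpha_{g_i}$ have to be absorbed using trace-like and invariance properties supplied by $\psi_A$ on the finite algebra $A$, so that the bound collapses to a multiple of $\varphi(hh^*)=\|h^*\|_\varphi^2\to0$. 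I expect this to be the delicate step, most likely requiring an invariant trace on $A$ or a passage to an ultrapower of the finite algebra $A$ to control the $u_i$-conjugation; that finiteness is genuinely indispensable here is confirmed by the continuous-core example in the preceding remark, where $A$ is semifinite and properly infinite and exactly this extension fails.
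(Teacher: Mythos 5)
Your reduction to $L^2$-estimates, the trivial direction (including the careful rewriting with $\omega_g^*1_A=\beta_g(1_A)\omega_g^*$), and the right-variable error term are all correct, and your Kadison--Schwarz bound $\|Y_i(a',b-b')\|_\varphi^2\le\varphi\bigl(a'^*u_i^*\,\alpha_{g_i}(hh^*)\,u_ia'\bigr)$ is valid. But the argument stops exactly at the step that carries all the content: you do not show that this quantity is small uniformly in $i$, and instead you speculate that one ``most likely'' needs an invariant trace or an ultrapower. As written this is a genuine gap, not a deferred routine verification.

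The mechanism that closes the gap (and is what the paper does; no ultrapower is involved) is the following. First, since the right variable has already been extended to all of $M1_B$, you may restrict to $\varphi$-analytic $a$, so that $a\xi_\varphi=\xi_\varphi a''$ and the error is dominated by $\|a''\|_\infty^2\,\sup_{g\in G,\,u\in\mathcal U(A)}\|\alpha_g(h^*)u\|_{2,\varphi}^2$; in your formulation $a'$ stays trapped inside $\varphi$ and cannot be discharged otherwise. Second, all the elements $\alpha_g(h^*)u$ lie in the fixed bounded set $(M)_2$, and on bounded subsets of a $\sigma$-finite von Neumann algebra the $L^2$-norms of any two faithful normal states are equivalent (with uniform constants). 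One then chains through two auxiliary states: $\omega:=\tau_A\circ E_A$ with $\tau_A$ a trace on the finite algebra $A$, so that $A\subset M_\omega$ and $\|Tu\|_{2,\omega}=\|T\|_{2,\omega}$ for all $u\in\mathcal U(A)$; and $\psi:=\psi_A\circ E_A$, which is $\alpha$-invariant because $\alpha_g\circ E_A=E_A\circ\alpha_g$, so that $\|\alpha_g(S)\|_{2,\psi}=\|S\|_{2,\psi}$. This gives $\|\alpha_g(h^*)u\|_{2,\varphi}\le C\|\alpha_g(h^*)\|_{2,\omega}\le D\|h^*\|_{2,\psi}\to0$, where the last convergence uses that $X$ is $\ast$-strongly (not merely strongly) dense. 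This is precisely where the finiteness of $A$ and the invariant state $\psi_A$ enter; note also that the bound collapses to a multiple of $\psi(hh^*)$ rather than $\varphi(hh^*)$ as you wrote, although the two are comparable on bounded sets.
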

\begin{proof}
	The implication $(2)\Rightarrow(1)$ is trivial. Assume that there exist $(u_i)_{i}$ and $(g_i)_i$ that satisfies the convergence for all $a,b\in X1_B$. By the triangle inequality, it is easy to see that the convergence for all $a\in M1_B$ and $b\in X1_B$. 

Take $a$ as an analytic element for $\varphi$, hence $a\xi_\varphi = \xi_\varphi a'$ for some $a'\in M$. Then for $b\in M1_B$, $g\in G$ and $u\in \mathcal U(A)$, we have
\begin{align*}
	 \|E_{B}( \beta_{g}(b^*)\omega_{g}^* ua )\|_{2,\varphi} 
	&\leq  \| \beta_{g}(b^*)\omega_{g}^* ua \|_{2,\varphi} 
	=  \| \omega_{g}^*\alpha_{g}(b^*) ua \|_{2,\varphi} \\
	&\leq \| \alpha_{g}(b^*) ua \|_{2,\varphi} 
	\leq \|a'\|_\infty \| \alpha_{g}(b^*) u \|_{2,\varphi}.
\end{align*}
Now fix any element $b\in M1_B$ with $\|b\|_\infty \leq 1$, and take a $\ast$-strong approximation $X\ni b_\lambda \to b\in M$. By the above inequality, we have only to show
	$$\sup_{g\in G,\ u\in \mathcal U(A)}\| \alpha_{g}(b^*-b_\lambda^*) u \|_{2,\varphi}  \to 0.$$
Observe that all elements in the norm inside this supremum is contained in a bounded set $(M)_2$. Since $M$ is $\sigma$-finite, all $L^2$-norms by faithful normal states on $(M)_2$ are equivalent, so there are constants $C,D>0$ such that
	$$ \|T \|_{2,\varphi} \leq C\|T\|_{2,\omega}\leq D\|T\|_{2,\psi},\quad T\in (M)_2 ,$$
where $\psi:= \psi_A \circ E_A$ and $\omega:=\tau_A \circ E_A$, and $\tau_A$ is a trace on $A$. Then $\omega$ has the trace condition on $A$, and $\psi$ is invariant by $\alpha$, so for all $u\in \mathcal U(A)$ and $g\in G$,
\begin{align*}
	\|\alpha_{g}(b^*-b_\lambda^*) u \|_{2,\varphi} 
	&\leq C\|\alpha_{g}(b^*-b_\lambda^*) u\|_{2,\omega}
	= C\|\alpha_{g}(b^*-b_\lambda^*) \|_{2,\omega}\\
	&\leq D\|\alpha_{g}(b^*-b_\lambda^*)\|_{2,\psi}
	=D\|b^*-b_\lambda^*\|_{2,\psi}.
\end{align*}
Since $b_\lambda \to b$ is a $\ast$-strong convergence, we get
	$$\sup_{g\in G,\ u\in \mathcal U(A)}\| \alpha_{g}(b^*-b_\lambda^*) u \|_{2,\varphi} \leq D\|b^*-b_\lambda^*\|_{2,\psi} \to 0.$$
This is the desired condition.
\end{proof}

\begin{prop}\label{prop-intertwining-action-finite}
	Keep the setting as in Definition \ref{def-intertwining2}. Assume that $A,B$ are finite and that there exists a $\alpha$-invariant faithful normal state $\psi_A\in A_\ast$. Let $\mathrm{ctr}_B$ be the center valued trace on B. Recall that  $\langle M,\widetilde{B}\rangle J1_BJ$ admits an extended center trace $\mathrm{ctr}_{\langle M,\widetilde{B}\rangle}$ such that 
	$$\mathrm{ctr}_{\langle M,\widetilde{B}\rangle}((x^*e_{\widetilde{B}}x) J1_BJ) = J\mathrm{ctr}_{B}\circ E_B(1_Bxx^*1_B)J, \quad x\in M,$$
(e.g.\ \cite[Subsection 2]{Is19}). Then the following conditions are equivalent.
\begin{enumerate}
	\item We have $(A,\alpha)\preceq_M (B,\beta)$.

	\item There exists a nonzero positive element $d\in A'\cap 1_A\langle M,\widetilde{B}\rangle^{\widehat{\alpha}} 1_A$ such that
	$$d=dJ1_BJ,\quad \widehat{E}_{\widetilde{B}}(d)<\infty,\quad\text{and}\quad \mathrm{ctr}_{\langle M,\widetilde{B}\rangle}(d)<\infty.$$

\end{enumerate}
If one of these conditions hold, then there exists $(H,f,\pi,w)$ and $\{u_g\}_g$ that witness $(A,\alpha)\preceq_M (B,\beta)$ and that $H$ is finite-dimensional.
\end{prop}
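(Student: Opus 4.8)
The plan is to prove the two implications separately, deriving the finite-dimensionality of $H$ along the way. For $(2)\Rightarrow(1)$ I would feed the element $d$ directly into \cite[Theorem 3.2(4)]{Is19}: since $d$ is a nonzero positive element of $A'\cap 1_A\langle M,\widetilde{B}\rangle^{\widehat{\alpha}}1_A$ with $d=dJ1_BJ$ and $\widehat{E}_{\widetilde{B}}(d)<\infty$, that result produces $(H,f,\pi,w)$ and $\{u_g\}_g$ witnessing $(A,\alpha)\preceq_M(B,\beta)$. The role of the extra hypothesis $\mathrm{ctr}_{\langle M,\widetilde{B}\rangle}(d)<\infty$ is to control the size of $H$: in that construction $d$ is (equivalent to) $w(e_{\widetilde{B}}\otimes 1)w^*$ and $w^*w\in B\ovt\B(H)$ has trace governed by the center trace of $d$ through the identity $\mathrm{ctr}_{\langle M,\widetilde{B}\rangle}((x^*e_{\widetilde{B}}x)J1_BJ)=J\mathrm{ctr}_B E_B(1_Bxx^*1_B)J$. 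Thus finiteness of $\mathrm{ctr}_{\langle M,\widetilde{B}\rangle}(d)$ forces $w^*w$ to have finite right $B$-dimension, so $f$ is a finite projection in $B\ovt\B(H)$, and after applying Lemma \ref{lem-intertwining-action1}(3) to move $f$ below a finite-rank projection of $\B(H)$ the multiplicity space $H$ may be taken finite-dimensional. This simultaneously gives $(1)$ and the final finite-dimensionality assertion.

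For $(1)\Rightarrow(2)$ I would start from a witness $(H,f,\pi,w)$, $\{u_g\}_g$ and form $d_1:=w(e_{\widetilde{B}}\otimes 1_H)w^*$, compressed by the minimal projection $e_{1,1}$ to an element of $1_A\langle M,\widetilde{B}\rangle 1_A$. The relation $w\pi(a)=(a\otimes 1)w$ together with the fact that $e_{\widetilde{B}}$ commutes with $B\ovt\B(H)$ yields $d_1\in A'\cap 1_A\langle M,\widetilde{B}\rangle 1_A$, while the $\widehat{\alpha}$-invariance of $d_1$ follows from $(\alpha_g\otimes\id)(w)=wu_g(\omega_g^*\otimes 1)$ (a rewriting of relation (i)), from $\widehat{\beta}_g(e_{\widetilde{B}})=e_{\widetilde{B}}$, and from $u_gu_g^*=f$, $wf=w$; a short computation then collapses $\widehat{\alpha}_g(d_1)$ back to $d_1$. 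One checks directly that $d_1\geq 0$, $d_1\neq 0$, $d_1=d_1J1_BJ$ and $\widehat{E}_{\widetilde{B}}(d_1)\leq ww^*<\infty$. The only missing property is $\mathrm{ctr}_{\langle M,\widetilde{B}\rangle}(d_1)<\infty$, which may fail when $H$ is infinite-dimensional.

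The main obstacle is therefore to replace $d_1$ by an element of the same kind with finite center trace, and this is exactly where the finiteness of $A$ and the invariant state $\psi_A$ enter. Here I would use that $\pi(A)$ is a finite algebra sitting with expectation inside the semifinite algebra $f(B\ovt\B(H))f$, so that the canonical trace restricts to a semifinite trace on $Q:=\pi(A)'\cap f(B\ovt\B(H))f$; one then selects a finite-trace projection in $Q$ and cuts $d_1$ down to a nonzero positive $d'\in A'\cap 1_A\langle M,\widetilde{B}\rangle 1_A$ with $\widehat{E}_{\widetilde{B}}(d')<\infty$ and $\mathrm{ctr}_{\langle M,\widetilde{B}\rangle}(d')<\infty$. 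To restore $\widehat{\alpha}$-invariance I would average $d'$ over the action $\gamma=\mathcal{U}(A)\rtimes_\alpha G$: since $d'\in\mathfrak{m}_{E_A\circ\widehat{E}_{\widetilde{B}}}$ and $\psi_A$ is $\alpha$-invariant, Proposition \ref{prop-averaging-amenable}(2) gives $d'\in\mathcal D(\gamma)$, so $\mathcal K(d',\gamma)^{\gamma}\neq\emptyset$; any $d$ in it is $\gamma$-invariant with $\widehat{E}_{\widetilde{B}}(d)<\infty$ (Lemma \ref{lem-operator-bounded}) and $\mathrm{ctr}_{\langle M,\widetilde{B}\rangle}(d)\leq\mathrm{ctr}_{\langle M,\widetilde{B}\rangle}(d')<\infty$ by invariance and lower semicontinuity of the center trace. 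The delicate point — the step I expect to be hardest — is to guarantee that this average $d$ is nonzero, equivalently that $0\notin\mathcal K(d',\gamma)$; I would establish this by an $L^2$-positivity estimate in the spirit of Lemma \ref{lem-intertwining1}, showing that the cut-down $d'$ still detects the intertwiner $w$ so that the averaging cannot annihilate it.
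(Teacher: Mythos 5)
Your direction $(2)\Rightarrow(1)$ is essentially the paper's (the author simply defers to the last part of the proof of \cite[Theorem 3.2]{Is19}): the finite center trace of $d$ forces the associated projection to be finite over $B\ovt \B(H)$, and Lemma \ref{lem-intertwining-action1}(3) then lets one compress $H$ to a finite-dimensional space. That half is fine.

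The gap is in $(1)\Rightarrow(2)$, exactly at the step you flag as hardest, and it is not a technicality. After cutting $d_1$ down by a finite-trace projection $r\in Q=\pi(A)'\cap f(B\ovt\B(H))f$, you must average over the whole group $\gamma=\mathcal U(A)\rtimes_\alpha G$, and the $G$-direction is the problem: $Q$ carries the twisted action $g\mapsto \Ad(u_g)\circ(\beta_g\otimes\id)$, which need not preserve the canonical trace on $Q$ (and $G$ need not be amenable here), so there is in general no nonzero finite-trace projection with a controlled orbit. Your uniform positivity estimate would have to bound $\langle u\,\widehat\alpha_g(d')\,u^*\xi,\xi\rangle$ from below \emph{uniformly in $g\in G$}, and the standard trick ($\|E(rw^*wr)\|_2=\|E(rw^*(u\otimes 1)wr)\|_2$) only handles the $\mathcal U(A)$-direction; to absorb $\widehat\alpha_g$ one needs $u_g(\beta_g\otimes\id)(r)u_g^*=r$, which an arbitrary finite-trace $r$ does not satisfy. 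Without such a bound, $0\in\mathcal K(d',\gamma)$ is a live possibility and the averaged element can vanish. The paper avoids this entirely by never touching the witness: it first upgrades $(1)$ to the analytic criterion of Proposition \ref{prop-intertwining2} (legitimate because $B\ovt\B(\ell^2)$ is properly infinite), uses Lemma \ref{lem-intertwining-action-finite1} — this is where finiteness of $A$ and the $\alpha$-invariant state $\psi_A$ actually enter — to move the test vectors into a finite corner $1\otimes Q$ with $Q$ finite rank, and only then builds $d_0=\sum_{b\in F}be_{\widetilde B}b^*$ from a \emph{finite} set $F\subset M1_B$. For such a $d_0$ the center trace is finite by the displayed formula $\|\mathrm{ctr}(d_0J1_BJ)\|_\infty\le\|\mathrm{ctr}_B\circ E_B(\sum_{b\in F}1_Bb^*b1_B)\|_\infty$, and the uniform $\delta>0$ coming from the negation of the net condition is exactly what certifies $0\notin\mathcal K(d_0,\gamma)$. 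You would need to either reproduce that route or supply an invariant (or almost-invariant) finite-trace cut-down, which your sketch does not do.
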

\begin{proof}
	$(1)\Rightarrow(2)$ We write
	$$B_\infty:=B\ovt \B(\ell^2)\subset M_\infty := M\ovt \B(\ell^2),\quad \beta^\infty:=\beta\otimes \id.$$
Then by Proposition \ref{prop-intertwining2}, there exist no nets 
$(u_i)_{i}$ in $\mathcal U(A)$ and $(g_i)_i$ in $G$ such that
	$$E_{B_\infty}( \beta_{g_i}^\infty(b^*)(\omega_{g_i}^* u_i \otimes 1)  a ) \rightarrow 0, \quad \text{$\sigma$-strongly for all $a,b\in M1_B\ovt \B(\ell^2)$.}$$
Let $Q_n \in \B(\ell^2)$ be an increasing sequence of finite rank projections converging to $1$ strongly, and put $q_n=1_B\otimes Q_n \in B_\infty$. Then by Lemma \ref{lem-intertwining-action-finite1}, there exists no nets $(u_i)_{i}$ in $\mathcal U(A)$ and $(g_i)_i$ in $G$ such that the above convergence holds for all $a,b\in X1_{B_\infty}$, where $X$ is the unit ball of the $\sigma$-weakly dense subalgebra $\bigcup_{n}q_n B_\infty q_n$. Thus, there exists $\delta>0$ and a finite subset $F\subset X 1_{B_\infty}$ such that
	$$\sum_{a,b\in F}\|E_{B_\infty}( \beta^\infty_{g}(b^*)(\omega_{g}^* u \otimes 1)  a )\|_{\varphi}^2\geq\delta,  \quad \text{for all } g\in G,\ u\in\mathcal{U}(A).$$ 
Since $F$ is a finite set, there exists a finite rank projection $Q\in \B(\ell^2)$ with $q=1\otimes Q$ such that $a = qaq$ for all $a\in F$. Then this inequality exactly means that
	$$\sum_{a,b\in F}\|E_{B_\infty}(q \beta_{g}^\infty(b^*)(\omega_{g}^* u \otimes Q)  a q)\|_{\varphi}^2\geq\delta,  \quad \text{for all } g\in G,\ u\in\mathcal{U}(A).$$
Then this is a condition for inclusions $A\otimes \C Q, B\ovt Q\B(\ell^2)Q\subset M\ovt Q\B(\ell^2)Q$. Since $Q$ is finite rank, if there exists a net $(u_i)_i$ and $g_i$ in item (2) of Theorem \ref{thm-intertwining2}, then we get a contradiction to the above inequality. This shows that Theorem \ref{thm-intertwining2}(2) holds. 

Once we get this condition, we can follow the proof of Theorem \ref{thm-intertwining2} $(2)\Rightarrow(3)$. So we define $d_0= \sum_{b\in F}b e_{\widetilde{B}} b^*$ for some finite subset $F\subset M1_B$, and the closed convex hull
	$$ \mathcal K(d_0,\gamma)=  \overline{{\rm co}}^{\rm weak} \left \{ u^* \widehat{\alpha}_g(d_0) u \mid u \in \mathcal U(A),\ g\in G\right \} \subset (1_A \langle M, \widetilde B\rangle 1_A)^+$$
does not contain $0$. Then find a nonzero $d\in  \mathcal K(d_0,\gamma)^{\gamma}$ such that $d=dJ1_BJ$ and $\widehat{E}_{\widetilde{B}}(d)<\infty$. Thus, to finish the proof, we have only to show that $\mathrm{ctr}_{\langle M,\widetilde{B}\rangle}(d)<\infty$. 
To see this, we can use the same computation as in the proof of (3)$\Rightarrow$(4) of  \cite[Theorem 3.2]{Is19}, and we get
\begin{align*}
	\|\mathrm{ctr}(d J1_BJ)\|_\infty \leq  \left\|\mathrm{ctr}_B\circ E_B\left(\sum_{b\in F}  1_Bb^*b1_B\right)\right\|_\infty<\infty.
\end{align*}
This shows item (2) holds. 

	$(2)\Rightarrow(1)$ This is really the same as the proof of the last part of (3)$\Rightarrow$(4) of \cite[Theorem 3.2]{Is19}. So we omit it.
\end{proof}

\section{Malleable deformation and spectral gap rigidity}

\subsection{Lemmas on ultraproducts}

In this subsection, we collect some necessary lemmas on ultraproduct von Neumann algebras. For the ultraproduct of von Neumann algebras, we refer to \cite{Oc85,AH12}. In this paper, we denote by $M^\omega$ the \emph{Ocneanu ultraproduct} of a $\sigma$-finite von Neumann algebra $M$. Let $E_\omega$ be the canonical conditional expectation associated with the inclusion $M \subset M^\omega$, and define $\varphi^\omega := \varphi \circ E_\omega$ for $\varphi \in M_\ast$.

The following lemma is essentially shown in \cite{BMO19,IM19}.

\begin{lem}\label{lem-convex-expectation-ultraproduct}
Let $M$ be a von Neumann algebra, $\psi \in M_\ast$ a faithful normal state, and $p \in M_\psi$ a projection. Let $A \subset pMp$ be a von Neumann algebra with expectation $E_A$ such that $\psi \circ E_A|_{pMp} = \psi$. Suppose there exist an automorphism $\theta \in \Aut(M)$ and a constant $\delta > 0$ such that
\[
\sup_{u \in \mathcal U(A^\omega_{\psi^\omega})} \| \theta^\omega(u) - u \|_{\psi^\omega} < \delta.
\]
Then, any $\psi$-preserving conditional expectation from $M$ onto $(A^\omega)_{\psi^\omega}' \cap M$ can be approximated pointwise in the $\sigma$-weak topology by convex combinations of $\Ad(u)$ with $u \in \mathcal U(A)$ satisfying $\| \theta(u) - u \|_{\psi} < \delta$.
\end{lem}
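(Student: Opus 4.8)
The plan is to transport a tracial averaging problem on the ultraproduct $M^\omega$ back to $M$, using the spectral gap only to control which unitaries of $\mathcal U(A)$ are needed. Set $N:=(A^\omega)_{\psi^\omega}$. Since $\psi^\omega$ restricts to a faithful normal trace on $N$, the algebra $N$ is finite, and $N\subset (M^\omega)_{\psi^\omega}$ is pointwise fixed by $\sigma^{\psi^\omega}$. Because $\sigma^{\psi^\omega}|_M=\sigma^\psi$, the relative commutant $N'\cap M$ is globally $\sigma^\psi$-invariant, so Takesaki's theorem provides a unique $\psi$-preserving conditional expectation $E\colon M\to N'\cap M$; this is the map to be approximated, and in particular the expectation in the statement is unique.

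Next I would apply the paper's tracial averaging to the action $\Ad_N\colon \mathcal U(N)\actson M^\omega$, which preserves $\psi^\omega$ because $N\subset (M^\omega)_{\psi^\omega}$. By Lemma~\ref{lem-averaging-trace1}, the unique $\psi^\omega$-preserving conditional expectation $\widetilde E\colon M^\omega\to N'\cap M^\omega$ belongs to $\D(\Ad_N\colon \mathcal U(N)\actson M^\omega)$, hence is a point-$\sigma$-weak limit of convex combinations $\sum_k t_k\Ad(v_k)$ with $v_k\in\mathcal U(N)$ and $\sum_k t_k=1$. The hypothesis enters through lifting: representing each $v_k$ by a sequence $(v_{k,n})_n$ of unitaries in $\mathcal U(A)$, one has $\lim_{n\to\omega}\|\theta(v_{k,n})-v_{k,n}\|_\psi=\|\theta^\omega(v_k)-v_k\|_{\psi^\omega}<\delta$, so that $v_{k,n}\in S:=\{u\in\mathcal U(A)\mid\|\theta(u)-u\|_\psi<\delta\}$ for $\omega$-almost every $n$. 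Thus, for $\omega$-almost every $n$, the honest convex combination $\Theta_n:=\sum_k t_k\Ad(v_{k,n})$ is a genuine member of $\mathrm{conv}\{\Ad(u)\mid u\in S\}$ acting on $M$.

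It then remains to let $n\to\omega$ and to identify the limit with $E$. Using the canonical expectation $E_\omega\colon M^\omega\to M$ and the formula $E_\omega((x_n)^\omega)=\lim_{n\to\omega}x_n$ (a $\sigma$-weak limit of bounded sequences), the point-$\sigma$-weak limit of the $\Theta_n$ along $\omega$, followed by the limit along the net approximating $\widetilde E$, yields a ucp map $\Psi$ in the point-$\sigma$-weak closure $\overline{\mathrm{conv}}\{\Ad(u)\mid u\in S\}$ satisfying $\psi\circ\Psi=\psi$ (since the $v_k$ lie in $(M^\omega)_{\psi^\omega}$) and $\Psi|_{N'\cap M}=\mathrm{id}$. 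The main obstacle I anticipate is proving that $\Psi$ is exactly $E$, i.e.\ that this averaging lands on the conditional expectation onto $N'\cap M$ rather than onto a strictly larger subalgebra: concretely this is the commuting-square statement that the $\psi^\omega$-orthogonal projection of $L^2(M)$ onto $L^2(N'\cap M^\omega)$ already lies in $L^2(M)$, equivalently $\widetilde E(M)\subset M$. This is precisely where the spectral gap constant $\delta$ and the near-$\theta^\omega$-invariance of $\mathcal U(N)$ must be used: following the transversality estimates of \cite{BMO19,IM19}, one shows that the averaged elements are almost fixed by $\theta^\omega$ and are thereby forced back into $M$, so that $\widetilde E(x)=E(x)$ for $x\in M$ and hence $\Psi=E$, which is the desired approximation by convex combinations of $\Ad(u)$ with $u\in S$.
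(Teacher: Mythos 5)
Your overall strategy is genuinely different from the paper's: the paper (following the proof of \cite[Proposition 4.2]{IM19}) works directly inside $M$ with the restricted unitary sets $\mathcal U_\varepsilon=\{u\in\mathcal U(A)\mid \|\psi u-u\psi\|<\varepsilon,\ \|\theta(u)-u\|_\psi\le\delta'\}$ and a uniform-convexity/minimal-norm argument in $L^2(M,\psi)$; there the averaged element lies in $M$ \emph{by construction}, and commutation with $(A^\omega)_{\psi^\omega}$ is proved afterwards by a diagonal argument over $\varepsilon\to 0$. You instead build the expectation upstairs in $M^\omega$ (where everything is tracial over $N:=(A^\omega)_{\psi^\omega}$) and push it down with $E_\omega$. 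Your steps 1--4 are fine: the lifting of $v\in\mathcal U(N)$ to unitaries of $\mathcal U(A)$ lying $\omega$-a.e.\ in $S$, and the identification of the iterated limit with $\Psi=E_\omega\circ\widetilde E|_M\in\overline{\mathrm{conv}}\{\Ad(u)\mid u\in S\}$, are correct.

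The genuine gap is the step you yourself flag: showing $\Psi=E$, equivalently the commuting-square statement $\widetilde E(M)\subset M$. The problem is that $E_\omega$ is only an $M$-bimodule map, not an $N$-bimodule map (since $N\not\subset M$), so $E_\omega(N'\cap M^\omega)$ is a priori only contained in $(N\cap M)'\cap M$, which can be strictly larger than $N'\cap M$; thus $\Psi$ is a $\psi$-preserving ucp map fixing $N'\cap M$ but with no reason to have range in $N'\cap M$. More importantly, the mechanism you propose for closing this gap is based on a misreading of the hypothesis: the condition $\sup_{u\in\mathcal U(N)}\|\theta^\omega(u)-u\|_{\psi^\omega}<\delta$ is not a spectral gap or transversality condition and does not ``force averaged elements back into $M$'' --- being nearly $\theta^\omega$-fixed has nothing to do with membership in $M$ versus $M^\omega$. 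Its only role in the lemma is to tag the unitaries, i.e.\ to guarantee that the representing unitaries of elements of $\mathcal U(N)$ can be taken from $S$ (this is exactly how the paper uses it, via the choice $\delta'<\delta$ in the definition of $\mathcal U_\varepsilon$). The commuting-square fact you need is essentially the whole analytic content of the lemma, so invoking it at this point makes the argument circular; to repair the proof you would have to replace step 5 by the direct $L^2$-estimate argument of \cite{IM19}, which establishes membership in $M$ first and commutation with $(A^\omega)_{\psi^\omega}$ second, rather than the other way around.
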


\begin{proof}
Choose $\delta' < \delta$ such that 
\[
\sup_{u \in \mathcal U(A^\omega_{\psi^\omega})} \| \theta^\omega(u) - u \|_{\psi^\omega} < \delta'.
\]
For each $\varepsilon > 0$, set
\[
\mathcal U_\varepsilon := \{ u \in \mathcal U(A) \mid \| \psi u - u \psi \| < \varepsilon,\ \| \theta(u) - u \|_{\psi} \leq \delta' \}.
\]
Then the same argument as in the proof of \cite[Proposition 4.2]{IM19} applies. We omit the proof.
\end{proof}

\begin{lem}\label{lem-uniform-convex-ultraproduct}
Let $M$ be a von Neumann algebra, $\psi \in M_\ast$ a faithful normal state, and $p \in M_\psi$ a projection. Let $A \subset pMp$ be a von Neumann algebra with expectation $E_A$ such that $\psi \circ E_A|_{pMp} = \psi$. Let $\theta \in \Aut(M)$ and $0 < \alpha < 1/2$, and define
\[
\iota\colon M \ni a \mapsto 
\begin{bmatrix}
a & 0 \\
0 & \theta(a)
\end{bmatrix}
\in M \otimes \M_2 =: M_2,\quad 
\psi_2 := \left(\frac{1}{2}\, \psi\right) \oplus \left( \alpha \psi \circ \theta^{-1} \right) \in (M_2)_\ast.
\]
Set $q := \iota(p)$ and $B := \iota(A)$. Suppose that
\[
\sup_{u \in \mathcal U(A^\omega_{\psi^\omega})} \| \theta^\omega(u) - u \|_{2, \psi^\omega}^2 < 1/4,\quad \| (\psi \circ \theta - \psi)|_A \| < 1/2,
\]
and that
\[
(B^\omega)_{\psi_2^\omega}' \cap q M_2 q = B' \cap q M_2 q.
\]
Then there exists $0 \neq w \in \theta(p) M p$ such that $\theta(a) w = w a$ for all $a \in A$.
\end{lem}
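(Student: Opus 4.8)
The plan is to produce the intertwiner $w$ as the off-diagonal corner of an element of $B'\cap qM_2q$ obtained by a spectral gap averaging argument. Write $e_{ij}\in\M_2$ ($1\le i,j\le2$) for the matrix units, so $q=p\otimes e_{11}+\theta(p)\otimes e_{22}$, $B=\iota(A)$ has unit $q$, and $B_{\psi_2}=\iota(A_\psi)$ (indeed $\iota(a)$ lies in the $\psi_2$-centralizer iff $a\in M_\psi$, since $\sigma^{\psi_2}_t=\sigma^{\psi}_t\oplus\sigma^{\psi\circ\theta^{-1}}_t$ on the diagonal). The key algebraic identity is that, for $u\in\mathcal U(A)$,
\[
\Ad(\iota(u))(1\otimes e_{21})=\theta(u)u^*\otimes e_{21},
\]
so conjugation by $B$ acts on the corner $\theta(p)Mp\otimes e_{21}$ by $x\mapsto \theta(u)\,x\,u^*$, whose fixed points are precisely the $w$ with $\theta(u)w=wu$ for all $u$, i.e.\ $\theta(a)w=wa$ for all $a\in A$. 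Thus it suffices to produce a nonzero element of $B'\cap qM_2q$ with nonzero $(2,1)$-corner.

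First I would set $N:=(B^\omega)_{\psi_2^\omega}'\cap M_2$ and let $E_N\colon M_2\to N$ be the $\psi_2$-preserving conditional expectation, which exists because $(B^\omega)_{\psi_2^\omega}$ is pointwise fixed by $\sigma^{\psi_2^\omega}$, making $N$ globally $\sigma^{\psi_2}$-invariant. I would then apply Lemma~\ref{lem-convex-expectation-ultraproduct} to the data $(M_2,\psi_2,B\subset qM_2q)$ with the flip automorphism $\Theta:=\Ad(1\otimes(e_{12}+e_{21}))\in\Aut(M_2)$. Since $\Theta^\omega(\iota^\omega(u))-\iota^\omega(u)=(\theta^\omega(u)-u)\otimes e_{11}+(u-\theta^\omega(u))\otimes e_{22}$, a short computation using hypotheses (1), (2) and $\alpha<1/2$ gives a bound $\sup_{v\in\mathcal U(B^\omega_{\psi_2^\omega})}\|\Theta^\omega(v)-v\|_{\psi_2^\omega}<\delta$ for a suitable $\delta$. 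Lemma~\ref{lem-convex-expectation-ultraproduct} then expresses $E_N$ as a point-$\sigma$-weak limit of convex combinations $\sum_i t_i\,\Ad(\iota(u_i))$ with $u_i\in\mathcal U(A)$ almost $\psi$-central and $\|\theta(u_i)-u_i\|_\psi\le\sqrt2\,\delta$.

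Next I would evaluate $E_N$ at $1\otimes e_{21}$. By the corner identity the approximants equal $\big(\sum_i t_i\,\theta(u_i)u_i^*\big)\otimes e_{21}\in\theta(p)Mp\otimes e_{21}$, so $E_N(1\otimes e_{21})=w\otimes e_{21}$ for some $w\in\theta(p)Mp\subset M$, and $w\otimes e_{21}\in N\cap qM_2q=(B^\omega)_{\psi_2^\omega}'\cap qM_2q$. The nonvanishing of $w$ is the crux: writing $\theta(u_i)u_i^*-p=(\theta(u_i)-u_i)u_i^*$ and using that $u_i$ is almost $\psi$-central, one gets $\|\theta(u_i)u_i^*-p\|_\psi\approx\|\theta(u_i)-u_i\|_\psi$, whence $\|w-p\|_\psi\le\sqrt2\,\delta$; the constants $1/4$, $1/2$, $\alpha<1/2$ are arranged precisely so that this is strictly smaller than $\|p\|_\psi$, forcing $w\ne0$. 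Finally I would invoke hypothesis (3): since $w\otimes e_{21}\in(B^\omega)_{\psi_2^\omega}'\cap qM_2q=B'\cap qM_2q$, the element commutes with all of $B=\iota(A)$, which is exactly $\theta(a)w=wa$ for every $a\in A$, giving the required $0\ne w\in\theta(p)Mp$.

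The step I expect to be the main obstacle is the quantitative bookkeeping in the middle: verifying the flip bound $\|\Theta^\omega(v)-v\|_{\psi_2^\omega}<\delta$ requires comparing $\|\theta^\omega(u)-u\|_{\psi^\omega}$ (controlled by (1)) with the cross term $\|u-\theta^\omega(u)\|_{\psi^\omega\circ\theta^{-1}}$, whose estimate must be extracted from hypothesis (2) and the weight $\alpha<1/2$, and then tracking these constants through the averaging so that the final inequality $\|w-p\|_\psi<\|p\|_\psi$ holds. By contrast, the conceptual ingredients—the corner computation, the choice of the flip $\Theta$, and the use of (3) to upgrade intertwining of the centralizer $A_\psi$ to intertwining of all of $A$—are routine once the setup is in place.
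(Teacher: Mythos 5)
Your overall architecture is the same as the paper's: apply Lemma~\ref{lem-convex-expectation-ultraproduct} inside $M_2$ to realize the $\psi_2$-preserving expectation onto $(B^\omega)_{\psi_2^\omega}'\cap qM_2q=B'\cap qM_2q$ as a point-$\sigma$-weak limit of convex combinations of $\Ad(\iota(u))$ with $\|\theta(u)-u\|_{2,\psi}$ quantitatively small, evaluate at $1\otimes e_{21}$ to read off $w$ in the $(2,1)$-corner, and use the second hypothesis to see $w\neq 0$. The genuine gap is your choice of auxiliary automorphism. With the flip $\Theta=\Ad(1\otimes(e_{12}+e_{21}))$ you must verify
\[
\sup_{v}\|\Theta^\omega(v)-v\|_{2,\psi_2^\omega}^2=\sup_{u}\Bigl(\tfrac12\|\theta^\omega(u)-u\|_{2,\psi^\omega}^2+\alpha\,\|u-\theta^\omega(u)\|_{2,(\psi\circ\theta^{-1})^\omega}^2\Bigr)<\delta^2
\]
with $\delta$ small enough that the resulting constraint $\|\theta(u)-u\|_{2,\psi}^2<2\delta^2$ on the averaging unitaries still forces $\Re\,\psi(\theta(u^*)u)>0$. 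The first summand is $<1/8$ by hypothesis. But the second summand equals $\alpha\,\|\theta^{-\omega}(u)-u\|_{2,\psi^\omega}^2$, i.e.\ it measures the displacement under $\theta^{-1}$ in the $\psi$-norm, and neither hypothesis controls it: the first hypothesis bounds the $\theta$-displacement in the $\psi^\omega$-norm only (and one cannot substitute $u\mapsto\theta^{-\omega}(u)$, since $\theta$ preserves neither $A$ nor $\psi$), while the second bounds $(\psi\circ\theta-\psi)$ only on $A$, whereas $(u-\theta^\omega(u))^*(u-\theta^\omega(u))$ does not lie in $A^\omega$. The only available bound is the trivial one, $\alpha\bigl(\|u\|_{2,\psi\circ\theta^{-1}}+\|\theta(u)\|_{2,\psi\circ\theta^{-1}}\bigr)^2\leq 4\alpha$, which for the permitted range $\alpha<1/2$ can be as large as $2$; since $\alpha$ is given in the statement rather than at your disposal, this step cannot be repaired from the stated hypotheses.

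The fix --- and the reason $\psi_2$ carries the twisted weight $\alpha\,\psi\circ\theta^{-1}$ on the second summand --- is to run Lemma~\ref{lem-convex-expectation-ultraproduct} with the entrywise automorphism $\theta\otimes\id_{\M_2}$ instead of the flip, which is what the paper does. Then $(\theta\otimes\id)(\iota(u))-\iota(u)=\mathrm{diag}\bigl(\theta(u)-u,\ \theta(\theta(u)-u)\bigr)$, and the second corner contributes $\alpha\|\theta(\theta(u)-u)\|_{2,\psi\circ\theta^{-1}}^2=\alpha\|\theta(u)-u\|_{2,\psi}^2$, so the total is $(\tfrac12+\alpha)\|\theta(u)-u\|_{2,\psi}^2<1/4$ precisely because $\alpha<1/2$; the twist in $\psi_2$ exactly cancels the extra $\theta$. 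Everything downstream in your proposal --- the corner identity $\Ad(\iota(u))(1\otimes e_{21})=\theta(u)u^*\otimes e_{21}$, the passage from the $\psi_2$-bound back to $\|\theta(u)-u\|_{2,\psi}^2<1/2$ via $\psi_2\circ\iota\geq\tfrac12\psi$, and the use of the bicentralizer hypothesis to replace $(B^\omega)_{\psi_2^\omega}'$ by $B'$ --- is correct and matches the paper. For the non-vanishing of $w$, the paper argues directly that $\|\theta(u)-u\|_{2,\psi}^2<1/2$ together with $\|(\psi\circ\theta-\psi)|_A\|<1/2$ forces $\Re\,\psi(\theta(u^*)u)\geq 1/2$, hence $\Re\,\psi(w)\geq 1/2$; this is cleaner than your route through $\|w-p\|_\psi<\|p\|_\psi$, which additionally requires quantitative almost-centrality of the $u_i$ and degrades when $\psi(p)$ is small.
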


\begin{proof}
Let $E_1\colon q M_2 q \to B' \cap q M_2 q$ be the $\psi_2$-preserving conditional expectation, and define
\[
\mathcal U_\theta := \{ a \in \mathcal U(A) \mid \| \theta(a) - a \|_{2, \psi}^2 < 1/2 \}.
\]
We claim that $E_1$ can be approximated by convex combinations of $\Ad(\iota(a))$ with $a \in \mathcal U_\theta$ in the point $\sigma$-weak topology .

Let $E_2$ be the $\psi_2$-preserving conditional expectation from $M_2$ onto $(B^\omega)_{\psi_2^\omega}' \cap M_2$. We have $E_1 = E_2$ by assumption, so it suffices to approximate $E_2$. Note that $\psi \circ \iota = (1/2 + \alpha) \psi$. Using the first inequality in the assumption and $1/2 + \alpha < 1$, we have
\[
\sup_{u \in \mathcal U(B^\omega_{\psi_2^\omega})} \| (\theta \otimes \id)^\omega(u) - u \|_{2, \psi_2^\omega}^2 < 1/4.
\]
By Lemma~\ref{lem-convex-expectation-ultraproduct}, $E_2$ is approximated by convex combinations of unitaries $u \in \mathcal U(B)$ satisfying $\| (\theta \otimes \id)(u) - u \|_{2, \psi_2}^2 < 1/4$. Since $B = \iota(A)$ and $\psi \circ \iota = (1/2 + \alpha) \psi \geq (1/2) \psi$, with a computation similar to above, the claim follows.

Let
$d_0 := 
\begin{bmatrix}
0 & 0 \\
1 & 0
\end{bmatrix} \in \widetilde{M}_2$. 
By the previous claim, it follows that
$E_1(d_0) = 
\begin{bmatrix}
0 & 0 \\
w & 0
\end{bmatrix}$
for some $w$ that is approximated in the $\sigma$-weak topology by convex combinations of $\theta(a^*) a$ with $a \in \mathcal U_\theta$. For any $a \in \mathcal U_\theta$, we compute
\begin{align*}
	\frac{1}{2}
	&\geq \| \theta(a) - a \|_{2, \psi}^2 
= \| \theta(a) \|_{2, \psi}^2 + \| a \|_{2, \psi}^2 - 2 \Re \psi(\theta(a^*) a) \\
	&\geq \| a \|_{2, \psi}^2 - \| (\psi \circ \theta - \psi)|_A \| \| a^* a \|_\infty + \| a \|_{2, \psi}^2 - 2 \Re \psi(\theta(a^*) a) \\
	&\geq 1 - \frac{1}{2} + 1 - 2 \Re \psi(\theta(a^*) a),
\end{align*}
which gives $1/2 \leq \Re \psi(\theta(a^*) a)$. Hence, $1/2 \leq \Re \psi(w)$ and $w \neq 0$. Finally, since $E_1(d_0) \in B' \cap M_2$, it commutes with $B$, so that $\theta(a) w = w a$ for all $a \in \mathcal U(A)$.
\end{proof}

\begin{lem}\label{lem-spectral-gap}
Let $M \subset \widetilde{M}$ be an inclusion of von Neumann algebras with expectation. Assume the following conditions.
\begin{itemize}
\item There exists an $M$-bimodule $\mathcal H = {}_M \mathcal H_M$ such that, as $M$-bimodules,
\[
{}_{M} L^2(\widetilde{M}) \ominus L^2(M)_{M} \prec {}_{M} \mathcal H_{M}.
\]
\item There exists a projection $p \in M$ and a von Neumann subalgebra $P \subset pMp$ with expectation such that
\[
P' \cap p \widetilde{M}^\omega p \not\subset p M^\omega p.
\]
\end{itemize}
Then, there exist a projection $q \in P' \cap p M p$ and a ucp map $\Psi \colon \rho_{\mathcal H}(M^{\mathrm{op}})' \to q M q$ such that $\Psi|_M$ is normal and $\Psi(x) = xq$ for all $x \in P$.
\end{lem}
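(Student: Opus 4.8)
The plan is to build $\Psi$ out of a family of almost-$P$-central right-bounded vectors in $\mathcal H$, obtained by transporting through the weak containment ${}_{M}L^2(\widetilde M)\ominus L^2(M)_{M}\prec{}_{M}\mathcal H_{M}$ a nonzero $P$-central element of $\widetilde M^\omega$ that is orthogonal to $M^\omega$. Fix a faithful normal state $\psi$ on $M$ and extend it to $\widetilde M$ by the expectation. By hypothesis choose $X\in P'\cap p\widetilde M^\omega p$ with $X\notin pM^\omega p$. Since the canonical expectation $E_{M^\omega}\colon\widetilde M^\omega\to M^\omega$ is $M$-bimodular, hence $P$-bimodular, the element $Y:=X-E_{M^\omega}(X)$ again lies in $P'\cap p\widetilde M^\omega p$, satisfies $E_{M^\omega}(Y)=0$, and is nonzero because $X\notin pM^\omega p$. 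Representing $Y=(y_n)^\omega$ with $y_n\in p\widetilde Mp$ and $E_M(y_n)=0$, the $y_n$ are uniformly bounded, their GNS norm is bounded below along $\omega$, and $\|[a,y_n]\|\to0$ along $\omega$ for every $a\in P$. Thus the vectors $\widehat y_n\in L^2(\widetilde M)\ominus L^2(M)$ form a uniformly right-bounded, asymptotically $P$-central family that does not shrink to zero.

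Next I would transfer this family to $\mathcal H$. Using weak containment, for each $n$ I would approximate the two-variable coefficients $\langle a\,\widehat y_n\,b,\widehat y_n\rangle$, with $a,b$ ranging over finite subsets of $M$ containing elements of $P$, by coefficients of $\mathcal H$; this yields uniformly right-bounded vectors $\xi_n\in\mathcal H$ (after amplification if necessary) that remain asymptotically $P$-central and whose $M$-valued inner products $\langle\xi_n,\xi_n\rangle_M$ are close to $E_M(y_n^*y_n)$, so that $\inf_n\psi(\langle\xi_n,\xi_n\rangle_M)>0$. Each $\xi_n$ determines a bounded right-$M$-linear map $L_{\xi_n}\colon L^2(M)\to\mathcal H$, $\widehat a\mapsto\xi_n a$, and hence the coefficient map
$$\Phi_n\colon\rho_{\mathcal H}(M^{\mathrm{op}})'\to M,\qquad \Phi_n(T):=L_{\xi_n}^*\,T\,L_{\xi_n},$$
which is contractive cp and normal on $\lambda_{\mathcal H}(M)\cong M$; moreover $\Phi_n(1)=\langle\xi_n,\xi_n\rangle_M\in(pMp)^+$, while asymptotic centrality gives $\Phi_n(x)-x\Phi_n(1)\to0$ and $[\Phi_n(1),x]\to0$ for all $x\in P$.

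I would then pass to a limit $\Theta$ of the $\Phi_n$ and renormalise. In the limit $\Theta(1)=:q_0$ is a nonzero positive element of $P'\cap pMp$ and $\Theta(x)=xq_0=q_0x$ for all $x\in P$. Let $q\in P'\cap pMp$ be the support projection of $q_0$ and set $\Psi(\,\cdot\,):=q_0^{-1/2}\,\Theta(\,\cdot\,)\,q_0^{-1/2}$, where $q_0^{-1/2}$ is the generalised inverse on the support; since $q_0\in P'$, this factor commutes with $P$. Then $\Psi$ is ucp into $qMq$ with $\Psi(1)=q$, and for $x\in P$ one computes $\Psi(x)=q_0^{-1/2}xq_0q_0^{-1/2}=xq$, exactly as required, with $\Psi|_M$ normal whenever $\Theta|_M$ is, because conjugation by the fixed elements $q_0^{-1/2}$ preserves normality.

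The one genuinely delicate point, and the main obstacle, is precisely the normality of $\Theta|_M$: a naive point--$\sigma$-weak limit of the normal maps $\Phi_n$ need not be normal on $M$. To secure it I would avoid such a crude limit and instead use the averaging technology of this paper. Realising $(\xi_n)$ as a single right-bounded vector over the Ocneanu ultrapower turns the asymptotic $P$-centrality into exact centrality and makes the corresponding coefficient map honestly normal; then, exactly in the spirit of the Comment on item~(3) of Theorem~\ref{thmC} and of Corollary~\ref{cor-intertwining1} together with Remark~\ref{rem-intertwining1}, one extracts from convex combinations of the maps $\Ad(u)$, $u\in\mathcal U(P)$, a ucp map that is normal on $M$ and has the same effect on $P$. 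This is the step where Theorem~\ref{thmA} is essential, and it is what upgrades the elementary coefficient-map construction above into the normal map demanded by the statement.
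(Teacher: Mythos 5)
Your construction — extract $Y=X-E_{M^\omega}(X)\in P'\cap p\widetilde M^\omega p$ with $E_{M^\omega}(Y)=0$, pass to almost $P$-central vectors in $L^2(\widetilde M)\ominus L^2(M)$, transfer them through the weak containment to right-bounded vectors $\xi_n\in\mathcal H$, and take a limit of the coefficient maps $T\mapsto L_{\xi_n}^*TL_{\xi_n}$ — is exactly the architecture of \cite[Lemma 4.1]{Ma16}, which is all the paper invokes here (the paper's proof of Lemma~\ref{lem-spectral-gap} is a one-line reduction to that result). The renormalisation at the end needs a small repair: $q_0^{-1/2}$ on the support of $q_0=\Theta(1)$ need not be bounded, so one should cut by a spectral projection $q=1_{[\varepsilon,\infty)}(q_0)\in P'\cap pMp$ rather than the support projection; since $q_0$ commutes with $P$ this changes nothing else.

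The genuine gap is the point you yourself flag: normality of $\Theta|_M$. Your proposed fix — passing to an ultraproduct vector and then averaging by convex combinations of $\Ad(u)$, $u\in\mathcal U(P)$, ``in the spirit of'' Theorem~\ref{thmC}(3) and Corollary~\ref{cor-intertwining1} — does not work and is not what is needed. The weak Dixmier semigroup $\D(P\subset\cdot)$ produces maps landing in relative commutants $P'\cap(\cdot)$; composing a non-normal limit $\Theta$ with such a map (on either side) cannot restore normality on $M$, and it destroys the $P$-bimodularity $\Theta(x)=x\Theta(1)$ that the conclusion requires. Theorem~\ref{thmA} plays no role in Lemma~\ref{lem-spectral-gap}. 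The correct mechanism, which is the actual content of \cite[Lemma 4.1]{Ma16}, is a domination argument: because $Y$ is represented by an $\omega$-equicontinuous sequence (i.e.\ $\|y_n\psi-\psi y_n\|\to0$ along $\omega$, the defining property of the Ocneanu ultrapower), the coefficient states satisfy a uniform bound $\psi\circ\Phi_n|_M\leq C\,\psi$; this requires transferring not only the right bounds but also the \emph{left} bounds of the $\widehat y_n$ through the weak containment, a step you elide. A positive functional dominated by a normal one is normal, the domination survives the point--$\sigma$-weak limit, and a positive map $\Phi$ on $M$ with $\psi\circ\Phi$ normal and $\psi$ faithful normal is itself normal. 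Without this (or an equivalent) argument your limit $\Theta|_M$ has no reason to be normal, and the lemma's conclusion fails to be established.
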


\begin{proof}
The case where $\mathcal H_M$ is the coarse bimodule is shown in \cite[Lemma 4.1]{Ma16}. The general case follows by the same argument.
\end{proof}

\subsection*{Lemma on relative bicentralizers}

Let $N \subset M$ be an inclusion of von Neumann algebras with expectation, and let $\psi \in N_\ast$ be a faithful normal state. The \emph{relative bicentralizer} is defined by
\[
\mathrm{B}(N \subset M, \psi) := (N^\omega)_{\psi^\omega}' \cap M.
\]
The following lemma is derived from \cite{Ma23}. The case that $A'\cap M \subset A$ was already considered (see, e.g., \cite[Example~3]{Ma23}), but we need the result for arbitrary inclusions.
The proof is essentially the same as in \cite{Ma23}, with only minor modifications.

\begin{lem}\label{lem-relative-bicentralizer-tensor}
Let $A \subset M$ be an inclusion of $\sigma$-finite von Neumann algebras with expectation, and let $N_1, N_2$ be $\sigma$-finite type $\rm III_1$ factors. Set $N := N_1 \overline{\otimes} N_2$. Then for any faithful normal state $\psi$ on $A \overline{\otimes} N$, we have
\[
\mathrm{B}(A \overline{\otimes} N \subset M \overline{\otimes} N, \psi) = (A' \cap M) \otimes \mathbb{C}1_N.
\]
\end{lem}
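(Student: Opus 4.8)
The plan is to prove the two inclusions separately: the inclusion $\supseteq$ is elementary, while the reverse inclusion carries all the content. Write $\tilde A := A \ovt N$ and $\tilde M := M \ovt N$, equipped with the expectation $E_A \ovt \id_N \colon \tilde M \to \tilde A$, and set $B := (\tilde A^\omega)_{\psi^\omega}' \cap \tilde M$. For the easy inclusion, observe that any $x \in A' \cap M$ gives $x \otimes 1_N \in \tilde M$, which commutes with every element of $\tilde A = A \ovt N$; hence if $(a_k)$ is a representing sequence of an element of $\tilde A^\omega$, then $(x\otimes 1_N)a_k = a_k(x\otimes 1_N)$ for every $k$, so $x\otimes 1_N$ commutes with the whole ultrapower $\tilde A^\omega$, in particular with $(\tilde A^\omega)_{\psi^\omega}$. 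Thus $(A' \cap M) \otimes \C 1_N \subseteq B$.

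For the reverse inclusion I would first reduce to a product state. By the state-independence of the relative bicentralizer established in \cite{Ma23}, it suffices to verify the reverse inclusion for one convenient faithful normal state on $\tilde A$, and I would take $\psi = \psi_A \otimes \varphi_1 \otimes \varphi_2$ with $\psi_A \in A_*$ and $\varphi_i \in (N_i)_*$ faithful normal states; put $\varphi := \varphi_1 \otimes \varphi_2$. For such a product state the modular flow splits as $\sigma^{\psi^\omega} = \sigma^{\psi_A^\omega} \otimes \sigma^{\varphi^\omega}$, so the sequences $(1_A \otimes b_k)$ arising from $\varphi$-centralizing sequences $(b_k)$ in $N$ exhibit $1_A \otimes (N^\omega)_{\varphi^\omega}$ as a subalgebra of $(\tilde A^\omega)_{\psi^\omega}$. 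Consequently $B$ commutes with $1_A \otimes (N^\omega)_{\varphi^\omega}$, and the tensor commutation theorem applied inside $M \ovt N^\omega$ gives
\[
B \subseteq \bigl(1 \otimes (N^\omega)_{\varphi^\omega}\bigr)' \cap (M \ovt N) = M \ovt \bigl((N^\omega)_{\varphi^\omega}' \cap N\bigr) = M \ovt \mathrm{B}(N \subset N, \varphi).
\]
This is exactly where the hypothesis $N = N_1 \ovt N_2$ enters: the bicentralizer of a tensor product of two type $\rm III_1$ factors is trivial, $\mathrm{B}(N \subset N, \varphi) = \C 1_N$ (the two factors allow one to solve the asymptotic bicentralizing relations by alternating legs, bypassing Connes' bicentralizer conjecture for a single factor), which is the input I would draw from \cite{Ma23}. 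Hence $B \subseteq M \otimes \C 1_N$, so every $z \in B$ has the form $z = z_0 \otimes 1_N$ with $z_0 \in M$.

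It remains to upgrade $z_0$ to an element of $A' \cap M$. Projecting $z \in B$ onto the $A$-leg already shows $z_0 \in (A^\omega)_{\psi_A^\omega}' \cap M = \mathrm{B}(A \subset M, \psi_A)$, but to obtain genuine commutation with all of $A$ (and not merely with its centralizer) I would use the type $\rm III_1$ richness of $N$ once more. For each analytic $a \in \mathcal U(A)$, Connes--Størmer homogeneity of states on a $\rm III_1$ factor provides a unitary $n_a \in \mathcal U(N^\omega)$ whose modular frequencies cancel those of $a$, so that the entangled unitary $a \otimes n_a$ lies in $(\tilde A^\omega)_{\psi^\omega}$. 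Commuting $z = z_0 \otimes 1_N$ with $a \otimes n_a$ yields $[z_0, a] \otimes n_a = 0$, and since $n_a$ is a unitary this forces $[z_0, a] = 0$. As such $a$ are strongly dense in $\mathcal U(A)$, we conclude $z_0 \in A' \cap M$, hence $z \in (A' \cap M) \otimes \C 1_N$, which finishes the reverse inclusion.

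The main obstacle I anticipate is twofold, and both parts live in the type $\rm III_1$ structure. First, justifying $\mathrm{B}(N_1 \ovt N_2, \varphi) = \C 1_N$ unconditionally: this is precisely why two factors, rather than one arbitrary $\rm III_1$ factor, are assumed, and is where \cite{Ma23} does the real work. Second, the construction of the entangling centralizer unitaries $a \otimes n_a$ in the ultrapower, which requires Connes--Størmer transitivity together with a careful analytic-element and ultrafilter argument to guarantee that $a \otimes n_a$ genuinely lands in the $\psi^\omega$-centralizer; making the initial reduction to a product state fully rigorous (literal equality of subalgebras versus a canonical identification) relies on the same body of results from \cite{Ma23} and should be treated with care.
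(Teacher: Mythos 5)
Your opening reductions are fine: the inclusion $\supseteq$ is immediate, the embedding $1_A\otimes (N^\omega)_{\varphi^\omega}\subset (\tilde A^\omega)_{\psi^\omega}$ for a product state is correct, the commutation theorem of Str\u{a}til\u{a}--Zsid\'o then gives $B\subset M\ovt \mathrm{B}(N\subset N,\varphi)$, and the triviality of the bicentralizer of $N_1\ovt N_2$ is indeed available from \cite{Ma23}. But the last step, which is where all the difficulty of the lemma actually sits, has a genuine gap. For $a\otimes n_a$ to lie in $(\tilde A^\omega)_{\psi^\omega}$ with $\psi=\psi_A\otimes\varphi$ you need $\sigma_t^{\psi_A}(a)\otimes\sigma_t^{\varphi^\omega}(n_a)=a\otimes n_a$ for all $t$, which forces $a$ to be an \emph{eigenvector} of the modular flow, $\sigma_t^{\psi_A}(a)=\lambda^{it}a$. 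Two problems follow. First, $n_a$ then cannot be a unitary unless $\lambda=1$: the KMS condition gives $\varphi^\omega(n_an_a^*)=\lambda\,\varphi^\omega(n_a^*n_a)$, so at best $n_a$ is a non-unitary isometry (this part is repairable, since $[z_0,a]\otimes n_a=0$ with $n_a\neq 0$ still kills $[z_0,a]$). Second, and fatally, the linear span of modular eigenvectors of $\sigma^{\psi_A}$ is dense in $A$ only for almost periodic states; for a general $\sigma$-finite $A$ (say of type $\rm III_0$, or $\rm III_1$ with a state whose point modular spectrum is trivial) the eigenvectors reduce essentially to the centralizer $A_{\psi_A}$, which is nowhere near dense. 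So "such $a$ are strongly dense in $\mathcal U(A)$" is false in the generality of the lemma, and you only recover $z_0\in \mathrm{B}(A\subset M,\psi_A)$, which can a priori be strictly larger than $A'\cap M$ --- that is exactly the bicentralizer problem you are trying to bypass.

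For contrast, the paper does not attempt this upgrade by hand. It writes $A\ovt N=(A\ovt N_1)\ovt N_2$, observes via Lemma \ref{lem-tensor-III_1} that $A\ovt N_1$ is of type $\rm III_1$, and invokes \cite[Theorem 9.17]{Ma23} to identify $\mathrm{B}(A\ovt N\subset M\ovt N,\psi)$ with the \emph{algebraic} bicentralizer $\mathrm{b}(A\ovt N\subset M\ovt N,\psi)=[((A\ovt N)'\cap C_\psi(M\ovt N))\vee L_\psi\R]\cap(M\ovt N)$, which is then computed directly from the relative commutant formula in the continuous core (again Lemma \ref{lem-tensor-III_1}). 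The passage from the ultrapower bicentralizer to the core-level commutant is precisely the content your entanglement argument is missing; if you want to keep your route, you must import that identification from \cite{Ma23} rather than reprove it with eigenvector unitaries.
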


\begin{proof}
We write $A \overline{\otimes} N = (A \overline{\otimes} N_1) \overline{\otimes} N_2$. By Lemma~\ref{lem-tensor-III_1} below, $A \overline{\otimes} N_1$ is of type $\rm III_1$. Hence, by \cite[Theorem 9.17]{Ma23}, the inclusion $A \overline{\otimes} N \subset M \overline{\otimes} N$ satisfies the relative bicentralizer property: for any faithful normal state $\psi \in (A \overline{\otimes} N)_\ast$,
\[
\mathrm{B}(A \overline{\otimes} N \subset M \overline{\otimes} N, \psi)
= \mathrm{b}(A \overline{\otimes} N \subset M \overline{\otimes} N, \psi),
\]
where the right-hand side is the \emph{algebraic bicentralizer} as in \cite[Section 6]{Ma23}. Applying Lemma~\ref{lem-tensor-III_1} and taking commutants with $A$, we have
\[
(A \overline{\otimes} N)' \cap C_\psi(M \overline{\otimes} N) = (A' \cap M) \overline{\otimes} \mathbb{C}1_N.
\]
Therefore,
\begin{align*}
\mathrm{b}(A \overline{\otimes} N \subset M \overline{\otimes} N, \psi)
&= \left[ ((A \overline{\otimes} N)' \cap C_\psi(M \overline{\otimes} N)) \vee L_\psi \mathbb{R} \right] \cap (M \overline{\otimes} N) \\
&= \left[ ((A' \cap M) \overline{\otimes} \mathbb{C}1_N) \vee L_\psi \mathbb{R} \right] \cap (M \overline{\otimes} N) \\
&= [ C_\psi(A' \cap M) \overline{\otimes} \mathbb{C}1_N ] \cap (M \overline{\otimes} N) \\
&= (A' \cap M) \overline{\otimes} \mathbb{C}1_N.
\end{align*}
This proves the claim.
\end{proof}

\begin{lem}\label{lem-tensor-III_1}
Let $M$ be a von Neumann algebra and $N$ a type $\rm III_1$ factor. Then for any faithful normal semifinite weight $\psi$ on $N \overline{\otimes} M$, we have
\[
(N \overline{\otimes} \mathbb{C}1_M)' \cap C_\psi(N \overline{\otimes} M) = \mathbb{C}1_N \overline{\otimes} M.
\]
In particular, $N \overline{\otimes} M$ is a type $\rm III_1$ von Neumann algebra, i.e.,
\[
(N \overline{\otimes} M)' \cap C_\psi(N \overline{\otimes} M) = \mathcal{Z}(N \overline{\otimes} M).
\]
\end{lem}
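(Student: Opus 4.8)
The plan is first to reduce the ``in particular'' clause to the displayed identity, and then to reduce that identity to the case of a product weight. Indeed, once $(N\ovt\C1_M)'\cap C_\psi(N\ovt M)=\C1_N\ovt M$ is known, intersecting further with the commutant of $\C1_N\ovt M$ gives $(N\ovt M)'\cap C_\psi(N\ovt M)=(\C1_N\ovt M)'\cap(\C1_N\ovt M)=\mathcal Z(N\ovt M)$, which is the second assertion. For the displayed identity I would use that the continuous core, together with the positions of the canonical images of $N\ovt\C1_M$ and $\C1_N\ovt M$ inside it, is independent of $\psi$ up to the canonical isomorphism $C_\psi\to C_{\psi'}$ that is the identity on $N\ovt M$; this lets me assume $\psi=\varphi\otimes\chi$ is a product of faithful normal semifinite weights. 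Then $\sigma^\psi=\sigma^\varphi\otimes\sigma^\chi$, the subalgebra $N\ovt\C1_M$ is globally modular invariant, and writing $C:=C_\psi(N\ovt M)=(N\ovt M)\rtimes_{\sigma^\varphi\otimes\sigma^\chi}\R$ with implementing unitaries $\lambda_t$, I obtain a copy $\mathcal N:=(N\ovt\C1_M)\vee\{\lambda_t\}''\cong C_\varphi(N)$ inside $C$, which is a type $\mathrm{II}_\infty$ factor because $N$ is $\mathrm{III}_1$. The inclusion $\C1_N\ovt M\subseteq(N\ovt\C1_M)'\cap C$ is clear, so only the reverse inclusion requires work.

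Set $B:=(N\ovt\C1_M)'\cap C$ and let $\theta\colon\R\actson C$ be the dual action, so that $\theta_s$ fixes $\pi(N\ovt M)$ pointwise, $\theta_s(\lambda_t)=e^{\ri st}\lambda_t$, and $C^\theta=N\ovt M$ by Takesaki duality. Since $\theta$ fixes $N\ovt\C1_M$, it preserves $B$. The key point is to show $B\subseteq C^\theta$; granting this, $B=(N\ovt\C1_M)'\cap(N\ovt M)=\C1_N\ovt M$ by Tomita's commutation theorem and the factoriality of $N$, and we are done. The mechanism is the following intertwiner computation on spectral components. A nonzero $\theta$-eigenoperator of frequency $t$ lies in the spectral subspace $C^\theta(\{t\})=\pi(N\ovt M)\lambda_t$, hence has the form $\pi(y)\lambda_t$ with $y\in N\ovt M$; commutation with $N\ovt\C1_M$ forces
\[
(n\otimes1)\,y=y\,(\sigma^\varphi_t(n)\otimes1)\qquad(n\in N).
\]
Slicing the $M$-leg by a normal functional shows that a nonzero such $y$ produces a nonzero $a\in N$ with $na=a\,\sigma^\varphi_t(n)$ for all $n$; then $a^*a,\,aa^*\in N'\cap N=\C$, so $a$ is a scalar multiple of a unitary implementing $\sigma^\varphi_t$. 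Because $N$ is of type $\mathrm{III}_1$ its Connes spectrum is full, equivalently $T(N)=\{0\}$ (Connes), so $\sigma^\varphi_t$ is outer---and hence, $N$ being a factor, properly outer---for every $t\neq0$; thus $y=0$ whenever $t\neq0$.

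The main obstacle is precisely the passage from ``no nonzero $\theta$-eigenoperator of nonzero frequency lies in $B$'' to ``$B\subseteq C^\theta$'', i.e.\ ruling out continuous spectrum of $\theta|_B$. I would settle this using the proper outerness of $\{\sigma^\varphi_t\}_{t\neq0}$ established above, invoking the standard principle that for a crossed product by an $\R$-action whose nonzero-time automorphisms are properly outer, the relative commutant of a globally invariant subalgebra is not enlarged on passing to the core. Concretely, one applies Arveson spectral-subspace theory to $\theta$ (for which $C^\theta(\{t\})=\pi(N\ovt M)\lambda_t$) together with the faithful normal $\theta$-equivariant conditional expectation $E\colon C\to\mathcal N$ extending $\id_N\otimes\chi$, for which $E(b(1\otimes m_0))\in N'\cap\mathcal N=\C$ for all $b\in B$ and $m_0\in M$ (the equality $N'\cap\mathcal N=\C$ being the $M=\C$ instance, resolved by the same proper outerness); the injectivity of the resulting slice maps on each spectral subspace, combined with $E(b(1\otimes m_0))$ having trivial $\theta$-spectrum, forces every nonzero-frequency component of $b$ to vanish. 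This spectral bookkeeping over the non-compact group $\R$ is the only delicate step; everything else is the elementary intertwiner calculation and the two reductions.
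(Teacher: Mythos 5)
Your reductions (the ``in particular'' clause, the passage to a product weight) and your eigenoperator computation are correct: a $\theta$-eigenoperator of frequency $t$ in $B:=(N\ovt\C1_M)'\cap C$ must have the form $\pi(y)\lambda_t$ with $y$ intertwining $\sigma^\varphi_t\otimes\id$, slicing produces a unitary implementing $\sigma^\varphi_t$, and $T(N)=\{0\}$ for a type $\rm III_1$ factor kills this for $t\neq 0$. This is genuinely the right mechanism, and it is where the $\rm III_1$ hypothesis enters in your route. The problem is the step you yourself flag as delicate, and your proposed resolution of it does not close. First, the ``standard principle'' you invoke --- that pointwise proper outerness of $\alpha_t$ for $t\neq 0$ prevents relative commutants from growing in $P\rtimes\R$ --- is a discrete-group fact (proved by Fourier coefficients); for a continuous group, pointwise outerness is strictly weaker than what is needed (one needs full Connes spectrum / strict outerness), and establishing that implication for the modular flow \emph{is} essentially the content of the lemma. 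Second, your concrete mechanism is circular: the slicing argument with $E$ reduces everything to the statement $N'\cap\mathcal N=N'\cap C_\varphi(N)=\C$, which is exactly the $M=\C$ instance of the lemma being proved; and for $M=\C$ your mechanism degenerates ($\mathcal N=C$ and $E=\id$), so ``resolved by the same proper outerness'' resolves nothing --- the continuous-spectrum problem for the base case is untouched. (There are also smaller gaps: $E=\id_N\otimes\chi$ needs $\chi$ to be a state, so the non-$\sigma$-finite and non-factor cases of $M$ require the separate reductions the paper carries out; and your one-sided slicing $E(b(1\otimes m_0))$ should at least be two-sided to hope for injectivity on $C=\mathcal N\vee(1\otimes M)$.)

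For comparison, the paper avoids the continuous-spectrum issue entirely by not working with $B$ directly: it takes $\psi_N$ to be a \emph{dominant} weight, uses $N_{\psi_N}'\cap N=\C$ to compute $(N_{\psi_N}\ovt\C1_M)'\cap C_\psi(N\ovt M)=\C1_N\ovt C_{\psi_M}(M)$, so that $B$ sits inside $\C1_N\ovt C_{\psi_M}(M)$, and then applies the Connes--Takesaki structure theorem \cite[Theorem 4.1]{CT76} to the dual-action-invariant intermediate algebra $M\subseteq P\subseteq C_{\psi_M}(M)$ to write $P=M\rtimes H$ for a closed subgroup $H\leq\R$; the requirement that $\lambda_t$ ($t\in H$) commute with $N$ forces $\sigma^{\psi_N}_t=\id$, hence $H=\{0\}$ since $N$ is of type $\rm III_1$. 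If you want to salvage your approach, you should either import $N'\cap C_\varphi(N)=\C$ for $\rm III_1$ factors as a citable black box (it is equivalent to the dominant-weight fact $N_{\psi_N}'\cap N=\C$ together with \cite{CT76}) rather than claim to reprove it, or replace your ``standard principle'' by an actual appeal to \cite[Theorem 4.1]{CT76}; as written, the core of the argument is missing.
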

\begin{proof}

The following proof is given by Marrakchi, which is much simpler than the previous proof in the first draft of the article. 

The inclusion $N \overline{\otimes} M \subset C_\psi(N \overline{\otimes} M)$ is independent of the choice of $\psi$, so it suffices to prove the lemma for a fixed $\psi$. Let $\psi_N$ be a dominant weight on $N$ and $\psi_M$ a faithful normal semifinite weight on $M$, and let $\psi := \psi_N \otimes \psi_M$. Consider dual actions $\theta^M\colon \R\actson C_{\varphi_M}(M)$, $\theta^N\colon \R\actson C_{\varphi_N}(N)$ and define 
	$$ \theta^{(2)}:=\theta^N\otimes \theta^M\colon  \R\times \R\actson C_{\varphi_N}(N)\ovt C_{\varphi_M}(M).$$
Then observe that $C_\psi(N\ovt M)$ is canonically identified with the fixed point algebra of $\theta^{(2)}|_{H}$, where $H:=\{ (t,-t)\in \R \mid t\in \R\}$. Since $N\ovt \C 1_M$ is fixed by $\theta^{(2)}$ and since $N'\cap C_{\psi_N}(N)=\C 1_N$ (because $\psi_N$ is a dominant weight), we have 
	$$ (N \overline{\otimes} \mathbb{C}1_M)' \cap C_\psi(N \overline{\otimes} M) = [ \C 1_N \ovt C_{\varphi_M}(M) ]^{\theta^{(2)}|_H} = \C 1_N \ovt M. $$
This completes the proof.
\end{proof}

\subsection{Spectral gap rigidity}
\label{Spectral gap rigidity}

Let $M \subset \widetilde{M}$ be an inclusion of von Neumann algebras with expectation $E_M$. 
A \emph{(symmetric) malleable deformation} \cite{Po03} is a pair consisting of an $\R$-action $\theta\colon \R \curvearrowright \widetilde{M}$ and an automorphism $\beta \in \Aut(\widetilde{M})$ satisfying
\[
\theta_t \circ \beta = \beta \circ \theta_{-t}, \quad \beta^2 = \id, \quad \beta|_M = \id_M.
\]
As in \cite{Ma16}, we do not assume the existence of a state or weight that is invariant under $\theta_t$. This formulation is more convenient in the context of type $\rm III$ von Neumann algebras. For this reason, the main reference for this section is \cite{Ma16}.

Our goal is to prove the following theorem. The main idea of the proof first appeared in \cite{Po06a}. Our version generalizes \cite[Theorem 4.2]{Ma16} by removing the assumption that the subalgebra $Q$ is finite.

\begin{thm}\label{thm-malleable-nonfinite}
Let $(\theta_t,\beta)$ be a malleable deformation of the inclusion $M \subset^{E_M} \widetilde{M}$. Let $p \in M$ be a projection and $Q \subset pMp$ a von Neumann subalgebra with expectation. Put $P := Q' \cap pMp$. Take $\sigma$-finite type $\rm III_1$ factors $N_1, N_2$ and set $N := N_1 \overline{\otimes} N_2$. Then, one of the following conditions holds:
\begin{enumerate}
    \item $Q \preceq_{\widetilde{M}} \theta_1(Q)$; or

    \item $(P \overline{\otimes} \mathbb{C}1_N)' \cap p(\widetilde{M} \overline{\otimes} N)^\omega p \not\subset p(M \overline{\otimes} N)^\omega p$.
\end{enumerate}
\end{thm}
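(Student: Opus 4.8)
The plan is to establish the dichotomy by proving the implication $\neg(2)\Rightarrow(1)$: assuming the spectral gap hypothesis
$$(P\ovt\C1_N)'\cap p(\widetilde M\ovt N)^\omega p \subset p(M\ovt N)^\omega p,$$
I will produce, for each sufficiently small $t>0$, an intertwiner witnessing $Q\preceq_{\widetilde M}\theta_t(Q)$, and then promote it to $t=1$ by malleability. The whole argument runs inside $M\ovt N$ with the deformation $\theta_t\ovt\id_N$ and the subalgebra $Q\ovt\C1_N$. The sole role of the type $\rm III_1$ factors $N_1,N_2$ is to force, through the relative bicentralizer property of Lemma~\ref{lem-relative-bicentralizer-tensor}, the $\psi^\omega$-centralizer relative commutants to collapse onto the honest relative commutants; this is precisely what lets the finite-type machine of Lemma~\ref{lem-uniform-convex-ultraproduct} operate even though $Q$ is of type $\rm III$.

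First I would fix a faithful normal state $\psi$ on $M\ovt N$ with $Q\ovt\C1_N$ in its centralizer and compatible with the given expectations, and extract the two consequences of $\neg(2)$ that feed Lemma~\ref{lem-uniform-convex-ultraproduct}. For the convergence input, note that every $u\in\mathcal U((Q\ovt\C1_N)^\omega_{\psi^\omega})$ commutes with $P\ovt\C1_N$ while $\theta_t\to\id$ pointwise, so the deformation defects of these centralizer unitaries become asymptotically $P$-central in $L^2(\widetilde M\ovt N)\ominus L^2(M\ovt N)$; the spectral gap $\neg(2)$ then forces them to vanish uniformly, and combined with pointwise convergence of the flow this yields, for all small $t>0$,
$$\sup_{u\in\mathcal U((Q\ovt\C1_N)^\omega_{\psi^\omega})}\|(\theta_t\ovt\id)^\omega(u)-u\|_{2,\psi^\omega}^2<1/4,\qquad \|(\psi\circ(\theta_t\ovt\id)-\psi)|_{Q\ovt\C1_N}\|<1/2.$$
For the relative commutant input, I would pass to the doubled algebra $(\widetilde M\ovt N)\otimes\M_2$ with $B=\iota(Q\ovt\C1_N)$ and $\iota=\mathrm{diag}(\id,\theta_t\ovt\id)$ as in Lemma~\ref{lem-uniform-convex-ultraproduct}; here Lemma~\ref{lem-relative-bicentralizer-tensor} identifies the diagonal $\psi^\omega$-centralizer relative commutant of $Q\ovt\C1_N$ with its honest relative commutant, and together with $\neg(2)$ this establishes the required equality
$$(B^\omega)_{\psi_2^\omega}'\cap q((\widetilde M\ovt N)\otimes\M_2)q=B'\cap q((\widetilde M\ovt N)\otimes\M_2)q.$$

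With both hypotheses verified, applying Lemma~\ref{lem-uniform-convex-ultraproduct} (with $\theta=\theta_t\ovt\id$, $A=Q\ovt\C1_N$) produces a nonzero partial isometry $w$ satisfying $(\theta_t\ovt\id)(a)w=wa$ for all $a\in Q\ovt\C1_N$, which is exactly an intertwiner witnessing $Q\ovt\C1_N\preceq_{\widetilde M\ovt N}(\theta_t\ovt\id)(Q\ovt\C1_N)$. Since the $N$-direction is trivial, tensor-stability of Popa intertwining descends this to $Q\preceq_{\widetilde M}\theta_t(Q)$. Finally I would invoke the standard malleability/transversality bootstrap: the transversality inequality for $(\theta_t,\beta)$ upgrades an intertwiner at scale $t$ to one at scale $2t$, and iterating in concert with the symmetry $\beta$ promotes $Q\preceq_{\widetilde M}\theta_t(Q)$ for a single small $t>0$ to $Q\preceq_{\widetilde M}\theta_1(Q)$, which is condition (1) (see \cite{Po03,Ma16}).

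The main obstacle is the middle step, carried out entirely within the $\psi^\omega$-centralizer framework. In the finite case of \cite{Ma16} the uniform convergence and the relative commutant equality are immediate from the trace, but for type $\rm III$ $Q$ one must control $(Q^\omega)_{\psi^\omega}$ in place of a genuine commutant, and matching the spectral gap statement $\neg(2)$, which is phrased in terms of $P=Q'\cap pMp$, to the relative commutant of $\iota(Q)$ in the doubled algebra is delicate; it is exactly this translation that the tensoring with the type $\rm III_1$ factors and the relative bicentralizer property of Lemma~\ref{lem-relative-bicentralizer-tensor} are designed to make possible, and verifying it carefully is the heart of the proof.
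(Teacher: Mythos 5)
Your overall strategy is the paper's: use $\neg(2)$ to get uniform $\|\theta_t^\omega(u)-u\|_{\psi^\omega}\to 0$ over the centralizer ultrapower (Lemma~\ref{lem-spectral-gap2}), feed this together with a relative bicentralizer collapse into Lemma~\ref{lem-uniform-convex-ultraproduct} to produce an intertwiner at small $t$, and then promote to $t=1$ via the $\beta$-symmetry as in Step~3 of \cite[Theorem 4.2]{Ma16}. However, there is a genuine gap at the step you yourself identify as the heart of the proof: you run the argument with the subalgebra $Q\ovt\C 1_N$, whereas it must be $\mathcal Q:=Q\ovt N$. Lemma~\ref{lem-relative-bicentralizer-tensor} computes $\mathrm{B}(A\ovt N\subset M\ovt N,\psi)=(A'\cap M)\otimes\C 1_N$ only for subalgebras of the form $A\ovt N$, i.e.\ containing the full tensor factor $N=N_1\ovt N_2$; this is precisely what makes the subalgebra contain a type $\rm III_1$ tensor component so that \cite[Theorem 9.17]{Ma23} applies. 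For $Q\ovt\C 1_N$ one only has the a priori inclusion $(Q\ovt\C1_N)'\cap(M\ovt N)=P\ovt N\subset \mathrm{B}(Q\ovt\C 1_N\subset M\ovt N,\psi)\subset \mathrm{B}(Q\subset M,\psi_M)\ovt N$, and the reverse inclusion is essentially the relative bicentralizer problem for the arbitrary (possibly type $\rm III$) algebra $Q\subset M$ itself, which is exactly what the tensoring trick is designed to circumvent. Consequently the hypothesis $(B^\omega)_{\psi_2^\omega}'\cap qM_2q=B'\cap qM_2q$ of Lemma~\ref{lem-uniform-convex-ultraproduct} cannot be verified for $B=\iota(Q\ovt\C1_N)$, and your middle step does not go through.

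The repair is exactly the paper's choice: take $\mathcal Q=Q\ovt N$, so that $\mathcal Q'\cap p(M\ovt N)p=(Q'\cap pMp)\ovt\C1_N=P\ovt\C1_N=\mathcal P$ matches the commutant appearing in condition $(2)$, note that the deformation embedding factors as $\id_N\otimes\iota_\theta$ because $\theta_t$ acts trivially on $N$, apply Lemma~\ref{lem-relative-bicentralizer-tensor} to get the bicentralizer collapse for $\iota_\Theta(\mathcal Q)$, and conclude from Lemma~\ref{lem-malleable-nonfinite} a nonzero $w$ with $\Theta_1(x)w=wx$ for all $x\in Q\ovt N$. This yields $Q\ovt N\preceq_{\widetilde M\ovt N}\theta_1(Q)\ovt N$, which descends to $Q\preceq_{\widetilde M}\theta_1(Q)$ by \cite[Lemma 4.3]{Is17}; with $Q\ovt\C1_N$ even this final descent would need a separate justification. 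Your description of the spectral gap input and of the $t\mapsto 2t$ patching is otherwise consistent with the paper (the patching uses $P'\cap p\widetilde Mp\subset pMp$ rather than transversality per se, but that is a minor point).
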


We fix the setting of the theorem and prove some lemmas. Let $\psi \in M_\ast$ be a faithful normal state such that $p \in M_\psi$ and $\psi \circ E_Q|_{pMp} = \psi$.

\begin{lem}\label{lem-spectral-gap2}
In the setting of Theorem~\ref{thm-malleable-nonfinite}, assume that $P' \cap p\widetilde{M}^\omega p \subset pM^\omega p$.
Then for any sequence $t_n \in \R$ with $t_n \to 0$ and any $(a_n)_\omega \in Q^\omega$, we have
$(\theta_{t_n}(a_n))_\omega = (a_n)_\omega$ in $\widetilde{M}^\omega$.
In particular,
\[
\lim_{t \to 0} \sup_{a \in ((Q^\omega)_{\psi^\omega})_1} \| \theta_t^\omega(a) - a \|_{\psi^\omega} = 0.
\]
\end{lem}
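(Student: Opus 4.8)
The plan is to run a spectral gap argument in the style of \cite{Po06a,Ma16}, using the hypothesis $P'\cap p\widetilde{M}^\omega p\subset pM^\omega p$ as the spectral gap input and Popa's transversality inequality to convert control of the ``$E_M$-orthogonal'' part of $\theta_t$ into control of the full deformation. Throughout I fix the faithful normal state $\widetilde\psi:=\psi\circ E_M$ on $\widetilde{M}$, let $e_M$ be the Jones projection of $E_M$ on $L^2(\widetilde{M})$, and write $\xi$ for the cyclic vector of $\widetilde\psi$. The heart of the matter is the single estimate $\lim_{n\to\omega}\|(1-e_M)\,\theta_{t_n}(a_n)\,\xi\|=0$ (and the same with $a_n$ replaced by $a_n^*$) for any $t_n\to 0$ and any bounded sequence $(a_n)$ representing an element of $Q^\omega$. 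Once this is established, Popa's transversality inequality for the symmetric deformation $(\theta_t,\beta)$, which in the symmetric $\sharp$-norm bounds $\|\theta_{2t}(x)-x\|^\sharp_{\widetilde\psi}$ by a universal constant times the $E_M$-orthogonal parts of $\theta_t(x)$ and $\theta_t(x)^*$ (valid with no invariant weight, cf.\ \cite{Ma16}), applied to $x=a_n$ and $t=t_n$, together with the rescaling $s_n:=2t_n\to 0$, yields $\lim_{n\to\omega}\|\theta_{s_n}(a_n)-a_n\|^\sharp_{\widetilde\psi}=0$. This says exactly that $(\theta_{s_n}(a_n)-a_n)_n$ is a null sequence, so $(\theta_{s_n}(a_n))_n$ lies in the Ocneanu multiplier algebra and represents the same class as $(a_n)_n$; this is the first assertion.

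First I would verify the almost-centrality of the relevant vectors. Put $\zeta_n:=(1-e_M)\,\theta_{t_n}(a_n)\,\xi$. For any $x\in P=Q'\cap pMp$ we have $[x,a_n]=0$ since $a_n\in Q$, hence
\[
[x,\theta_{t_n}(a_n)]=[\,x-\theta_{t_n}(x)\,,\,\theta_{t_n}(a_n)\,]+\theta_{t_n}([x,a_n])=[\,x-\theta_{t_n}(x)\,,\,\theta_{t_n}(a_n)\,].
\]
Because $\theta$ is a continuous $\R$-action and $x\in M$ is fixed, $\theta_{t_n}(x)\to x$ $\ast$-strongly as $t_n\to 0$, while $(a_n)$ is bounded; thus $\|[x,\theta_{t_n}(a_n)]\|^\sharp_{\widetilde\psi}\to 0$. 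Since $e_M$ commutes with both the left and right actions of $M$ on $L^2(\widetilde{M})$ and $x\in M$, it follows that $\|x\zeta_n-\zeta_n x\|\to 0$ along $\omega$ for every $x\in P$. Moreover $a_n=pa_np$ and $\theta_{t_n}(p)\to p$, so up to a vanishing error the $\zeta_n$ are cut by $p$ on both sides and genuinely live in $p\bigl(L^2(\widetilde{M})\ominus L^2(M)\bigr)p$. Thus $(\zeta_n)$ is a bounded, asymptotically $P$-central sequence in the complementary $P$-$P$-bimodule.

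Next I would invoke the hypothesis. The containment $P'\cap p\widetilde{M}^\omega p\subset pM^\omega p$ is, via the standard correspondence between almost-central sequences in the ultrapower and almost-central vectors in $L^2$, equivalent to the assertion that $p\bigl(L^2(\widetilde{M})\ominus L^2(M)\bigr)p$ has spectral gap as a $P$-$P$-bimodule: every bounded asymptotically $P$-central sequence in it converges to $0$. Applying this to $(\zeta_n)$ (and to the adjoint version) gives $\lim_{n\to\omega}\|\zeta_n\|=0$, the desired orthogonality estimate. Finally, the ``in particular'' clause follows from the first assertion by a diagonal argument: if $\sup_{a\in((Q^\omega)_{\psi^\omega})_1}\|\theta_t^\omega(a)-a\|_{\psi^\omega}$ did not tend to $0$, there would be $\delta>0$, $t_n\to 0$ and $a_n\in((Q^\omega)_{\psi^\omega})_1$ with $\|\theta_{t_n}^\omega(a_n)-a_n\|_{\psi^\omega}\ge\delta$; choosing representing sequences and reindexing along $\omega$ produces a single bounded sequence $(b_m)_m$ defining an element of $Q^\omega$ and scalars $s_m\to 0$ with $\liminf_{m\to\omega}\|\theta_{s_m}(b_m)-b_m\|^\sharp_{\widetilde\psi}\ge\delta/2$, contradicting the first assertion.

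The step I expect to be the main obstacle is the passage between the ultrapower hypothesis and the $L^2$-level spectral gap, precisely because no $\theta$-invariant weight is assumed: $\theta_{t_n}$ need not preserve $\widetilde\psi$, so the twisted sequence $(\theta_{t_n}(a_n))_n$ is \emph{not} a priori $\widetilde\psi^\omega$-centralizing and need not define an element of the Ocneanu ultrapower before the estimate is proven. This forces the whole argument to be carried out at the level of $L^2(\widetilde{M})$-vectors and the symmetric $\sharp$-norm, using the $E_M$-orthogonal decomposition and the correspondence formulation of spectral gap, rather than by directly manipulating relative commutants of ultrapowers. Verifying that this correspondence formulation is available in the present non-tracial generality, so that asymptotic $P$-centrality of the bounded vectors $\zeta_n$ alone forces $\|\zeta_n\|\to0$, is the delicate point; the cutdowns by $p$ and the convergence $\theta_{t_n}(p)\to p$ are the routine technical adjustments needed to place the vectors in the correct corner.
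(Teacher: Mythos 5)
Your treatment of the ``in particular'' clause coincides with what the paper actually proves: the paper's entire written proof of this lemma is the diagonal extraction you describe (choose $\delta>0$, $t_n\to 0$, representatives $a_{n,k_n}$ with $\|a_{n,k_n}\psi-\psi a_{n,k_n}\|<1/n$ and $\|\theta_{t_n}(a_{n,k_n})-a_{n,k_n}\|_{2,\psi}\ge\delta/2$, contradiction with the first assertion). For the first assertion the paper does not give an argument at all; it cites Step 1 of the proof of \cite[Theorem 4.2]{Ma16}. So the substance of your proposal is your reconstruction of that step, and there the key claim is not correct as stated. You assert that $P'\cap p\widetilde{M}^\omega p\subset pM^\omega p$ is \emph{equivalent} to spectral gap of the $P$-bimodule $p(L^2(\widetilde{M})\ominus L^2(M))p$ in the sense that every bounded asymptotically $P$-central sequence of vectors in it tends to $0$. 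The ultrapower hypothesis only controls sequences of the form $c_n\xi$ where $(c_n)_n$ is a bounded sequence lying in the Ocneanu multiplier algebra $\mathcal M^\omega(\widetilde M)$; it says nothing about arbitrary $L^2$-bounded almost-central vectors, and the implication from the relative-commutant condition to the $L^2$-level statement is the direction that fails in general (the standard spectral gap arguments go the other way). You flag precisely this as ``the delicate point'' but do not close it, so as written the proof of the orthogonality estimate $\|\zeta_n\|\to 0$ is incomplete.

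The gap is repairable, and the repair is exactly Marrakchi's argument, which never leaves the ultrapower. Since $\theta_{t_n}\to\id$ in the $u$-topology, one checks directly that $\theta_{t_n}$ preserves the trivializing ideal $\mathcal I_\omega$ and hence that $(\theta_{t_n}(a_n))_n$ defines an element $y\in\widetilde{M}^\omega$ \emph{before} any estimate is proved (your stated worry on this point is unfounded); the commutator computation you perform then shows $y\in P'\cap p\widetilde{M}^\omega p$, so $y\in pM^\omega p$ by hypothesis. At that point one does not need a transversality inequality: since $\beta|_M=\id$, one has $\beta^\omega(y)=y$, while $\beta^\omega(y)=(\beta\theta_{t_n}(a_n))_\omega=(\theta_{-t_n}(a_n))_\omega$, whence $(\theta_{2t_n}(a_n))_\omega=(a_n)_\omega$, and replacing $t_n$ by $t_n/2$ gives the first assertion. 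This also sidesteps your appeal to a ``universal constant'' transversality inequality in the $\sharp$-norm with no $\theta$-invariant weight, which is itself only valid asymptotically (the usual proof uses $\|\theta_{2t}(x)-x\|=\|\theta_t(x)-\theta_{-t}(x)\|$, which requires $\theta_t$ to preserve the reference state; without invariance one only gets an error term controlled by $\|\widetilde\psi\circ\theta_t-\widetilde\psi\|$). I recommend either citing \cite[Theorem 4.2, Step 1]{Ma16} as the paper does, or rewriting the first part along the ultrapower lines above.
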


\begin{proof}
The first part is proven in Step 1 of the proof of \cite[Theorem 4.2]{Ma16}, so we focus on the second part. Suppose the conclusion does not hold. Then there exists a sequence $t_n \to 0$ and a bounded sequence $(a_n)_n \subset (Q^\omega)_{\psi^\omega}$ such that 
$\lim_n \| \theta^\omega_{t_n}(a_n) - a_n \|_{2,\psi} \not = 0$.
Passing to a subsequence, we may assume there exists $\delta > 0$ such that for all $n$,
\[
\| \theta^\omega_{t_n}(a_n) - a_n \|_{2,\psi} \geq \delta > 0.
\]
Write $a_n = (a_{n,k})_k$ with $\|a_{n,k}\|_\infty \leq 1$. Since $a_n \in (Q^\omega)_{\psi^\omega}$, we can construct a sequence of integers $k_n$ such that
\[
\|a_{n,k_n} \psi - \psi a_{n,k_n}\| < \frac{1}{n},\quad \| \theta_{t_n}(a_{n,k_n}) - a_{n,k_n} \|_{2,\psi} \geq \frac{\delta}{2}.
\]
Then the bounded sequence $(a_{n,k_n})_n$ defines an element in $(Q^\omega)_{\psi^\omega}$, while
\[
\| (\theta_{t_n}(a_{n,k_n}))_\omega - (a_{n,k_n})_\omega \| \geq \frac{\delta}{2},
\]
which contradicts the first part. Hence the second part follows.
\end{proof}

We define the sequence by $t_1 = 0$ and $t_n = 2^{-n}$ for $n \geq 2$. Let $\{e_{i,j}\}_{i,j \in \mathbb{N}}$ be the standard matrix units in $\mathbb{B}(\ell^2)$. Consider the embedding
\[
\iota\colon \widetilde{M} \to \widetilde{M}_\infty := \widetilde{M} \overline{\otimes} \mathbb{B}(\ell^2);\quad a \mapsto \sum_{n \in \mathbb{N}} \theta_{t_n}(a) \otimes e_{n,n}.
\]
Define a faithful normal state $\psi_\infty \in (\widetilde{M}_\infty)_\ast$ by
\[
\psi_\infty(a \otimes e_{i,j}) := \delta_{i,j} \frac{1}{2^i} \psi \circ \theta_{t_i}^{-1}(a),\quad a \in \widetilde{M},\ i,j \geq 1,
\]
so that $\psi_\infty \circ \iota = \psi$. Let $q := \iota(p)$ and $Q_\infty := \iota(Q)$. Then the inclusion $Q_\infty \subset q \widetilde{M}_\infty q$ admits a conditional expectation and is invariant under $\sigma^{\psi_\infty}$.

\begin{lem}\label{lem-malleable-nonfinite}
	In this setting, assume that
\[
\mathrm{B}(Q_\infty \subset q \widetilde{M}_\infty q, \psi_\infty) = Q_\infty' \cap q \widetilde{M}_\infty q.
\]
Then one of the following conditions holds:
\begin{enumerate}
    \item there exists a nonzero element $w \in \theta_1(p) \widetilde{M} p$ such that
    \[
    \theta_1(x) w = w x,\quad \text{for all } x \in Q.
    \]
    \item $P' \cap p \widetilde{M}^\omega p \not\subset p M^\omega p$.
\end{enumerate}
\end{lem}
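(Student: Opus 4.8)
The plan is to establish the dichotomy by assuming that condition (2) fails, i.e.\ $P'\cap p\widetilde M^\omega p\subset pM^\omega p$, and producing the intertwiner of condition (1). The first step is to feed this assumption into Lemma~\ref{lem-spectral-gap2}, which yields the uniform spectral gap $\lim_{t\to0}\sup_{a\in((Q^\omega)_{\psi^\omega})_1}\|\theta_t^\omega(a)-a\|_{\psi^\omega}=0$. Since $t_n=2^{-n}\to0$, this means that for all sufficiently large $n$ one has $\sup_{u\in\mathcal U((Q^\omega)_{\psi^\omega})}\|\theta_{t_n}^\omega(u)-u\|_{2,\psi^\omega}^2<1/4$, which is precisely the first hypothesis of Lemma~\ref{lem-uniform-convex-ultraproduct} applied to the single automorphism $\theta_{t_n}$ (with ambient algebra $\widetilde M$, subalgebra $Q$, and $\psi$ extended to $\widetilde M$ via $E_M$).

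The second step is to extract, from the ambient bicentralizer assumption on $Q_\infty$, the $2\times2$ bicentralizer hypothesis required by Lemma~\ref{lem-uniform-convex-ultraproduct}. For $n\geq2$ set $r_n:=q(e_{1,1}+e_{n,n})$; since $\psi_\infty$ is diagonal, $r_n\in(\widetilde M_\infty)_{\psi_\infty}$, and because both $Q_\infty$ and $r_n$ are diagonal, $r_n$ commutes with $Q_\infty$. Compressing by $r_n$ identifies $r_n\widetilde M_\infty r_n$ with a copy of $M_2:=\widetilde M\otimes\M_2$, identifies $r_nQ_\infty r_n$ with $\iota_2(Q)=\{\mathrm{diag}(a,\theta_{t_n}(a)):a\in Q\}$, and identifies $\psi_\infty|_{r_n}$ with $\tfrac12\psi\oplus2^{-n}\,\psi\circ\theta_{t_n}^{-1}$; these are exactly the data of Lemma~\ref{lem-uniform-convex-ultraproduct} with $\theta=\theta_{t_n}$ and $\alpha=2^{-n}<1/2$. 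Using that the relative bicentralizer commutes with compression by a centralizer projection lying in the relative commutant of the small algebra (a property of relative bicentralizers from~\cite{Ma23}), the hypothesis $\mathrm{B}(Q_\infty\subset q\widetilde M_\infty q,\psi_\infty)=Q_\infty'\cap q\widetilde M_\infty q$ descends to the corresponding triviality of the relative bicentralizer of $\iota_2(Q)\subset\iota_2(p)M_2\iota_2(p)$ with respect to $\psi_2$, which is the bicentralizer hypothesis of Lemma~\ref{lem-uniform-convex-ultraproduct}. The last hypothesis, $\|(\psi\circ\theta_{t_n}-\psi)|_Q\|<1/2$, holds for large $n$ by norm continuity of $t\mapsto\psi\circ\theta_t$ on the predual.

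With all three hypotheses verified for one large $n$, Lemma~\ref{lem-uniform-convex-ultraproduct} yields a nonzero $w_n\in\theta_{t_n}(p)\widetilde M p$ with $\theta_{t_n}(x)w_n=w_nx$ for all $x\in Q$; in particular $Q\preceq_{\widetilde M}\theta_{t_n}(Q)$. Since $\theta_1=\theta_{2^nt_n}$, it remains to upgrade this to $\theta_1$ by iterating Popa's transposition trick $n$ times. The intertwining relation itself is formal: given $w$ with $\theta_s(x)w=wx$, the element $w':=\theta_s(w)\,w$ satisfies $\theta_{2s}(x)w'=w'x$, because $\theta_{2s}(x)\theta_s(w)=\theta_s(w)\theta_s(x)$ and then $\theta_{2s}(x)\theta_s(w)w=\theta_s(w)\theta_s(x)w=\theta_s(w)wx$. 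Applying this step $n$ times produces a nonzero intertwiner for $\theta_1$, which is condition (1); the passage from the abstract relation to the concrete form of condition (1) is standard via~\cite{BH16}.

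The main obstacle is the non-vanishing of the doubled intertwiner: the naive product $\theta_s(w)w$ and the $\beta$-twisted variant $\theta_s(w\beta(w)^*)$ could \emph{a priori} vanish, since the relevant support projections in $Q'\cap p\widetilde Mp$ may be orthogonal. Overcoming this is exactly where the malleable structure is indispensable: using $\beta^2=\id$, $\beta|_M=\id$ and $\theta_t\circ\beta=\beta\circ\theta_{-t}$, one expresses the support of the doubled intertwiner through a $\beta$-symmetric positive element and shows it is nonzero, as in~\cite{Po03,Po06a,Ma16}. I expect this transposition step, together with the justification of the compression property of relative bicentralizers used in the second step, to be the two technical hurdles; the spectral gap estimate and the matrix bookkeeping are then routine.
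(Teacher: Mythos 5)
Your proposal is correct and follows essentially the same route as the paper's proof: assume (2) fails, invoke Lemma \ref{lem-spectral-gap2} for the uniform spectral gap, compress by $1\otimes e_{1,1}+1\otimes e_{n,n}$ to land in the setting of Lemma \ref{lem-uniform-convex-ultraproduct} with $\alpha=2^{-n}$ and $\theta=\theta_{t_n}$, and then upgrade the resulting $\theta_{t_n}$-intertwiner to a $\theta_1$-intertwiner. The paper likewise delegates the last step to Step 3 of the proof of \cite[Theorem 4.2]{Ma16}, using exactly the hypothesis $P'\cap p\widetilde{M}p\subset pMp$ (which follows from the failure of (2)) that your $\beta$-transposition discussion identifies as the crux; the only stray remark is your appeal to \cite{BH16}, which is not needed since condition (1) already is the abstract intertwining relation.
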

\begin{proof}
\begin{comment}
Assume that (2) does not hold, and we will prove (1).  
Take an arbitrary projection \( r \in P \). By \cite[??]{HU15}, \( r \) is equivalent to a projection \( r' \in P_\psi \).  
If one can find \( w' \) satisfying the conclusion of (1) for \( r' \), then using a partial isometry \( v \in P \) such that \( r = vv^* \) and \( r' = v^*v \), the element \( w := \theta_1(v) w' v^* \) satisfies the conclusion of (1) for \( Qr \).  
Hence, we may assume without loss of generality that \( r \in P_\psi \).

For a projection \( r \in P_\psi \), define \( P_r := rPr \) and \( Q_r := Qr \).  
Then \( P_r = Q_r' \cap rMr \).  
By the assumption of the lemma and the fact that \( r \in P_\psi \), we have
\[
\mathrm{B}((Q_r)_\infty \subset r\widetilde{M}_\infty r, \psi_\infty) = (Q_r)_\infty' \cap r\widetilde{M}_\infty r.
\]
The assumption that (2) does not hold implies
\[
P_r' \cap r\widetilde{M}^\omega r \subset rM^\omega r.
\]
Thus, it suffices to find an element \( w \in \theta_1(r) \widetilde{M} r \) satisfying the conclusion of (1).  
This can be done by proving the case \( r = 1 \).
\end{comment}

Assume (2) does not hold. By Lemma~\ref{lem-spectral-gap2}, for sufficiently large $n$ we have
\[
\sup_{u \in \mathcal{U}((Q^\omega)_{\psi^\omega})} \| \theta_{t_n}(u) - u \|_{2,\psi}^2 < \frac{1}{4},\quad \| \psi \circ \theta_{t_n} - \psi \| < \frac{1}{2}.
\]
Fix such $n$ and define a projection $q_n := 1 \otimes e_{1,1} + 1 \otimes e_{n,n} \in \widetilde{M}_\infty$. Let $\iota_n$ be the composition of $\iota$ with compression by $q_n$:
\[
\iota_n\colon \widetilde{M} \to q_n(\widetilde{M} \overline{\otimes} \mathbb{B}(\ell^2))q_n \simeq \widetilde{M} \overline{\otimes} \mathbb{M}_2.
\]
This embedding fits the setting of Lemma~\ref{lem-uniform-convex-ultraproduct}. With the assumption of the relative bicentralizer, the hypothesis of Lemma~\ref{lem-uniform-convex-ultraproduct} is satisfied (with $\alpha = 1/2^n$ and $\theta = \theta_{t_n}$). There exists $w \in \theta_{t_n}(q) \widetilde{M} q$ such that $\theta_{t_n}(x) w = w x$ for all $x \in Q$. Since $P' \cap p \widetilde{M} p \subset p M p$, Step 3 of the proof of \cite[Theorem 4.2]{Ma16} implies that $\theta_{t_n}$ can be replaced with $\theta_1$. We get item (1) and this proves the lemma.
\end{proof}

\begin{proof}[{\bf Proof of Theorem~\ref{thm-malleable-nonfinite}}]
Let
\[
\mathcal{M} := M \overline{\otimes} N \subset \widetilde{M} \overline{\otimes} N =: \widetilde{\mathcal{M}},\quad
\mathcal{Q} := Q \overline{\otimes} N \subset p\mathcal{M}p,\quad
\mathcal{P} := P \overline{\otimes} \mathbb{C}1_N.
\]
Extend the malleable deformation as $\Theta_t := \theta_t \otimes \id_N$ acting trivially on $N$. Define the embedding
\[
\iota_\Theta \colon \widetilde{\mathcal{M}} \to \widetilde{\mathcal{M}}_\infty := \widetilde{\mathcal{M}} \overline{\otimes} \mathbb{B}(\ell^2)
\]
similarly as above. Since $\theta_t$ acts trivially on $N$, we may write
\[
\iota_\Theta = \id_N \otimes \iota_\theta \colon N \overline{\otimes} \widetilde{M} \to N \overline{\otimes} \widetilde{M}_\infty.
\]
Therefore, Lemma~\ref{lem-relative-bicentralizer-tensor} applies to the inclusion $\iota_\Theta(\mathcal{Q}) \subset q \widetilde{\mathcal{M}}_\infty q$, yielding
\[
\mathrm{B}(\mathcal{Q} \subset q \widetilde{\mathcal{M}}_\infty q, \psi) = \mathcal{Q}' \cap q \widetilde{\mathcal{M}}_\infty q
\]
for any faithful normal state $\psi$ on $\mathcal{Q}$. Hence, Lemma~\ref{lem-malleable-nonfinite} applies, and one of the following holds:
\begin{enumerate}
    \item there exists a nonzero $w \in \theta_1(p) \widetilde{\mathcal{M}} p$ such that
$\theta_1(x) w = w x$ for all  $x \in \mathcal{Q}$.

    \item $\mathcal{P}' \cap p \widetilde{\mathcal{M}}^\omega p \not\subset p \mathcal{M}^\omega p$.
\end{enumerate}
If (1) holds, then by the existence of $w$, we have $Q \overline{\otimes} N \preceq_{\widetilde{M} \overline{\otimes} N} \theta_1(Q) \overline{\otimes} N$. By \cite[Lemma 4.3]{Is17}, this implies $Q \preceq_{\widetilde{M}} \theta_1(Q)$. This finishes the prove.
\end{proof}

\subsection{Proof of Theorem \ref{thmD}}

We prove Theorem \ref{thmD}. We indeed prove the following more general theorem, which removes the finiteness assumption on subalgebras from \cite[Section 5]{Ma16} and \cite[Theorem 7.1]{IM19}. The ideas of the proof first appeared in \cite{Po03,Po06a,CI08}.

For inclusion of von Neumann algebras $A,B\subset M$ with $A$ possibly non-unital, recall that \emph{$A$ is amenable relative to $B$ in $M$} and write $A\lessdot_MB$ if there exists a condtional expectation from $1_A\langle M,B\rangle1_A$ onto $A$ that is normal on $1_AM1_A$.

\begin{thm}\label{thm-Bernoulli}
	Let $\Gamma \actson I$ be an action of a discrete group on a set with finite stabilizers, and let $(B_0,\varphi_0)$ be a von Neumann algebra with a faithful normal state. Consider the generalized Bernoulli action
	$$ \alpha\colon \Gamma \actson \bigotimes_{I} (B_0,\varphi_0) =: (B,\varphi_B). $$
	Let $\Gamma \actson N$ be an action on a $\sigma$-finite von Neumann algebra, and consider the crossed product $M = (N\overline{\otimes} B)\rtimes \Gamma$ with respect to the diagonal action. Then for every projection $p \in M$ and every von Neumann subalgebra $A \subset pMp$ with expectation, we have either
	\begin{enumerate}
		\item $A \preceq_M N \rtimes \Gamma$, or
		\item $A'\cap pMp \lessdot_M N \overline{\otimes} B$.
	\end{enumerate}
\end{thm}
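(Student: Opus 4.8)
The plan is to run Popa's malleable deformation and spectral gap argument, using Theorem~\ref{thm-malleable-nonfinite} as the abstract engine and Lemma~\ref{lem-spectral-gap} to convert its ultrapower alternative into relative amenability. First I would construct a malleable deformation of a suitable enlargement of $M$. Choose an inclusion $(B_0,\varphi_0)\subset(\widetilde B_0,\widetilde\varphi_0)$ with a $\widetilde\varphi_0$-preserving conditional expectation and a symmetric malleable deformation $(\theta^0_t,\beta^0)$ fixing $B_0$ pointwise; the free-product construction $\widetilde B_0:=B_0\ast L(\mathbb Z)$, with $\theta^0_t$ the rotation of the generating Haar unitary, works for an arbitrary faithful normal state $\varphi_0$. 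Tensoring over $I$ gives $\widetilde B:=\bigotimes_I(\widetilde B_0,\widetilde\varphi_0)\supset B$ with a product-state expectation, and since $\theta^0_t$ acts coordinatewise it commutes with coordinate permutations, hence with the generalized Bernoulli action. Letting $\theta_t,\beta$ act as the identity on $N$ and on $\Gamma$ produces a malleable deformation $(\theta_t,\beta)$ of $M\subset^{E_M}\widetilde M$, where $\widetilde M:=(N\ovt\widetilde B)\rtimes\Gamma$ and $E_M$ is induced by the expectation $\widetilde B\to B$.

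Next I would apply Theorem~\ref{thm-malleable-nonfinite} to $Q:=A\subset pMp$ with $P:=A'\cap pMp$, taking for the auxiliary type $\mathrm{III}_1$ factors the amenable Araki--Woods factor $R:=R_\infty\ovt R_\infty$ (I rename the theorem's pair to avoid clashing with the Bernoulli base $N$; amenability will be used below). This yields the dichotomy: either (1) $A\preceq_{\widetilde M}\theta_1(A)$, or (2) $(P\ovt\mathbb C1_R)'\cap p(\widetilde M\ovt R)^\omega p\not\subset p(M\ovt R)^\omega p$. I will match case (1) to conclusion (1) and case (2) to conclusion (2) of the theorem, so that exhausting the two alternatives completes the proof.

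In case (1) the task is to upgrade $A\preceq_{\widetilde M}\theta_1(A)$ to $A\preceq_M N\rtimes\Gamma$. Here I would combine the transversality of the s-malleable deformation with the fact that the generalized Bernoulli action is weakly mixing relative to $N\rtimes\Gamma$; this is exactly where the finite-stabilizer hypothesis enters, as it guarantees that the tail subspaces of $\widetilde B$ carry no nonzero $\Gamma$-finite vectors over $N\rtimes\Gamma$. By the now standard argument that intertwining into the deformed algebra forces intertwining into the undeformed core (cf.\ \cite{Po03,CI08,Ma16,IM19}), one obtains $A\preceq_M N\rtimes\Gamma$, which is conclusion (1). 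In case (2) I would feed the hypothesis into Lemma~\ref{lem-spectral-gap} for the inclusion $M\ovt R\subset\widetilde M\ovt R$ with subalgebra $P\ovt\mathbb C1_R$. The bimodule hypothesis requires the weak containment ${}_{M}\bigl(L^2(\widetilde M)\ominus L^2(M)\bigr)_{M}\prec{}_{M}L^2\langle M,N\ovt B\rangle_{M}$, after which (tensoring with the trivial $R$-bimodule $L^2(R)$) one has $\rho_{\mathcal H}((M\ovt R)^{\mathrm{op}})'\cong\langle M,N\ovt B\rangle\ovt R$; the lemma then returns a projection $q\in(P\ovt1)'\cap p(M\ovt R)p$ and a ucp map $\Psi\colon\langle M,N\ovt B\rangle\ovt R\to q(M\ovt R)q$ that is normal on $M\ovt R$ and satisfies $\Psi(x)=xq$ for $x\in P\ovt1$. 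This witnesses $(P\ovt1)q\lessdot_{M\ovt R}N\ovt B\ovt R$; using that $R$ is amenable I would trace out the $R$-factor to get $Pq\lessdot_M N\ovt B$, and a maximality argument over orthogonal projections in $P'\cap pMp$ upgrades the support to conclude $P=A'\cap pMp\lessdot_M N\ovt B$, which is conclusion (2).

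The main obstacle I expect is the bimodule weak containment ${}_{M}\bigl(L^2(\widetilde M)\ominus L^2(M)\bigr)_{M}\prec{}_{M}L^2\langle M,N\ovt B\rangle_{M}$. This is the genuinely structural step: one must decompose the deformation subspace into words supported on finite nonempty subsets $F\subset I$ containing at least one deformed letter, and show that the $M$-bimodule each such word generates is weakly contained in the coarse bimodule over $N\ovt B$—a conclusion that hinges on the finite-stabilizer condition, which makes the relevant induced $\Gamma$-representation a subrepresentation of a multiple of the regular one. A secondary difficulty is the non-tracial bookkeeping throughout (working with states rather than traces) and the relative-amenability descent from $M\ovt R$ to $M$ together with the support-upgrade; both are routine in spirit but require care in the type $\mathrm{III}$ setting.
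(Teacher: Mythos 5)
Your overall architecture matches the paper's: the \cite{CI08} free-product deformation $\widetilde B_0=B_0\ast L\Z$, the dichotomy of Theorem~\ref{thm-malleable-nonfinite} with the auxiliary amenable type $\rm III_1$ factor $R_\infty\simeq R_\infty\ovt R_\infty$, and Lemma~\ref{lem-spectral-gap} applied to the weak containment of ${}_ML^2(\widetilde M)\ominus L^2(M){}_M$ to extract relative amenability in the second branch. But there is a genuine gap in how you dispose of the first branch. The alternative $A\preceq_{\widetilde M}\theta_1(A)$ does \emph{not} yield $A\preceq_M N\rtimes\Gamma$; the correct conclusion of the ``standard'' locating argument (Lemma~\ref{lem-uniform-Bernoulli} in the paper) is the dichotomy $A\preceq_M N\rtimes\Gamma$ \emph{or} $A\preceq_M N\ovt B_0^F$ for some finite $F\subset I$. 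The second possibility is not vacuous (think of $A$ sitting inside finitely many tensor legs), is not excluded by relative weak mixing over $N\rtimes\Gamma$, and does not give conclusion (1). The paper routes it to conclusion (2): from $A\preceq_M N\ovt B_0^F$ and $A\not\preceq_M N\rtimes\Gamma$ one passes to the normalizer via \cite[Lemma 7.2]{IM19}, gets $\mathcal N_{qMq}(Aq)''\preceq_M N\ovt B$, and hence a corner of $A'\cap pMp$ amenable relative to $N\ovt B$; this is why the proof is set up from the start as a contradiction after cutting by the maximal central projection $z$ with $Pz\lessdot_M N\ovt B$. Your proposal is missing this entire case.

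A second, related underestimate: in the type III setting the step ``intertwining into the deformed algebra forces intertwining into the undeformed core'' is exactly where the paper's new technology is needed, not a citation to \cite{Po03,CI08,Ma16}. The classical argument runs $\|\cdot\|_2$-estimates for a net of unitaries over a dense subalgebra and extends them to all of $M$ using the trace; when $A$ is of type III this extension fails (see the Remark after Theorem~\ref{thm-intertwining1} showing condition $(2')$ is strictly weaker than $(2)$). The paper replaces the net by a ucp map $\Psi\in\D(A\subset p\B(L^2(\widetilde M))p)$ that is \emph{normal} on $\widetilde M$ (Corollary~\ref{cor-intertwining1} and Remark~\ref{rem-intertwining1}), and Lemma~\ref{lem-intertwining1}(2) is what lets one verify $\Psi(xe_{\theta_1(M)}x^*)=0$ only on the word-decomposition generating sets $X,Y$ and still conclude non-intertwining. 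Without this, your Case (1) argument does not close even in its $N\rtimes\Gamma$ branch. Finally, a minor point: the weak containment actually available from \cite{CI08,Ma16} is ${}_{\mathcal M}L^2(\widetilde{\mathcal M})\ominus L^2(\mathcal M)\prec\bigoplus_{x}L^2(\mathcal M)\otimes_{\mathcal N\ovt B_0^{\Delta_x}}L^2(\mathcal M)$ over \emph{finite} subsets $\Delta_x$, not the coarse bimodule over $N\ovt B$ itself; this suffices because the commutant of the right action on that direct sum still contains the diagonal copy of $\langle\mathcal M,\mathcal N\ovt B\rangle$, which is all Lemma~\ref{lem-spectral-gap} requires.
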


\begin{proof}
	As in \cite{Ma16,IM19}, we consider the malleable deformation $(\theta_t, \beta)$ and the inclusion $M \subset^{E_M} \widetilde{M}$ defined in \cite{CI08}. Let $R_\infty$ be an amenable type $\mathrm{III}_1$ factor, and extend the malleable deformation to $R_\infty \overline{\otimes} M$ so that it acts trivially on $R_\infty$.

	Next, take the maximal projection $z \in \mathcal{Z}(P)$ such that $Pz \lessdot_M N \overline{\otimes} B$ (cf.\ \cite[Lemma 3.4]{HI17}). Suppose that (1) and (2) do not hold, and we deduce a contradiction. We have that $A \not\preceq_{M} N \rtimes_{\alpha} \Gamma$ and $z \neq p$. Replacing $p - z$ by $p$, we may assume that $P$ has no direct summand that is amenable relative to $N \overline{\otimes} B$.

	Noting the isomorphism $R_\infty \simeq R_\infty \overline{\otimes} R_\infty$, we can apply Theorem~\ref{thm-malleable-nonfinite}. Set $\mathcal{P} := \C 1_{R_\infty} \overline{\otimes} (A'\cap pMp)$, then one of the following holds:
	\begin{enumerate}
		\item[$\mathrm{(a)}$] $A \preceq_{\widetilde{M}} \theta_1(A)$;
		\item[$\mathrm{(b)}$] $\mathcal{P}' \cap p(R_\infty \overline{\otimes} \widetilde{M})^\omega p \not\subset p(R_\infty \overline{\otimes} M)^\omega p$.
	\end{enumerate}

	\bigskip
	\noindent
	\textbf{Case (a).} Using Lemma~\ref{lem-uniform-Bernoulli} below, there exists a finite subset $F$ such that $A \preceq_M N \overline{\otimes} B^F$. Since $A \not\preceq_M N$ by assumption, it follows from \cite[Lemma 7.2]{IM19} that there exists a projection $q \in A' \cap pMp$ such that $\mathcal{N}_{qMq}(Aq)'' \preceq_M N \overline{\otimes} B$. By \cite[Proposition 2.2]{IM19}, there exists a central projection $w \in \mathcal{Z}(\mathcal{N}_{qMq}(Aq)'')$ such that $\mathcal{N}_{qMq}(Aq)'' w \lessdot_M N \overline{\otimes} B$. This implies $qPqw \lessdot_M N \overline{\otimes} B$. By \cite[Lemma 3.4(iv)]{HI17}, this contradicts the assumption that $P$ has no direct summand that is amenable relative to $N \overline{\otimes} B$.

	\bigskip
	\noindent
	\textbf{Case (b).} Let $\mathcal{N} := R_\infty \overline{\otimes} N$, $\mathcal{M} := R_\infty \overline{\otimes} M \subset \widetilde{\mathcal{M}} := R_\infty \overline{\otimes} \widetilde{M}$. As proved in \cite{CI08} and \cite[Theorem 5.2]{Ma16}, and as explained in the proof of \cite[Theorem 7.1]{IM19} the following weak containment holds:
	$$ L^2(\widetilde{\mathcal{M}}) \ominus L^2(\mathcal{M}) \prec \bigoplus_{x \in J} L^2(\mathcal{M}) \otimes_{\mathcal{N} \overline{\otimes} B_0^{\Delta_x}} L^2(\mathcal{M}) $$
as $\mathcal{M}$-bimodules. On the right-hand side, the bimodule has a left diagonal action of $\langle \mathcal{M}, \mathcal{N} \overline{\otimes} B \rangle$ that commutes with the right action. Combining this with (b), we can apply Lemma~\ref{lem-spectral-gap}, obtaining a projection $q \in \mathcal{P}' \cap p\mathcal{M}p$ and a ucp map
	$$ \Psi\colon \langle \mathcal{M}, \mathcal{N} \overline{\otimes} B \rangle \to q \mathcal{M} q $$
	such that $\Psi|_{\mathcal{M}}$ is normal, and for any $x \in \mathcal{P}$, we have $\Psi(x) = xq$. Since $\mathcal{P}q \subset q\mathcal{M}q$ is with expectation, we may replace the image with $\mathcal{P}q$. Let $z \in \mathcal{Z}(\mathcal{P}) = \C 1_{R_\infty} \overline{\otimes} \mathcal{Z}(A' \cap pMp)$ be the central support projection of $q$ in $\mathcal{P}'$. Then the map $xq \mapsto xz$ gives an isomorphism from $\mathcal{P}q$ to $\mathcal{P}z$, and hence we obtain a map from $\langle \mathcal{M}, \mathcal{N} \overline{\otimes} B \rangle$ to $\mathcal{P}z$. Finally, noting $\langle \mathcal{M}, \mathcal{N} \overline{\otimes} B \rangle = R_\infty \overline{\otimes} \langle M, N \overline{\otimes} B \rangle$, we restrict the domain to $\langle M, N \overline{\otimes} B \rangle$, and get that $Pz$ is amenable relative to $N\ovt B$ in $M$. This is a contradiction.
\end{proof}

\begin{cor}\label{cor-Bernoulli}
	Keep the setting as in Theorem \ref{thm-Bernoulli} and assume that $B,N$ are amenable. If $N\rtimes \Gamma$ is solid relative to $N$, then $M = (N\ovt B)\rtimes \Gamma$ is solid relative to $N$. 
\end{cor}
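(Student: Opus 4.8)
The plan is to combine the dichotomy of Theorem~\ref{thm-Bernoulli} with the hypothesis that $Q:=N\rtimes\Gamma$ is solid relative to $N$, exploiting that the ``fiber'' $R:=N\ovt B$ is amenable. Write $E_Q\colon M\to Q$ for the expectation induced by $\varphi_B$ and $E_R\colon M\to R$ for the expectation killing $\lambda_g$ for $g\neq e$; both $Q$ and $R$ are with expectation in $M$. Since $N$ and $B$ are amenable, $R=N\ovt B$ is amenable; moreover $R$ is amenable relative to $N$, and this stays true after crossing by $\Gamma$, so $M=(N\ovt B)\rtimes\Gamma$ is amenable relative to $Q=N\rtimes\Gamma$, i.e.\ $M\lessdot_M Q$. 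Now fix a projection $p\in M$ and a subalgebra $A\subset pMp$ with expectation such that $A\not\preceq_M N$; I must show $A'\cap pMp$ is amenable. By Theorem~\ref{thm-Bernoulli} either $\mathrm{(1)}$ $A\preceq_M Q$, or $\mathrm{(2)}$ $A'\cap pMp\lessdot_M R$. In case~$\mathrm{(2)}$, relative amenability to the amenable algebra $R$ forces $A'\cap pMp$ to be amenable, and we are done.

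The whole difficulty lies in case~$\mathrm{(1)}$. First I would pass to a corner where the intertwining becomes spatial: using a maximal intertwiner for $A\preceq_M Q$, and Lemma~\ref{lem-intertwining-finite-dimensional1} to keep $H$ finite-dimensional (write $\widehat\bullet:=\bullet\ovt\B(H)$), I obtain a nonzero projection $q\in A'\cap pMp$, a partial isometry $w$ with $ww^*=q\otimes e_{1,1}$ and $f':=w^*w\le f\in\widehat Q$, and a unital normal $*$-homomorphism giving $D:=f'\pi(A)\subset f'\widehat Q f'$ with $Aq\cong D$. The conjugation $x\mapsto w^*(x\otimes e_{1,1})w$ is then a normal $*$-isomorphism
\[
q(A'\cap pMp)q\;\xrightarrow{\ \cong\ }\;D'\cap f'\widehat M f'.
\]
Since $A\not\preceq_M N$, no corner of $A$ intertwines into $N$, so $Aq\not\preceq_M N$, and by descent of intertwining for subalgebras of $\widehat Q$ this transports to $D\not\preceq_{\widehat Q}\widehat N$, where $\widehat N:=N\ovt\B(H)$.

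Next I would apply solidity. Relative solidity is stable under tensoring with $\B(H)$, so $\widehat Q$ is solid relative to $\widehat N$; as $D\subset f'\widehat Q f'$ is with expectation and $D\not\preceq_{\widehat Q}\widehat N$, solidity yields that the relative commutant computed \emph{inside $\widehat Q$}, namely $S_Q:=D'\cap f'\widehat Q f'$, is amenable. What I actually need, however, is the amenability of $S_M:=D'\cap f'\widehat M f'$, the relative commutant computed inside the larger algebra $\widehat M$, since it is $S_M$ that is isomorphic to a corner of $A'\cap pMp$. Here I would use the bridge supplied by the amenability of the Bernoulli fiber: $M\lessdot_M Q$ amplifies to $\widehat M\lessdot_{\widehat M}\widehat Q$, and relative amenability is hereditary under subalgebras with expectation, so $S_M\lessdot_{\widehat M}\widehat Q$; moreover $E_{\widehat Q}$ restricts to a normal conditional expectation $S_M\to S_Q$. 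Because $S_M$ commutes with $D\subset\widehat Q$, the $\widehat Q$-module structure seen from $S_M$ should factor through $D'\cap\widehat Q$, upgrading $S_M\lessdot_{\widehat M}\widehat Q$ to $S_M\lessdot_{\widehat M}S_Q$; combined with the amenability of $S_Q$ and transitivity of relative amenability (relative amenable to amenable is amenable), this gives that $S_M$ is amenable. Hence $q(A'\cap pMp)q$ is amenable, and since $q$ may be chosen with full central support in $A'\cap pMp$ (or by exhausting $A'\cap pMp$ by such corners), $A'\cap pMp$ itself is amenable, completing case~$\mathrm{(1)}$ and the proof.

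The step I expect to be the main obstacle is precisely this bridge: deducing amenability of the relative commutant taken in $\widehat M$ from that taken in the subalgebra $\widehat Q$. The refinement $S_M\lessdot_{\widehat M}\widehat Q\Rightarrow S_M\lessdot_{\widehat M}(D'\cap\widehat Q)$, which rests on the fact that $S_M$ and $D$ commute, is the technical heart, and it is exactly where the amenability of the Bernoulli base $B$ (equivalently $M\lessdot_M Q$) is essential: solidity of $Q$ only controls the $\widehat Q$-relative commutant, and without this relative-amenability transitivity across the inclusion $\widehat Q\subset\widehat M$ one cannot transfer the conclusion to $\widehat M$.
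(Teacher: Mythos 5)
Your reduction to the case $A\preceq_M N\rtimes\Gamma$ (disposing of the other alternative of Theorem~\ref{thm-Bernoulli} by amenability of $N\ovt B$) and the transport of $A$ and its relative commutant into an amplification of $Q:=N\rtimes\Gamma$ via an intertwiner both match the paper. The genuine gap is exactly the step you flag as the ``technical heart'': the passage from amenability of $S_Q=D'\cap f'\widehat Qf'$ to amenability of $S_M=D'\cap f'\widehat Mf'$. The implication $S_M\lessdot_{\widehat M}\widehat Q\Rightarrow S_M\lessdot_{\widehat M}S_Q$ is asserted with only the heuristic that the $\widehat Q$-module structure seen from $S_M$ ``should factor through'' $D'\cap\widehat Q$; this is not a theorem, and the natural obstruction is that $\langle\widehat M,S_Q\rangle$ \emph{contains} $\langle\widehat M,\widehat Q\rangle$, so the $S_M$-central state witnessing $S_M\lessdot_{\widehat M}\widehat Q$ would have to be extended to a strictly larger algebra, for which no mechanism is offered. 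The underlying principle ``$M\lessdot_M Q$, $D\subset Q$, $D'\cap Q$ amenable $\Rightarrow$ $D'\cap M$ amenable'' is false without further hypotheses: for $M=\B(\ell^2\mathbb F_2)$ and $D=Q=\lambda(\mathbb F_2)''$ one has $M\lessdot_M Q$ (since $M$ is injective) and $D'\cap Q=\C$, yet $D'\cap M=\rho(\mathbb F_2)''$ is non-amenable. This example violates your ``with expectation'' side conditions, but it shows that co-amenability of $\widehat Q\subset\widehat M$ plus commutation with $D$ cannot by themselves carry the argument; some structural input about the extension is indispensable, and none is supplied.

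The paper closes this gap by a different mechanism, which is where the Bernoulli structure (rather than the amenability of $B$) actually enters. Following \cite[Lemma 7.3(2)]{IM19}, after normalizing the intertwiner $v$ for $A\preceq_M N\rtimes\Gamma$ so that $E_{N\rtimes\Gamma}(v^*v)$ has full support $f$, the mixing property of the inclusion $N\rtimes\Gamma\subset(N\ovt B)\rtimes\Gamma$ together with $A\not\preceq_M N$ gives $v^*v\in\theta(eAe)'\cap fMf\subset f(N\rtimes\Gamma)f$, and more generally $\Ad(v^*)$ carries $(eAeq)\vee((eAeq)'\cap qMq)$ into $f(N\rtimes\Gamma)f$. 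In other words, the relative commutant computed in $M$ already lands inside $N\rtimes\Gamma$ after conjugation --- in your notation $S_M=S_Q$ --- so relative solidity of $N\subset N\rtimes\Gamma$ applies directly and no bridge across $\widehat Q\subset\widehat M$ is needed. (The paper also replaces your final ``exhaust by corners'' remark by cutting, at the outset, by the complement of the maximal central projection $z$ with $(A'\cap pMp)z$ amenable, so that a single amenable corner $q(A'\cap pMp)q$ yields the contradiction.) As written, your argument does not establish the corollary; the relative-amenability bridge must be replaced by the commutant/normalizer control specific to Bernoulli extensions.
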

\begin{proof}
	Let $A\subset pMp$ be von Neumann subalgebra with expectation where $p\in M$ is a projection. Assume that $A\not\preceq_MN$ and that $A'\cap pMp$ is not amenable. We will deduce a contradiction. Let $z\in \mathcal Z(A'\cap pMp)$ be the maximum projection such that $(A'\cap pMp)z$ is amenable. Since $z\neq p$, up to replacing $p-z$ by $p$, we may assume that $A'\cap pMp$ has no amenable direct summand.

By Theorem \ref{thm-Bernoulli}, we have $A\preceq_M N\rtimes \Gamma$. 
By \cite[Lemma 2.6(1)]{Is19}, with $B:=N\rtimes \Gamma$, take projections $e\in A$, $f\in B$, a partial isometry $v\in eMf$, and a unital normal $\ast$-homomorphism $\theta\colon eAe\to fBf$ such that $eAe\subset fBf$ is with expectation and that $v\theta(a) = av$ for all $a \in eAe$. We may assume that the support projection of $E_B(v^*v)$ is $f$. We then follow the proof of \cite[Lemma 7.3(2)]{IM19}, and get $v^*v\in \theta(eAe)'\cap fMf \subset fBf$. Up to replacing $f$ with $v^*v$, putting $q:=vv^*\in (eAe)'\cap eMe$, we get 
	$$ v^* \mathcal N_{qMq}(eAeq)'' v \subset fMf. $$
Then $\Ad(v^*)$ restricts to a unital embedding 
	$$\widetilde{\theta}\colon (eAeq)\vee ((eAeq)'\cap qMq) \to f(N\rtimes \Gamma)f.$$
Observe that  $\widetilde{\theta}(eAeq)\not\preceq_{N\rtimes \Gamma} N$, because $v\in M$ and $A\not\preceq_MN$. By the relative solidity of $N \subset N\rtimes \Gamma$, we have that $\widetilde{\theta}(eAeq)'\cap f(N\rtimes \Gamma)f$ is amenable. This implies that $\widetilde{\theta}((eAeq)'\cap qMq)$ is amenable. Since $(eAeq)'\cap qMq = q( A'\cap pMp )q$, this further implies that $A'\cap pMp$ has no amenable direct summand. This is a contradiction.
\end{proof}

The following lemma was used in the proof of Theorem \ref{thm-Bernoulli}. The proof relies on our characterization of Popa's intertwining condition.

\begin{lem}\label{lem-uniform-Bernoulli}
In the setting of the proof of Theorem~\ref{thm-Bernoulli}, suppose that $A \preceq_{\widetilde{M}} \theta_1(M)$. Then one of the following conditions holds:
\begin{itemize}
  \item $A \preceq_M N \rtimes \Gamma$; or
  \item There exists a finite subset $F \subset I$ such that $A \preceq_M N \overline{\otimes} B_0^F$.
\end{itemize}
\end{lem}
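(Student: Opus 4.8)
The plan is to argue by contraposition: assuming that $A\not\preceq_M N\rtimes\Gamma$ and that $A\not\preceq_M N\ovt B_0^F$ for every finite subset $F\subset I$, I will produce a normal ucp map witnessing $A\not\preceq_{\widetilde M}\theta_1(M)$, so that the hypothesis $A\preceq_{\widetilde M}\theta_1(M)$ forces one of the two stated conclusions. This is exactly the setting in which the normal ucp formulation of Popa's criterion (Theorem~\ref{thmC}(3), i.e.\ Theorem~\ref{thm-intertwining1}(4)) is more effective than the net-of-unitaries formulation, and the proof is meant to illustrate that advantage.

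First I would apply Corollary~\ref{cor-intertwining1} to the family of target subalgebras $\{N\rtimes\Gamma\}\cup\{N\ovt B_0^F:\ F\subset I\ \text{finite}\}$, all of which $A$ fails to embed into by assumption. (If $A$ has a semifinite properly infinite direct summand, I first remove it by the standard amplification/reduction, as in Proposition~\ref{prop-intertwining1} and Lemma~\ref{lem-intertwining-action2}.) This yields a single $\Psi\in\D(A\subset 1_A\B(L^2(M))1_A)$ such that $\Psi|_{1_AM1_A}$ is a faithful normal conditional expectation onto $A'\cap 1_AM1_A$ and $\Psi(\mathfrak m_{\widetilde{B}_i}J1_{B_i}J)=0$ for every building block $B_i$ in the family. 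Since $M\subset\widetilde M$ is with expectation $E_M$, I would then invoke Remark~\ref{rem-intertwining1} to lift $\Psi$ to $\widetilde\Psi\in\D(A\subset 1_A\B(L^2(\widetilde M))1_A)$ that is normal on $1_A\widetilde M1_A$ and restricts to $\Psi$ on $1_A\B(L^2(M))1_A$ via $e_M\,\cdot\,e_M$.

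The heart of the argument, and the main obstacle, is to transfer the annihilation of the building-block Jones projections to the deformed Jones projection $e_{\theta_1(M)}=\theta_1(e_M)$, i.e.\ to show that $\widetilde\Psi$ kills $\theta_1(x)\,e_{\theta_1(M)}\,\theta_1(y)^*$ for $x,y$ ranging over a norm-dense set of Bernoulli words. Here I would use two structural inputs from the Chifan--Ioana deformation: the transversality/symmetry relations $\theta_t\circ\beta=\beta\circ\theta_{-t}$, $\beta^2=\id$, $\beta|_M=\id_M$, which let me rewrite $\theta_{\pm1}$ of unitaries of $A\subset M$ in terms of $\theta_1$ and the symmetry; and the relative-mixing weak containment of $M$-bimodules ${}_ML^2(\widetilde M)\ominus L^2(M){}_M\prec\bigoplus_x {}_ML^2(M)\otimes_{N\ovt B_0^{\Delta_x}}L^2(M){}_M$ over finite site-sets $\Delta_x$, together with the diagonal $N\rtimes\Gamma$-part. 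The point is that every matrix coefficient $\langle\widetilde\Psi(\theta_1(x)e_{\theta_1(M)}\theta_1(y)^*)\Lambda_\varphi(\eta),\Lambda_\varphi(\eta)\rangle$ decomposes, through this weak containment, into pieces indexed by $N\rtimes\Gamma$ and by the finite-site algebras $N\ovt B_0^{\Delta_x}$; each such piece is dominated by the corresponding $\|E_{B_i}(\cdots)\|_\varphi$, and is therefore annihilated by $\Psi$. Because $\widetilde\Psi|_{\widetilde M}$ is normal, I can pass the relevant $\sigma$-weak limits through the computation and reduce checking the vanishing to the dense set of words, after which Lemma~\ref{lem-intertwining1}(2) upgrades ``$\widetilde\Psi$ vanishes on a dense set of $x\,e_{\theta_1(M)}\,y^*$'' to the genuine conclusion $A\not\preceq_{\widetilde M}\theta_1(M)$.

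I expect the delicate point to be the bookkeeping of this last decomposition: making the splitting of the $e_{\theta_1(M)}$-coefficients along $N\rtimes\Gamma$ and the finite-site blocks precise, and verifying that the mixing of the generalized Bernoulli action (guaranteed by the finite stabilizers of $\Gamma\actson I$) really confines the nonvanishing coefficients to these blocks. Normality of $\widetilde\Psi$ on $\widetilde M$ is essential throughout, since it is what legitimizes the passage to the norm-dense set of Bernoulli words and the application of Lemma~\ref{lem-intertwining1}; this is precisely the advantage of the ucp-map criterion over a bare net of unitaries, which in the type $\rm III$ setting need not define an element of the ultraproduct.
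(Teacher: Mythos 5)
Your overall architecture is the same as the paper's: argue by contraposition, use Corollary~\ref{cor-intertwining1} (via Remark~\ref{rem-intertwining1}) to produce a single $\Psi$ that is normal on $\widetilde M$ and kills all the Jones projections $e_{N\rtimes\Gamma}$ and $e_F$, and then use Lemma~\ref{lem-intertwining1}(2) on a norm/$L^2$-dense set of Bernoulli words to upgrade vanishing on generators to $A\not\preceq_{\widetilde M}\theta_1(M)$. That part is right, and your remarks about why normality of $\Psi|_{\widetilde M}$ is the whole point are accurate.

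The gap is in what you call the heart of the argument. You propose to deduce the vanishing of the coefficients $\langle\Psi(xe_{\theta_1(M)}x^*)\Lambda_\varphi(y),\Lambda_\varphi(y)\rangle$ from the bimodule weak containment ${}_ML^2(\widetilde M)\ominus L^2(M)_M\prec\bigoplus_x L^2(M)\otimes_{N\ovt B_0^{\Delta_x}}L^2(M)$. That weak containment is the input for the spectral-gap half of Theorem~\ref{thm-Bernoulli} (Case~(b), via Lemma~\ref{lem-spectral-gap}); it concerns a different bimodule from the one relevant here, namely $L^2\langle\widetilde M,\theta_1(M)\rangle$, and weak containment of bimodules does not give a pointwise domination of individual matrix coefficients of $\Psi(xe_{\theta_1(M)}x^*)$ by coefficients of $\Psi(\cdot\,e_{N\rtimes\Gamma}\,\cdot)$ or $\Psi(\cdot\,e_F\,\cdot)$. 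What is actually needed, and what the paper supplies, is an explicit computation of $E_{\theta_1(M)}(x^*uy)$ on reduced Bernoulli words, split into two cases: (i) when all letters of $x,y$ lie in $B_0\theta_1(B_0)$, one factors $x=x_1\theta_1(x_2)$, $y=y_1\theta_1(y_2)$ and uses free independence of $M$ and $\theta_1(M)$ over $N\rtimes\Gamma$ to replace $E_{\theta_1(M)}$ by $E_{N\rtimes\Gamma}$, reducing to $\Psi(x_1e_{N\rtimes\Gamma}x_1^*)=0$; (ii) otherwise, the formula of \cite[Lemma 5.1]{Ma16} expresses $E_{\theta_1(M)}(x^*uy)$ as a finite sum (finite by the finite-stabilizer hypothesis) over $\gamma$ with $\gamma^{-1}K_x\cap K_y\neq\emptyset$, each term dominated by $\|E_{\gamma^{-1}K_x\cap K_y}(\lambda_\gamma^*u)\|_\varphi^2$, hence killed by $\Psi(\lambda_\gamma e_{\gamma^{-1}K_x\cup K_y}\lambda_\gamma^*)=0$. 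Without this case analysis and these explicit estimates your argument does not close; also, the transversality relations for $\beta$ that you invoke play no role in this lemma.
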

\begin{proof}
By \cite[Remark 4.2(4)]{HI15}, we may assume that $A$ is a direct sum of a finite von Neumann algebra and a type $\mathrm{III}$ algebra. 
Let $\varphi_N$ be a faithful normal state on $N$ and define a state $\varphi := \varphi_N \otimes \varphi_B$ on $N \overline{\otimes} B$. Extend it canonically to $M$, and extend it on $\widetilde{M}$ by $E_M$. Let $E_F$ and $E_{N \rtimes \Gamma}$ be the $\varphi$-preserving conditional expectations onto $N \overline{\otimes} B_0^F$ and $N \rtimes \Gamma$, respectively. Let $e_F, e_{N \rtimes \Gamma}, e_M \in \mathbb{B}(L^2(\widetilde{M}))$ be the corresponding Jones projections. Note that $e_Me_F=e_F$ and $e_Me_{N \rtimes \Gamma}=e_{N \rtimes \Gamma} $.

Assume that both conditions do not hold. Then by Remark \ref{rem-intertwining1}, there exists $\Psi \in \mathrm{DSG}(A \subset p \mathbb{B}(L^2(\widetilde{M})) p)$ such that $\Psi|_{\widetilde{M}}$ is normal and
\[
\Psi(a e_{N \rtimes \Gamma} b^*) = 0 = \Psi(a e_F b^*)
\]
for all $a, b \in M$ and all finite subsets $F \subset I$. It suffices to show that
\[
\Psi(a e_{\theta_1(M)} b^*) = 0, \quad \text{for all } a, b \in \widetilde{M}.
\]
We will use Lemma~\ref{lem-intertwining1}. For this, we define the following sets:
\begin{align*}
  X_0 &:= \left\{ x = \bigotimes_{s \in K_x} x_s \in \widetilde{B} \mid x_s \in B_0 \ast L\mathbb{Z},\ \text{each } x_s \text{ is a reduced word} \right\}, \\\\
  X &:= \left\{ x_0z \mid x_0 \in X_0,\ z\in N\rtimes\Gamma \right\}, \\\\
  Y_0 &:= \left\{ y = \bigotimes_{s \in K_y} y_s \in X_0 \mid \text{each } y_s \text{ is a reduced word in } (B_0)_{\mathrm{a}} \ast L\mathbb{Z} \right\}, \\\\
  Y &:= \left\{ y_0 w \mid  y_0 \in Y_0,\ w\in (N\rtimes \Gamma)_a \right\},
\end{align*}
where $(B_0)_{\mathrm{a}}$ and $(N\rtimes \Gamma)_{\mathrm{a}}$ denote the sets of $\varphi$-analytic elements, and where $K_x, K_y \subset I$ are finite subsets depending on $x,y$ respectively.  
The span of $X$ is dense in $\widetilde{M}$, and the span of $Y$ is $L^2$-dense in $L^2(\widetilde{M}, \varphi)$. Therefore, by Lemma~\ref{lem-intertwining1}(2), it suffices to show that for all $x \in X$ and $y \in Y$,
\[
(*) \quad \langle \Psi(x e_{\theta_1(M)} x^*) \Lambda_\varphi(y), \Lambda_\varphi(y) \rangle = 0.
\]
Fix $x \in X$ and $y \in Y$, and write $x = x_0z$, $y = y_0w$. Let $(f_\mu)$ be a net converging pointwise $\sigma$-weakly to $\Psi$, where each $f_\mu$ has the form
\[
f_\mu = \sum_{i=1}^{n_\mu} t_{\mu,i} \operatorname{Ad}(u_{\mu,i}^*), \quad \text{with } u_{\mu,i} \in \mathcal{U}(A),\ t_{\mu,i} > 0,\ \sum_{i=1}^{n_\mu} t_{\mu,i} = 1.
\]
By Lemma\ref{lem-intertwining1}(1), we compute that
\begin{align*}
	 \langle f_\mu( x e_{\theta_1(M)} x^* )\Lambda_\varphi(y),\Lambda_\varphi(y) \rangle
	&= \sum_{i=1}^{n_\mu} t_{\mu,i} \| E_{\theta_1(M)} (z^* x_0^*u_{\mu,i} y_0w  )\|_{\varphi}^2\\
	&\leq \| z^* \|_\infty^2 \| E_{\theta_1(M)} (x_0^*u_{\mu,i} y_0)  \|_{\varphi}^2\|\sigma^\varphi_{\ri/2}(w) \|_\infty^2\\
	&= \| z^* \|_\infty^2 \|\sigma^\varphi_{\ri/2}(w) \|_\infty^2  \langle f_\mu( x_0 e_{\theta_1(M)} x_0^* )\Lambda_\varphi(y_0),\Lambda_\varphi(y_0) \rangle.
\end{align*}
Since $f_\lambda$ converges to $\Psi$, we may assume that $z = w = 1$ in order to prove $(*)$. Thus, we may write $x = \bigotimes_{s \in K_x} x_s \in X_0$ and $y = \bigotimes_{s \in K_y} y_s \in Y_0$, where each $x_s, y_s$ is a reduced word in the free product $B_0 \ast L\mathbb{Z}$. We now divide the proof into two cases.

\bigskip
\noindent
{\bf Case 1: All reduced words $x_s, y_s$ belong to $B_0 \theta_1(B_0)$.}

In this case, each reduced word can be written in the form $x_s = x_{1,s} \theta_1(x_{2,s})$ and $y_s = y_{1,s} \theta_1(y_{2,s})$. Hence, we can decompose
\[
x = \bigotimes_{s \in K_x} x_{1,s} \theta_1(x_{2,s}) = \left( \bigotimes_{s \in K_x} x_{1,s} \right) \left( \bigotimes_{s \in K_x} \theta_1(x_{2,s}) \right) =: x_1 \theta_1(x_2).
\]
Similarly, write $y = y_1 \theta_1(y_2)$. Then, by Lemma~\ref{lem-intertwining1}(1), for each $f_\mu$, we have
\begin{align*}
  \langle f_\mu( x e_{\theta_1(M)} x^* )\Lambda_\varphi(y),\Lambda_\varphi(y) \rangle
  &= \sum_{i=1}^{n_\mu} t_{\mu,i} \left\| E_{\theta_1(M)}( \theta_1(x_2^*) x_1^* u_{\mu,i} y_1 \theta_1(y_2) ) \right\|_{\varphi}^2 \\
  &\leq \| \theta_1(x_2^*) \|_\infty^2 \| \sigma^\varphi_{\ri/2}(\theta_1(y_2)) \|_\infty^2 \sum_{i=1}^{n_\mu} t_{\mu,i} \left\| E_{\theta_1(M)}( x_1^* u_{\mu,i} y_1 ) \right\|_{\varphi}^2.
\end{align*}
Since $x_1^* u_{\mu,i} y_1 \in M$, and since $M$ and $\theta_1(M)$ are freely independent over $N \rtimes \Gamma$, we may replace $E_{\theta_1(M)}$ with $E_{N \rtimes \Gamma}$. Taking the limit $f_\mu \to \Psi$, we obtain
\[
\langle \Psi( x e_{\theta_1(M)} x^* )\Lambda_\varphi(y),\Lambda_\varphi(y) \rangle
\leq \| \theta_1(x_2^*) \|_\infty^2 \| \sigma^\varphi_{\ri/2}(\theta_1(y_2)) \|_\infty^2 \langle \Psi( x_1 e_{N \rtimes \Gamma} x_1^* )\Lambda_\varphi(y_1),\Lambda_\varphi(y_1) \rangle = 0.
\]
Therefore, $(*)$ holds in this case.

\bigskip
\noindent
{\bf Case 2: Some $x_s$ or $y_s$ does not belong to $\theta_1(B_0) B_0$.}

In this case, by consulting the proof of \cite[Lemma 5.1]{Ma16}, we know that for any $a \in M$, the following holds:
\begin{align*}
	E_{\theta_1(M)}(x^* ay)
	&= \sum_{\gamma \in \Gamma,\  \gamma^{-1} K_x\cap K_y\neq \emptyset}\lambda_{\gamma}^{\Gamma} E_{\theta_1(N\ovt B)}(\alpha_{\gamma^{-1}}(x^*) E_{ \gamma^{-1} K_x\cap K_y}( \lambda_\gamma^* a) y) .
\end{align*}
By the assumption of finite stabilizers, the right-hand side is a finite sum. We have
\begin{align*}
	\| E_{\theta_1(M)} (x^*u_{\mu,i} y)\|_{\varphi}^2
	&= \sum_{\gamma \in \Gamma,\ \gamma^{-1}K_x \cap K_y\neq \emptyset} \left\| E_{\theta_1(N\ovt B)}(\alpha_{\gamma^{-1}}(x^*) E_{ \gamma^{-1} K_x\cap K_y}( \lambda_\gamma^* u_{\mu,i}) y)\right\|_\varphi^2 \\
	&\leq \|x\|_\infty^2 \|\sigma^\varphi_{\ri/2}(y)\|_\infty^2 \sum_{\gamma \in \Gamma,\ \gamma^{-1}K_x \cap K_y\neq \emptyset} \| E_{ \gamma^{-1} K_x\cap K_y}( \lambda_\gamma^* u_{\mu,i})\|_{\varphi}^2.
\end{align*}
Therefore, by Lemma~\ref{lem-intertwining1}(1), we obtain
\begin{align*}
	\langle f_\mu( x e_{\theta_1(M)} x^* )\Lambda_\varphi(y),\Lambda_\varphi(y) \rangle
	&\leq \|x\|_\infty^2 \|\sigma^\varphi_{\ri/2}(y)\|_\infty^2 \sum_{\gamma \in \Gamma,\ K_x \cap \gamma K_y\neq \emptyset} \langle f_\mu( \lambda_\gamma e_{\gamma^{-1}K_x \cup K_y} \lambda_\gamma ^* )\Lambda_\varphi( 1 ),\Lambda_\varphi(1 ) \rangle.
\end{align*}
Now, since $f_\mu \to \Psi$, the right-hand side converges to $0$, and thus so does the left-hand side. Hence, $(*)$ also holds in this case. This completes the proof.
\end{proof}

\section{Ozawa's relative solidity theorem}

\subsection{Biexact groups and von Neumann algebras}

In this subsection, we recall biexactness of groups and von Neumann algebras. We refer the reader to \cite[Section 15]{BO08} and the recent developments in \cite{DKEP22,DP23}.

\subsection*{Biexact groups}

Let $\Gamma $ be a discrete group and $I \subset \ell^\infty(\Gamma)$ a closed ideal containing $c_0(\Gamma)$. We say that $I$ is a \textit{boundary piece} if it is globally invariant under the left and right translations on $\Gamma$. Then the \textit{small-at-infinity compactification of $\Gamma$} (relative to $I$) is the spectrum of the $C^*$-algebra 
	$$ \mathbb S_I(\Gamma):= \{f\in \ell^\infty(\Gamma)\mid f-f^t \in I,\quad \text{for all }t\in \Gamma \} ,$$
where $f^t$ is defined by $f^t(s):=f(st)$ for $s\in \Gamma$. We say that $\Gamma$ is \textit{biexact relative to $I$} \cite{BO08} if the action $\Gamma \actson \mathbb S_I(\Gamma)/I$ is topologically amenable. 
When $I=c_0(\Gamma)$, we simply say that $\Gamma$ is biexact. 
For example, all amenable groups and hyperbolic groups are biexact, see \cite[Section 15]{BO08}. 

	Let $\mathcal G$ be a family of subgroups of $\Gamma$. We say that a subset $\Omega\subset \Gamma$ is \textit{small relative to $\mathcal G$} if it is contained in a finite union of $s\Lambda t$, where $s,t\in \Gamma $ and $\Lambda \in \mathcal G$. Let $c_0(\Gamma;\mathcal G)\subset \ell^\infty(\Gamma)$ be the $C^*$-algebra generated by functions whose supports are small relative to $\mathcal G$. Equivalently, $c_0(\Gamma;\mathcal G)\subset \ell^\infty(\Gamma)$ is the closed ideal generated by $e_{s\Lambda t}$, where $s,t\in \Gamma$, $\Lambda\in \mathcal G$, and $e_{s\Lambda t}\in \ell^\infty(\Gamma)$ is the characteristic function on $s\Lambda t$. We say that $\Gamma$ is \textit{biexact relative to $\mathcal G$} if it is biexact relative to the boundary piece $c_0(\Gamma;\mathcal G)$.

\subsection*{Biexact von Neumann algebras}

Let $X\subset \B(H)$ be a norm closed subspace, and $M,N\subset \B(H)$ (unital) von Neumann subalgebras. We say that $X$ is a \textit{normal operator $M$-$N$-bimodule} if $MXN\subset X$. We simply say it is a \textit{right $N$-module} if $M=\C$, and a \textit{left $M$-module} if $N =\C$.

We say that $\varphi \in X^*$ is \textit{$M$-$N$-normal} if for each $T\in X$,
	$$ M\ni x\mapsto \varphi(xT),\quad N\ni y\mapsto \varphi(Ty) $$
are normal functionals. We denote by $X^{M\sharp N}\subset X^*$ the set of all $M$-$N$-normal functionals. When $M=\C$ (resp.\ $N=\C$), this normality is called \textit{left $N$-normal} (resp.\ \text{right $M$-normal}) and we use the notation $X^{\sharp N}$ (resp.\ $X^{M\sharp }$). Then by definition, we have $X^{M\sharp N} = X^{M\sharp } \cap X^{\sharp N}$ and it is easy to see that $X^{M\sharp N},X^{M\sharp },X^{\sharp N}$ are norm closed subspaces in $X^*$.

Let $\widetilde{M}\subset M'$ and $\widetilde{N}\subset N'$ be von Neumann subalgberas and assume that $X$ is a normal operator $M$-$N$-bimodule and $\widetilde{M}$-$\widetilde{N}$-bimodule, that is, $M\widetilde{M}XN\widetilde{N}\subset X$. In this setting, we define
	$$ X^{(M,\widetilde{M})\sharp (N,\widetilde{N})}:= X^{M\sharp } \cap  X^{\widetilde{M}\sharp } \cap X^{\sharp N}\cap X^{\sharp \widetilde{N}}  ,$$
which is a norm closed subspace in $X^*$. We call the weak topology in $X$ given by $X^{(M,\widetilde{M})\sharp (N,\widetilde{N})}$ (namely, $\sigma(X,X^{(M,\widetilde{M})\sharp (N,\widetilde{N})})$) the \textit{weak $(M,\widetilde{M})$-$(N,\widetilde{N})$-topology}. We simply say the \textit{weak $M$-$N$-topology} if $\widetilde{M}=\C =\widetilde{N}$.

Let $M \subset \B(L^2(M))$ be a von Neumann algebra with the standard representation. We say that a hereditary $C^*$-subalgebra $\mathbb X \subset \B(L^2(M))$ is a \textit{$M$-boundary piece} if $M\cap \mathrm M(\mathbb X) \subset M $ and $M'\cap \mathrm M(\mathbb X) \subset M'$ are $\sigma$-weakly dense, where $\mathrm M(\mathbb X)$ is the multiplier algebra of $\mathbb X$. Consider the weak $(M,M')$-$(M,M')$-topology on $\B(L^2(M))$, and we define $\mathbb K_{\mathbb X}^{\infty,1}(M)$ as the closure of $\mathbb X$ by this topology. Since $\mathbb X$ is a $M$-boundary piece, it is easy to see that $\mathbb K_{\mathbb X}^{\infty,1}(M)\subset \B(L^2(M))$ is a normal operator $M$-$M$-bimodule and $M'$-$M'$-bimodule. 
We define the \textit{small-at-infinity boundary of $M$} (relative to $\mathbb X$) as 
	$$ \mathbb S_{\mathbb X}(M):=\{ T\in \B(L^2(M))\mid [T,x]\in \mathbb K_{\mathbb X}^{\infty,1}(M), \ \text{for all } x\in M' \} .$$
It is an operator system and is a normal $M$-$M$-bimodule containing $M$. It is closed in the weak $M$-$M$-topology (of $\B(L^2(M))$).

\begin{df}[{\cite[Definition 6.1]{DP23}}]\upshape
	Let $M$ be a von Neumann algebra and $\mathbb X \subset \B(L^2(M))$ a $M$-boundary piece. We say that \emph{$M$ is biexact relative to $\mathbb X$} if the inclusion map $M \to \mathbb S_{\mathbb X}(M)$ is $M$-nuclear. This means that there exist nets of ccp maps $\phi_i \colon M \to \M_{n(i)}$ and $\psi_i\colon \M_{n(i)}\to \mathbb S_{\mathbb X}(M)$ such that for each $x\in M$, $\psi_i\circ \phi_i(x)$ converges to $x$ in the weak $M$-$M$-topology. 
\end{df}

When $\mathbb X = \mathbb K(L^2(M))$, we simply say that $M$ is biexact. It was proved in \cite[Theorem 6.2]{DP23} that a discrete group $\Gamma$ is biexact if and only if $L\Gamma$ is biexact.

We prove a lemma. This will be used in the proof of Theorem \ref{thm-biexact} below.

\begin{lem}\label{lem-biexact-ccp}
	Let $M\subset \B(L^2(M))$ be an inclusion of von Neumann algebras and let 
$\theta\colon \B(L^2(M))\to \B(L^2(M))$ be a completely positive map that is normal on $M$ and $M'$.

\begin{enumerate}
	\item The map $\theta$ is continuous from the weak $(M,M')$-$(M,M')$-topology to the $\sigma$-weak topology.

	\item If $\theta$ is $M'$-bimodular in the sense that $\theta(xTy) = x\theta(T)y$ for all $T\in \B(L^2(M))$ and $x,y\in M'$, and if $\theta(\mathbb X)=0$ for a $M$-boundary piece $\mathbb X\subset \B(L^2(M))$, then we have $\theta( \mathbb S_{\mathbb X}(M) ) \subset M $.

\end{enumerate}
\end{lem}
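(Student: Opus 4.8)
The plan is to obtain part (2) as a short consequence of part (1), so the substance lies in part (1).

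\textbf{Part (1).} By construction the weak $(M,M')$-$(M,M')$-topology is the weak topology $\sigma(\B(L^2(M)),F)$ determined by $F:=X^{(M,M')\sharp(M,M')}$, whose continuous dual is exactly $F$ (note $F$ separates points, since every normal functional already lies in $F$); the $\sigma$-weak topology is $\sigma(\B(L^2(M)),\B(L^2(M))_*)$. Thus $\theta$ is continuous from the former to the latter if and only if $\omega\circ\theta\in F$ for every normal $\omega$. So I would fix a normal $\omega$ and verify the four conditions defining $F$, i.e. that $\omega\circ\theta$ lies in $X^{M\sharp}$, $X^{M'\sharp}$, $X^{\sharp M}$ and $X^{\sharp M'}$. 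Since $F$ is a subspace and every normal functional is a linear combination of four positive normal ones (Jordan decomposition), it is enough to treat $\omega\ge0$; then $\Phi:=\omega\circ\theta$ is a bounded positive functional on $\B(L^2(M))$ because $\theta$ is positive.

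Consider, say, the condition $\omega\circ\theta\in X^{M\sharp}$: for a fixed $T$ one must show $x\mapsto\Phi(xT)$ is normal on $M$. I would use the standard fact that a bounded functional is normal once it is $\sigma$-strong${}^{*}$-continuous on the unit ball, together with the Cauchy--Schwarz inequality for the positive functional $\Phi$, namely $|\Phi(xT)|^{2}\le\Phi(xx^{*})\,\Phi(T^{*}T)$. If $x_i\to0$ is a bounded net converging $\sigma$-strongly${}^{*}$, then $x_i^{*}\to0$ $\sigma$-strongly, hence $x_ix_i^{*}\to0$ $\sigma$-weakly; since $\theta$ is normal on $M$ this forces $\theta(x_ix_i^{*})\to0$ $\sigma$-weakly, and then $\Phi(x_ix_i^{*})=\omega(\theta(x_ix_i^{*}))\to0$ by normality of $\omega$. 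As $\Phi(T^{*}T)$ is a fixed constant, $\Phi(x_iT)\to0$, giving normality. The condition $X^{\sharp M}$ is identical using $|\Phi(Tx)|^{2}\le\Phi(TT^{*})\,\Phi(x^{*}x)$ and $x_i\to0$ $\sigma$-strongly, and the two $M'$-conditions are the same arguments with $x\in M'$, invoking that $\theta$ is normal on $M'$. This yields $\omega\circ\theta\in F$ and proves part (1).

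\textbf{Part (2).} For $T\in\mathbb S_{\mathbb X}(M)$ and $x\in M'$, the $M'$-bimodularity gives $\theta(T)x=\theta(Tx)$ and $x\theta(T)=\theta(xT)$, so $[\theta(T),x]=\theta([T,x])$. By definition $[T,x]\in\mathbb K_{\mathbb X}^{\infty,1}(M)$, the closure of $\mathbb X$ in the weak $(M,M')$-$(M,M')$-topology. Since $\theta$ is continuous for this topology by part (1) and $\theta(\mathbb X)=0$, continuity sends this closure into the $\sigma$-weak closure of $\{0\}$, so $\theta(\mathbb K_{\mathbb X}^{\infty,1}(M))=0$; in particular $[\theta(T),x]=\theta([T,x])=0$. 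As this holds for every $x\in M'$, we conclude $\theta(T)\in(M')'=M$.

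\textbf{Main obstacle.} The one genuinely delicate point is the normality verification in part (1): converting the separate normality of $\theta$ on $M$ and on $M'$ into the joint $M$-$M'$-normality of $\omega\circ\theta$. Complete positivity is what makes this possible, through the Cauchy--Schwarz estimate; the care needed is simply to pair the right one-sided strong convergence of $x_i$ or $x_i^{*}$ with the matching factor $x_i^{*}x_i$ or $x_ix_i^{*}$, so that the relevant restriction $\theta|_M$ or $\theta|_{M'}$ can be applied. Everything after that is formal.
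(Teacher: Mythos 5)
Your proposal is correct and follows essentially the same route as the paper: part (1) is proved by showing $\omega\circ\theta\in X^{(M,M')\sharp(M,M')}$ for every normal $\omega$ via the Cauchy--Schwarz inequality for the positive functional $\omega\circ\theta$ together with the normality of $\theta$ on $M$ and $M'$, and part (2) follows formally by pushing $\theta(\mathbb X)=0$ through the closure and using bimodularity to get $[\theta(T),x']=\theta([T,x'])=0$. Your write-up merely spells out in more detail the step the paper compresses into ``these inequalities show that $\psi\circ\theta(aT)$ is normal in $a$''.
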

\begin{proof}
	(1) Fix $\psi\in \B(H)_\ast^+$. We have only to show that $\psi\circ \theta$ is contained in $X^{(M,M')\sharp (M,M')}$. Fix $T\in \B(L^2(M))$, then for any $a\in M$, by the Cauchy--Schwartz inequality, 
\begin{align*}
	|\psi\circ \theta (aT)|
	\leq \|a^*\|_{\psi\circ \theta} \|T\|_{\psi\circ \theta},\quad |\psi\circ \theta (Ta)|
	\leq \|T^*\|_{\psi\circ \theta}\|a\|_{\psi\circ \theta} .
\end{align*}
Since $\psi\circ \theta$ is normal on $M$, these inequalities show that $\psi\circ \theta (aT) $ and $\psi\circ \theta (Ta)$ are normal on $a\in M$, thus $\psi\circ \theta\in X^{M\sharp }\cap X^{\sharp M}$. A similar argument shows that $\psi\circ \theta\in X^{M'\sharp }\cap X^{\sharp M'}$ and we get $\psi\circ \theta \in X^{(M,M')\sharp (M,M')}$.

	(2) By the continuity proved in item (1) and by $\theta(\mathbb X)=0$, we get $\theta(\mathbb K_{\mathbb X}^{\infty,1}(M))=0$. For any $x'\in M'$ and $T\in \mathbb S_{\mathbb X}(M)$, we have
	$$x'\theta(T) = \theta(x'T)=\theta(Tx' - [T,x']) = \theta(Tx')=\theta(T)x',$$
where we used $\theta([T,x'])=0$ (because $ [T,x']\in \mathbb K_{\mathbb X}^{\infty,1}(M)$). Hence $\theta(T)$ is contained in $M'' = M$. This finishes the proof.
\end{proof}

\subsection{Proof of Theorem \ref{thmE}}

We prove Theorem \ref{thmE}. We will indeed prove a more general theorem, see Theorem \ref{thm-biexact-group}.

Let $M $ be a $\sigma$-finite von Neumann algebra and $M_0\subset M$ a $\sigma$-weakly dense unital $C^*$-subalgebra. 
Consider a family $\mathcal F:= \{ (B_i,E_i) \}_{i\in I}$, where each $B_i \subset 1_{B_i}M1_{B_i}$ is a von Neumann subalgebra with a faithful normal conditional expectation $E_i$ such that for all $i\in I$,
	$$ 1_{B_i}\in M_0 \quad \text{and}\quad E_i( 1_{B_i}M_01_{B_i} )\subset M_0.$$
Put $\widetilde{B}_i:=  B_i \oplus \C (1_M-1_{B_i})$ and extend $E_i$ to $\widetilde{E}_i\colon M\to \widetilde{B}_i$ as a faithful normal conditional expectation. 
We denote the Jones projection by $e_{\widetilde{B}_i}$ and put $e_{B_i}:= e_{\widetilde{B}_i}1_{B_i}=e_{\widetilde{B}_i} J1_{B_i}J$. Note that $e_{\widetilde{B}_i}$ is defined on the standard representation $L^2(M)$ in the following way. Fix a faithful normal state $\varphi_i\in M_*$ with $\varphi_i \circ \widetilde{E}_i = \varphi_i$ and take the implementing vector $\xi_{\varphi_i}\in L^2(M)$ in the positive cone. Then define $e_{\widetilde{B}_i} x \xi_{\varphi_i} = \widetilde{E}_i(x)\xi_{\varphi_i}$. It does not depend on the choice of $\varphi_i$, see \cite[Appendix A]{HI15}.

We define a $M$-boundary piece $\mathbb X_{\mathcal F}\subset \B(L^2(M))$ as the hereditary $C^*$-subalgebra generated by $e_{B_{i}} xJyJ  $ for all $x,y\in M_0$ and $i\in I$. 
%Recall that for a subset $S\subset \B(L^2(M))$, if the $C^*$-algebra generated by $s_1^*  s_2 $ for $s_1,s_2\in S$ contains $S$, then the hereditary $C^*$-subalgebra generated by $S$ coincides with the norm closure of all the linear spans of the form
%	$$ s_1^* T s_2 ,\quad s_1,s_2\in S,\quad T\in \B(L^2(M)). $$
Observe that $\mathbb X_{\mathcal F}\subset \B(L^2(M))$ coincides with the norm closure of linear spans of 
	$$  x_1Jy_1J e_{B_{i_1}} T e_{B_{i_2}} x_2^*Jy_2^*J ,\quad x_k,y_k\in M_0,\ i_k\in I,\ k=1,2,\quad T\in \B(L^2(M)).$$

\begin{rem}\upshape
	The $M$-boundary piece $\mathbb X_{\mathcal F}\subset \B(L^2(M))$ does depend on the choice of the dense subalgebra $M_0 \subset M$. However, the closure $\mathbb K^{1,\infty}_{\mathbb X_{\mathcal F}}(M)$ does not depend on $M_0$. Indeed, for the generators  $x_1Jy_1J e_{B_{i_1}} T e_{B_{i_2}} x_2^*Jy_2^*J$ of $\mathbb X_{\mathcal F}$ above, one can approximate each $x_k,y_k$ from $M_0$ to $M$ in the strong$\ast$-topology by bounded nets. Then this provides approximations by the weak $(M,M')$-$(M,M')$-topology.
\end{rem}

The next theorem generalizes \cite[Proposition 6.13]{DP23}, see also \cite[Theorem 6.7 and 6.9]{DP23}.

\begin{thm}\label{thm-biexact}
	Keep the setting and assume that $M$ is biexact relative to $\mathbb X_{\mathcal F}$. 
Then for any projection $p\in M$ and any von Neumann subalgebra $A\subset pMp$ with expectation, either one of the following conditions holds.
\begin{enumerate}
	\item[$\rm (i)$] We have $A\preceq_M B_i$ for some $i\in I$.

	\item[$\rm (ii)$] The relative commutant $A'\cap pMp$ is amenable.

\end{enumerate}
\end{thm}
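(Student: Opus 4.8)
My plan is to establish the dichotomy in its contrapositive form: assuming $A\not\preceq_M B_i$ for every $i\in I$, I will show that $N:=A'\cap pMp$ is amenable, adapting Ozawa's strategy but feeding it the normal ucp map of Corollary~\ref{cor-intertwining1} in place of the classical net of unitaries. First I would reduce to the case where $A$ has no direct summand that is semifinite and properly infinite: amenability of $N$ can be checked on the corners $e(A'\cap pMp)e=(eAe)'\cap eMe$ for projections $e\in A$, and $eAe\preceq_M B_i$ would force $A\preceq_M B_i$, so cutting the semifinite properly infinite part of $A$ by finite projections preserves the hypothesis $A\not\preceq_M B_i$ while bringing the relevant corner into the required type. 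In that case Corollary~\ref{cor-intertwining1} produces $\Psi\in\D(A\subset p\B(L^2(M))p)$ such that $\Psi|_{pMp}$ is a faithful normal conditional expectation $E_N$ onto $N$ and $\Psi(\mathfrak m_{\widehat E_{\widetilde B_i}}J1_{B_i}J)=0$ for all $i$. The decisive feature, unavailable for a net of unitaries, is that $\Psi$ is \emph{normal on $M$}; this is what lets the continuity arguments below survive the absence of a trace.

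Next I would extract two properties of $\Psi$. Since $\Psi$ is a point-$\sigma$-weak limit of convex combinations of $\Ad(u)$ with $u\in\mathcal U(A)\subset\mathcal U(M)$, and every such $u$ commutes with $M'=JMJ$, the map $\Psi$ is $M'$-bimodular. Rewriting $e_{B_i}=e_{\widetilde B_i}J1_{B_i}J$, the hypothesis gives $\Psi(a\,e_{B_i}\,b^*)=0$ for all $a,b\in M$, and combined with $M'$-bimodularity a direct computation on the generators $x_1Jy_1J\,e_{B_{i_1}}T\,e_{B_{i_2}}\,x_2^*Jy_2^*J$ shows that $\Psi$ annihilates $\mathbb X_{\mathcal F}$. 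As $\Psi$ is normal on $M$ and on $M'$, Lemma~\ref{lem-biexact-ccp}(1) makes it continuous from the weak $(M,M')$-$(M,M')$-topology to the $\sigma$-weak topology, so the vanishing propagates from $\mathbb X_{\mathcal F}$ to its closure $\mathbb K^{\infty,1}_{\mathbb X_{\mathcal F}}(M)$. Being $M'$-bimodular and annihilating this closure, Lemma~\ref{lem-biexact-ccp}(2) then yields $\Psi(\mathbb S_{\mathbb X_{\mathcal F}}(M))\subset M$; all of this is to be read inside the corner $p\,\cdot\,p$.

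Finally I would combine this with biexactness of $M$ relative to $\mathbb X_{\mathcal F}$, which furnishes nets of ccp maps $\phi_n\colon M\to\M_{k(n)}$ and $\psi_n\colon\M_{k(n)}\to\mathbb S_{\mathbb X_{\mathcal F}}(M)$ with $\psi_n\circ\phi_n\to\id_M$ in the weak $M$-$M$-topology. I would then consider the ccp maps
\[
\Theta_n:=E_N\circ\Psi\circ\psi_n\circ\phi_n|_N\colon N\to N,
\]
which factor through the matrix algebras $\M_{k(n)}$. For $x\in N$ one has $\psi_n\phi_n(x)\to x$ in the weak $M$-$M$-topology, hence $\Psi(\psi_n\phi_n(x))\to\Psi(x)=x$ $\sigma$-weakly by Lemma~\ref{lem-biexact-ccp}(1), and applying $E_N$ gives $\Theta_n(x)\to x$ $\sigma$-weakly. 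Thus $\id_N$ is a point-$\sigma$-weak limit of ccp maps factoring through matrix algebras, so $N$ is semidiscrete and therefore amenable, which is conclusion $\rm (ii)$. I expect the main obstacle to be the middle step: checking that $\Psi$ really kills the boundary piece $\mathbb X_{\mathcal F}$ and upgrading this to $\Psi(\mathbb S_{\mathbb X_{\mathcal F}}(M))\subset M$, where the normality of $\Psi$ on $M$—the genuine payoff of invoking Corollary~\ref{cor-intertwining1} rather than a net of unitaries—is indispensable, and where the compressions by $p$ (so that Lemma~\ref{lem-biexact-ccp}, stated on $\B(L^2(M))$, applies in the corner $p\B(L^2(M))p$) must be handled with care.
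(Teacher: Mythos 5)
Your proposal is correct and follows essentially the same route as the paper: reduce to the case where $A$ has no semifinite properly infinite direct summand (the paper does this via an increasing sequence of projections $p_n\in A$ with central support $p$, which is the one detail worth making explicit so that $(p_nAp_n)'\cap p_nMp_n = p_n(A'\cap pMp)p_n$), then invoke Corollary~\ref{cor-intertwining1} to get the normal $\Psi$, kill $\mathbb X_{\mathcal F}$ via $M'$-bimodularity and Cauchy--Schwarz, apply Lemma~\ref{lem-biexact-ccp} to land in $M$, and conclude weak nuclearity of $\id_{A'\cap pMp}$ from biexactness. The paper's proof is the same argument phrased in terms of weak nuclearity of the composed map $M\hookrightarrow\mathbb S_{\mathbb X_{\mathcal F}}(M)\to pMp$ rather than your explicit approximating maps $\Theta_n$.
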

\begin{proof}
	Assume first that $A$ does not have any direct summand that is semifinite and properly infinite. Assume that (1) does not hold. Then by Corollary \ref{cor-intertwining1},  take $\Psi\in \D(A\subset p \B(L^2(M))p)$ such that  $\Psi(\mathfrak m_{\widetilde{E}_{\widetilde{B}_i}}J1_{B_i}J)=0$ for all $i\in I$ and that $\Psi|_{pMp}$ is a faithful normal conditional expectation onto $A'\cap pMp$. It holds that 
	$$\Psi(Jy_1Jx_1 e_{\widetilde{B}_{i}} x_2^* Jy_2^*J) = Jy_1J\Psi(x_1 e_{\widetilde{B}_{i}} x_2^* ) Jy_2^*J=0,\quad \text{for all }x_1,x_2,y_1,y_2\in M1_{B_i},\ i\in I.$$
By the Cauchy--Schwartz inequality, we get $\Psi(\mathbb X_{\mathcal F}) = 0$. 

Since $\Psi|_M$ and $\Psi|_{M'}=\id_{M'} $ are normal and since $\Psi$ is a $M'$-bimodule map,  we can apply Lemma \ref{lem-biexact-ccp}(2) and get  $\Psi(\mathbb S_{\mathbb X_{\mathcal F}}(M))\subset M$. Note that $\Psi$ is continuous from the weak $M$-$M$-topology to the $\sigma$-weak topology by Lemma \ref{lem-biexact-ccp}(1), as  the weak $M$-$M$-topology is stronger than the weak $(M,M')$-$(M,M')$-topology.

Now we assume that $M$ is biexact relative to $\mathbb X_{\mathcal F}$. Then the composed map
	$$ M \hookrightarrow  \mathbb S_{\mathbb X_{\mathcal F}}(M) \to^{\text{compression by $p$}}  p\mathbb S_{\mathbb X_{\mathcal F}}(M)p  \to^{\Psi} pMp $$
is weakly nuclear, because $\Psi$ is continuous from the weak $M$-$M$-topology to the $\sigma$-weak topology. The restriction of this map on $A'\cap pMp$ is the identity map, hence by composing a normal conditional expectation from $pMp$ onto $A'\cap pMp$, we conclude that $A'\cap pMp$ is semidiscrete. We get the conclusion in this case.

We next prove the general case. For a general $A$, one can find an increasing sequence of  projections $p_n \in A$, converging to $p=1_A$ strongly, such that the central support of $p_n$ in $A$ is $p$ and that $p_nAp_n$ does not have a direct summand that is semifinite and properly infinite. Suppose that (i) does not hold. Then by \cite[Remark 4.2(4)]{HI15}, we have that $p_nAp_n\not\preceq_M B_i$ for all $i$. Hence we get that $(p_nAp_n)' \cap p_nM p_n = p_n(A'\cap pMp)p_n$ is amenable. Then we get that $A'\cap pMp$ is amenable. This finishes the proof. 
\end{proof}

\subsection*{Rigidity of crossed products by biexact groups}

We use Theorem \ref{thm-biexact} to study crossed product von Neumann algebras. 

Let $\Gamma $ be a discrete group, $(B,\varphi)$ a von Neumann algebra with a faithful normal state, $\alpha \colon \Gamma\actson B$ an action. We denote by $J_\varphi$ the modular conjugation on $L^2(B,\varphi)$. Then the modular conjugation $J$ on the standard representation $L^2(B\rtimes G) = L^2(G,L^2(B,\varphi)) \simeq L^2(B,\varphi)\otimes \ell^2(\Gamma)$ is given by
\begin{align*}
	&J(1\otimes \lambda_s) J = v_s\otimes \rho_s\quad (s\in \Gamma);\\
	&J\pi_\alpha(x)J = J_\varphi x J_\varphi\otimes 1\quad (x\in B),
\end{align*}
where $\rho_s$ is the right regular representation given by $\rho_s\delta_t = \delta_{ts^{-1}}$ for all $s,t\in \Gamma$. 

The following theorem generalizes results in \cite{Oz04,HV12,Is12}. It removes the finiteness assumption on subalgebras.

\begin{thm}\label{thm-biexact-group}
	Let $\alpha \colon \Gamma \actson B$ be an action of a discrete group on a $\sigma$-finite von Neumann algebra. Put $M:=B\rtimes_\alpha \Gamma$. Assume that $B$ is amenable and that $\Gamma$ is bi-exact relative to $\mathcal G$, where $\mathcal G$ is a family of subgroups in $\Gamma$. 

Then for any projection $p\in M$ and any von Neumann subalgebra $A\subset pMp$ with expectation, we have either one of the following conditions.
\begin{itemize}
	\item[$(\rm{i})$] There exists $\Lambda \in \mathcal G$ such that $A\preceq_{M} B\rtimes_\alpha \Lambda$.
	\item[$(\rm{ii})$] The relative commutant $A'\cap pMp$ is amenable.

\end{itemize}
\end{thm}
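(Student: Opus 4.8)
The plan is to obtain Theorem~\ref{thm-biexact-group} as a direct application of Theorem~\ref{thm-biexact}, with the family $\mathcal F$ taken to consist of the subalgebras coming from the subgroups in $\mathcal G$. First I would fix the $\sigma$-weakly dense unital $C^*$-subalgebra $M_0 := C^*(\pi_\alpha(B)\cup\{\lambda_g : g\in\Gamma\}) \subset M$ and set $\mathcal F := \{(B\rtimes_\alpha\Lambda, E_\Lambda)\}_{\Lambda\in\mathcal G}$, where $E_\Lambda\colon M\to B\rtimes_\alpha\Lambda$ is the canonical faithful normal conditional expectation that restricts the $\Gamma$-support to $\Lambda$. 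Each $B\rtimes_\alpha\Lambda$ is unital, so $1_{B_i}=1_M\in M_0$, and $E_\Lambda(M_0)\subset M_0$ since $E_\Lambda$ merely discards the Fourier coefficients outside $\Lambda$; thus the standing hypotheses on $M_0$ and $\mathcal F$ in the setup of Theorem~\ref{thm-biexact} hold. Under this choice, conclusion (i) of Theorem~\ref{thm-biexact} becomes precisely $A\preceq_M B\rtimes_\alpha\Lambda$ for some $\Lambda\in\mathcal G$, and conclusion (ii) is unchanged. Consequently, everything reduces to verifying the single hypothesis of Theorem~\ref{thm-biexact}: that $M$ is biexact relative to the boundary piece $\mathbb X_{\mathcal F}$.

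The core of the argument is therefore to relate $\mathbb X_{\mathcal F}$ to the group boundary piece $I:=c_0(\Gamma;\mathcal G)$ and to transport the relative biexactness of $\Gamma$ to the von Neumann algebraic level. On $L^2(M)\simeq L^2(B,\varphi)\otimes\ell^2(\Gamma)$, I would analyze the Jones projections $e_{B\rtimes_\alpha\Lambda}$ and check that $\mathbb K_{\mathbb X_{\mathcal F}}^{\infty,1}(M)$ absorbs the multiplication operators $1\otimes M_f$ for $f\in I$. Recalling that $M'$ contains $J\lambda_sJ = v_s\otimes\rho_s$, the commutator $[\,1\otimes M_f,\,v_s\otimes\rho_s\,]=v_s\otimes[M_f,\rho_s]=v_s\otimes M_{f-f^s}\rho_s$ lies in the ideal generated by $I$ whenever $f$ is small at infinity, i.e.\ $f-f^t\in I$ for all $t$. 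This shows that $1\otimes M_f\in\mathbb S_{\mathbb X_{\mathcal F}}(M)$ for every $f\in\mathbb S_I(\Gamma)$, so that $\mathbb S_I(\Gamma)$ embeds into $\mathbb S_{\mathbb X_{\mathcal F}}(M)$ compatibly with the $M$-$M$-bimodule structure; this is the dictionary I need.

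With this identification in hand, I would construct the $M$-nuclear factorization of $M\hookrightarrow\mathbb S_{\mathbb X_{\mathcal F}}(M)$ by splicing two approximations. Since $\Gamma$ is biexact relative to $\mathcal G$, topological amenability of $\Gamma\actson\mathbb S_I(\Gamma)/I$ produces a net of almost-invariant, almost-finitely-supported probability measures on $\Gamma$, hence ccp maps factoring $\id_{L\Gamma}$ through matrix algebras with ranges inside $\mathbb S_I(\Gamma)$, exactly as in Ozawa's proof of biexactness of $L\Gamma$ (cf.\ \cite[Section 15]{BO08}). Since $B$ is amenable, semidiscreteness gives ccp factorizations $B\to\M_k\to B$ converging to $\id_B$ in the point-$\sigma$-weak topology. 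Using the covariance relation $\lambda_g\pi_\alpha(b)\lambda_g^*=\pi_\alpha(\alpha_g(b))$ to keep the two families compatible, I would tensor them into nets $\phi_i\colon M\to\M_{n(i)}$ and $\psi_i\colon\M_{n(i)}\to\mathbb S_{\mathbb X_{\mathcal F}}(M)$ with $\psi_i\circ\phi_i(x)\to x$ in the weak $M$-$M$-topology, which is the required $M$-nuclearity.

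The hard part will be the bookkeeping in this last step: one must verify that the ranges of $\psi_i$ genuinely lie in $\mathbb S_{\mathbb X_{\mathcal F}}(M)$ — i.e.\ that $[\psi_i(\,\cdot\,),\,JyJ]\in\mathbb K_{\mathbb X_{\mathcal F}}^{\infty,1}(M)$ for $y\in M$ — and that the convergence holds in the weak $M$-$M$-topology rather than merely in norm, so that it survives the compression by $p$ used inside Theorem~\ref{thm-biexact}. Both points rest on the identification of $\mathbb K_{\mathbb X_{\mathcal F}}^{\infty,1}(M)$ with the ideal generated by $I$ from the second paragraph, together with a careful tracking of how the small-at-infinity condition for $\Gamma$ interacts with the semidiscrete approximation of the amenable base $B$. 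Once $M$ is known to be biexact relative to $\mathbb X_{\mathcal F}$, Theorem~\ref{thm-biexact} applies directly and delivers the dichotomy (i)/(ii).
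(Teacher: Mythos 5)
Your reduction is exactly the paper's: you choose $\mathcal F=\{(B\rtimes_\alpha\Lambda,E_\Lambda)\}_{\Lambda\in\mathcal G}$ with the dense subalgebra $M_0=B\rtimes_{\rm red}\Gamma$, identify the resulting boundary piece $\mathbb X_{\mathcal F}$ with the hereditary $C^*$-subalgebra generated by $c_0(\Gamma;\mathcal G)$ acting on $L^2(B)\otimes\ell^2(\Gamma)$, and then invoke Theorem~\ref{thm-biexact}. The one place you diverge is the key input that $M$ is biexact relative to this boundary piece: the paper simply cites \cite[Proposition 8.3]{DP23} for this (that proposition is precisely the statement that a crossed product of an amenable von Neumann algebra by a group biexact relative to $\mathcal G$ is biexact relative to the associated boundary piece), whereas you attempt to prove it directly by splicing the topological amenability of $\Gamma\actson\mathbb S_I(\Gamma)/I$ with the semidiscreteness of $B$.

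That direct argument is where your proposal is only a sketch rather than a proof. The dictionary you set up (that $1\otimes M_f\in\mathbb S_{\mathbb X_{\mathcal F}}(M)$ for $f\in\mathbb S_I(\Gamma)$, via the commutator computation with $J\lambda_sJ=v_s\otimes\rho_s$) is fine, but the step you yourself flag as ``the hard part'' --- constructing ccp maps $\phi_i\colon M\to\M_{n(i)}$ and $\psi_i\colon\M_{n(i)}\to\mathbb S_{\mathbb X_{\mathcal F}}(M)$ whose ranges genuinely land in the small-at-infinity boundary and whose composition converges in the weak $M$-$M$-topology (not merely pointwise in norm or $\sigma$-weakly) --- is exactly the technical content of the cited result, and it is not routine: one must control the interaction between the covariant approximation of $B$ and the almost-invariant vectors coming from the boundary amenability of $\Gamma$, and verify commutators with all of $JMJ$, not just with $J\lambda_sJ$. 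As written, the proposal either needs to carry out that verification in full (essentially reproving \cite[Proposition 8.3]{DP23}) or should cite it, as the paper does; with that citation in place your argument is complete and coincides with the paper's.
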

\begin{proof}
	Let $\mathbb X\subset \B(L^2(M))$ be the hereditary $C^*$-subalgebra generated by $\B(L^2(B))\otm c_0(\Gamma;\mathcal G)$. Observe that 
	$$\mathbb X= \overline{c_0(\Gamma;\mathcal G)\B(L^2(M))c_0(\Gamma;\mathcal G)}, $$
and that $M_0:=B\rtimes_{\rm red}\Gamma$ and $JM_0J$ are contained in the multiplier algebra of $\mathbb X$. Then $\mathbb X$ is a $M$-boundary piece and by \cite[Proposition 8.3]{DP23}, $M$ is biexact relative to $\mathbb X$.

Let $E_\Lambda\colon M\to B\rtimes\Lambda$ be canonical faithful normal conditional expectations for $\Lambda\in \mathcal G$, and put $\mathcal F:=\{( B\rtimes \Lambda,E_\Lambda )\}_{\Lambda\in \mathcal G}$. Consider the $M$-boundary piece $\mathbb X_{\mathcal F}$ with the dense algebra $M_0\subset M$. Then it is straightforward to show that $\mathbb X = \mathbb X_{\mathcal F}$. Thus, we get that $M$ is biexact relative to $\mathbb X_{\mathcal F}$, and the theorem follows by Theorem \ref{thm-biexact}.
\end{proof}

\subsection*{Unique prime factorization results}

We next use Theorem \ref{thm-biexact} to study tensor product von Neumann algebras. The following theorem generalizes results in \cite{OP03,HI15}.

\begin{thm}\label{thm-biexact-UPF}
	Let $M_1,\ldots,M_m$ be $\sigma$-finite and biexact von Neumann algebras and put  $M:=M_1\ovt \cdots \ovt M_m$. 
Then for any projection $p\in M$ and any von Neumann subalgebra $A\subset pMp$ with expectation, we have either one of the following conditions.
\begin{itemize}
	\item[$(\rm{i})$] There exists $i\in \{1,\ldots,m\}$ such that $A\preceq_{M} M_i^c$, where $M_i^c:= \left(\overline{\bigotimes}_{j\neq i} M_j \right)\ovt \C 1_{M_i}$.

	\item[$(\rm{ii})$] The relative commutant $A'\cap pMp$ is amenable.

\end{itemize}
\end{thm}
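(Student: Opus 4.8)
The plan is to derive Theorem~\ref{thm-biexact-UPF} from Theorem~\ref{thm-biexact} by realizing $M$ as a von Neumann algebra that is biexact relative to a boundary piece built from the subalgebras $M_i^c$. First I would fix faithful normal states $\varphi_i \in (M_i)_\ast$, form the product state $\varphi := \varphi_1 \otimes \cdots \otimes \varphi_m$, and take $M_0 := M_1 \odot \cdots \odot M_m$, the $\sigma$-weakly dense algebraic tensor product. For each $i$ the slice map $E_i := \id \otimes \cdots \otimes \varphi_i \otimes \cdots \otimes \id$ is a faithful normal conditional expectation from $M$ onto $M_i^c$, it is unital, and $E_i(M_0) \subset M_0$; hence $\mathcal F := \{(M_i^c, E_i)\}_{i=1}^m$ fits the framework preceding Theorem~\ref{thm-biexact}, and I would form the associated $M$-boundary piece $\mathbb X_{\mathcal F} \subset \B(L^2(M))$. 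Under the identification $L^2(M) = L^2(M_1) \otimes \cdots \otimes L^2(M_m)$, the Jones projection of $M_i^c$ is $e_{M_i^c} = \rho_i(p_{\xi_{\varphi_i}})$, where $\rho_i\colon \B(L^2(M_i)) \to \B(L^2(M))$ is the amplification onto the $i$-th slot and $p_{\xi_{\varphi_i}}$ is the rank-one projection onto $\C\xi_{\varphi_i}$. Once $M$ is shown to be biexact relative to $\mathbb X_{\mathcal F}$, Theorem~\ref{thm-biexact} applied to this $\mathcal F$ (with $B_i = M_i^c$) yields exactly the dichotomy (i)--(ii).

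The heart of the argument is to prove that $M$ is biexact relative to $\mathbb X_{\mathcal F}$, using the biexactness of each factor $M_i$. I would first identify the boundary closure: since $p_{\xi_{\varphi_i}}$ is a rank-one projection, the generators $e_{M_i^c} T e_{M_i^c}$ with $T \in \B(L^2(M))$ already exhaust the operators $p_{\xi_{\varphi_i}} \otimes S$ for $S \in \B(\bigotimes_{j\neq i} L^2(M_j))$; multiplying by $\rho_i(M_i) \subset M$ on both sides and using that $\mathbb K_{\mathbb X_{\mathcal F}}^{\infty,1}(M)$ is an $M$-$M$-bimodule, one sees that every operator $K_i \otimes S$ with $K_i \in \mathbb K(L^2(M_i))$ compact and $S$ bounded on the remaining slots lies in $\mathbb K_{\mathbb X_{\mathcal F}}^{\infty,1}(M)$. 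The second point is that products of small-at-infinity operators are small-at-infinity: if $A_i \in \mathbb S_{\mathbb K(L^2(M_i))}(M_i)$ for each $i$, then for $x' = \rho_1(x_1')\cdots\rho_m(x_m')$ with $x_i' \in M_i'$ the commutator $[\rho_1(A_1)\cdots\rho_m(A_m), x']$ telescopes into a sum of terms, each carrying a factor $\rho_i([A_i, x_i'])$ with $[A_i, x_i'] \in \mathbb K_{\mathbb K(L^2(M_i))}^{\infty,1}(M_i)$ in the $i$-th slot and bounded operators on the other slots. By the first point each such term is a weak limit of operators $K_i \otimes S \in \mathbb K_{\mathbb X_{\mathcal F}}^{\infty,1}(M)$, and since $\mathbb K_{\mathbb X_{\mathcal F}}^{\infty,1}(M)$ is closed in the weak $(M,M')$-$(M,M')$-topology, the whole commutator lies in it; thus $\rho_1(A_1)\cdots\rho_m(A_m) \in \mathbb S_{\mathbb X_{\mathcal F}}(M)$.

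With these two points I would assemble the nuclear maps. Biexactness of each $M_i$ provides nets of ccp maps $\phi^i_\lambda\colon M_i \to \M_{k_i(\lambda)}$ and $\psi^i_\lambda\colon \M_{k_i(\lambda)} \to \mathbb S_{\mathbb K(L^2(M_i))}(M_i)$ with $\psi^i_\lambda \circ \phi^i_\lambda \to \id_{M_i}$ in the weak $M_i$-$M_i$-topology. Setting $\phi_\lambda := \bigotimes_i \phi^i_\lambda\colon M \to \M_{k_1(\lambda)} \otimes \cdots \otimes \M_{k_m(\lambda)}$ and $\psi_\lambda(e^1 \otimes \cdots \otimes e^m) := \rho_1(\psi^1_\lambda(e^1)) \cdots \rho_m(\psi^m_\lambda(e^m))$ (well defined and ccp because the ranges of the $\rho_i$ commute), the second point forces $\psi_\lambda$ to take values in $\mathbb S_{\mathbb X_{\mathcal F}}(M)$, while $\psi_\lambda \circ \phi_\lambda = \bigotimes_i(\psi^i_\lambda \circ \phi^i_\lambda)$ converges to $\id_M$ in the weak $M$-$M$-topology on elementary tensors and hence, by boundedness and density, everywhere. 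This exhibits the required $M$-nuclear approximation of $M \to \mathbb S_{\mathbb X_{\mathcal F}}(M)$, so $M$ is biexact relative to $\mathbb X_{\mathcal F}$, and the theorem follows from Theorem~\ref{thm-biexact}. I expect the main obstacle to be the two topological points of the second paragraph: controlling the telescoped commutators $[\rho_1(A_1)\cdots\rho_m(A_m), x']$ and checking that amplifying the approximating compacts and surrounding them by bounded operators on the other slots preserves convergence in the weak $(M,M')$-$(M,M')$-topology (in the spirit of Lemma~\ref{lem-biexact-ccp}(1)), so that the products of the operator-system-valued maps $\psi^i_\lambda$ genuinely land in $\mathbb S_{\mathbb X_{\mathcal F}}(M)$ rather than merely in $\B(L^2(M))$.
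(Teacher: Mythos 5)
Your reduction is the same as the paper's: both proofs form the family $\mathcal F=\{(M_i^c,E_i)\}_{i=1}^m$ with the slice-map expectations, identify the resulting boundary piece $\mathbb X_{\mathcal F}$ with the hereditary $C^*$-algebra generated by $\K(L^2(M_i))\otm\B(L^2(M_i^c))$, establish that $M$ is biexact relative to it, and then invoke Theorem~\ref{thm-biexact}. The one point of divergence is that the paper disposes of the crucial step --- that a tensor product of biexact von Neumann algebras is biexact relative to this boundary piece --- by citing (the proof of) \cite[Proposition 6.14]{DP23}, whereas you sketch a direct argument: tensoring the nuclear approximations of the factors, multiplying the operator-system-valued maps along the commuting amplifications $\rho_i$, and telescoping commutators against $M'=M_1'\ovt\cdots\ovt M_m'$ to land in $\mathbb S_{\mathbb X_{\mathcal F}}(M)$. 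That sketch is essentially the argument of \cite{DP23}, and the two obstacles you flag are the genuinely delicate ones: one must check that amplified compacts multiplied by bounded operators on the other slots, and products of nets converging in the weak $M_i$-$M_i$-topologies, still converge in the weak $(M,M')$-$(M,M')$-topology of $\B(L^2(M))$ (a Cauchy--Schwarz argument in the spirit of Lemma~\ref{lem-biexact-ccp}(1) plus a telescoping of the product is needed, and this is precisely the content carried out in \cite{DP23}). So your proposal is correct in strategy and matches the paper, but to be complete it either needs those two convergence statements proved in detail or, as the paper does, an appeal to \cite[Proposition 6.14]{DP23}.
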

\begin{proof}
	Define $\mathbb X$ as the hereditary $C^*$-algebra generated by 
	$$ \K(L^2(M_i))\otm \B(L^2(M_i^c)) \subset \B(L^2(M)),\quad i=1,\ldots,m.$$
Then it is easy to see that $\mathbb X$ is the norm closure of all the linear spans of 
	$$\K(L^2(M_i))\B(L^2(M))\K(L^2(M_j))\quad \text{for } i,j=1,\ldots,m,$$
where we identify $\K(L^2(M_i))\simeq \K(L^2(M_i))\otm \C1_{M_i^c}$ for all $i$. 
Define a $\sigma$-weakly dense $C^*$-subalgebra by
	$$M_0:=M_1\otm \cdots \otm M_m\subset M .$$
Then $M_0$ and $JM_0J$ are contained in the multiplier algebra of $\mathbb X$, hence $\mathbb X$ is a $M$-boundary piece. Then by (the proof of) \cite[Proposition 6.14]{DP23}, $M$ is biexact relative to $\mathbb X$.

For each $i$, fix a faithful normal state $\varphi_i \in (M_i)_\ast$ and definite a faithful normal conditional expectation $E_i:= \id\otimes\varphi_i\colon M \to M_i^c $. 
Put $\mathcal F:= \{ (M_i^c ,E_i) \}_{i=1}^m$ and define $\mathbb X_{\mathcal F}$ with the dense algebra $M_0 \subset M$. 
Then it is straightforward to show that $\mathbb X = \mathbb X_{\mathcal F}$. Thus, we get that $M$ is biexact relative to $\mathbb X_{\mathcal F}$, and the theorem follows by Theorem \ref{thm-biexact}.
\end{proof}

The next corollary follows by the same argument as in \cite[Theorem 7]{OP03}. Note that when $M$ is of type III, the case where $M=N$ was already proved in \cite{Is14,HI15,DP23}, hence the case where $M$ is of type III and $N$ is a subfactor is new. Therefore the assumption that each $M_i$ is properly infinite is not essential. 

\begin{cor}\label{cor-biexact-UPF}
	For each $1 \leq i \leq m\in \N$, let $M_i$ be a non-amenable, biexact, and properly infinite factor. Put $M:=M_1\ovt \cdots \ovt M_m$. Let $N=N_1\ovt \cdots \ovt N_n\subset M$ be a tensor product factor with expectation such that at most one of $N_j$ is amenable. Then $m\leq n$. If in addition $m=n$ and all $N_j$ are non-amenable, then $N'\cap M$ is atomic, and up to permutation and unitary conjugacy and reduction, we get $N_i \subset M_i$ for all $i$. 
\end{cor}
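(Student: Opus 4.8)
The plan is to run the unique prime factorization argument of \cite[Theorem 7]{OP03}, using Theorem~\ref{thm-biexact-UPF} as the analytic input in place of Ozawa's solidity theorem, and the intertwining calculus for inclusions with expectation from \cite{HI15,Is19} to carry out the bookkeeping in the properly infinite setting. First I would record the standard reductions: since each $M_i$ is properly infinite, every $M_i^c$ is properly infinite, and the relation $\preceq_M$ of Definition~\ref{def-intertwining1} is stable under amplification and reduction, so I may freely pass to corners and amplifications, which is what ``up to unitary conjugacy and reduction'' refers to. (The hypothesis that each $M_i$ is properly infinite is only a convenience, as noted before the statement: for general biexact factors one tensors with $\B(\ell^2)$.) Recall also that a non-amenable biexact factor is prime and solid, so each $M_i$ is genuinely indecomposable.

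The first substantial step is to produce, for each relevant $j$, a soft intertwining relation. For every $j$ we have $N_j'\cap M\supseteq N_j'\cap N=\overline{\bigotimes}_{k\neq j}N_k$, and since at most one of the $N_k$ is amenable this relative commutant is non-amenable whenever $n\geq 2$ (the cases $n\leq 1$ being trivial). Hence alternative (ii) of Theorem~\ref{thm-biexact-UPF} fails for $A=N_j$, and we obtain an index $\sigma(j)\in\{1,\dots,m\}$ with $N_j\preceq_M M_{\sigma(j)}^c$; that is, each $N_j$ ``avoids'' the direction $\sigma(j)$.

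The heart of the proof is an induction on $m$ that upgrades these soft relations into a rigid matching, following \cite[Theorem 7]{OP03}. Fixing $j$ (say $j=1$) with $N_1\preceq_M M_{\sigma(1)}^c$, the intertwining calculus of \cite{HI15,Is19} — composition of intertwiners, passage to corners, and the ``with expectation'' form of Popa's criterion — upgrades this to a unitary $u$ in a suitable amplification together with a reduction placing a corner of $N_1$ inside $M_{\sigma(1)}^c$, while $M_{\sigma(1)}$ is absorbed into $N_1'\cap M\supseteq\overline{\bigotimes}_{k\geq 2}N_k$. Arranging the conjugacy to be compatible with the tensor splitting $M=M_{\sigma(1)}\ovt M_{\sigma(1)}^c$, the complementary factors $N_2,\dots,N_n$ are carried (up to the same corner) into $M_{\sigma(1)}^c$, which is a tensor product of $m-1$ biexact factors; the inductive hypothesis applied to $\overline{\bigotimes}_{k\geq 2}N_k\subseteq M_{\sigma(1)}^c$ gives $m-1\leq n-1$, hence $m\leq n$. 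When $m=n$ and every $N_j$ is non-amenable, each inductive step matches exactly one $N$-factor with one $M$-factor and exhausts both lists simultaneously; after permuting the labels and conjugating one obtains $N_i\subseteq M_i$ up to reduction, and then $N'\cap M=\overline{\bigotimes}_i(N_i'\cap M_i)$ is a tensor product of atomic algebras, so $N'\cap M$ is atomic.

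I expect the main obstacle to be precisely this upgrade-and-induct step. In the tracial framework of \cite{OP03} the center-valued trace and scalar amplifications organize the reductions and guarantee that the conjugacy of $N_1$ into $M_{\sigma(1)}^c$ can be made to split off the entire factor $M_{\sigma(1)}$; reproducing this bookkeeping when only conditional expectations, and not a trace, are available forces one to invoke the ``with expectation'' intertwining theory of \cite{HI15,Is19} at every stage and to verify that the intermediate algebras produced by the induction remain subfactors with expectation, so that both the relation $\preceq_M$ and Theorem~\ref{thm-biexact-UPF} keep applying. The genuine analytic content, by contrast, is entirely absorbed into Theorem~\ref{thm-biexact-UPF}; what remains is this structural bookkeeping together with the elementary counting that drives the induction.
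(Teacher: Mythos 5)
Your overall route is the same as the paper's: the paper's entire proof of this corollary is the one sentence that it ``follows by the same argument as in \cite[Theorem 7]{OP03}'', with Theorem~\ref{thm-biexact-UPF} supplying the dichotomy, so your plan of running the Ozawa--Popa induction with the ``with expectation'' intertwining calculus of \cite{HI15,Is19} replacing the tracial bookkeeping is exactly what is intended.

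There is, however, a concrete problem in your counting step. The induction you set up consumes one $N_j$ and one $M_{\sigma(j)}$ per stage and reaches a contradiction precisely when the $M_i$'s are exhausted before the $N_j$'s; carried out correctly it therefore yields $n\le m$ (which is the conclusion of \cite[Theorem 7]{OP03}, where the $M_i$ are the distinguished prime-like factors and the $N_j$ are arbitrary), not $m\le n$. Your remark that ``the cases $n\le 1$ are trivial'' is where this surfaces: for $n=1$ the assertion $m\le n$ reads $m\le 1$, which fails for $N=M_1\otimes\C 1\subset M_1\ovt M_2$, a subfactor with expectation with $n=1$ and $N_1$ non-amenable. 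So either the inequality is to be read as $n\le m$ --- which is what your induction actually proves --- or an additional hypothesis is needed; as written, the step ``$m-1\le n-1$, hence $m\le n$'' does not follow from the argument you describe. Two smaller points: when $n=2$ and one $N_k$ is amenable, the relative commutant $N_j'\cap M$ of the \emph{other} tensor leg need not be non-amenable, so Theorem~\ref{thm-biexact-UPF} must be applied to the factor whose complementary leg is non-amenable, not to an arbitrary $N_j$; and since every properly infinite $M_i$ admits the degenerate decomposition $M_i\cong M_i\ovt\B(\ell^2)$, the single amenable tensor leg allowed by the hypothesis has to be absorbed or excluded for the count to be meaningful. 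Finally, in the last step you should justify why each $N_i'\cap M_i$ is atomic (any diffuse corner with expectation would have non-amenable relative commutant containing $N_i$, contradicting solidity of the biexact factor $M_i$) rather than only amenable.
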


\subsection{Examples}
\label{Examples_prime}

In this subsection, we construct concrete examples of group actions, which provides relatively solid factors.

\begin{lem}\label{lem-relative-solid-prime}
	Let $B\subset M$ be an inclusion of $\sigma$-finite von Neumann algebras with expectation. Assume that $B$ is amenable and $B\subset M$ is relatively solid. Then every non-amenable subfactor of $M$ with expectation is prime or McDuff. 
\end{lem}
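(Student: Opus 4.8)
The plan is to establish the stated dichotomy by showing that a non-amenable subfactor $N \subset M$ with expectation which is \emph{not} prime must be McDuff. Since $N$ is not prime, I would write $N = P \ovt Q$ with $P, Q$ diffuse; as $N$ is a factor and $\mathcal Z(N) = \mathcal Z(P) \ovt \mathcal Z(Q) = \C$, both $P$ and $Q$ are in fact (diffuse) factors. Fixing a faithful normal conditional expectation $E_N \colon M \to N$ and faithful normal states $\omega_P, \omega_Q$ on $P, Q$, the compositions $(\id_P \otimes \omega_Q) \circ E_N$ and $(\omega_P \otimes \id_Q) \circ E_N$, restricted to $1_N M 1_N$, exhibit both $P$ and $Q$ as with-expectation subalgebras of $1_N M 1_N$, so that relative solidity and Popa's intertwining are both applicable to them.

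The crux is the claim that at least one of $P, Q$ is amenable, which I would prove by the dichotomy $P \preceq_M B$ versus $P \not\preceq_M B$. If $P \preceq_M B$, then Definition \ref{def-intertwining1} furnishes a unital normal $*$-homomorphism $\pi \colon P \to f(B \ovt \B(H))f$ with $\pi(P)$ with expectation; since $B$ is amenable and $\B(H)$ is type $\rm I$, the algebra $f(B \ovt \B(H))f$ is injective, and as $P$ is a factor and $\pi$ is unital with $f \neq 0$, the map $\pi$ is injective. Hence $P \cong \pi(P)$ is a with-expectation subalgebra of an injective algebra, so $P$ is amenable. If instead $P \not\preceq_M B$, then relative solidity of $B \subset M$ applied with $p = 1_N$ and $A = P$ gives that $P' \cap 1_N M 1_N$ is amenable; since $Q \subset P' \cap 1_N M 1_N$ with expectation (restrict the conditional expectation $(\omega_P \otimes \id_Q) \circ E_N$ to this relative commutant), $Q$ is amenable. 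In either case one tensor factor is amenable.

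Say $Q$ is the amenable factor; being also diffuse, it is a diffuse injective factor, hence absorbs the hyperfinite $\rm II_1$ factor tensorially: $Q \cong Q \ovt R$. Therefore $N \ovt R = P \ovt Q \ovt R = P \ovt (Q \ovt R) \cong P \ovt Q = N$, so $N$ is McDuff, which completes the argument.

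The main obstacle I anticipate is twofold. First, the implication $P \preceq_M B \Rightarrow P$ amenable requires the care noted above: checking that tensoring with $\B(H)$ and compressing by $f$ preserve injectivity, and that $\pi$ is genuinely injective so that injectivity transfers back through the with-expectation copy $\pi(P)$. Second, the final step uses that a diffuse amenable factor is McDuff, i.e.\ tensorially absorbs $R$; in the $\sigma$-finite setting this rests on the Connes--Haagerup classification of amenable factors, and it is the only place where the fine structure of amenable factors is used. The remaining bookkeeping—working inside $1_N M 1_N$ for a possibly non-unital $N$ and verifying the with-expectation hypotheses for $P$, $Q$, and for $Q \subset P' \cap 1_N M 1_N$—is routine but should be recorded explicitly.
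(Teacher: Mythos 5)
Your proof is correct and follows essentially the same route as the paper's: relative solidity forces one tensor factor $P$ either to satisfy $P\preceq_M B$ (hence be amenable, via the with-expectation embedding into the injective algebra $f(B\ovt\B(H))f$) or to have amenable relative commutant containing $Q$, and in either case a diffuse amenable tensor factor makes $N$ McDuff. You merely spell out the two steps the paper leaves implicit ($P\preceq_M B\Rightarrow P$ amenable, and the Connes-classification input for absorption of $R$), which is a useful but not substantively different elaboration.
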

\begin{proof}
	Let $N\subset M$ be a non-amenable subfactor with expectation and assume that $N$ is not prime. 
Let $N = P\ovt Q$ be a tensor decomposition, and we may assume that $Q$ is not amenable. Then since $P'\cap M$ is not amenable as well, we get $P\preceq_M B$ and $P$ is amenable. Hence $N$ is a McDuff factor.
\end{proof}

\begin{exa}\upshape
	Let $\Gamma$ be a discrete biexact group and $H_\R$ a real Hilbert space. Consider  orthogonal representations $\pi\colon \Gamma \actson H_\R$ and $U\colon \R\actson H_\R$, and assume that they commute. Let $\Gamma(U_t,H_\R)''$ be the von Neumann algebra arising from the CAR functor, see \cite{HT70}. Since $\pi$ and $U$ commute, we have an associated action $\alpha\colon \Gamma\actson \Gamma(U_t,H_\R)''$, which can be defined via the extended representation of $\pi$ on the Fock space. 
Then since $\Gamma(U_t,H_\R)''$ is amenable, by Theorem \ref{thm-biexact-group}, the inclusion $\Gamma(U_t,H_\R)''\subset \Gamma(U_t,H_\R)''\rtimes \Gamma$ is relatively solid.

Assume further that $\pi$ has stable spectral gap, that is, $\pi\otimes \rho$ has no almost invariant vectors for all orthogonal representations $\rho$ of $\Gamma$. Then the extension of $\pi$ on the Fock space has no almost invariant vectors, hence it holds that the action $\alpha$ is strongly ergodic. This implies that the crossed product $\Gamma(U_t,H_\R)''\rtimes \Gamma$ is a full factor, see \cite{HI15b}. 
In particular, it is a prime factor by Lemma \ref{lem-relative-solid-prime}. Since $\Gamma(U_t,H_\R)''$ can be of type $\rm III_{\lambda}$ $(0<\lambda\leq 1)$, we are able to construct plenty of new examples of  prime type III factors.
\end{exa}

\begin{exa}\upshape
	Let $\Gamma$ be a discrete non-amenable group acting on a countable set $I$ with finite stabilizers. Let $(B_0,\varphi_0)$ be an amenable von Neumann algebra with a faithful normal state. 
Then the generalized Bernoulli action $\Gamma \actson \overline{\bigotimes}_{I}(B_0,\varphi_0)=:(B,\varphi)$ satisfies the assumptions on Theorem \ref{thm-Bernoulli} and the inclusion $L\Gamma \subset B\rtimes \Gamma$ is relatively solid. 

Assume further that $\Gamma$ is biexact and $\Gamma \actson N$ is any action on an amenable von Neumann algebra. Then by Theorem \ref{thm-biexact-group} and Corollary \ref{cor-Bernoulli}, the inclusion $N\subset (N\ovt B)\rtimes \Gamma$ is relatively solid. 
%If the action $\Gamma \actson N$ is state-preserving and strongly ergodic, then $(N\ovt B)\rtimes \Gamma$ is a full factor, hence it is a prime factor by Lemma \ref{lem-relative-solid-prime}.
\end{exa}

We next construct group actions on semifinite von Neumann algebras whose crossed products are of type III. We prove a lemma.

\begin{lem}\label{lem-dual-action}
	Consider the following setting.
\begin{itemize}
	\item Let $\theta \colon \R\actson (N,\Tr_N)$ be a centrally ergodic action on a type $\rm II_\infty$ von Neumann algebra with trace. I is trace scaling in the sense that $\theta_t (\Tr_N) = e^t \Tr_N$ for all $t\in \R$. 

	\item Let $\beta\colon \Gamma \actson (B,\tau_B)$ be an outer action of a discrete group on a $\rm II_1$ factor with trace.

	\item Let $\pi\colon \Gamma \to \R$ be a group homomorphism.

\end{itemize}
We consider following  actions 
\begin{align*}
	& \alpha \colon \Gamma \actson N\ovt B ;\quad \alpha_g = \theta_{\pi(g)} \otimes \beta_g\quad \text{for }g\in \Gamma;\\
	& \widehat{\alpha} \colon \Gamma \actson \mathcal Z(N)\ovt L^\infty(\R);\quad \widehat{\alpha}_g = \theta_{\pi(g)} \otimes \tau_{-\pi(g)}\quad \text{for }g\in \Gamma,
\end{align*}
where $\tau_{t}$ is given by $\tau_t(f)(s)=f(s-t)$ for all $f\in L^\infty(\R)$ and $s,t\in \R$. 
Then the crossed product $M=(N\ovt B)\rtimes_\alpha \Gamma$ is a diffuse factor, whose flow of weight coincides with 
	$$\id\otimes \tau\colon \R \actson [\mathcal Z(N)\ovt L^\infty(\R)]^{\widehat{\alpha}}.$$
In particular, the following assertions hold.
\begin{enumerate}

	\item If $N$ is a type $\rm II_\infty$ factor, then $M$ is of
\begin{itemize}

	\item a type $\rm II_\infty$ factor if $\pi(\Gamma)=\{0\}$;

	\item a type $\rm III_\lambda$ factor if $\pi(\Gamma) = T \Z$, where $T>0$ and $\lambda=e^{-T}$;

	\item a type $\rm III_1$ factor if $\pi(\Gamma)\leq \R$ is a dense subgroup.

\end{itemize}

	\item If $\pi(\Gamma)\leq \R$ is a dense subgroup and $\mathcal Z(N)$ is diffuse, then the flow of weight coincides with the restriction of $\theta$ on $\mathcal Z(N)$.

\end{enumerate}
\end{lem}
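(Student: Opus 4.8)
The plan is to compute the \emph{flow of weights} of $M$, since this single invariant simultaneously encodes factoriality (the flow is ergodic iff $M$ is a factor) and the type. Because the flow of weights does not depend on the chosen weight, I would work with the continuous core of $M=P\rtimes_\alpha\Gamma$ relative to the dual weight $\varphi$ of the trace $\Tr:=\Tr_N\otimes\tau_B$ on $P:=N\ovt B$. Since $\alpha_g=\theta_{\pi(g)}\otimes\beta_g$ scales $\Tr$ by $\delta(g):=e^{\pi(g)}$, the modular automorphism group of $\varphi$ fixes $P$ pointwise and acts on the implementing unitaries $\lambda_g$ by $\sigma^\varphi_t(\lambda_g)=\delta(g)^{it}\lambda_g=e^{it\pi(g)}\lambda_g$. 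Thus the core $C_\varphi(M)=M\rtimes_{\sigma^\varphi}\R$ is generated by $P$, the $\lambda_g$, and the modular unitaries $u_s$ ($s\in\R$), subject to $\lambda_g x\lambda_g^*=\alpha_g(x)$ for $x\in P$, $[u_s,P]=0$, and $u_s\lambda_gu_s^*=e^{is\pi(g)}\lambda_g$.

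The key manoeuvre is to reorganize this double crossed product. The unitaries $\{u_s\}_{s\in\R}$ commute with $P$ and generate a copy of $L\R\cong L^\infty(\R)$; under Fourier duality $u_s$ corresponds to the character $\xi\mapsto e^{is\xi}$, and the relation $\lambda_g u_s\lambda_g^*=e^{-is\pi(g)}u_s$ becomes translation by $\pi(g)$ on $L^\infty(\R)$. Hence I would identify
\[
C_\varphi(M)\;\cong\;\bigl(N\ovt B\ovt L^\infty(\R)\bigr)\rtimes_\gamma\Gamma,\qquad \gamma_g=\theta_{\pi(g)}\otimes\beta_g\otimes\tau_{\pi(g)},
\]
and the canonical trace-scaling dual flow $\theta^\varphi_r$ (which fixes $P$ and the $\lambda_g$ and multiplies $u_s$ by a phase $e^{irs}$) becomes $\id\otimes\id\otimes\tau_{-r}$, commuting with the $\Gamma$-action because translations commute.

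Next I would compute $\mathcal Z(C_\varphi(M))$ together with the restriction of the dual flow. Since $\beta$ is outer on the type $\rm II_1$ factor $B$, each $\gamma_g$ ($g\neq e$) is properly outer on $N\ovt B\ovt L^\infty(\R)$ (an inner implementer on an invariant central corner would, after slicing off the $N$- and $L^\infty(\R)$-variables, realize $\beta_g$ as inner). For a properly outer action the center of the crossed product is the fixed-point algebra of the center, and as $B$ is a factor $\mathcal Z(N\ovt B\ovt L^\infty(\R))=\mathcal Z(N)\ovt L^\infty(\R)$; therefore
\[
\mathcal Z(C_\varphi(M))=\bigl[\mathcal Z(N)\ovt L^\infty(\R)\bigr]^{\,\theta_{\pi(g)}\otimes\tau_{\pi(g)}},
\]
on which the dual flow acts by $\id\otimes\tau$. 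After the reflection $\xi\mapsto-\xi$ of $L^\infty(\R)$ (which interchanges $\tau_{\pi(g)}$ with $\tau_{-\pi(g)}$ and reverses the flow, reconciling the Fourier sign conventions with the convention $\tau_t(f)(s)=f(s-t)$ in the statement), this is exactly $\id\otimes\tau$ on $[\mathcal Z(N)\ovt L^\infty(\R)]^{\widehat\alpha}$, proving the flow-of-weights assertion.

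Finally, $M$ is diffuse because it contains the diffuse algebra $B$, and $M$ is a factor iff its flow of weights is ergodic; composing $\widehat\alpha_g$ with $\id\otimes\tau_{\pi(g)}$ shows the joint group is generated by $\theta_{\pi(g)}\otimes\id$ and the (already ergodic) translation flow $\id\otimes\tau_r$, so ergodicity reduces to ergodicity of $\{\theta_{\pi(g)}\}_{g}$ on $\mathcal Z(N)$, which follows from central ergodicity of $\theta$ (in the dense case by continuity of the flow, and in the factor case trivially). The two corollaries then drop out by evaluating the fixed-point flow: for $N$ a factor one has $[\C\ovt L^\infty(\R)]^{\widehat\alpha}=L^\infty(\R)^{\tau_{-\pi(\Gamma)}}$, giving translation on $\R$, on $\R/T\Z$, or on a point according to whether $\pi(\Gamma)$ is $\{0\}$, $T\Z$, or dense; and for $\pi(\Gamma)$ dense with $\mathcal Z(N)$ diffuse, solving $\theta_{\pi(g)}(F(s+\pi(g)))=F(s)$ via $G(s):=\theta_s(F(s))$ identifies the invariant algebra with $\mathcal Z(N)$ and the flow with $\theta$ itself. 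I expect the main obstacles to be the reorganization of the core as a single crossed product with the correct cocycle-free action (and the attendant Fourier and orientation bookkeeping) and the verification of proper outerness needed for the center computation; the factoriality/ergodicity reduction is then routine.
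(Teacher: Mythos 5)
Your proposal is correct and follows essentially the same route as the paper: both compute the modular action of the dual weight of $\Tr_N\otimes\tau_B$, flip the double crossed product to rewrite the core as $(N\ovt B\ovt L\R)\rtimes\Gamma$, pass to $L^\infty(\R)$ by Pontryagin duality so the phase action becomes translation, use proper outerness of $\beta$ to identify $\mathcal Z(C_\varphi(M))$ with $[\mathcal Z(N)\ovt L^\infty(\R)]^{\widehat\alpha}$ carrying the translation flow, and then read off items (1) and (2) (your change of variables $G(s)=\theta_s(F(s))$ is the paper's Fell-absorption step). The only differences are cosmetic: you handle the Fourier sign convention explicitly, and you are if anything slightly more careful than the paper in noting that ergodicity of $\{\theta_{\pi(g)}\}_g$ on $\mathcal Z(N)$ is what factoriality actually requires.
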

\begin{proof}
	Since $\beta$ is properly outer, so is the diagonal action $\alpha$. Since $\alpha$ is centrally ergodic, $M$ is a factor. Since $N\ovt B \subset M$ is with expectation and $N\ovt B$ is diffuse and properly infinite, it holds that $M$ is also diffuse and properly infinite.

	Let $\Tr:=\Tr_N\otimes \tau_B$ and observe that $\alpha$ is $\Tr$-scaling in the sense that $\alpha_g(\Tr) = e^{\pi(g)}\Tr$ for $g\in \Gamma$. Let $\varphi:=\widehat{\Tr}$ be the dual weight on $M$. Then the modular action $\sigma^{\widehat{\varphi}}$ satisfies 
	$$\sigma^{\widehat{\varphi}}_t|_{N\ovt B} = \sigma^\varphi_t,\quad \sigma^{\widehat{\varphi}}(\lambda^{\Gamma}_g) =\lambda_g^\Gamma [D\Tr\circ \alpha_g : D\Tr]_t = e^{-\ri t\pi(g) }\lambda_g^\Gamma \quad \text{for } g\in \Gamma, \ t\in \R,$$
where $[D\Tr\circ \alpha_g : D\Tr]_t$ is the Connes cocycle and $\lambda_g^\Gamma$ is the canonical unitary in $L\Gamma$ corresponding to $g\in\Gamma$. Then by flipping $\ell^2(\Gamma)$ and $L^2(\R)$, there is the following canonical $\ast$-isomorphism:
	$$C_{\widehat{\varphi}}(M)= [(N\ovt B)\rtimes_\alpha \Gamma]\rtimes_{\sigma^{\widehat{\varphi}}} \R \simeq  [N\ovt B\ovt L\R]\rtimes_{\widehat{\alpha}}\Gamma $$
where the action $\widehat{\alpha}\colon \Gamma \actson N\ovt B\ovt L\R$ is given by 
	$$\widehat{\alpha}_g|_{N\ovt B} = \alpha_g,\quad \widehat{\alpha}_g(\lambda^{\R}_t) =  e^{\ri t\pi(g) }\lambda_t^\R \quad \text{for } g\in \Gamma, \ t\in \R,$$
where  $\lambda_t^\R$ is the canonical unitary in $L\R$. Under the natural isomorphism $L\R \ni \lambda_t^\R \mapsto e^{\ri t \cdot}\in L^\infty(\R)$ by the Pontryagin duality, $\widehat{\alpha}_g$ on $L\R$ coincides with $\tau_{\pi(g)}$. 
Since $\beta$ is properly outer, $\widehat{\alpha}$ is properly outer as well, hence it holds that
	$$ \mathcal Z(C_\varphi(M)) \simeq \mathcal Z(N\ovt B \ovt L^\infty(\R))^{\widehat{\alpha}} = [\mathcal Z(N)\ovt L^\infty(\R)]^{\widehat{\alpha}}.$$
Then the dual action on $\mathcal Z(C_\varphi(M)) $ coincides with the translation $\tau$ on $L^\infty(\R)$ on the right hand side, so the flow of weight of $M$ has the desired identification.

	(1) Since $N$ is a factor, the flow of weight of $M$ is the translation on $L^\infty(\R)^{\widehat{\alpha}}$. By seeing the range $\pi(\Gamma)\leq \R$, the conclusion easily follows.

	(2) Let $\gamma\colon \R \actson \mathcal Z(N)\ovt L^\infty(\R)$ be given by $\gamma_t = \theta_t \otimes \tau_{-t}$ for $t\in \R$. Then since $\widehat{\alpha}_g = \theta_{\pi(g)}\otimes \tau_{\pi(g)}$ for $g\in \Gamma$ and since $\pi(\Gamma)\leq \R$ is dense, the fixed points of the action $\widehat{\alpha}\colon \Gamma \actson \mathcal Z(N)\ovt L^\infty(\R)$ coincides with the ones of $\gamma$. Then since $\tau_t$ is the translation, by Fell's absorption trick, the flow of weight of $M$ coincides with the $\theta\otimes \id \colon \R\actson \mathcal Z(N)\ovt \C 1_{L^\infty(\R)}$. 
\end{proof}

\begin{exa}\upshape
	Let $N_0$ be the unique amenable type $\rm III_1$ factor and consider the trace scaling action $\theta \colon \R\actson N$ on the continuous core $N=C(N_0)$. Let $\mathbb F_m \to \R$ be any homomorphism for some $m\geq 2$, and $\mathbb F_m \actson N$ any outer action on the amenable type $\rm II_1$ factor (e.g.\ Bernoulli shift action). Then the crossed product $M=(N\ovt B)\rtimes_\alpha \mathbb F_m$ in Lemma \ref{lem-dual-action}(1) provides examples of actions that satisfy the assumptions on Corollary \ref{thm-biexact-group}. Then the crossed product can be a factor of type $\rm II_\infty$ or $\rm III_\lambda$ $(0<\lambda \leq 1)$.

	Let $\gamma \colon \R \actson L^\infty(X)$ be any ergodic continuous action such that it is not conjugate to translations of $\R \actson \R/\Lambda$ for all closed subgroups $\Lambda \leq \R$.  Then there exists a unique amenable type $\rm III_0$ factor whose flow of weight is $\gamma$ \cite{Kr75, Co75}. We denote by $\theta \colon \R\actson N$ the dual action on the continuous core of the factor, so that the restriction of $\theta$ on the center of $N$ is $\gamma$. 
Then using the same construction as in the last paragraph and by Lemma \ref{lem-dual-action}(2), we can construct examples of type $\rm III_0$ factors whose flow of weight is $\gamma$. 

We note that all these examples are McDuff. Indeed by construction all above $N$ has a decomposition $N = R\ovt N_{00}$, where $R$ is the amenable $\rm II_1$ factor, such that $\theta$ acts trivially on $R$. Hence $M$ also has a similar tensor decomposition. By Corollary \ref{thm-biexact-group}, if $M = P\ovt Q$ and $Q$ is of type III, then since $Q\not\preceq_M N\ovt B$ (because $N\ovt B$ is semifinite), $P$ satisfies $P\preceq_M N\ovt B$. This shows that $P$ is amenable and semifinite. Hence $M$ does not have any tensor decomposition with an amenable type III factor, while it dose admit a tensor decomposition with the amenable $\rm II_1$ factor. 
\end{exa}

\section{Galois correspondences}
\label{Galois correspondences}

Let $G$ be a totally disconnected group and $K_0\leq G$ a compact open subgroup. We say that an action $\alpha\colon G \actson B$ on a diffuse factor is \textit{properly outer relative to $K_0$} if $\alpha$ satisfies the following property: for any $g\in G$, if there exists nonzero $b\in B$ such that $\alpha_g(x)b=bx$ for all $x\in B^{K_0}$, then $g$ is contained in $K_0$. In this case, for each open subgroup $K\leq K_0$, we have $(B^K)'\cap (B\rtimes G) = LK$  \cite[Proposition 4.8]{BB17}.

\subsection{Proof of Theorem \ref{thmF}}

Let $G\actson^\alpha B$ be an action of a locally compact group on a von Neumann algebra $B$, and put $M=B\rtimes G$. Let $H\leq G$ be an open subgroup and assume that $B^H\subset B\rtimes H$ is with expectation $E_{B^H}$. We extend it to $M$ by composing the canonical conditional expectation $E_{B\rtimes H}\colon M \to B\rtimes H$. Let $q\in B^H$ be a projection such that $qB^Hq$ has no direct summand that is semifinite and properly infinite. 
Let $\varphi_{qB^Hq}$ be a faithful normal state on $qB^Hq$ such that it is a trace on the finite direct summand. Put $\varphi:= \varphi_{qB^Hq}\circ E_{B^H}$. Then by Theorem \ref{thm-Marrakchi}, the unique conditional expectation 
	$$E:=E_{(qB^Hq)'\cap qMq} \colon qMq\to  (qB^Hq)'\cap qMq ,$$
which preserves  $\varphi$, is contained in $\D(qB^Hq \subset qMq)$.

\begin{lem}\label{lem-Galois}
	Keep the setting and let $B\subset N\subset M$ be an intermediate von Neumann subalgebra. 
\begin{enumerate}
	\item If $x\in M$ satisfies $qE_{B\rtimes H}( x )q \neq 0$, then there exists $b\in qBq$ such that $ E(q bx q) \neq 0 $.

	\item If $g\in G$, $x_0\in N$, and a projection $p\in LH$ satisfies $qE_{B\rtimes H}( \lambda_g^* x_0p )q \neq 0$, then there exists $b\in B$ such that
	$$ E(q\lambda_g^* bx_0q)\in  \lambda_g^* N\quad \text{and}\quad E(q\lambda_g^* bx_0q)p\neq0.$$
If $p$ commutes with $N \cap (B^H)'$, then the partial isometry $v$ given by the polar decomposition of $E(q\lambda_g^* bx_0q)$ satisfies
	$$v \in (q\lambda_g^*Nq) \cap (qB^Hq)'\quad\text{and}\quad vp\neq0.$$

\end{enumerate}
\end{lem}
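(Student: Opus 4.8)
The plan is to derive both items from two structural facts about $E$. First, since $E\in\D(qB^Hq\subset qMq)$, for every $y\in qMq$ we have $E(y)\in\mathcal K(y,\Ad_{qB^Hq})$, the $\sigma$-weakly closed convex hull of $\{uyu^*\mid u\in\mathcal U(qB^Hq)\}$. Second, each $u\in\mathcal U(qB^Hq)$ lies in $B\rtimes H$ and hence commutes with $E_{B\rtimes H}$, so passing to convex combinations and $\sigma$-weak limits gives the intertwining relation $E_{B\rtimes H}\circ E=E'\circ E_{B\rtimes H}$ on $qMq$, where $E':=E|_{q(B\rtimes H)q}$ is the $\varphi$-preserving conditional expectation of $q(B\rtimes H)q$ onto $(qB^Hq)'\cap q(B\rtimes H)q$. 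I will also use that, since $B^H$ is the fixed-point algebra and $\lambda_h$ implements $\alpha_h$, the algebra $LH$ commutes with $B^H$; in particular $q$ commutes with $LH$ and $qLHq\subseteq(qB^Hq)'\cap q(B\rtimes H)q=\operatorname{Ran}(E')$.

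For item (1) I argue by contraposition: assuming $E(qbxq)=0$ for all $b\in qBq$, I will show $z:=qE_{B\rtimes H}(x)q=0$. Applying $E_{B\rtimes H}$, the intertwining relation together with $E_{B\rtimes H}(qbxq)=bz$ (valid as $b\in qBq\subset B\rtimes H$) gives $E'(bz)=0$ for all $b\in qBq$; since $E'$ is $\varphi$-preserving, $\varphi(wbz)=\varphi(wE'(bz))=0$ for all $w\in\operatorname{Ran}(E')$ and $b\in qBq$. The crux is that $\operatorname{span}\{wb\mid w\in qLHq,\ b\in qBq\}$ is $\sigma$-weakly dense in $q(B\rtimes H)q$: as $q$ commutes with $LH$ this span equals $q\,\operatorname{span}(LH\cdot B)\,q$, a $*$-subalgebra, and $LH$ together with $B$ generate $B\rtimes H$. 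Faithfulness of $\varphi$ on $q(B\rtimes H)q$ (it is $\varphi_{qB^Hq}\circ E_{B^H}$) then forces $z=0$, which is the desired contradiction.

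For item (2) I apply item (1) to $x:=\lambda_g^*x_0p$, whose compression $qE_{B\rtimes H}(x)q$ is nonzero by hypothesis, obtaining $b'\in qBq$ with $E(qb'\lambda_g^*x_0pq)\neq0$. Setting $b:=\alpha_g(b')\in B$ turns $qb'\lambda_g^*$ into $q\lambda_g^*b$; and since $p$ commutes with $q$ and $pq\in\operatorname{Ran}(E)$, the module property of $E$ yields $E(q\lambda_g^*bx_0pq)=E(q\lambda_g^*bx_0q)\,p$, so $w_0:=E(q\lambda_g^*bx_0q)$ satisfies $w_0p\neq0$, the second assertion. For the first, $bx_0\in N$, and in the averaging description each generator $u(q\lambda_g^*bx_0q)u^*=\lambda_g^*\big(\alpha_g(u)\,bx_0\,u^*\big)$ lies in $\lambda_g^*N$ (using $\alpha_g(u),u^*\in B^H\subseteq N$); since $\lambda_g^*N$ is $\sigma$-weakly closed, $w_0\in\lambda_g^*N$, and combined with $w_0\in\operatorname{Ran}(E)=(qB^Hq)'\cap qMq$ this gives $w_0\in q\lambda_g^*Nq\cap(qB^Hq)'$.

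It remains to analyze the polar decomposition $w_0=v|w_0|$. Writing $w_0=\lambda_g^*n_0$ with $n_0\in N$ shows $|w_0|=(n_0^*n_0)^{1/2}\in N$ and $v=\lambda_g^*v_0$ with $v_0\in N$ the polar part of $n_0$, so $v\in q\lambda_g^*Nq$; and since $w_0\in(qB^Hq)'$ and the latter is a von Neumann algebra, both $|w_0|$ and $v=\lim_{\varepsilon\to0}w_0(w_0^*w_0+\varepsilon)^{-1/2}$ lie in $(qB^Hq)'$, hence $|w_0|\in N\cap(qB^Hq)'$. Identifying $(qB^Hq)'\cap qMq$ with $q\big((B^H)'\cap M\big)q$ and using $pq=qp$, the hypothesis that $p$ commutes with $N\cap(B^H)'$ makes $p$ commute with $|w_0|$; in any case, since $\ker v=\ker|w_0|$, the relation $w_0p\neq0$ forces $|w_0|p\neq0$ and therefore $vp\neq0$. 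I expect the main obstacle to be the density-and-faithfulness step closing item (1), and, in item (2), simultaneously placing $v$ in $q\lambda_g^*Nq$ and $(qB^Hq)'$ while correctly tracking the compressions by $q$—in particular the identification of relative commutants under compression.
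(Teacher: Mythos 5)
Your proof is correct and follows essentially the same route as the paper: item (1) is the contrapositive of the paper's direct argument (both hinge on faithfulness and normality of $\varphi$ on $q(B\rtimes H)q=qBq\rtimes H$, the density of $\operatorname{span}(LH\cdot B)$, and the fact that $E$ commutes with $E_{B\rtimes H}$ and preserves $\varphi$), and item (2) uses the same averaging observation $u(q\lambda_g^*yq)u^*\in\lambda_g^*N$ followed by the polar decomposition. One harmless slip: $\alpha_g(u)$ lies in $B^{gHg^{-1}}$, not in $B^H$, but all you need is $\alpha_g(u)\in B\subseteq N$, so the conclusion stands.
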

\begin{proof}
	(1) Put $X:=E_{B\rtimes H}( q x q )  \neq 0$. Since $\varphi$ is faithful and normal on $q(B\rtimes H)q = qBq\rtimes H$, there exists $b\in qBq$ and $f\in C_c(H)$ such that $0\neq \varphi(fb X )$. Then we have
\begin{align*}
	0
	\neq \varphi(fb X )
	&= \varphi(fbE_{q(B\rtimes H)q}(q xq ) )
	= \varphi(E_{q(B\rtimes H)q}(f bx  q))\\
	&= \varphi(E(f bx q ))
	= \varphi(qfE(qbxq)).
\end{align*}
We get $E(qbxq)\neq 0$.

	(2) Putting $x=\lambda_g^* x_0 p$, we apply (1) and find $b_0\in B$ such that 
	$$0\neq E(qb_0\lambda_g^* x_0 pq) = E(q\lambda_g^* \alpha_g(b_0)x_0 q)pq = 
E(q\lambda_g^* b x_0 q)p,\quad \text{where $b=\alpha_g(b_0)$}.$$
We get the second condition. For the first condition, for each $y\in N$ and $u\in \mathcal U(qB^Hq)$,
	$$\lambda_g \Ad(u)(q\lambda_g^* yq ) = \lambda_g u \lambda_g^* qy u^*q  =  \alpha_g(u) y u^* \in N.$$
Since $E\in \D(qB^Hq \subset qMq)$, this implies $\lambda_gE(q\lambda_g^* yq)\in N$. Putting $y= bx_0$, we get the first condition.

	Put $a := E(qu_g^* bx_0q) \in (\lambda_g^* \alpha_g(q)Nq)\cap (qB^Hq)'$ and let $a= v|a|$ be the polar decomposition. Then it is easy to see that $v\in (\lambda_g^* \alpha_g(q)Nq)\cap (qB^Hq)'$ and $|a|\in (qNq)\cap (qB^Hq)'= q(N'\cap B^H)q$. If $p$ commutes with $N'\cap B^H$, then it commutes with $|a|$. In particular, $0\neq ap = vp (p|a|p)$ is the polar decomposition and we get $vp\neq 0$. 
\end{proof}

\begin{proof}[\bf Proof of Theorem \ref{thmF}]
	We follow the proof of \cite[Theorem 1.2]{BB17}. The property that $B$ is semifinite is only used in Steps 1 and 2 of that proof. So to prove Theorem \ref{thmF}, we have only to show the following claim. For each compact subgroup $K \leq G$, let $p_K\in LK$ be the central and minimal projection defined by
	$$ p_K :=\frac{1}{\mu_G(K)}\int_{K} \lambda_g \, d\mu_G(g), $$
where $\mu_G$ is a fixed left Haar measure on $G$.

\begin{claim}
	Let $K\leq K_0 $ be a compact open subgroup and let $g\in G$. If there exists $x_0\in N$ such that
	$$  E_{B\rtimes K}(\lambda_g^* x_0)p_K  =E_{B\rtimes K}(\lambda_g^* x_0p_K)  \neq 0,$$
then we have $\lambda_gp_K \in (N)_1 p_K$.
\end{claim}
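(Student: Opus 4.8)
The plan is to specialise the framework set up just before Lemma~\ref{lem-Galois} to the choice $H=K$. Since $K$ is compact and open, averaging over $K$ produces a faithful normal conditional expectation $B\rtimes K\to B^K$, so that inclusion is with expectation and the framework applies. In the main (and new) case, where $B^K$ has no semifinite properly infinite direct summand -- which includes the type $\mathrm{III}$ situation that is the whole point -- I would take $q=1$, so that $E:=E_{(B^K)'\cap M}\in\D(B^K\subset M)$ is the $\varphi$-preserving conditional expectation supplied by Theorem~\ref{thm-Marrakchi}, and Lemma~\ref{lem-Galois} is available with $q=1$.

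First I would feed the hypothesis directly into Lemma~\ref{lem-Galois}(2) with $p=p_K$. Its standing requirement that $p_K$ commute with $N\cap(B^K)'$ is automatic: by \cite[Proposition 4.8]{BB17} one has $(B^K)'\cap M=LK$, hence $N\cap(B^K)'=N\cap LK\subset LK$, while $p_K\in\mathcal Z(LK)$. The hypothesis $E_{B\rtimes K}(\lambda_g^*x_0p_K)\neq0$ is exactly the nonvanishing needed, so Lemma~\ref{lem-Galois}(2) yields a partial isometry $v\in(\lambda_g^*N)\cap(B^K)'$ with $vp_K\neq0$; since $(B^K)'\cap M=LK$, this forces $v\in LK$ and $\lambda_g v\in N$.

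Next I would compress to the trivial-representation corner. As $K$ is compact, $p_K$ is the minimal central projection of $LK$ onto the trivial representation, so the associated character $\pi_0\colon LK\to\C$ satisfies $xp_K=\pi_0(x)p_K$ for all $x\in LK$. Writing $\mu:=\pi_0(v)$ gives $vp_K=\mu p_K$, and $vp_K\neq0$ forces $\mu\neq0$; moreover $\pi_0$ is a $*$-homomorphism, so $|\mu|^2=\pi_0(v^*v)\in\{0,1\}$ because $v^*v$ is a projection, whence $|\mu|=1$. Setting $n:=\mu^{-1}\lambda_g v\in N$, which is a partial isometry and so satisfies $\|n\|\le1$, I would compute $np_K=\mu^{-1}\lambda_g(vp_K)=\lambda_g p_K$. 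Thus $\lambda_g p_K=np_K\in(N)_1p_K$, which is the claim.

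The remaining point, and the one I expect to be the main obstacle, is the case where $B^K$ carries a semifinite properly infinite summand, which forces a proper projection $q\in B^K$ (taken finite, with $qE_{B\rtimes K}(\lambda_g^*x_0p_K)q\neq0$) in order to invoke the framework. Running the argument above verbatim then delivers only $\lambda_g p_K q\in(N)_1p_K$, with $v^*v=e\in qLK$ and $ep_K=qp_K$ controlling the supports. To recover the full statement I would patch over a partition $1=\sum_i q_i$ of $B^K$ into orthogonal projections equivalent to subprojections of $q$ -- possible since minimality of $K_0\actson B$ makes $B^K$ a factor -- transporting the local solutions by partial isometries $u_i\in B^K$ and using $\alpha_g(u_i)=\lambda_gu_i\lambda_g^*\in B\subset N$ to remain inside $N$. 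Because the resulting elements $\mu_i^{-1}\lambda_g v_i$ have mutually orthogonal source projections (below $q_i$) and mutually orthogonal range projections (below $\alpha_g(q_i)$), their strong sum is again a contraction and lies in the strongly closed set $(N)_1p_K$, with compression by $p_K$ equal to $\lambda_g p_K$. The delicate steps are verifying that each piece genuinely satisfies the Lemma hypothesis (equivalently, transporting the partial isometry $v$ across the equivalent projections) and the norm bookkeeping that keeps the infinite sum a contraction; alternatively, on a semifinite summand one may simply fall back on the trace-based argument of \cite{BB17}.
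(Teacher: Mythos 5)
Your argument for the central case is essentially the paper's own proof: specialise the framework to $H=K$, apply Lemma~\ref{lem-Galois}(2) with $p=p_K$ (the commutation hypothesis being automatic since $(B^K)'\cap M=LK$ and $p_K\in\mathcal Z(LK)$), and use minimality of $p_K$ in $LK$ to write $vp_K=c\,p_K$ with $|c|=1$, whence $\lambda_gp_K=(c^{-1}\lambda_gv)p_K\in (N)_1p_K$. Two points of divergence are worth noting. First, your justification that $B^K\subset B\rtimes K$ is with expectation is imprecise: averaging $\Ad(\lambda_k)$ over $K$ lands in the fixed points of $\Ad(\lambda_K)$, which contain $B^K\vee\mathcal Z(LK)$ and are strictly larger than $B^K$ in general; the paper instead extends the averaging expectation $B\to B^K$ to $B\rtimes K\to B^K\ovt LK$ and then composes with a (necessarily non-canonical) expectation $B^K\ovt LK\to B^K$, which is why it explicitly remarks that the resulting expectation is not canonical. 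The existence is true, so this is a gap in justification rather than in substance. Second, for the semifinite properly infinite summand the paper's route is much lighter than your patching: it simply observes that $\lambda_gqp_K\in(N)_1p_K$ for all sufficiently large finite projections $q\in B^K$ and lets $q\nearrow 1$, using that $(N)_1p_K$ is $\sigma$-weakly compact. Your transport argument does work --- the pieces $\alpha_g(u_i^*)n_0u_i$ have sources under $q_i$ and ranges under $\alpha_g(q_i)$, so the cross terms vanish and the strong sum is a contraction --- but it is considerably more bookkeeping than necessary, and the fallback to the trace-based argument of \cite{BB17} is also legitimate there.
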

\begin{proof}[{\bf Proof of Claim}]
	Since $K$ is compact, the inclusion $B^K \subset B$ is with expectation $E_{B^K}$. Since $E_{B^K}$ commutes with the group action of $K$, we can extend it to $B^K\ovt LK\subset B\rtimes K$. Since $B^K \subset B^K\ovt LK$ is with expectation, it holds that $B^K \subset B\rtimes K$ is with expectation. In particular, we can use Lemma \ref{lem-Galois}. 
We note that the resulting expectation is not a canonical one, as the inclusion $B^K \subset B^K\ovt LK$ does not have any canonical expectation. 

Since $(B^K)'\cap B=\C$ by the minimality assumption, $B^K$ is a factor. If it is semifinite and properly infinite, then take a finite projection $q\in B^K$ such that 
	$$  qE_{B\rtimes K}(\lambda_g^* x_0)p_Kq  \neq 0.$$
In the other case, we simply put $q=1$. Then we apply Lemma \ref{lem-Galois}(2). Since  $(B^K)'\cap M \subset LK$ and since $p_K$ is in the center of $LK$, we can use the last statement of that lemma, and find a partial isometry $v \in (q\lambda_g^*Nq) \cap (LKq)$ with $vp_K\neq0$. 
Then since $p_K$ is central and minimal, there exists $c\in \C$ such that $vp_K = c \, qp_K$. Since $vp_K$ and $qp_K$ are partial isometries, we have $|c|=1$. Since $\lambda_g v\in N$, we conclude that 
	$$ \lambda_g qp_K =(c^{-1}\, \lambda_g v)p_K \in (N)_1p_K.$$
Finally if $B$ is semifinite and properly infinite, then since $\lambda_g qp_K\in (N)_1p_K$ for all finite projections $q\in B^K$ sufficiently large, we get $\lambda_gp_K \in (N)_1p_K$. 
\end{proof}
As explained above, using the claim, we can follow the proof of \cite[Theorem 1.2]{BB17}.
\end{proof}

\subsection{Examples}
\label{Examples_Galois}

The next proposition is a slight generalization of \cite[Proposition 4.10]{BB17}.

\begin{prop}\label{prop-Galois}
	Let $G$ be a totally disconnected group and $K\leq G$ a compact open subgroup such that $\bigcap_{g\in G} gKg^{-1} = \{e\}$. Let $(B_0,\varphi_0)$ be a diffuse factor with a faithful normal state. Consider the generalized Bernoulli action
	$$\alpha\colon G\actson \overline{\bigotimes}_{G/K}(B_0,\varphi_0) =:(B,\varphi) ,$$
induced from the left translation action $G\actson G/K$. Then, for each $\sigma$-finite factor $N$, the action $\beta:=\alpha\otimes \id_N \colon G \actson B\ovt N$ is  properly outer relative to $K$ and $\beta|_K$ is minimal.
\end{prop}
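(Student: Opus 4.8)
The plan is to reduce both assertions to the case $N=\mathbb{C}$, where they amount to \cite[Proposition 4.10]{BB17} for the pure Bernoulli action $\alpha\colon G\actson B$, and to obtain the reduction from the factoriality of $N$. The key structural observation is that $\beta_k=\alpha_k\otimes\id_N$ acts trivially on the second tensor leg, so that $(B\ovt N)^K=B^K\ovt N$, where $B^K$ denotes the fixed-point algebra of $\alpha|_K$; moreover $B\ovt N$ is a diffuse factor (as $B$, a generalized Bernoulli algebra over the factor $B_0$, is one and $N$ is a factor), so the statement makes sense.

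For proper outerness, suppose $g\in G$ and $0\neq b\in B\ovt N$ satisfies $\beta_g(x)b=bx$ for all $x\in (B\ovt N)^K=B^K\ovt N$. Applying this to $x=1\otimes n$ with $n\in N$, and using $\beta_g(1\otimes n)=1\otimes n$, shows that $b$ commutes with $1\otimes N$; since $N$ is a factor, $(1\otimes N)'\cap(B\ovt N)=B\ovt\mathcal{Z}(N)=B$, so $b\in B$. Restricting the intertwining relation to $x=y\otimes1$ with $y\in B^K$ then gives $\alpha_g(y)b=by$ in $B$ with $b\neq0$, which is the pure Bernoulli situation. I would reprove the latter by a mixing argument: if $g\notin K$ then $gK\neq eK$ in $G/K$, so the positions $eK$ and $gK$ are distinct; since $K$ fixes $eK$, the whole copy $B_0^{(eK)}$ lies in $B^K$, and as $B_0$ is diffuse one may choose unitaries $u_i\in B_0^{(eK)}$ with $u_i\to0$ $\sigma$-weakly. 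The relation gives $b=\alpha_g(u_i)\,b\,u_i^*$, whence $\varphi(b^*b)=\varphi(b^*\alpha_g(u_i)bu_i^*)$ for all $i$; since $\alpha_g(u_i)\in B_0^{(gK)}$ and $u_i^*\in B_0^{(eK)}$ sit in distinct, $\varphi$-independent tensor positions, the right-hand side tends to $0$, forcing $b=0$, a contradiction. Hence $g\in K$.

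For minimality, faithfulness of $\beta|_K$ is equivalent to that of $\alpha|_K$, since the $\id_N$-leg carries no kernel; and $\alpha_k=\id$ holds iff $k$ fixes every coset $sK$, i.e.\ iff $k\in\bigcap_{s\in G}sKs^{-1}=\{e\}$, which is exactly the standing hypothesis. For the relative commutant condition I would again strip off $N$: by the tensor commutation theorem together with $\mathcal{Z}(N)=\mathbb{C}$,
\[
\big((B\ovt N)^K\big)'\cap(B\ovt N)=(B^K)'\cap B,\qquad \mathcal{Z}\big((B\ovt N)^K\big)=\mathcal{Z}(B^K),
\]
so that minimality of $\beta|_K$ is equivalent to the identity $(B^K)'\cap B=\mathcal{Z}(B^K)$ for the pure Bernoulli action, which is the content of \cite[Proposition 4.10]{BB17}.

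The main obstacle is this pure Bernoulli minimality $(B^K)'\cap B=\mathcal{Z}(B^K)$. The difficulty is that $\alpha|_K$ permutes the positions of $G/K$ \emph{diagonally} with finite orbits (each orbit finite since $K$ is compact open and $G/K$ discrete), and the fixed-point algebra $B^K$ does \emph{not} split as the tensor product of the orbitwise fixed-point algebras: elements fixed only jointly across several orbits also occur. Consequently the relative commutant cannot be computed orbit by orbit, and one must argue as in \cite{BB17}, exploiting the $K$-fixed positions (the image of $N_G(K)$ in $G/K$, which always contains $eK$) to control $(B^K)'\cap B$. A secondary technical point is the passage from finitely supported tensors to general $b$ in the mixing estimate above, where the non-tracial state $\varphi$ obstructs a naive $\|\cdot\|_\varphi$-approximation; this is handled by choosing the $u_i$ inside a $\sigma^{\varphi_0}$-controllable diffuse abelian subalgebra of $B_0$, as in \cite{BB17}.
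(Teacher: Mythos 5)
Your reduction to the case $N=\C$ is correct and clean: commuting $b$ past $1\otimes N$ and using $(1\otimes N)'\cap(B\ovt N)=B\ovt\mathcal Z(N)=B\otimes\C$ forces $b\in B$, and the minimality/faithfulness of $\beta|_K$ does follow from that of $\alpha|_K$ by the tensor commutation theorem, exactly as the paper asserts (the paper does not even bother with the reduction and simply works with the intertwiner $b\in B\ovt N$ directly). The genuine gap is in the core step you flag yourself: the mixing estimate $\varphi(b^*\alpha_g(u_i)bu_i^*)\to 0$ for general $b\in B$. The estimate works for $b$ finitely supported, but the approximation step requires controlling $\varphi\bigl(u_i(b-b_0)^*(b-b_0)u_i^*\bigr)$ by $\|b-b_0\|_\varphi$, which fails for non-tracial $\varphi_0$ unless the $u_i$ lie in (a diffuse subalgebra of) the centralizer $(B_0)_{\varphi_0}$ or are analytic with uniformly controlled extensions. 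The hypotheses only give a diffuse factor $B_0$ with an arbitrary faithful normal state $\varphi_0$; there exist such pairs (e.g.\ certain free Araki--Woods factors with their free quasi-free states) whose centralizer is trivial, so the "$\sigma^{\varphi_0}$-controllable diffuse abelian subalgebra" you invoke need not exist, and the argument does not close under the stated generality.

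The paper avoids this entirely and its route is worth internalizing: from $\alpha_g(y)b=by$ for all $y\in B^{(K)}$ (the tensor leg at position $eK$, which lies in $B^{\alpha|_K}$ since $K$ fixes $eK$) one reads off directly that $B^{(K)}\preceq_{B\ovt N}B^{(gK)}$ -- the nonzero intertwiner $b$, after polar decomposition, is exactly a witness in the sense of Definition~\ref{def-intertwining1}, with no averaging or mixing needed. If $g\notin K$ then $B^{(gK)}\subset (B^{(K)})'\cap(B\ovt N)$, so $B^{(K)}\preceq_{B\ovt N}(B^{(K)})'\cap(B\ovt N)$, and \cite[Lemma 4.3]{Is17} yields $B^{(K)}\preceq_{B^{(K)}}\C$, i.e.\ $B_0$ has a minimal projection, contradicting diffuseness. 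This argument is entirely trace-free and is the reason the proposition holds for an arbitrary faithful normal state $\varphi_0$; your proof would be complete if you replaced the mixing step by this intertwining step (your reduction to $b\in B$ is then optional).
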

\begin{proof}
	In \cite[Proposition 4.10]{BB17}, it was proved that $\alpha$ is a faithful action and that $\alpha|_K$ is minimal. So, it holds that $\beta$ is faithful and that $\beta|_K$ is minimal. We have to show that $\beta$ is properly outer relative to $K$. For this, let $g\in G$ and $0\neq b\in B\ovt N$ be such that
	$$\beta_g(x)b=bx,\quad \text{for all }x\in (B\ovt N)^{\beta|_K}=B^{\alpha|_K}\ovt N.$$
We restrict this condition to one on $x\in B^{\alpha|_K}$. Then if we denote by $B^{(hK)}$ the tensor component of $B$ corresponding to $hK$ $(h\in G)$, since $\alpha_g$ is a $\ast$-isomorphism from $B^{(K)}$ onto $B^{(gK)}$, we get
	$$B^{(K)}\preceq_{B\ovt N} B^{(gK)} .$$
Suppose now that $g\not \in K$. Then this implies that $B^{(K)}\preceq_{B\ovt N} (B^{(K)})' \cap (B\ovt N)$, hence by \cite[Lemma 4.3]{Is17}, we get $B^{(K)}\preceq_{B^{(K)}}\C$. This means that $B^{(K)}$ has a minimal projection, which is a contradiction. We conclude $g\in K$, as desired.
\end{proof}

\begin{exa}\upshape
	Let $\alpha \colon G\actson (B,\varphi)$ be as in Proposition \ref{prop-Galois} and assume that $B_0$ is a $\rm II_1$ factor. Let $N$ be a $\sigma$-finite factor. Then, 
by Proposition \ref{prop-Galois} and Theorem \ref{thmF}, the crossed product $(B\ovt N)\rtimes_{\beta} G$ satisfies the intermediate subfactor property. In this case, the flow of weight of $B\ovt N$ coincides with the one of $N$, hence we can choose $B\ovt N$ as a type $\rm III_0$ factor with any prescribed flow of weight. In particular, this solves a question in \cite[QUESTION 1.5]{BB17}.
\end{exa}

\small{

}


\begin{thebibliography}{DKEP22}


\bibitem[AH12]{AH12} H. Ando and U. Haagerup, \textit{Ultraproducts of von Neumann algebras.} J. Funct. Anal. {\bf 266} (2014), 6842--6913.

%\bibitem[AHHM18]{AHHM18} H. Ando, U. Haagerup, C. Houdayer, and A. Marrakchi, \textit{Structure of bicentralizer algebras and inclusions of type III factors.} Math. Ann. {\bf 376} (2020), no. 3-4, 1145--1194.

\bibitem[BMO19]{BMO19} J. Bannon, A. Marrakchi, and N. Ozawa, \textit{Full factors and co-amenable inclusions.} Comm. Math. Phys. {\bf 378} (2020), no. 2, 1107--1121.

\bibitem[BB17]{BB17} R. Boutonnet and A. Brothier, \textit{Crossed-products by locally compact groups intermediate subfactors.} J. Operator Theory {\bf 79} (2018), no. 1, 101--137.

\bibitem[BH16]{BH16} R.~Boutonnet and C.~Houdayer, \textit{Amenable absorption in amalgamated free product von Neumann algebras.} Kyoto J. Math. {\bf 58} (2018), 583--593.

\bibitem[BO08]{BO08} N. P. Brown and N. Ozawa, \textit{$C^{\ast}$-algebras and finite-dimensional approximations}. Graduate Studies in Mathematics, 88. American Mathematical Society, Providence, RI, 2008.

\bibitem[CH08]{CH08} I.~Chifan and C.~Houdayer, \textit{Bass-Serre rigidity results in von Neumann algebras}, Duke Math.\ J.\ {\bf 153} (2010), 23--54.

\bibitem[CI08]{CI08} I.~Chifan and A. Ioana, \textit{Ergodic subequivalence relations induced by a Bernoulli action.} Geom. Funct. Anal. {\bf 20} (2010), no. 1, 53--67.

\bibitem[Ch78]{Ch78} H. Choda, \textit{A Galois correspondence in a von Neumann algebra.} Tohoku Math. J. 30(1978), 491--504.

\bibitem[CE97]{CE97} M.-D. Choi and E. G. Effros, \textit{Injectivity and operator spaces.} J. Funct. Anal. {\bf 24} (1977) 156--209.

%\bibitem[Co72]{Co72} A.~Connes, \textit{Une classification des facteurs de type $\rm III$.} Ann. Sci. \'Ecole Norm. Sup. (4) {\bf 6} (1973), 133--252.

\bibitem[Co75]{Co75} A.~Connes, \textit{Classification of injective factors.} Ann. of Math. (2) {\bf 104} (1976), no. 1, 73--115.

%\bibitem[CT76]{CT76} A. Connes and M. Takesaki, \textit{The flow of weights on factors of type $\rm III$.} {T\^{o}hoku} Math. J. (2) {\bf 29} (1977), no. 4, 473--575.

\bibitem[DKEP22]{DKEP22} C. Ding, J. Peterson, and S. K. Elayavalli, \textit{Properly proximal von Neumann algebras.} Duke Math. J. {\bf 172} (2023), no. 15, 2821--2894.

\bibitem[DP23]{DP23}  C. Ding and J. Peterson, \textit{Biexact von Neumann algebras.} Preprint 2023.  \texttt{arXiv:2309.10161}

\bibitem[Di49]{Di49} J. Dixmier, \textit{Les anneaux d'op\' erateurs de classe finie.} Ann. Sci. \'Ecole Norm. Sup. (3) {\bf 66} (1949), 209--261.

\bibitem[Ha77a]{Ha77a} U. Haagerup, \textit{Operator valued weights in von Neumann algebras, I.} J. Funct. Anal. {\bf 32} (1979), 175--206.

\bibitem[Ha77b]{Ha77b} U. Haagerup, \textit{Operator valued weights in von Neumann algebras, II.} J. Funct. Anal. {\bf 33} (1979), 339--361.

\bibitem[Ha85]{Ha85} U. Haagerup \textit{Connes' bicentralizer problem and uniqueness of the injective factor of type $\rm III_1$.} Acta Math. {\bf 158} (1987), no. 1-2, 95--148.

\bibitem[HI15a]{HI15} C.~Houdayer and Y. Isono, \textit{Unique prime factorization and bicentralizer problem for a class of type $\rm III$ factors}. Adv. Math. {\bf 305} (2017), 402--455.

\bibitem[HI15b]{HI15b} C.~Houdayer and Y. Isono, \textit{Bi-exact groups, strongly ergodic actions and group measure space type $\rm III$ factors with no central sequence.} Comm. Math. Phys. {\bf 348} (2016), no. 3, 991--1015

\bibitem[HI17]{HI17} C.~Houdayer and Y. Isono, \textit{Factoriality, Connes' type III invariants and fullness of amalgamated free product von Neumann algebras.} Proc. Roy. Soc. Edinburgh Sect. A {\bf 150} (2020), 1495--1532. 

\bibitem[HR10]{HR10} C.~Houdayer and E.~Ricard, \textit{Approximation properties and absence of Cartan subalgebra for free Araki-Woods factors}. Adv.\ Math.\ {\bf 228} (2011), 764--802.

\bibitem[HSV16]{HSV16} C. Houdayer, D. Shlyakhtenko, and S. Vaes, \textit{Classification of a family of non almost periodic free Araki-Woods factors.} J. Eur. Math. Soc. (JEMS) {\bf 21} (2019), 3113--3142.

\bibitem[HV12]{HV12} C. Houdayer and S. Vaes, \textit{Type $\rm III$ factors with unique Cartan decomposition.} J. Math. Pures Appl. (9) {\bf 100} (2013), no. 4, 564--590.

\bibitem[HT70]{HT70} R. Herman and M. Takesaki, \textit{States and Automorphism Groups of Operator Algebras.} Comm. Math. Phys. {\bf 19} (1970), 142--160.

\bibitem[Io17]{Io17} A. Ioana, \textit{Rigidity for von Neumann algebras.} Proceedings of the International Congress of Mathematicians-Rio de Janeiro 2018. Vol. III. Invited lectures, 1639-1672, World Sci. Publ., Hackensack, NJ, 2018.

\bibitem[Is12]{Is12} Y.~Isono, \textit{Weak Exactness for $C^*$-algebras and Application to Condition $\rm (AO)$.} J. Funct. Anal.\ {\bf 264} (2013), 964--998.

\bibitem[Is14]{Is14} Y.~Isono, \textit{Some prime factorization results for free quantum group factors}.  J. Reine Angew. Math. {\bf 722} (2017), 215--250.

\bibitem[Is17]{Is17} Y.~Isono, \textit{Unique prime factorization for infinite tensor product factors.} J. Funct. Anal. {\bf 276} (2019), 2245--2278.

\bibitem[Is19]{Is19} Y.~Isono, \textit{Unitary conjugacy for type $\rm III$ subfactors and W$^*$-superrigidity.} J. Eur. Math. Soc. (JEMS) {\bf 24} (2022), 1679--1721.  

\bibitem[IM19]{IM19} Y. Isono and A. Marrakchi, \textit{Tensor product decompositions and rigidity of full factors.} Ann. Sci. \'Ec. Norm. Sup$\rm \acute{e}$r. (4) {\bf 55} (2022), 109--139. 

\bibitem[ILP96]{ILP96} M. Izumi, R. Longo, S. Popa, \textit{A Galois correspondence for compact groups of automorphisms of von Neumann algebras with a generalization to Kac algebras.} J.\ Funct.\ Anal. {\bf 155} (1998), 25--63.

\bibitem[Kr75]{Kr75} W. Krieger, \textit{On ergodic flows and the isomorphism of factors.} Math. Ann. {\bf 223} (1976), no. 1, 19--70.

\bibitem[Ma16]{Ma16} A. Marrakchi, \textit{Solidity of type $\rm III$ Bernoulli crossed products.} Comm. Math. Phys. {\bf 350} (2017), no. 3, 897--916.

\bibitem[Ma19]{Ma19} A. Marrakchi, \textit{On the weak relative Dixmier property.} Proc. London Math. Soc. {\bf 122}(1) (2019), 118--123. 

\bibitem[Ma23]{Ma23} A. Marrakchi, \textit{Kadison's problem for type $\rm III$ subfactors and the bicentralizer conjecture}. Invent. Math. {\bf 239} (2025), 79--163.

\bibitem[Oz03]{Oz03} N.~Ozawa, \textit{Solid von Neumann algebras}. Acta Math. 192 (2004), 111--117.

\bibitem[Oz04]{Oz04} N.~Ozawa, \textit{A Kurosh type theorem for type $\rm II_1$ factors}. Int. Math. Res. Not. (2006), Art. ID 97560, 21 pp.

\bibitem[OP03]{OP03} N.~Ozawa and S.~Popa, \textit{Some prime factorization results for type $\rm II_1$ factors}, Invent.\ Math.\ {\bf 156} (2004), 223--234.  

\bibitem[Oc85]{Oc85} A. Ocneanu, \textit{Actions of discrete amenable groups on von Neumann algebras.} Lecture Notes in Mathematics, {\bf 1138}. Springer-Verlag, Berlin, 1985. iv+115 pp.

\bibitem[Po81]{Po81} S.~Popa, \textit{On a problem of R. V. Kadison on maximal abelian $\ast$-subalgebras in factors.} Invent. Math. {\bf 65} (1981), no. 2, 269--281.

\bibitem[Po95]{Po95} S.~Popa, \textit{Classification of subfactors and their endomorphisms.} CBMS Regional Conf. Ser. in Math., {\bf 86}.

\bibitem[Po99]{Po99} S.~Popa, \textit{On the relative Dixmier property for inclusions of C$^*$-algebras.} J. Funct. Anal. {\bf 171} (2000) 139--154.

\bibitem[Po01]{Po01} S.~Popa, \textit{On a class of type $\rm II_1$ factors with Betti numbers invariants.} Ann.\ of Math.\ {\bf 163} (2006), 809--899.

\bibitem[Po03]{Po03} S.~Popa, \textit{Strong rigidity of $\rm II_1$ factors arising from malleable actions of w-rigid groups~$\rm I$.} Invent.\ Math.\ {\bf 165} (2006), 369--408.

\bibitem[Po05a]{Po05a} S.~Popa, \textit{Cocycle and orbit equivalence superrigidity for malleable actions of w-rigid groups.} Invent. Math. {\bf 170} (2007), no. 2, 243--295.

\bibitem[Po06a]{Po06a} S. Popa, \textit{On the superrigidity of malleable actions with spectral gap.} J. Amer. Math. Soc. {\bf 21} (2008), no. 4, 981--1000.

\bibitem[Po06b]{Po06b} S. Popa, \textit{Deformation and rigidity for group actions and von Neumann algebras.} In Proceedings of the International Congress of Mathematicians (Madrid, 2006), Vol. I, European Mathematical Society Publishing House, 2007, p. 445--477.

\bibitem[Po20]{Po20} S. Popa, \textit{On ergodic embeddings of factors.} Comm. Math. Phys. {\bf 384} (2021), no. 2, 971--996.

\bibitem[Sc63]{Sc63} J. Schwartz, \textit{Two finite, non-hyperfinite, non-isomorphic factors.} Comm. Pure Appl. Math. {\bf 16} (1963) 19--26.

\bibitem[SZ99]{SZ99} S. $\rm Str\check{a}til\check{a}$ and L.  $\rm Zsid\acute{a}$, \textit{The commutation theorem for tensor products over von Neumann algebras.} J. Funct. Anal. {\bf 165} (2), 293--346 (1999)

\bibitem[Ta03]{Ta03} M.~Takesaki, \textit{Theory of operator algebras $\rm II$}. Encyclopedia of Mathematical Sciences, 125. Operator Algebras and Non-commutative Geometry, {\bf 5}. Springer-Verlag, Berlin, 2002.

\bibitem[Ue12]{Ue12} Y.~Ueda, \textit{Some analysis of amalgamated free products of von Neumann algebras in the non-tracial setting.} J.\ Lond.\ Math.\ Soc.\ (2) {\bf 88} (2013), no. 1, 25--48.

\bibitem[Ue16]{Ue16} Y.~Ueda, \textit{A free product pair rigidity result in von Neumann algebras.} J. Noncommut. Geom. {\bf 13} (2019), no. 2, 587--607.

\bibitem[Va10]{Va10} S. Vaes, \textit{Rigidity for von Neumann algebras and their invariants.} Proceedings of the ICM (Hyderabad, India, 2010), Vol. III, Hindustan Book Agency (2010), 1624--1650.

%\bibitem[VV14]{VV14} S. Vaes and P. Verraedt, \textit{Classification of type III Bernoulli crossed products}. Adv. Math. {\bf 281} (2015), 296--332.

\end{thebibliography}
\end{document}